\documentclass[onecolumn, oneside, a4paper, reqno]{amsart}
\usepackage{amssymb}
\usepackage{mathrsfs}
\usepackage{anyfontsize}
\usepackage{mathtools}
\usepackage{amsmath}
\usepackage{tikz}
\usepackage{pifont}
\usepackage{color}
\usepackage{soul}
\usepackage[all]{xy}
\usepackage{type1cm}
\usepackage{hyperref}
\usepackage{color}
\usepackage{tikz-cd}
\usepackage{appendix}
\usepackage{mathbbol}
\usepackage[english]{babel}
\hypersetup{colorlinks=true,
	breaklinks=true,
	linkcolor=blue,
	pageanchor=true}
\newcommand{\xiaowuhao}{\fontsize{9pt}{\baselineskip}\selectfont}

\newcommand{\xiaoliuhao}{\fontsize{7pt}{\baselineskip}\selectfont}

\newtheorem{thm}{Theorem}[section]

\newtheorem{cor}[thm]{Corollary}
\newtheorem{lem}[thm]{Lemma}
\newtheorem{prop}[thm]{Proposition}
\newtheorem{ques}[thm]{Question}

\theoremstyle{definition}
\newtheorem{defn}[thm]{Definition}
\newtheorem{exam}[thm]{Example}
\newtheorem{rem}[thm]{Remark}

\newcommand{\ga}{\gamma}

\newcommand{\D}{\mathcal D}
\newcommand{\E}{\mathcal E}

\newcommand{\K}{\mathcal K}

\renewcommand{\S}{\mathcal S}
\newcommand{\R}{\mathcal R}
\newcommand{\T}{\mathcal T}
\newcommand{\U}{\mathcal U}
\newcommand{\V}{\mathcal V}
\newcommand{\W}{\mathcal W}

\renewcommand{\dim}{\mathsf{dim}}

\newcommand{\Holim}{\underleftarrow{{\rm Holim}}}
\newcommand{\colim}{\underrightarrow{{\rm colim}}}
\newcommand{\lims}{\underleftarrow{{\rm lim}}}

\DeclareMathOperator{\Hom}{\mathsf{Hom}}

\DeclareMathOperator*{\inj}{\!-\mathsf{inj}}

\DeclareMathOperator*{\Mod}{\!-\mathsf{Mod}}

\DeclareMathOperator*{\op}{\mathsf{op}}

\DeclareMathOperator*{\proj}{\!-\mathsf{proj}}

\DeclareMathOperator*{\smod}{\!-\mathsf{mod}}

\title[Chen-Sun-Zhang]{Triangulated categories with a compact silting object, Brown--Comenetz duality and Brown representability theorems}

\author[Chen-Sun-Zhang]{Xiaohu Chen, Yongliang Sun and Yaohua Zhang*}

\address{\normalfont{Xiaohu Chen \\
		School of Mathematical Sciences, Capital Normal University, 100048 Beijing, People's Republic of China}}
\email{xiaohu.chen@cnu.edu.cn}

\address{\normalfont{Yongliang Sun \\ 
		School of Mathematics and Physics\\
		Yancheng Institute of Technology\\
		224003 Jiangsu, People's Republic of China}}
\email{syl13536@126.com}

\address{\normalfont{Yaohua Zhang \\ Hubei Key Laboratory of Applied Mathematics, Faculty of Mathematics and Statistics, Hubei University, 430062 Wuhan, People's Republic of China}}
\email{yhzhang@hubu.edu.cn}

\thanks{* Corresponding author.}

\keywords{recollement, triangulated category, Brown--Comenetz dual, Brown representability theorems, short exact sequence of categories}

\subjclass[2020]{18G80, 16E35}

\begin{document}
	\begin{abstract}
The paper develops a Brown--Comenetz dual framework for Neeman's representability theorems for triangulated categories with a single compact generator (Invent. math., 244:531-616, 2026). Starting from a locally Hom-finite approximable triangulated category, we use the Brown--Comenetz duals of compact objects to construct a triangulated subcategory $\E$, which plays the role of an injective-side analogue of the compact subcategory $\T^c$. We introduce the intrinsic subcategory $\T_c^+$, dual to Neeman's subcategory $\T_c^-$, and characterize its objects by strong $\E$-coapproximating systems and homotopy inverse limits. Under the compact silting hypothesis, we prove Brown representability theorems identifying $(\T_c^+)^{\op}$ with locally finite $\E$-homological functors and $(\T_c^b)^{\op}$ with finite $\E$-homological functors. We also establish localization results for recollements on the Brown--Comenetz side and derive applications to derived categories of finite-dimensional algebras.
\end{abstract}
	\maketitle
	
	\tableofcontents

    \section{Introduction}
Recollements and representability theorems lie at the heart of modern homological algebra and the study of derived categories \cite{AKLY17,BBD,Chen2,N18a}. Neeman's work on compactly generated triangulated categories \cite{N92,N18a} clarified how compact objects control representability, approximability and localization phenomena. In particular, for an approximable triangulated category $\T$, the intrinsic subcategory $\T_c^-$ is governed by compact objects and locally finite cohomological functors on $\T^c$, the subcategory of compact objects. More precisely, in his recent work \cite{N18a}, Neeman shows that
approximability allows one to construct intrinsic subcategories
\[
   \mathcal{T}^{-}_{c}
   =
   \bigcap_{n>0}\bigl(\mathcal{T}^{c} * \mathcal{T}^{\leq -n}\bigr),
   \qquad
   \mathcal{T}^{b}_{c}
   =
   \mathcal{T}^{-}_{c}\cap \mathcal{T}^{b}.
\]
Neeman proves that, for a commutative noetherian ring $R$ and a locally Hom-finite approximable $R$-linear triangulated category,
the restricted Yoneda functor
\[
   \mathcal{T}^{-}_{c}
      \longrightarrow
   {\Hom}_{R}\bigl((\mathcal{T}^{c})^{\mathrm{op}}, R\text{-}\mathrm{Mod}\bigr),
   \qquad
   X\longmapsto{\Hom}_{\mathcal{T}}(-,X)|_{\mathcal{T}^{c}},
\]
is full and has as its essential image the locally finite cohomological
functors on $\mathcal{T}^{c}$. Moreover, its restriction to
$\mathcal{T}^{b}_{c}$ is fully faithful, and its essential image consists
precisely of the finite cohomological functors on $\mathcal{T}^{c}$.  Thus
Neeman's theorem gives an intrinsic Brown representability theorem for the
compact-side subcategories $\mathcal{T}^{-}_{c}$ and $\mathcal{T}^{b}_{c}$.

The starting point of this paper is the following question. In what follows, $k$ denotes a field. Let $A$ be a finite-dimensional algebra over $k$, and let $\mathcal{T}=\D(A)$ be the unbounded derived category of $A$. This is one of the basic examples of an approximable triangulated category.
In this case the subcategory of compact objects is
$\K^b(A\proj)$, the homotopy category of bounded complexes of finitely generated projective
$A$-modules.  It has a natural dual counterpart, namely
   $\K^b(A\inj)$,
the homotopy category of bounded complexes of finitely generated injective
$A$-modules.  Similarly, in this derived setting the subcategory
$\mathcal{T}^{-}_{c}$ can be identified with $\K^{-}(A\proj)$, the homotopy category of bounded-above complexes of finitely generated projective modules, while its natural
dual counterpart is $\K^{+}(A\inj)$, the homotopy category of bounded-below complexes of finitely generated injective modules. This example suggests the following general question.  

\begin{ques}
For an arbitrary approximable triangulated category
$\mathcal{T}$, what should be the correct dual counterparts of
$\mathcal{T}^c$ and $\mathcal{T}^{-}_{c}$?  Moreover, do these dual
subcategories admit Brown representability theorems analogous to those known
for $\mathcal{T}^c$ and $\mathcal{T}^{-}_{c}$?
\end{ques}

To look for a categorical dual of compactness inside a triangulated category, the notions of cocompact objects and 0-cocompact objects \cite{OPS19, OPS}  have been
introduced as formal or weak duals of compact objects. However, these notions do not capture the dual behaviour needed in the
basic examples coming from finite-dimensional algebras. Indeed, even in
the simplest case of \(k\), the derived category \(\D(k)\)
has no non-zero cocompact objects, while every object of \(\D(k)\) is 0-cocompact \cite[Example 2.2]{OPS}.  Thus neither cocompactness nor \(0\)-cocompactness
distinguishes the expected dual class of compact objects in this setting.

The point of departure of this paper is different.  Instead of dualizing the defining compactness condition itself, we dualize compact objects through Brown--Comenetz duality. More precisely, we replace the subcategory $\mathcal{T}^{c}$ by the subcategory generated by the Brown--Comenetz duals of compact objects.  For a compact object $C\in \mathcal{T}^{c}$, its Brown--Comenetz dual is an object $\mathbf{S}(C)$ representing the functor $\mathbf{D}{\Hom}_{\mathcal{T}}(C,-)$, namely $$\mathbf{D}{\Hom}_{\mathcal{T}}(C,-)\simeq \Hom_{\mathcal{T}}(-,\mathbf{S}(C)),$$ 
where
$\mathbf{D}=\operatorname{Hom}_{k}(-,k)$.
We   define $\mathcal{E}$ to be the smallest thick triangulated subcategory of $\mathcal{T}$ which contains all Brown--Comenetz duals of compact objects,
and regard $\mathcal{E}$ as the {\em Brown--Comenetz-side dual} of
$\mathcal{T}^{c}$. In the derived category of a finite-dimensional algebra, this construction recovers the expected passage from the homotopy category of bounded complexes of finitely generated projective modules to the homotopy category of bounded complexes of finitely generated injective modules. Thus the category $\mathcal{E}$ provides the
appropriate starting point for constructing the intrinsic subcategory $\T^+_c$, the dual counterpart of
$\mathcal{T}^{-}_{c}$, and for formulating the corresponding Brown representability theorems.
	
\subsection{Brown representability theorems}
The first main result of this paper is a Brown representability theorem for the
dual subcategories constructed above.  Formally, this theorem is the
Brown--Comenetz-side counterpart of Neeman's Brown representability theorem for
\(\mathcal T_c^{-}\) and \(\mathcal T_c^{b}\) \cite[Theorems~1.4(i)]{N18a}: the compact subcategory
\(\mathcal T^c\) is replaced by its Brown--Comenetz dual \(\mathcal E\), the
subcategory \(\mathcal T_c^{-}\) is replaced by \(\mathcal T_c^{+}\), and locally
finite cohomological functors on \(\mathcal T^c\) are replaced by locally finite
homological functors on \(\mathcal E\).
	
\begin{thm}[Theorem~\ref{thm:rep for T+c} and Theorem~\ref{thm: rep for Tbc}]
		Let $\T$ be a locally Hom-finite $k$-linear triangulated category with a compact silting object. 
		\begin{enumerate}
			\item The restricted Yoneda functor
			\[
			y: {(\mathcal{T}_c^+)}^{\op} \longrightarrow \mathsf{Hom}_k(\mathcal{E}, k\text{-}\mathsf{Mod}),\quad T\longmapsto \Hom_{\T}(T,-)|_{\E},
			\]
			is full, and the essential image consists of all locally finite $\mathcal{E}$-homological functors.
			\item The restricted Yoneda functor
			\[
			y: {(\mathcal{T}_c^b)}^{\op} \longrightarrow \mathsf{Hom}_k(\E, k\text{-}\mathsf{Mod}),\quad T\longmapsto \Hom_{\T}(T,-)|_{\E},
			\]
			is fully faithful, and the essential image consists of all finite $\E$-homological functors.
	\end{enumerate}
\end{thm}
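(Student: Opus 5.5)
The plan is to transport Neeman's theorem \cite[Theorem~1.4]{N18a} along the Brown--Comenetz duality instead of redoing his approximation argument from scratch. Write $G$ for the compact silting object. Locally Hom-finiteness together with $\mathbf{D}\Hom_{\T}(C,-)\simeq\Hom_{\T}(-,\mathbf{S}(C))$ shows that $\mathbf S$ is a fully faithful triangle functor $\T^c\to\E$; indeed $\Hom(\mathbf S C,\mathbf S C')\simeq \mathbf D\Hom(C',\mathbf SC)\simeq \mathbf D\mathbf D\Hom(C,C')$. Since $\E$ is the thick closure of the image, $\mathbf S:\T^c\xrightarrow{\sim}\E$ is an equivalence.

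Consequently, a functor $H:\E\to k\Mod$ is homological and locally finite (respectively finite) if and only if $H\circ\mathbf S:\T^c\to k\Mod$ is, where local finiteness is measured with respect to $G$ and $\mathbf SG$ and the shifts match. I would then set $F=\mathbf D(H\circ \mathbf S)$. This is a cohomological functor on $\T^c$ of the same finiteness type, because each value is finite-dimensional and so $\mathbf{D}$ is exact and involutive on them.

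For (1), I would take such an $H$. Neeman's theorem for the silting/approximable case gives an object $X\in\T_c^-$ with $\Hom(-,X)|_{\T^c}\simeq F$. The step I expect to be the main obstacle is the following. In general there is no ambient duality $\T\to\T^{\op}$ sending $\T_c^-$ to $\T_c^+$, so the representing object must be built directly in $\T_c^+$. I would do this dually to Neeman, using the characterization of $\T_c^+$ proved earlier via strong $\E$-coapproximating systems and homotopy inverse limits.

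Concretely, I would use the t-structure generated by $G$ and truncate $F$ in degrees: the restrictions of $H$ to the pieces $\E\cap\T^{\ge -n}$ and their finite-length truncations are finite. For these, I would produce objects $E_n\in\E$ together with maps $E_{n+1}\to E_n$ such that $\Hom(E_n,-)|_{\E}\to H$ is an isomorphism in a growing range of degrees. The injectivity-type property of $\mathbf S G$ lets one kill kernels and cokernels degree by degree, in the mirror image of Neeman's inductive step: each step adds a finite sum of shifts of $\mathbf SG$ via a cone. Taking $T=\Holim E_n$, the earlier characterization gives $T\in\T_c^+$. Because the system stabilizes in each degree, $\Hom(T,E)\simeq\lim\Hom(E_n,E)$ for $E\in\E$, with the $\lim^1$ term vanishing by Mittag-Leffler. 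This gives $y(T)\simeq H$.

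Fullness would follow by the same coapproximation. Given $T,T'\in\T_c^+$ and a natural transformation $y(T')\to y(T)$, I would write $T=\Holim E_n$ and lift the transformation compatibly to maps $T\to E'_n$. Since each $E'_n\in\E$, such a map is determined by Yoneda, and the compatible family then induces a map $T\to\Holim E'_n=T'$. Conversely, $y(T)$ is locally finite for every $T\in\T_c^+$, because $T$ agrees with an object of $\E$ in any bounded range of degrees.

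For (2), the finite functors correspond to $\T_c^b$. The boundedness of $T$ follows because $H(\Sigma^i\mathbf SG)=\mathbf D\Hom(G,\Sigma^{-i}T)$ vanishes for $|i|\gg0$, and $G$ detects the t-structure. For faithfulness on $\T_c^b$, if $y(f)=0$ for a map $f:T\to T'$, I would approximate $T'$ by an object $E\in\E$ with $T'\to E$ an isomorphism in the relevant degree range. Boundedness of $T$ then forces $f$ to factor through this map, so $f=0$. This is the dual of Neeman's faithfulness argument on $\T_c^b$.
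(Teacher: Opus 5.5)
The transport to Neeman's theorem breaks down for (1), and not for the reason you give. If $\mathbf{H}$ is locally finite on $\E$, then $F=\mathbf{D}(\mathbf{H}\circ\mathbf{S})$ satisfies $F(G[i])\cong\mathbf{D}\mathbf{H}(E[i])=0$ for $i\gg 0$. Neeman's locally finite cohomological functors on $\T^c$ instead vanish on $G[i]$ for $i\ll 0$; this is exactly the sign flip recorded in the remark after the definition of locally finite $\E$-homological functors. So $F$ is not of the ``same finiteness type'', and Theorem~\ref{lem:object in bc}(1) does not apply. In fact no object of $\T^-_c$ can represent $F$ unless $\mathbf{H}$ is finite.

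Nor is the missing ambient duality the obstacle. Any $X$ with $\Hom_{\T}(-,X)|_{\T^c}\cong F$ already satisfies $\Hom_{\T}(X,\mathbf{S}C)\cong\mathbf{D}\Hom_{\T}(C,X)\cong\mathbf{H}(\mathbf{S}C)$, so it represents $\mathbf{H}$ itself. What is missing is the existence of such an $X$ and a proof that it lies in $\T^+_c$.

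For finite $\mathbf{H}$, by contrast, the vanishing is two-sided, so your transport does work for (2). Theorem~\ref{lem:object in bc}(2) (approximability coming from the silting hypothesis), the natural isomorphism $\Hom_{\T}(X,\mathbf{S}C)\cong\mathbf{D}\Hom_{\T}(C,X)$, and the fact that $\mathbf{D}$ is an anti-equivalence on functors with finite-dimensional values give (2) directly. This route needs neither (1) nor the identification $\T^b_c=\T^+_c\cap\T^b$ of Lemma~\ref{relation}. Your present argument for (2) uses that identification silently. Your faithfulness step coapproximates $T'\in\T^b_c$ by objects of $\E$, which needs $\T^b_c\subseteq\T^+_c$. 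You also need that an object of $\T^+_c\cap\T^b$ lies in $\T^-_c$.

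For (1) you are therefore left with the direct construction, which is the whole content of the theorem and is only asserted. For the earlier characterization to put $\Holim E_n$ in $\T^+_c$, the tower must be strongly $\E$-coapproximating, which forces each cone to live in high $t$-degree. You give no argument that kernels of $\Hom_{\T}(E_n,-)|_{\E}\to\mathbf{H}$ can be killed under this constraint without creating new ones, nor that the colimit ends up isomorphic to $\mathbf{H}$.

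The paper does not achieve this directly either. It first produces some representing $F\in\T$ (Proposition~\ref{brutal rep}, via weak triangles and Neeman's Lemmas 7.8, 8.5, 8.6). It then uses the silting hypothesis through Lemma~\ref{lem:key2} to get only an \emph{epimorphism} onto $\mathbf{H}$ from $\Hom_{\T}(\Holim(E_*),-)|_{\E}$ (Lemmas~\ref{B.1}--\ref{B.3}, Proposition~\ref{epi1}). Finally it needs the phantom argument (Lemmas~\ref{cophantom decom} and \ref{cophantom vanish}) to see that $F$ is a summand of an object of $\T^+_c$.

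Also, $\Hom_{\T}(\Holim E_n,E')\cong\lims\Hom_{\T}(E_n,E')$ is backwards. For $E'\in\E$ it is a colimit (Lemma~\ref{colim-holim1}), obtained by dualizing the Milnor limit for $\Hom_{\T}(G[i],\Holim E_n)$. Your fullness argument needs exactly this colimit statement, namely that every map $T'\to E''$ with $E''\in\E$ factors through some $E'_n$ (Corollary~\ref{key1}).

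If you want to keep the duality idea, here is a workable route. $F=\mathbf{D}(\mathbf{H}\circ\mathbf{S})$ is the $k$-dual of a homological, hence flat, functor on $\T^c$, so it is an injective $\T^c$-module. By Krause's theorem it is of the form $\Hom_{\T}(-,X)|_{\T^c}$ for a pure-injective $X$, and this $X$ represents $\mathbf{H}$. It lies in some $\T^{\ge -A}$ because $G$ is silting. Iterating the $\mathrm{add}(E)$-approximation from the proof of Lemma~\ref{lem:key2} then shows $X\in\T^+_c$; that proof only uses $X\in\T^{\ge 0}$ and finite-dimensionality of $\Hom_{\T}(X,E[j])$, a property the cocone inherits.
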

The proof does not follow from a purely formal dualization of Neeman's
argument.  Several steps require new ingredients specific to the Brown--Comenetz side:
compact approximations are replaced by coapproximations by objects of
\(\mathcal E\), homotopy colimits are replaced by homotopy limits, and the
necessary control of degrees depends on the compact silting hypothesis (Lemma~\ref{lem:key2}). Indeed, the category $\T$ under the assumption of having a compact silting object is approximable (see \cite[Remark 5.3]{N18a}).
	
\subsection{Localization theorems}
The first localization theorem is a dual Neeman theorem: for a recollement of compactly generated, locally Hom-finite triangulated categories, the third row restricts up to the Brown--Comenetz duals of the subcategories of compact objects. 
	\begin{thm}[Proposition~\ref{prop:dual neeman}]
	Let the following diagram be a recollement of compactly generated locally Hom-finite $k$-linear triangulated categories 
		$$\xymatrix{\R\ar^-{i_*=i_!}[r]
			&\T\ar^-{j^!=j^*}[r]\ar^-{i^!}@/^1.2pc/[l]\ar_-{i^*}@/_1.6pc/[l]
			&\S.\ar^-{j_*}@/^1.2pc/[l]\ar_-{j_!}@/_1.6pc/[l]}$$
		Then the third row restricts to a short exact sequence up to direct summands
		$$\E_s\stackrel{j_*}\longrightarrow \E_t\stackrel{i^!}\longrightarrow \E_r$$
		where $\E_s, \E_t, \E_r$ are the Brown--Comenetz duals of $\S^c, \T^c, \R^c$, respectively.    
	\end{thm}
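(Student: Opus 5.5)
The plan is to transport Neeman's localization theorem on the compact side through Brown--Comenetz duality. The main tools are a formula for how the adjoints of the recollement interact with the Serre-type functors $\mathbf{S}_s,\mathbf{S}_t,\mathbf{S}_r$, together with Neeman's theorem that the first row restricts to compacts. For a recollement of compactly generated categories, $i^*$ and $j_!$ preserve compact objects, because their right adjoints $i_*$ and $j^*$ preserve coproducts. Neeman's theorem then gives a short exact sequence up to direct summands
\[
\R^c\stackrel{i_*}\longrightarrow \T^c\stackrel{j^*}\longrightarrow \S^c .
\]
More precisely, $\S^c$ is equivalent, up to summands, to the idempotent completion of the Verdier quotient $\T^c/\R^c$, where $\R^c$ is viewed inside $\T^c$ via $i_*$. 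This last point uses that $i_*$ preserves compacts, which holds because $i^!$ preserves coproducts as a right adjoint of $i_*$ that itself has a right adjoint.

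First I will establish the key commutation formulas. For $C\in\S^c$ and $X\in\T$ we have
\[
\Hom_\T(X,j_*\mathbf{S}_s C)\cong \Hom_\S(j^*X,\mathbf{S}_sC)\cong \mathbf{D}\Hom_\S(C,j^*X)\cong \mathbf{D}\Hom_\T(j_!C,X).
\]
By Yoneda this gives $j_*\mathbf{S}_s\cong \mathbf{S}_t j_!$ on $\S^c$. In the same way, for $D\in\T^c$ and $Y\in\R$,
\[
\Hom_\R(Y,i^!\mathbf{S}_tD)\cong\Hom_\T(i_*Y,\mathbf{S}_tD)\cong \mathbf{D}\Hom_\T(D,i_*Y)\cong \mathbf{D}\Hom_\R(i^*D,Y),
\]
so $i^!\mathbf{S}_t\cong \mathbf{S}_r i^*$ on $\T^c$. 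I will also check that $i_*\mathbf{S}_r\cong \mathbf{S}_t i_*$ on $\R^c$, using $i^!$, and that $j^*\mathbf{S}_t\cong \mathbf{S}_s j^*$ on $\T^c$, using $j_*$. Since $j_*$ and $i^!$ are exact, these formulas show that $j_*$ maps $\E_s$ into $\E_t$ and $i^!$ maps $\E_t$ into $\E_r$, by a thick-closure induction. The composite $i^!j_*$ vanishes.

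Second, I will show that $\mathbf{S}_t\colon\T^c\to\E_t$ is fully faithful, and similarly for $\R$ and $\S$. Local Hom-finiteness gives
\[
\Hom(\mathbf{S}C,\mathbf{S}C')\cong \mathbf{D}\Hom(C',\mathbf{S}C)\cong \mathbf{D}\mathbf{D}\Hom(C,C')\cong\Hom(C,C'),
\]
and this isomorphism is the map induced by $\mathbf{S}$. Hence $\mathbf{S}$ is a triangle equivalence from $\T^c$ onto a triangulated subcategory whose thick closure is $\E_t$. Triangulatedness of $\mathbf{S}$ follows from the standard argument for Serre-type functors. Since $\T^c$ is thick and $\mathbf{S}$ is fully faithful, the image is closed under summands, so $\E_t=\mathbf{S}_t(\T^c)$ up to isomorphism.

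Third, I will transport the exact sequence. Under the equivalences $\mathbf{S}$, the functors $j_!\colon \S^c\to\T^c$ and $i^*\colon\T^c\to\R^c$ correspond to $j_*\colon \E_s\to\E_t$ and $i^!\colon \E_t\to\E_r$. Neeman's theorem for the second row, $\S^c\xrightarrow{j_!}\T^c\xrightarrow{i^*}\R^c$, which is the left adjoint row, then transfers directly. This sequence is short exact up to summands: $j_!$ is fully faithful, $i^*$ kills its image, and the induced functor $\T^c/j_!\S^c\to\R^c$ is an equivalence up to summands. That is exactly the compact-level localization statement applied to the recollement with the roles of $\R$ and $\S$ swapped via the triangle $j_!j^*\to 1\to i_*i^*$.

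The main obstacle is this last point, because Neeman's theorem is usually stated for the first row $i_*$, $j^*$. The sequence $\S^c\xrightarrow{j_!}\T^c\xrightarrow{i^*}\R^c$ is the localization of $\T$ at the localizing subcategory generated by $j_!\S^c$, which is compactly generated by compacts of $\T$. Neeman's localization theorem (Thomason--Trobaugh--Neeman) then identifies $\R^c$ with the idempotent completion of $\T^c/j_!\S^c$, and this is what I will use.
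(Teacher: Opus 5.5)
Your argument is essentially the paper's: the natural isomorphisms $j_*\mathbf{S}_s\cong\mathbf{S}_tj_!$ and $i^!\mathbf{S}_t\cong\mathbf{S}_ri^*$ from adjunction plus Brown--Comenetz duality, the equivalences $\mathbf{S}_t\colon\T^c\to\E_t$ (and likewise for $\R$, $\S$) under local Hom-finiteness, and then transport of Neeman's sequence $\S^c\xrightarrow{j_!}\T^c\xrightarrow{i^*}\R^c$, for which your Thomason--Trobaugh--Neeman justification is fine. You should delete the side claims you never use, since they are false in general: $i_*$ and $j^*$ need not preserve compacts ($i^!$ need not have a right adjoint, which would amount to extending the recollement downwards), so $\R^c\xrightarrow{i_*}\T^c\xrightarrow{j^*}\S^c$ is not available; and $i_*\mathbf{S}_r\cong\mathbf{S}_ti_*$ and $j^*\mathbf{S}_t\cong\mathbf{S}_sj^*$ fail, because mapping $X$ (resp.\ $Y$) into the two sides gives $\mathbf{D}\Hom_\R(C,i^*X)$ versus $\mathbf{D}\Hom_\R(C,i^!X)$ (resp.\ $\mathbf{D}\Hom_\T(D,j_!Y)$ versus $\mathbf{D}\Hom_\T(D,j_*Y)$).
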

	In particular, for recollements of derived categories of finite-dimensional algebras, the third row restricts to a short exact sequence of homotopy categories of bounded complexes of finitely generated injective modules.
	
As an important application of the Brown representability theorem above, we obtain a localization theorem for the intrinsic subcategories \(\mathcal T_c^{+}\) and \(\mathcal E\).  This result should be viewed as the Brown--Comenetz dual version of the localization theorem of Sun--Zhang--Zhang \cite{SZZ}.  Whereas their theorem concerns the compact-side subcategories and the corresponding Verdier quotient categories, our result establishes the analogous localization statements on the dual side.

	\begin{thm}[Theorem~\ref{thm:loc for T+c} and Theorem~\ref{thm:loc for E}]
		Let the following diagram be a recollement of locally Hom-finite $k$-linear triangulated categories, each of which admits a compact silting object.
		$$\xymatrix{\R\ar^-{i_*=i_!}[r]
			&\T\ar^-{j^!=j^*}[r]\ar^-{i^!}@/^1.2pc/[l]\ar_-{i^*}@/_1.6pc/[l]
			&\S.\ar^-{j_*}@/^1.2pc/[l]\ar_-{j_!}@/_1.6pc/[l]}$$
		Then 
		\begin{enumerate}
			\item Suppose that $\T$ has a compact generator $G$ such that there is an integer $N$ with ${\Hom}_{\T}(G, G[n])=0, n<N$. If the recollement can extend one step upwards, then the first row induces a short exact sequence $${\S}^+_c/{\S}^c\stackrel{\overline{j_!}}{\longrightarrow}\T^+_c/\T^c\stackrel{\overline{i^*}}{\longrightarrow}{\R}^+_c/{\R}^c.$$
			\item The second row induces a short exact sequence    $${\R}^+_c/{\R}^b_c\stackrel{\overline{i_*}}{\longrightarrow}\T^+_c/\T^b_c\stackrel{\overline{j^*}}{\longrightarrow}{\S}^+_c/{\S}^b_c.$$
			\item The third row induces a short exact sequence
			$${\S}^+_c/\E_s\stackrel{\overline{j_*}}{\longrightarrow}\T^+_c/\E_t\stackrel{\overline{i^!}}{\longrightarrow}{\R}^+_c/\E_r.$$
		\end{enumerate}
	\end{thm}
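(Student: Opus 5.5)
The plan is to treat the three rows separately, in each case reducing to a general criterion for a recollement-type diagram to induce a short exact sequence of Verdier quotients. The criterion is as follows. Suppose we have a short exact sequence (up to summands) $\mathcal A\to\mathcal B\to\mathcal C$ of triangulated categories. Suppose also that thick subcategories $\mathcal A'\subseteq\mathcal A$, $\mathcal B'\subseteq\mathcal B$, $\mathcal C'\subseteq \mathcal C$ are compatible, meaning the functors restrict to a short exact sequence up to summands $\mathcal A'\to\mathcal B'\to\mathcal C'$. Then a $3\times3$ argument yields a short exact sequence $\mathcal A/\mathcal A'\to\mathcal B/\mathcal B'\to\mathcal C/\mathcal C'$, provided we also know that $\mathcal A'=\mathcal A\cap\mathcal B'$ (up to the embedding). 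Summands are killed in the quotient.

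So in each case I must establish two things. First, the outer row restricts to a short exact sequence on the ``large'' subcategories $\S_c^+\to\T_c^+\to\R_c^+$. Second, it restricts on the ``small'' ones: $\S^c,\T^c,\R^c$ for (1); $\R^b_c,\T^b_c,\S^b_c$ for (2); and $\E_s,\E_t,\E_r$ for (3).

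For (3), the small-level statement is exactly the dual Neeman theorem (Proposition~\ref{prop:dual neeman}). For (2), I would first show that $i_*$ and $j^*$ preserve $\T_c^+$ and $\T^b_c$. Both functors have adjoints on both sides preserving compacts (resp.\ $\E$) in the compact silting setting, so they preserve boundedness with respect to the silting t-structures up to a finite shift. They also preserve $\E$-coapproximating systems, hence by the characterization of $\T_c^+$ via strong $\E$-coapproximating systems and homotopy inverse limits they preserve $\T_c^+$. For (1), the extra upward step ensures that $j_!$ and $i^*$ have right adjoints preserving coproducts. Together with the bound $\Hom(G,G[n])=0$ for $n<N$, this lets $i^*$ and $j_!$ preserve $\T_c^+$: a strong coapproximating system is sent to one with controlled degrees.

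The main work is exactness at the middle on the $+$ level. Let $T\in\T_c^+$ with $j^*T=0$ (resp.\ $i^*T=0$, $i^!T=0$). I need to show $T$ lies in the image of $\R_c^+$ (resp.\ $\S_c^+$). The plan is to use the Brown representability theorem (Theorem~\ref{thm:rep for T+c}). Take $T\in\T_c^+$ and consider the functor $\Hom_\T(i_*(-)\ldots)$. Concretely, for (2) the preimage is $i^*T$ or $i^!T$. I verify membership in $\R_c^+$ by checking that $\Hom_\R(i^*T,-)|_{\E_r}\cong\Hom_\T(T,i_*-)|_{\E_r}$ is a locally finite $\E_r$-homological functor. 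This holds because $i_*$ sends $\E_r$ into $\E_t$ up to shift-bounded objects, after which I apply the theorem to realise it. Fullness of $y$ plus the unit triangle then identify $T\cong i_*i^*T$. Similarly I realise the functors on $\E_s$ given by $\Hom_\T(T,j_*-)$ and on $\E_r$ given by $\Hom_\T(T,i_*-)$. This gives essential surjectivity onto the kernel. Fully faithfulness of the induced quotient functors follows from full faithfulness of $i_*,j_!,j_*$.

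The step I expect to be the main obstacle is showing that the quotient functor $\T_c^+/\mathcal B'\to\R_c^+/\mathcal C'$ (resp.\ $\S$) is essentially surjective, together with the intersection condition $\mathcal A'=\mathcal A\cap\mathcal B'$. For essential surjectivity, given $R\in\R_c^+$ I would lift it via $i_*R\in\T_c^+$ (or $j_*S$), using $i^*i_*\cong\mathrm{id}$. For the intersection condition in (1), I must show that $j_!S$ compact forces $S$ compact, and likewise for $\E$ in (3) and for boundedness in (2). For (1) this is where the upward extension is really needed: $j^*$ then has a coproduct-preserving right adjoint, so $S\cong j^*j_!S$ is compact. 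For (3) I would apply $j^*$ similarly, using $j^*j_*\cong\mathrm{id}$ and that $j^*$ maps $\E_t$ into $\E_s$ by the dual Neeman theorem's proof.
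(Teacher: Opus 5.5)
\textbf{The reduction criterion you build everything on is false.} Exactness of both rows together with $\mathcal A'=\mathcal A\cap\mathcal B'$ does not make $\mathcal A/\mathcal A'\to\mathcal B/\mathcal B'$ fully faithful.

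Here is a counterexample. Take $Q\colon b\to a\leftarrow c$ and $\mathcal B=\D^b(kQ)$. Put $\mathcal A=\langle P_b,P_c\rangle$ and $\mathcal C=\mathcal B/\mathcal A$, and put $\mathcal A'=0$, $\mathcal B'=\langle S_b\rangle$, $\mathcal C'=\mathcal C$.
\begin{itemize}
\item We have $\mathcal A^{\perp}=\langle P_a\rangle$, and $P_b\to S_b\to P_a[1]$ is a triangle. Hence $\mathcal B'\to\mathcal C$ is an equivalence, so both rows are short exact.
\item Clearly $\mathcal A\cap\mathcal B'=0$.
\item The map $P_a\to P_b$ has cone $S_b$, so it becomes invertible in $\mathcal B/\mathcal B'$. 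Therefore $\Hom_{\mathcal B/\mathcal B'}(P_b,P_c)\cong\Hom_{\mathcal B/\mathcal B'}(P_a,P_c)\neq 0$. Indeed, the nonzero map $P_a\to P_c$ cannot factor through $\mathcal B'$, since $\Hom(P_a,S_b[n])=0$ for all $n$.
\item But $\Hom_{\mathcal B}(P_b,P_c)=0$, so $\mathcal A\to\mathcal B/\mathcal B'$ is not full.
\end{itemize}

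\textbf{What is actually needed.} The paper uses the $3\times 3$ lemma together with a Verdier-type criterion (Lemmas 2.5 and 2.6 of [SZZ]). That criterion requires a \emph{factorization} property, and the intersection condition then follows by factoring identity maps. Each case needs its own argument, none of which appears in your plan:
\begin{itemize}
\item In (1), every $T\to j_!V$ with $T\in\T^c$ and $V\in\S^+_c$ factors as $T\to j_!j^{\#}T\to j_!V$. Here $j^{\#}$ is the left adjoint of $j_!$ supplied by the upward extension, and it preserves compacts.
\item In (2), let $X\in\T^b_c\cap\T^{\le m}$, $Y\in\R^+_c$, and $i_*(\R^{\ge 0})\subseteq\T^{\ge n}$. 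Then every $X\to i_*Y$ factors through $i_*(Y^{\le m-n})$. The truncation $Y^{\le m-n}$ lies in $\R^b_c$ because the silting $t$-structure restricts to $\R^+_c$.
\item In (3), every $j_*V\to E'$ with $E'\in\E_t$ factors through $j_*F$, for a triangle $D\to V\to F$ with $F\in\E_s$, $D\in\S^{\ge m}$ and $m\gg 0$. This uses $t$-boundedness of $j_*$ and the vanishing $\Hom_\T(\T^{\ge B},E')=0$ from Proposition~\ref{tech4}(1).
\end{itemize}

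\textbf{Further errors in the auxiliary steps.}
\begin{itemize}
\item In (3) you deduce the intersection condition from $j^*(\E_t)\subseteq\E_s$. The dual Neeman theorem only gives $\mathbf S_tj_!\cong j_*\mathbf S_s$ and $\mathbf S_ri^*\cong i^!\mathbf S_t$. The object $j^*\mathbf S_t(C)$ represents $\mathbf D\Hom_\T(C,j_!(-))$, and nothing in the hypotheses of (3) places it in $\E_s$. That would follow if $j_!$ had a left adjoint $j^{\#}$, giving $j^*\mathbf S_t\cong\mathbf S_sj^{\#}$, but this is not assumed in (3).
\item In (1), the upward extension does not give $j^*$ a coproduct-preserving right adjoint. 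It gives left adjoints $j^{\#}$ of $j_!$ and $i_{\#}$ of $i^*$. These yield $S\cong j^{\#}j_!S$ for the intersection, and $j^*\mathbf S_t\cong\mathbf S_sj^{\#}$ and $i_*\mathbf S_r\cong\mathbf S_t i_{\#}$. It is the latter that make $j_!$ and $i^*$ preserve the $(-)^+_c$ subcategories, since their right adjoints then preserve $\E$.
\item The hypothesis $\Hom_\T(G,G[n])=0$ for $n<N$ is not used for that preservation. It is needed to put compact objects into $\T^b_c$, hence to get $\T^c\subseteq\T^+_c$, and likewise $\R^c\subseteq\R^+_c$ and $\S^c\subseteq\S^+_c$. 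Without these inclusions the quotients in (1) are not defined, and your plan never checks them.
\item Essential surjectivity of the right-hand quotient functor is not the obstacle; it is inherited from the sequence on the $(-)^+_c$ level. The real issue is full faithfulness of the left-hand quotient functor.
\end{itemize}
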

	
	The paper is organized as follows. Section~\ref{sec:pre} recalls notation, $t$-structures, short exact sequences of triangulated categories, and the approximable framework. Section~\ref{sec:BC dual} develops Brown--Comenetz duality and proves the dual Neeman theorem. Section~\ref{sec:int subcat} defines $\T^+_c$, proves its homotopy-limit characterization, and establishes basic finiteness properties. Section~\ref{sec:rep thm} contains the representability theorems for $\T^+_c$ and $\E_t$. Section~\ref{sec:loc thm} gives localization results for $\T^+_c$ and $\E_t$, and their related Verdier quotient categories.
	
	\section{Preliminaries}\label{sec:pre}
	In this section, we collect the notation, definitions, and basic facts. Throughout this paper, $k$ denotes a field.
	
	\subsection{Notation}
	Let $\T$ be a triangulated category. For two subcategories $\U, \V$ of $\T$, we set
	$$\U*\V:=\{T\in\T\mid T~\text{admits a triangle}~U\to T\to V, U\in\U, V\in\V\}.$$ We also define $$\mathcal{U}^{\perp}\coloneqq\{M\in \mathcal{T}\mid \Hom_{\mathcal{T}}(U,M)=0,\ \forall U\in \mathcal{U}\}$$ as the full subcategory of $\mathcal{T}$ right orthogonal to $\mathcal{U}$. The left orthogonal to $\mathcal{U}$ is defined dually.
	
    Assume $\T$ has small coproducts and let $G$ be an object of $\T$. For $a< b\in\mathbb{Z}\cup\{-\infty, +\infty\}$, 
	we denote by $\overline{\langle G\rangle}^{[a,b]}$ the smallest subcategory of $\T$ which contains $G[-i]$ where $a\leq i\leq b$ and is closed under direct summands, coproducts and extensions. For a positive integer $n$, the subcategories $\langle G\rangle_n^{[a,b]}$ and $\overline{\langle G\rangle}_n^{[a,b]}$ are defined inductively by
	\begin{align*}
        \langle G\rangle_1^{[a,b]} &= \text{direct summands of finite direct sums of } \{\,G[-i] \mid a \le i \le b \,\},\\
        \langle G\rangle_n^{[a,b]} &= \text{direct summands of objects in } 
		\langle G\rangle_1^{[a,b]} * \langle G\rangle_{n-1}^{[a,b]},\\
		\overline{\langle G\rangle}_1^{[a,b]} &= \text{direct summands of coproducts of } \{\,G[-i] \mid a \le i \le b \,\},\\
		\overline{\langle G\rangle}_n^{[a,b]} &= \text{direct summands of objects in } 
		\overline{\langle G\rangle}_1^{[a,b]} * \overline{\langle G\rangle}_{n-1}^{[a,b]}.
	\end{align*}
	
	We denote $\langle G\rangle:=\bigcup_{i\geq 1}\langle G\rangle_{i}^{[-i,i]}$. Clearly $\langle G\rangle$ is the smallest thick subcategory of $\T$ containing $G$. For simplicity, we denote $\langle G\rangle_{n}^{(-\infty,+\infty)}$ by $\langle G\rangle_{n}$. Dually, for a triangulated category $\T$ having products, and an object $E \in \T$, we define $\overline{[E]}_n^{[a,b]}$ by replacing coproducts with products in the definition of $\overline{\langle G\rangle}_n^{[a,b]}$. It is well-known that a compactly generated triangulated category has coproducts and products.
	
	Let $R$ be a ring. We denote by $R\Mod$ (resp., $R\smod$, $R\proj$) the category of right $R$-modules (resp., finitely presented right $R$-modules, finitely generated projective right $R$-modules).  
	We write $\D(R)$ (resp., $\D^b(R\smod)$, $\D^-(R\smod)$, $\K^b(R\proj)$, $\K^{-,b}(R\proj)$) for the derived category (resp., bounded derived category, upper-bounded derived category, homotopy category of bounded complexes, homotopy category of upper-bounded complexes with bounded cohomology) of the corresponding module categories.
	
	\subsection{\texorpdfstring{$t$}{t}-structures and homological facts for triangulated categories}
	We briefly recall some basic notions concerning $t$-structures.  
	A pair $(\T^{\le 0}, \T^{\ge 0})$ of full subcategories of a triangulated category $\T$ is called a \emph{$t$-structure} \cite[Definition~1.3.1]{BBD} if:
	\begin{enumerate}
		\item $\T^{\leq 0}[1]\subseteq \T^{\leq 0}$ and $\T^{\geq 0}[-1]\subseteq \T^{\geq 0}$;
		\item ${\Hom}_{\T}(\T^{\leq 0}, \T^{\geq 0}[-1])=0$;
		\item For any $T\in\T$, there exists a triangle
		$$U\longrightarrow T\longrightarrow V\longrightarrow U[1],$$
		where $U\in\T^{\leq 0}$ and $V\in\T^{\geq 0}[-1]$.
	\end{enumerate}

    For each $n \in \mathbb{Z}$, we denote $$\T^{\leq n}\coloneqq \T^{\leq 0}[-n]\;\;\text{and}\;\;\T^{\geq n}\coloneqq \T^{\geq 0}[-n].$$ The \emph{heart} of the $t$-structure, $\mathcal{H} := \T^{\le 0} \cap \T^{\ge 0}$, is an abelian category and yields a cohomological functor $\mathbf{H}^0 : \T \to \mathcal{H}$.  
	For $i \in \mathbb{Z}$, we set $\mathbf{H}^i := \mathbf{H}^0 \circ [i]$.
	
	Two $t$-structures $(\T^{\le 0}_1, \T^{\ge 0}_1)$ and $(\T^{\le 0}_2, \T^{\ge 0}_2)$ are said to be \emph{equivalent} if there exists $n \in \mathbb{N}$ such that
	\[
	\T^{\le -n}_1 \subseteq \T^{\le 0}_2 \subseteq \T^{\le n}_1.
	\]
	
Assume $\T$ and $\T'$ are triangulated categories with $t$-structures $t=(\T^{\leq 0}, \T^{\geq 0})$ and $t'=(\T'^{\leq 0},\T'^{\geq 0})$ respectively. Let $F:\T\to \T'$ be a triangulated functor, we say $F$ is \emph{left $t$-bounded} with respect to $(t,t')$ if there exists $n\geq 0$ such that $F(\T^{\leq 0})\subset \T'^{\leq n}$. One can define \emph{right $t$-bounded} functors similarly.

	When $\T$ has coproducts and a compact generator $G$, the results of \cite[Theorem~A.1]{ALS03} and \cite[Theorem~2.3]{N18d} show that $G$ determines a canonical $t$-structure
	\[
	(\T^{\le 0}_G, \T^{\ge 0}_G)
	:= \bigl( \overline{\langle G\rangle}^{(-\infty,0]},\, (\overline{\langle G\rangle}^{(-\infty,-1]})^\perp \bigr),
	\]
	whose equivalence class is called the \emph{preferred equivalence class}.
    
    In a triangulated category with a compact generator $G$, we consider the subcategories
	\[
	\T^- \coloneqq \bigcup_{n\in\mathbb{Z}} \T_{G}^{\le n}, \qquad
	\T^+ \coloneqq \bigcup_{n\in\mathbb{Z}} \T_{G}^{\ge n}, \qquad
	\T^b \coloneqq \T^- \cap \T^+,
	\]
	and those defined via compact approximations \[\T^-_c \coloneqq \bigcap_{i=1}^{\infty} (\T^{c} * \T_{G}^{\leq -i}), \qquad \T^b_c \coloneqq \T_{c}^{-} \cap \T^{b}.\]
	
	\begin{defn}\label{ML}
		A sequence \[\cdots \longrightarrow E_3 \longrightarrow E_2 \longrightarrow E_1 \] in $\mathbf{Ab}$ satisfies the \emph{Mittag-Leffler condition} if for each $i$ there exists $j\geq i$ such that the image of $E_{s}\to E_{i}$ equals the image of $E_{j}\to E_{i}$ for all $s\geq j$.
	\end{defn}
    
	Clearly, if the sequence $\cdots\to E_{3}\to E_{2}\to E_{1}$ consists of finite-dimensional vector spaces over a field $k$, then it automatically satisfies the Mittag-Leffler condition.

	\begin{lem}\label{ML lem1}
		If the sequence 
		$$\cdots\longrightarrow E_{3}\longrightarrow E_{2}\longrightarrow E_{1}$$ in $\mathbf{Ab}$ satisfies the Mittag-Leffler condition, then there exists an exact sequence in $\mathbf{Ab}${\rm :}\[0\longrightarrow\lims(E_{*})\longrightarrow \prod_{k=1}^{\infty}E_{k}\xrightarrow{1-p}\prod_{k=1}^{\infty}E_{k}\longrightarrow 0.\]
	\end{lem}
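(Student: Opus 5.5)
The plan is to write down the map $1-p$ explicitly and check exactness at each of the three spots. Left exactness is formal; the real content, surjectivity of $1-p$, is where the Mittag-Leffler condition enters.

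Let $p_k:E_{k+1}\to E_k$ be the transition maps. Define
\[
(1-p)\colon \prod_{k\ge1}E_k\to\prod_{k\ge1}E_k,\qquad (x_k)_k\mapsto \bigl(x_k-p_k(x_{k+1})\bigr)_k .
\]
The kernel of $1-p$ consists exactly of the compatible sequences $(x_k)$ with $x_k=p_k(x_{k+1})$ for all $k$, which is the standard description of $\lims E_*$. The map $\lims E_*\to\prod E_k$ is the inclusion of compatible sequences, so it is injective and its image is $\ker(1-p)$. This gives exactness at the first two places.

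For surjectivity I will first treat the special case where every $p_k$ is surjective. Given $(y_k)$, I solve $x_k-p_k(x_{k+1})=y_k$ recursively: set $x_1=0$, and given $x_k$, choose $x_{k+1}$ with $p_k(x_{k+1})=x_k-y_k$, which exists because $p_k$ is onto.

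For the general case, let $E'_i$ be the stable image of $E_s\to E_i$ for $s\gg i$, which exists by the Mittag-Leffler condition. I first check that the restricted maps $E'_{i+1}\to E'_i$ are surjective. Take $x$ in the image of $E_s\to E_i$ with $s$ large enough to be stable for both indices $i$ and $i+1$. Then $x$ lifts to some $z\in E_s$, and the image of $z$ in $E_{i+1}$ lies in $E'_{i+1}$ and maps to $x$.

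Next I consider the quotient system $E''_i=E_i/E'_i$. The short exact sequence of towers $0\to E'\to E\to E''\to0$ induces a map of short exact sequences of products, and the snake lemma then gives the exact sequence
\[
\operatorname{coker}(1-p)_{E'}\to\operatorname{coker}(1-p)_{E}\to\operatorname{coker}(1-p)_{E''}\to 0 .
\]
The left-hand cokernel vanishes by the surjective case. So it remains to prove that $1-p$ is surjective for $E''$.

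The tower $E''$ is essentially zero: for each $i$ there is $j(i)$ such that the composite $E''_{j}\to E''_i$ is zero, because the image of $E_j\to E_i$ already equals $E'_i$. For such a tower, given $(y_k)$ I set
\[
x_k=\sum_{m\ge k} p_{k,m}(y_m),
\]
where $p_{k,m}\colon E''_m\to E''_k$ is the composite transition map. The sum is finite, since $p_{k,m}=0$ once $m\ge j(k)$, and a telescoping check gives $x_k-p_k(x_{k+1})=y_k$. This finishes the proof.

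The main obstacle is this last step, specifically confirming that the stable image $E'_i$ is exactly the image from $E_{j(i)}$, so that the composite $E''_{j(i)}\to E''_i$ genuinely vanishes. This is exactly the Mittag-Leffler condition as stated in Definition~\ref{ML}.
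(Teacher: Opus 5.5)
Your proof is correct and complete. The paper gives no proof of this lemma and quotes it as a standard fact. Your argument is the standard textbook proof: the subtower of stable images has surjective transition maps, the quotient tower is essentially zero, and the snake lemma applied to the products (which are exact in $\mathbf{Ab}$) glues the two cases.

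The step you flag as the main obstacle is immediate from Definition~\ref{ML}. There, $E'_i$ is by definition the common image of $E_s\to E_i$ for all $s\ge j(i)$, so $E''_s\to E''_i$ vanishes automatically for $s\ge j(i)$. The same computation, namely that $p_i$ maps the image of $E_s\to E_{i+1}$ onto the image of $E_s\to E_i$, gives the inclusion $p_i(E'_{i+1})\subseteq E'_i$. You use this inclusion without stating it; it is what makes the subtower and the quotient tower well defined.
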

	
	\begin{lem}\label{ML lem2}
		Let $\T$ be a $k$-linear triangulated category and 
		$$\cdots\longrightarrow E_{3}\longrightarrow E_{2}\longrightarrow E_{1}$$ a sequence in $\T$. For an object $M \in \T$ such that ${\Hom}_\T(M,E_{i})$ is finite-dimensional for all $i>0$, there is a canonical isomorphism $$\lims\Hom_\T(M,E_{*})\simeq{\Hom}_\T(M,\Holim(E_{*})).$$
	\end{lem}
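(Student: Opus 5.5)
The plan is to apply $\Hom_\T(M,-)$ to the triangle defining the homotopy limit and read off the answer from the long exact sequence. By definition $\Holim(E_*)$ sits in a triangle
$$\Holim(E_*)\longrightarrow \prod_{i\ge1}E_i\xrightarrow{\;1-p\;}\prod_{i\ge1}E_i\longrightarrow \Holim(E_*)[1],$$
where $p$ is built from the transition maps $E_{i+1}\to E_i$. Since $\Hom_\T(M,-)$ commutes with products, applying it gives an exact sequence
$$\prod_i\Hom_\T(M,E_i[-1])\xrightarrow{1-p}\prod_i\Hom_\T(M,E_i[-1])\longrightarrow \Hom_\T(M,\Holim(E_*))\longrightarrow \prod_i\Hom_\T(M,E_i)\xrightarrow{1-p}\prod_i\Hom_\T(M,E_i).$$
The kernel of the last map is, by definition, $\lims\Hom_\T(M,E_*)$. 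So the composite $\Hom_\T(M,\Holim(E_*))\to\lims\Hom_\T(M,E_*)$ is the canonical map, and it is \emph{surjective}. This needs nothing beyond exactness.

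It remains to show that this canonical map is injective. Its kernel is the cokernel of $1-p$ on $\prod_i\Hom_\T(M,E_i[-1])=\prod_i\Hom_\T(M[1],E_i)$, that is, ${\lim}^1\Hom_\T(M[1],E_*)$. By Lemma~\ref{ML lem1}, this term vanishes whenever the system $\Hom_\T(M[1],E_*)$ is Mittag-Leffler. By the remark after Definition~\ref{ML}, that holds in particular when these spaces are finite-dimensional over $k$.

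The main obstacle is exactly this vanishing. The stated hypothesis controls $\Hom_\T(M,E_i)$, so Lemma~\ref{ML lem1} directly gives surjectivity of $1-p$ on $\prod_i\Hom_\T(M,E_i)$, i.e.\ ${\lim}^1\Hom_\T(M,E_*)=0$. However, the kernel involves the shifted system $\Hom_\T(M[1],E_*)$.

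My first attempt would be to transfer the Mittag-Leffler property from $\Hom_\T(M,E_*)$ to $\Hom_\T(M[1],E_*)$. I do not see a general mechanism for this. Without it, the honest conclusion from the stated hypothesis alone is the following: the canonical map is a natural surjection $\Hom_\T(M,\Holim(E_*))\twoheadrightarrow\lims\Hom_\T(M,E_*)$ whose kernel is ${\lim}^1\Hom_\T(M[1],E_*)$. It becomes the claimed isomorphism exactly when this ${\lim}^1$ term vanishes.

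I would therefore check each later application of the lemma, to confirm that either the system $\Hom_\T(M[1],E_*)$ is also Mittag-Leffler there, or only the surjectivity is actually used.
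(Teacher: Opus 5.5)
Your computation is correct, and the step you could not supply cannot be supplied: the paper states this lemma without proof, and as written, with a hypothesis only on $\Hom_\T(M,E_i)$, it is false. Your exact sequence shows that the canonical map $\Hom_\T(M,\Holim(E_*))\to\lims\Hom_\T(M,E_*)$ is always surjective with kernel ${\lim}^1\Hom_\T(M[1],E_*)$. The stated hypothesis on $\Hom_\T(M,E_i)$ plays no role in the isomorphism question.

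Here is a concrete counterexample inside the paper's own framework. Take $\T=\D(k)$, which is locally Hom-finite with compact silting object $k$, and let $M=k$. Let $V_i=\bigoplus_{j\ge i}k$ with the inclusions $V_{i+1}\hookrightarrow V_i$, and set $E_i=V_i[1]$. Then $\Hom_\T(k,E_i)=0$ for all $i$, so the hypothesis holds and $\lims\Hom_\T(k,E_*)=0$. However, your sequence gives $\Hom_\T(k,\Holim(E_*))\cong{\lim}^1 V_*$. This is nonzero: since $\lims V_*=\bigcap_i V_i=0$, the short exact sequence of towers $0\to V_*\to V_1\to V_1/V_*\to 0$ gives ${\lim}^1 V_*\cong \prod_{j\ge1}k\big/\bigoplus_{j\ge1}k\neq 0$.

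The correct statement is the one you actually proved. If the system $\Hom_\T(M[1],E_*)$ is Mittag-Leffler, for instance if $\Hom_\T(M[1],E_i)$ is finite-dimensional for all $i$, then the canonical map is an isomorphism. Your plan to check the applications succeeds. The lemma is invoked only in the proof of Proposition~\ref{brutal rep}, with $M=G[j]$ and $E_i=F_i\in\T_c^+$. There Lemma~\ref{homfinitelem}(1) gives finite-dimensionality of $\Hom_\T(G[j],F_i)$ for every $j$, in particular for $G[j+1]=M[1]$. So nothing downstream breaks, but the lemma's hypothesis should be changed to concern $M[1]$, or all shifts of $M$.
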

	
	\begin{lem}\label{tech1}
		Let $(\T^{\leq 0}, \T^{\geq 0})$ be a $t$-structure on $\T$. For $n \in \mathbb{Z}$, if $F \in \T^{+}$ satisfies $\mathbf{H}^{i}(F) = 0$ for all $i < n$, then $F \in \T^{\geq n}$.
	\end{lem}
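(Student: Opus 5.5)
The plan is to argue by contradiction, using the standard truncation triangle of the $t$-structure together with the definition of $\T^{+}$. Since $F\in\T^{+}=\bigcup_{m}\T^{\geq m}$, there is some integer $m$ with $F\in\T^{\geq m}$. If $m\geq n$ we are done, because $\T^{\geq m}\subseteq\T^{\geq n}$. So we may assume $m<n$, and we will show by induction that $F\in\T^{\geq m+1}$. Iterating this finitely many times gives $F\in\T^{\geq n}$.

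For the inductive step, suppose $F\in\T^{\geq m}$ with $m<n$. Then $\mathbf{H}^{m}(F)=0$ by hypothesis. Consider the truncation triangle
\[
\tau^{\leq m}F\longrightarrow F\longrightarrow \tau^{\geq m+1}F\longrightarrow (\tau^{\leq m}F)[1].
\]
Since $F\in\T^{\geq m}$ and $\T^{\geq m}$ is stable under $\tau^{\leq m}$, the object $\tau^{\leq m}F$ lies in $\T^{\leq m}\cap\T^{\geq m}=\mathcal{H}[-m]$. Therefore
\[
\tau^{\leq m}F\simeq (\mathbf{H}^{m}F)[-m]=0.
\]
Here we use the standard identification $\tau^{\geq m}\tau^{\leq m}F\simeq \mathbf{H}^{m}(F)[-m]$ from \cite{BBD}, together with the fact that $\tau^{\geq m}$ acts as the identity on objects already lying in $\T^{\geq m}$. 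It follows that $F\simeq\tau^{\geq m+1}F\in\T^{\geq m+1}$, which completes the inductive step.

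The only point needing care is justifying that $\tau^{\leq m}F$ stays in $\T^{\geq m}$. This follows from the long exact sequence of $\mathbf{H}^{i}$ applied to the truncation triangle. Alternatively, one can use that for $F\in\T^{\geq m}$ one has $\tau^{\leq m}F\simeq\tau^{\geq m}\tau^{\leq m}F$, since $\tau^{\leq m}$ and $\tau^{\geq m}$ commute. Everything else is formal. The hypothesis $F\in\T^{+}$ is essential: it supplies the starting bound $m$, which is what makes the induction finite. Without it, vanishing of cohomology would not force membership in $\T^{\geq n}$ for nondegenerate reasons.
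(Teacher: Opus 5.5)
Your proof is correct and is essentially the paper's argument: for $F\in\T^{\geq m}$ with $m<n$, the truncation triangle has first term $\tau^{\leq m}F\simeq\mathbf{H}^{m}(F)[-m]=0$, so $F\simeq\tau^{\geq m+1}F$, and one iterates up to $n$. The paper leaves both this iteration and the identification of $\tau^{\leq m}F$ implicit.

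One small point: you give two justifications that $\tau^{\leq m}F\in\T^{\geq m}$, and only one of them works. Use the commutation of truncations, or simply the extension-closure of $\T^{\geq m}$ applied to $(\tau^{\geq m+1}F)[-1]\to\tau^{\leq m}F\to F$. Deducing it from the vanishing of $\mathbf{H}^{i}$ alone would require the very lemma you are proving.
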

	\begin{proof}
		Assume $F \in \T^{\geq k}$ for some $k < n$. Then there exists a triangle $$\mathbf{H}^{k}(F)[-k]\longrightarrow F\longrightarrow F^{\geq k+1}\longrightarrow \mathbf{H}^{k}(F)[-k+1]$$
		where $\mathbf{H}^{k}(F)[-k]=0=\mathbf{H}^{k}(F)[-k+1]$. Thus we have
		$F \simeq F^{\geq k+1}$, and consequently $F \in \T^{\geq n}$.
	\end{proof}

      The following lemma is well-known and useful in the proof of our main theorems. For the convenience of the reader, we give a proof here. 
	
	\begin{lem}\label{tech3}
		Let $\T$ be a compactly generated triangulated category equipped with a $t$-structure $(\T^{\leq 0}, \T^{\geq 0})$ such that $\T^{\leq 0}$ is closed under products.  Consider a sequence \[ (E_{*},f_{*}) :\quad \cdots \xrightarrow{f_3} E_3 \xrightarrow{f_2} E_2 \xrightarrow{f_1} E_1 \] in $\T$.  If for every integer $i$ there exists $N_{i} > 0$ such that $\mathbf{H}^{i}(f_j)$ is an isomorphism for all $j > N_{i}$, then  for all $i\in\mathbb{Z}$, there is a canonical isomorphism
		$$\mathbf{H}^{i}(\Holim(E_{*}))\simeq\lims(\mathbf{H}^{i}(E_{*})).$$
	\end{lem}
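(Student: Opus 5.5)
We prove Lemma~\ref{tech3} by computing $\mathbf{H}^i$ of the defining triangle of the homotopy limit and reading off the two ends of the resulting long exact sequence. Recall that $\Holim(E_*)$ sits in a triangle
\[
\Holim(E_*)\longrightarrow \prod_{k\ge1}E_k\xrightarrow{\,1-f\,}\prod_{k\ge1}E_k\longrightarrow \Holim(E_*)[1].
\]
Applying the cohomological functor $\mathbf{H}^0$ gives a long exact sequence in the heart:
\[
\mathbf{H}^{i-1}\Bigl(\prod E_k\Bigr)\xrightarrow{\mathbf{H}^{i-1}(1-f)}\mathbf{H}^{i-1}\Bigl(\prod E_k\Bigr)\to \mathbf{H}^{i}(\Holim E_*)\to \mathbf{H}^{i}\Bigl(\prod E_k\Bigr)\xrightarrow{\mathbf{H}^{i}(1-f)}\mathbf{H}^{i}\Bigl(\prod E_k\Bigr).
\]
It therefore suffices to prove two things: that $\mathbf{H}^i$ commutes with countable products in the relevant sense, and that $\mathbf{H}^{i-1}(1-f)$ is surjective.

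\textbf{Step 1: $\mathbf{H}^i$ commutes with products.} We show that $\mathbf{H}^i(\prod E_k)\simeq \prod_{\mathcal H}\mathbf{H}^i(E_k)$, with the product taken in the heart $\mathcal H$.

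Since $\T^{\le0}$ is closed under products by hypothesis, and $\T^{\ge0}$, being a right orthogonal, is always closed under products, both truncation functors commute with products. Concretely, take the product of the truncation triangles
\[
\tau^{\le n}E_k\to E_k\to\tau^{\ge n+1}E_k.
\]
Products of triangles are triangles in a category with products, so this yields a truncation triangle for $\prod E_k$. Hence $\tau^{\le n}$ and $\tau^{\ge n}$ commute with products, and therefore so does $\mathbf{H}^i$.

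Moreover, for $X\in\mathcal H$ we have
\[
\Hom_{\mathcal H}\Bigl(X,\prod E\Bigr)=\prod\Hom(X,E),
\]
so $\prod E_k$ computed in $\T$ is the product in $\mathcal H$. The map $\mathbf{H}^i(1-f)$ is then the usual map $1-\mathbf{H}^i(f)$ on $\prod\mathbf{H}^i(E_k)$.

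\textbf{Step 2: identifying kernel and cokernel.} Under Step 1, the kernel of $1-\mathbf{H}^i(f)$ is $\lims\mathbf{H}^i(E_*)$, computed in $\mathcal H$. The cokernel is $\lims^1$.

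Fix $j$ and choose $N=N_j$ from the hypothesis. The tower $\mathbf{H}^j(E_*)$ is then eventually constant: all maps $\mathbf{H}^j(f_m)$ with $m>N$ are isomorphisms. We claim $1-\mathbf{H}^j(f)$ is surjective on $\prod_k\mathbf{H}^j(E_k)$, and we solve $x_k-f(x_{k+1})=y_k$ directly:
\begin{itemize}
\item For $k>N$, use the isomorphisms to identify all $\mathbf{H}^j(E_k)$ with a single object. We cannot simply sum the infinite tail. Instead, note that the truncated product $\prod_{k>N}$ with the shift map is the standard $\lim^1$-type map for a constant tower, which is split surjective. The splitting is given explicitly by $x_k=\sum_{m\ge k}$, and to avoid infinite sums we instead set $x_k=-\sum_{m<k}y_m$ (transported along the isomorphisms). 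This uses only finite sums, so it works in any abelian category with countable products, via maps into products built componentwise.
\item For $k\le N$, define $x_k$ by downward recursion from $x_{N+1}$, using finitely many steps.
\end{itemize}
Applying this with $j=i-1$ shows $\mathbf{H}^{i-1}(1-f)$ is surjective. The long exact sequence then gives
\[
\mathbf{H}^i(\Holim E_*)\simeq\ker\bigl(1-\mathbf{H}^i(f)\bigr)=\lims\mathbf{H}^i(E_*).
\]
This isomorphism is canonical, since it is induced by the projections $\Holim E_*\to E_k$.

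\textbf{Expected obstacle.} The main difficulty is Step 1, namely justifying that $\mathbf{H}^i$ preserves products; this is exactly where the closure of $\T^{\le0}$ under products is used. A secondary care point is writing the surjectivity argument in Step 2 with only finite sums, since the heart need not have exact products.
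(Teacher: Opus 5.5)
Your argument is correct, and its skeleton matches the paper's proof: apply $\mathbf{H}^i$ to the defining triangle of $\Holim(E_*)$, identify $\ker\mathbf{H}^i(1-p)$ with $\lims\mathbf{H}^i(E_*)$, and prove that $\mathbf{H}^{i-1}(1-p)$ is onto. The differences are in how two steps are carried out.

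Your Step~1 proves $\mathbf{H}^i(\prod_k E_k)\simeq\prod_k\mathbf{H}^i(E_k)$, using the product of truncation triangles and uniqueness of truncations. The paper uses this identification without proof; it only remarks that the heart is closed under products. So your Step~1 supplies a justification the paper omits.

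For surjectivity, the paper reduces to abelian groups. It applies $\Hom_{\T}(H,-)$ for $H\in\mathcal H$ and notes that the resulting towers are Mittag--Leffler. Lemma~\ref{ML lem1} then gives surjectivity on Hom groups, and taking $H=\prod_k\mathbf{H}^{i-1}(E_k)$ and lifting the identity yields a section. You instead build the section directly inside $\mathcal H$:
\begin{itemize}
\item for $k>N_{i-1}$, a forward recursion along the isomorphisms, starting from $x_{N+1}=0$;
\item for $k\le N$, finitely many downward steps.
\end{itemize}
In the first step the correct formula is $x_k=-\sum_{N<m<k}y_m$, transported along the inverse isomorphisms. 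Your sketch first writes the aborted $\sum_{m\ge k}$ and then $\sum_{m<k}$; both should be replaced by this.

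Your route is self-contained and needs only finite sums and countable products in $\mathcal H$, but it is tailored to eventually-constant towers. The paper's Yoneda-plus-Mittag--Leffler reduction is shorter once the standard lemma is available. It would also apply verbatim whenever one can check the Mittag--Leffler condition on the Hom-towers $\Hom_{\T}(H,\mathbf{H}^{i-1}(E_*))$.
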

	\begin{proof}
		By definition of the homotopy limit, there is a triangle
		\[
		\Holim(E_{*}) \longrightarrow \prod_{k=1}^{\infty} E_{k}
		\stackrel{1-p}{\longrightarrow} \prod_{k=1}^{\infty} E_{k}
		\longrightarrow \Holim(E_{*})[1].
		\]		
		Applying $\mathbf{H}^{i}(-)$ gives the long exact sequence
		{\xiaoliuhao
			\[
			\mathbf{H}^{i-1}\!\left(\prod_{k=1}^{\infty} E_{k}\right)
			\xrightarrow{\mathbf{H}^{i-1}(1-p)}
			\mathbf{H}^{i-1}\!\left(\prod_{k=1}^{\infty} E_{k}\right)
			\longrightarrow
			\mathbf{H}^{i}(\Holim(E_{*}))
			\longrightarrow
			\mathbf{H}^{i}\!\left(\prod_{k=1}^{\infty} E_{k}\right)
			\xrightarrow{\mathbf{H}^{i}(1-p)}
			\mathbf{H}^{i}\!\left(\prod_{k=1}^{\infty} E_{k}\right).
			\]
		}
		
		Since $\ker(\mathbf{H}^{i}(1-p)) \simeq \lims(\mathbf{H}^{i}(E_{*}))$, it remains to show that $\mathbf{H}^{i-1}(1-p)$ is surjective. Consider the inverse system
		\[
		\cdots \longrightarrow \mathbf{H}^{i-1}(E_{2}) \longrightarrow \mathbf{H}^{i-1}(E_{1}).
		\]Then we obtain a short exact sequence
		\[
		0 \to \lims(\mathbf{H}^{i-1}(E_{*}))
		\to \prod_{k=1}^{\infty} \mathbf{H}^{i-1}(E_{k})
		\xrightarrow{\mathbf{H}^{i-1}(1-p)}
		\prod_{k=1}^{\infty} \mathbf{H}^{i-1}(E_{k}).
		\]		
		For any \(H\in\mathcal{H}\), since \(\mathbf{H}^{i-1}(f_{j})\) is an isomorphism for all \(j>N_{i-1}\), the inverse system \(\Hom_{\T}(H,\mathbf{H}^{i-1}(E_{*}))\) satisfies the Mittag–Leffler condition in \(\mathbf{Ab}\). Hence, by Lemma~\ref{ML lem1}, applying the functor \(\Hom_{\T}(H,-)\) yields the exact sequence:
		\[
			0 \to {\T}(H, \lims(\mathbf{H}^{i-1}(E_{*})))
			\to {\T}\!\left(H, \prod_{k=1}^{\infty} \mathbf{H}^{i-1}(E_{k})\right)
			\to {\T}\!\left(H, \prod_{k=1}^{\infty} \mathbf{H}^{i-1}(E_{k})\right)
			\to 0.
			\]
			Since $\T^{\leq 0}$ is closed under products, so is the heart $\mathcal{H}$, and then
             the product $\prod_{k=1}^{\infty} \mathbf{H}^{i-1}(E_{k})$ lies in $\mathcal{H}$. One can take $H=\prod_{k=1}^{\infty} \mathbf{H}^{i-1}(E_{k})$, hence $\mathbf{H}^{i-1}(1-p)$ is a split epimorphism.
	\end{proof}

	\subsection{Short exact sequences and recollements of triangulated categories}
	In analogy with the notion of short exact sequences in abelian categories, a sequence of triangulated categories
	$$\R\stackrel{F}{\longrightarrow}\T\stackrel{G}{\longrightarrow}\S$$
	is a {\em short exact sequence} if it satisfies
	\begin{enumerate}
		\item $F$ is fully faithful;
		\item $GF=0$; and
		\item the induced functor $\overline{G}: \T/\R\to \S$ is an equivalence.
	\end{enumerate}
	
	The triangulated analogue of the classical $3 \times 3$ lemma \cite[Lemma~2.5]{SZZ} will be used frequently.
	
	\begin{defn}(\cite[1.4.3]{BBD})
		Let $\T$, $\R$ and $\S$ be triangulated categories.
		We say that $\T$ is a {\em recollement} of $\R$
		and $\S$ if there is a diagram of six triangle functors
		$$\xymatrix{\R\ar^-{i_*=i_!}[r]&\T\ar^-{j^!=j^*}[r]
			\ar^-{i^!}@/^1.2pc/[l]\ar_-{i^*}@/_1.6pc/[l]
			&\S\ar^-{j_*}@/^1.2pc/[l]\ar_-{j_!}@/_1.6pc/[l]}$$ such
		that
		\begin{enumerate}
			\item $(i^*,i_*),(i_!,i^!),(j_!,j^!)$ and $(j^*,j_*)$ are adjoint
			pairs;
			\item $i_*,j_*$ and $j_!$ are fully faithful functors;
			\item $i^!j_*=0$; and
			\item for each object $T\in\T$, there are two triangles in
			$\T$
			$$
			i_!i^!(T)\to T\to j_*j^*(T)\to i_!i^!(T)[1],
			$$
			$$
			j_!j^!(T)\to T\to i_*i^*(T)\to j_!j^!(T)[1].
			$$
		\end{enumerate}
	\end{defn}
	
	A recollement is said to \emph{extend one step downwards} (resp., \emph{upwards}) if additional adjoint functors exist so that the diagram extends accordingly.
	
\subsection{Approximable triangulated categories}

We conclude by recalling the notion of approximability introduced in \cite{N18a,N21b}, which has become a powerful tool for studying Bondal–Van den Bergh’s Conjecture and Rouquier’s Strong Generation Conjecture \cite{N21a},  
in Rickard’s theorem on derived equivalences \cite{CHNS24}, and in recent progress on the existence of bounded $t$-structures \cite{BCPRZ24,N22a}, and also provides the foundational framework underlying the representability and localization results developed in this paper. Let $R$ be a commutative noetherian ring and let $\T$ be an $R$-linear triangulated category with a compact generator $G$. A functor $\mathbf{H} : (\T^c)^{\op} \to R\text{-}\mathsf{Mod}$ is called \emph{$\T^c$-cohomological} if it sends triangles to long exact sequences.  
It is said to be \emph{locally finite} (resp.\ \emph{finite}) if $\mathbf{H}(T)$ is finitely generated for all $T \in \T^c$ and vanishes in sufficiently negative (resp.\ sufficiently large positive and negative) degrees.  
The category $\T$ is \emph{locally Hom-finite} if $\Hom_\T(X,Y)$ is finitely generated over $R$ for all compact objects $X,Y \in \T^c$.

\begin{defn}(\cite[Definition 1.25]{N18a})\label{def:appro}
A triangulated category $\T$ with coproducts is called \emph{approximable} if it admits a compact generator $G$ and an integer $n>0$ such that:
\begin{itemize}
    \item[(i)] $\Hom_\T(G,G[i]) = 0$ for all $i \ge n$;
    \item[(ii)] every object $X \in \T^{\le 0}_G$ fits into a triangle

\[
    E \longrightarrow X \longrightarrow D \longrightarrow E[1]
    \]

    with $E \in \overline{\langle G\rangle}_{n}^{[-n,n]}$ and $D \in \T^{\le -1}_G$.
\end{itemize}
\end{defn}

This notion is independent of the choice of compact generator and of the representative of the preferred equivalence class of $t$-structures \cite[Facts~1.26]{N18a}.

	\begin{thm}\textnormal{(\cite[Theorem~1.4]{N18a})}\label{lem:object in bc}
	Let $\T$ be a locally Hom-finite approximable $R$-linear triangulated category. Consider:
    \begin{align*}
        Y: \T^-_c &\longrightarrow \Hom((\T^c){}^{\op}, R\Mod) \\
        X &\longmapsto \Hom(-,X)|_{\T^{c}}\\
        Y\circ i: {\T^b_c} &\longrightarrow \Hom((\T^c){}^{\op}, R\Mod) 
    \end{align*}
\begin{enumerate}
    \item $Y$ is full, and its essential image consists of all locally finite functors.
    \item $Y\circ i$ is fully faithful, and its essential image consists of all finite functors.
\end{enumerate}
	\end{thm}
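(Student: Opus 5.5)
This statement is quoted from \cite[Theorem~1.4]{N18a}, and I would follow the strategy of Neeman's argument. The two main tools are approximability (Definition~\ref{def:appro}) and the description of objects of $\T^-_c$ as homotopy colimits of \emph{strong compact approximating systems}. Concretely, $X\in\T^-_c$ should be written as $X\simeq\operatorname{Hocolim}E_n$ with $E_n\in\T^c$. The cones of $E_n\to X$ should lie in $\T^{\le -n}_G$, and the cones of $E_n\to E_{n+1}$ should lie in $\T^{\le -n}_G\cap\langle G\rangle^{[-A-n,A-n]}_{A}$ for some fixed $A$.

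\emph{Step 1: $Y$ lands in locally finite functors.} Let $C\in\T^c$. By compactness we have $C\in\T^{\ge -B}_G$ for some $B$, hence $\Hom(C,\T^{\le -B-1}_G)=0$. Choose a triangle $E\to X\to D$ with $E\in\T^c$ and $D\in\T^{\le -m}_G$ for $m$ large. This gives $\Hom(C[i],X)\cong\Hom(C[i],E)$ in any fixed finite range of $i$, and the right-hand side is finitely generated by local Hom-finiteness. Vanishing in sufficiently negative degrees follows because $X\in\T^-$ while $C\in\T^{\ge -B}_G$.

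\emph{Step 2: fullness.} Let $\phi\colon\Hom(-,X)|_{\T^c}\to\Hom(-,X')|_{\T^c}$ be a natural transformation, and write $X=\operatorname{Hocolim}E_n$. By Yoneda, $\phi$ yields compatible maps $E_n\to X'$. The obstruction to gluing them into a map $\operatorname{Hocolim}E_n\to X'$ is a $\lim^1$ term of the system $\Hom(E_n[1],X')$. Using the degree control from Step~1, I would show that this system is eventually constant in each relevant range, since the maps $E_n\to E_{n+1}$ have cones in $\T^{\le -n}_G$ and $\Hom(E_n,-)$ only sees a bounded window. Hence the $\lim^1$ term vanishes and a lift exists. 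The lift then induces $\phi$, because on each compact object it agrees with $\phi$ in a stable range.

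\emph{Step 3: essential surjectivity, the main obstacle.} Given a locally finite $H$, I would build $E_1\to E_2\to\cdots$ in $\T^c$ inductively, together with maps $\Hom(-,E_n)|_{\T^c}\to H$ that are isomorphisms on $\Hom(G[i],-)$ for $i$ in a window growing with $n$. At each stage, finite generation lets me pick finitely many elements of $H(G[i])$ in the new degree and adjoin them via a map out of a finite sum of shifts of $G$. The kernels must then be killed by taking cones. Approximability (ii), together with $\Hom(G,G[i])=0$ for $i\ge n$, is what keeps each correction inside $\T^{\le -n}_G\cap\langle G\rangle_A^{[-A-n,A-n]}$. This uniform bound is the delicate part and is where I expect to spend most effort. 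The object $X=\operatorname{Hocolim}E_n$ then lies in $\T^-_c$ and represents $H$ on $\T^c$, by compactness.

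\emph{Part (2).} For faithfulness on $\T^b_c$, take $f\colon X\to X'$ with $Y(f)=0$ and $X'\in\T^{\ge -m}_G$. Choose $E\to X\to D$ with $E\in\T^c$ and $D\in\T^{\le -m-1}_G$. Then $f|_E=0$, so $f$ factors through $D$, and $\Hom(D,X')=0$; hence $f=0$. For the essential image, suppose first $X\in\T^b_c$. Then $\Hom(C[i],X)$ vanishes for $i\gg0$ as well as for $i\ll0$, so $Y(X)$ is finite. Conversely, if $H$ is finite, then the $X$ constructed in Step~3 has $\Hom(G[i],X)=0$ for $i\ll0$. Since $G$ is a generator, $X\in\T^+$, and therefore $X\in\T^b_c$.
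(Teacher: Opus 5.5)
The paper does not prove this statement itself. It quotes Neeman, and Section~\ref{sec:rep thm} carries out the Brown--Comenetz dual of his argument. Measured against that, your Step~1 (up to a slip noted below) and the faithfulness half of Part~(2) are fine. Step~3, however, carries the whole content of the theorem and has a genuine gap: you name the uniform degree bound as the delicate point but give no mechanism for it.

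Approximability~(ii) is a statement about \emph{objects} of $\T^{\le 0}_G$. While you build $E_1\to E_2\to\cdots$ you only have $\mathbf{H}$ and the compact objects $E_n$, so there is nothing to apply it to.

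The naive scheme (adjoin shifts of $G$ for missing elements, cone off kernel elements degree by degree) only behaves well when $\Hom(G,G[l])=0$ for all $l>0$, which is essentially the silting case. Adjoining $G[i]$ changes $\Hom(G[j],-)$ through $\Hom(G,G[i-j])$. Coning off a map $G[i]\to E_n$ also changes it through $\Hom(G,G[i+1-j])$. So as soon as $\Hom(G,G[l])\neq 0$ for some $l>0$ (approximability only forces vanishing for large $l$), each correction in degree $i$ disturbs degrees $j<i$ already matched with $\mathbf{H}$. Re-matching those disturbs lower degrees again. Nothing in your sketch makes the matched window grow, or keeps the cones of $E_n\to E_{n+1}$ in $\langle G\rangle_A^{[-A-n,A-n]}$ with $A$ independent of $n$.

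Neeman's route, dualized here in Lemma~\ref{tech5} through Theorem~\ref{thm:rep for T+c}, never builds compact approximations of $\mathbf{H}$ directly. It proceeds in three stages:
\begin{enumerate}
\item It represents $\mathbf{H}$ by some $F\in\T$, obtained as a homotopy colimit of \emph{non-compact} objects of $\T^-_c$ produced with weak triangles.
\item It uses approximability, in the role Lemma~\ref{lem:key2} plays here, to get $F'\in\T^-_c$ with an epimorphism $Y(F')\to\mathbf{H}$.
\item By iterating on kernels and a phantom-map argument, it shows that $F$ is a direct summand of an object of $\T^-_c$.
\end{enumerate}
The converse half of Part~(2) rests on Step~3 and inherits the gap.

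Step~2 also contains an error, though a repairable one. From the triangle $\bigoplus E_n\to\bigoplus E_n\to\operatorname{Hocolim}E_n$, the map $\Hom(\operatorname{Hocolim}E_n,X')\to\lims\Hom(E_n,X')$ is \emph{always} surjective. The term ${\lim}^1\Hom(E_n[1],X')$ is its kernel, so it measures non-uniqueness of the lift, not an obstruction to it. Any lift $f$ of your compatible family already satisfies $Y(f)=\varphi$, because $Y(\operatorname{Hocolim}E_n)=\colim Y(E_n)$ on $\T^c$ by compactness. No vanishing is needed; compare Corollary~\ref{key1} on the dual side.

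Your argument for eventual constancy would not work anyway. The cones of $E_n\to E_{n+1}$ lie in $\T^{\le -n}_G$, and if $X'\notin\T^+$ then $\Hom(\T^{\le -n}_G,X')\neq 0$ for every $n$. Over a noetherian $R$ the groups are only finitely generated, so Mittag-Leffler is not automatic. Moreover, vanishing for all $X'\in\T^-_c$ would make $Y$ faithful on $\T^-_c$, which is more than the theorem asserts.

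There are two smaller slips.
\begin{itemize}
\item In Step~1, compact objects need not lie in $\T^+$, and $C\in\T^{\ge -B}_G$ would not give $\Hom(C,\T^{\le -B-1}_G)=0$ anyway, since the orthogonality runs the other way. The correct reason is that $\Hom(G,G[i])=0$ for $i\gg 0$, together with compactness of $G$, gives $\Hom(G,\T^{\le -N}_G)=0$ for some $N$. Hence $\Hom(C,\T^{\le -B}_G)=0$ for each compact $C$ and suitable $B$.
\item In Part~(2), $X\in\T^+$ needs $\Hom(G[i],X)=0$ for $i\gg 0$, not $i\ll 0$; finiteness of $\mathbf{H}$ gives both.
\end{itemize}
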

	
	\section{Brown--Comenetz duality of compact objects}\label{sec:BC dual}
	
	In this section, we recall the Brown--Comenetz duality for compact objects in a triangulated category and establish a dual form of Neeman's classical theorem on recollements. The category $\E$ constructed below is the injective-side analogue of the compact subcategory $\T^c$. Throughout this section, let $\T$ be a compactly generated $k$-linear triangulated category, and let $\mathbf{D}$ denote the $k$-linear duality ${\Hom}_k(-,k)$.
	
	For a compact object $G \in \T$, the Brown representability theorem \cite{Br65,N96} implies that the cohomological functor $\mathbf{D}{\Hom}_{\T}(G,-)$ is representable. Hence, there exists an object $E \in \T$ and a canonical isomorphism
    \[
	{\Hom}_{\T}(-,E) \simeq \mathbf{D}{\Hom}_{\T}(G,-).
	\]
	The object $E$ is called the \emph{Brown--Comenetz dual} of $G$, a notion originating in \cite{BZ76}. Brown--Comenetz duality extends functorially: it induces a triangle functor $\mathbf{S}:\T^{c}\to\T$, known as the \emph{partial Serre functor} (see \cite[Theorem~3.3]{OPS}), such that
	\[
	{\Hom}_{\T}(-,\mathbf{S}(G)) \simeq \mathbf{D}{\Hom}_{\T}(G,-).
	\]
    Let $\operatorname{Im}(\mathcal{S})$ denote the essential image of $\T^{c}$ under $\mathbf{S}$, and let $\E$ be the smallest thick subcategory of $\T$ containing $\operatorname{Im}(\mathcal{S})$. We refer to $\E$ as the \emph{Brown--Comenetz dual} of the subcategory $\T^c$.

    A typical example arises from finite-dimensional algebras: if $A$ is a finite-dimensional $k$-algebra, then $\D(A)$ is a locally Hom-finite triangulated category with a compact generator $A$. In this case, $\T^c = \K^b(A\proj)$ and its Brown--Comenetz dual is $\K^b(A\inj)$, the homotopy category of bounded complexes of finitely generated injective modules.

    \begin{lem}\label{lem-ebasic}
    The following statements hold.
    \begin{enumerate}
        \item If $\T$ admits a compact generator $G$, then $\E=\langle\mathbf{S}(G)\rangle$.
        \item If $\T$ is locally Hom-finite, then $\mathbf{S}:\T^{c}\to\E$ is a triangle equivalence. In particular, $\E={\rm Im}(\mathbf{S})$.
    \end{enumerate}
    \end{lem}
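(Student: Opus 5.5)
For (1), I will use that $\mathbf{S}:\T^c\to\T$ is a triangle functor (the partial Serre functor of \cite[Theorem~3.3]{OPS}). If $G$ is a compact generator, then $\T^c=\langle G\rangle$ by Neeman's theorem: in a compactly generated category the compact objects are exactly the thick closure of a compact generating set. A triangle functor carries thick closures into thick closures. Shifts, cones and finite sums of $G$ go to the corresponding operations on $\mathbf{S}(G)$. For direct summands, if $C$ is a summand of $C'$ then $\mathbf{S}(C)$ is a summand of $\mathbf{S}(C')$, because $\mathbf{S}$ is additive and sends idempotents to idempotents. An induction on the construction of $\langle G\rangle$ then gives $\operatorname{Im}(\mathbf{S})\subseteq\langle\mathbf{S}(G)\rangle$. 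Since $\langle\mathbf{S}(G)\rangle$ is thick, it follows that $\E\subseteq\langle\mathbf{S}(G)\rangle$. The reverse inclusion is trivial, since $\mathbf{S}(G)\in\E$.

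For (2), I first show that $\mathbf{S}$ is fully faithful on $\T^c$ when $\T$ is locally Hom-finite. For $C,C'\in\T^c$ there is a chain of isomorphisms
\[
\Hom_\T(\mathbf{S}C,\mathbf{S}C')\simeq \mathbf{D}\Hom_\T(C',\mathbf{S}C)\simeq \mathbf{D}\mathbf{D}\Hom_\T(C,C')\simeq\Hom_\T(C,C').
\]
The first isomorphism is the defining property of $\mathbf{S}C'$ evaluated at $\mathbf{S}C$. The second is the defining property of $\mathbf{S}C$ evaluated at $C'$. The third uses finite-dimensionality of $\Hom_\T(C,C')$, so that double duality is an isomorphism.

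The main obstacle is to check that this composite isomorphism agrees with the map induced by the functor $\mathbf{S}$, rather than being merely some abstract isomorphism. I would verify this by Yoneda. Chasing a morphism $f:C\to C'$ through the naturality of $\Hom(-,\mathbf{S}C)\simeq\mathbf{D}\Hom(C,-)$ in $C$ shows that $\mathbf{S}(f)$ corresponds to $\mathbf{D}\Hom(f,-)$. Evaluating at the identity element then shows that the composite sends $f$ to $\mathbf{S}(f)$ up to the canonical evaluation map, so $\mathbf{S}$ is bijective on Hom-spaces.

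It remains to show that the essential image is already thick, which gives $\E=\operatorname{Im}(\mathbf{S})$. The image of a fully faithful triangle functor is closed under shifts, and it is closed under cones because cones are unique up to isomorphism: a morphism $\mathbf{S}C\to\mathbf{S}C'$ equals $\mathbf{S}(f)$ for some $f$, and its cone is $\mathbf{S}(\mathrm{cone} f)$. For direct summands, an idempotent on $\mathbf{S}C$ comes from an idempotent $e$ on $C$. This $e$ splits in $\T^c$, since $\T^c$ is idempotent complete as a thick subcategory of a category with coproducts. Hence the corresponding summand of $\mathbf{S}C$ is $\mathbf{S}(\operatorname{Im}e)$. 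Therefore $\operatorname{Im}(\mathbf{S})$ is thick, equals $\E$, and $\mathbf{S}:\T^c\to\E$ is a triangle equivalence.
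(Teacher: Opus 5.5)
Your proposal is correct and follows the paper's proof essentially step for step. Part (1) uses $\T^c=\langle G\rangle$ and the fact that the triangle functor $\mathbf{S}$ carries $\langle G\rangle$ into the thick subcategory $\langle\mathbf{S}(G)\rangle$; part (2) combines full faithfulness of $\mathbf{S}$ with idempotent completeness of $\T^c$ to show that $\operatorname{Im}(\mathbf{S})$ is already thick, hence equal to $\E$. The only difference is that you prove full faithfulness yourself by the double-duality computation, and your Yoneda check that the composite isomorphism is $f\mapsto\mathbf{S}(f)$ is sound, whereas the paper simply cites \cite[Observation~3.4]{OPS} for this step.
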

    \begin{proof}
    (1) In this case, $\T^{c}=\langle G\rangle$. Since $\mathbf{S}$ is a triangle functor, we have ${\rm Im}(\mathbf{S})\subseteq\langle\mathbf{S}(G)\rangle$. Note that $\langle\mathbf{S}(G)\rangle$ is a thick subcategory of $\T$. Thus $\E\subseteq\langle\mathbf{S}(G)\rangle$. The inclusion $\langle\mathbf{S}(G)\rangle\subseteq\E$ is clear.

    (2) By \cite[Observation~3.4]{OPS}, the functor $\mathbf{S}$ in this case is fully faithful. Hence we obtain a triangle equivalence $\mathbf{S} : \T^c \xrightarrow{\sim} {\rm Im}(\mathbf{S})$. Thus it suffices to show $\mathrm{Im}(\mathbf{S})=\E$. By \cite[Proposition~2.1.1]{Bv03}, $\T^{c}$ is Karoubian, i.e., every idempotent morphism in $\T^{c}$ is split in $\T^{c}$. Consequently, $\mathrm{Im}(\mathbf{S})$ is also Karoubian and therefore closed under direct summands. Hence $\mathrm{Im}(\mathbf{S})$ is a thick subcategory of $\T$, and we obtain $\mathrm{Im}(\mathbf{S})=\E$.
    \end{proof}
	
	We now recall a fundamental result of Neeman concerning recollements of compactly generated triangulated categories.
	
	\begin{lem}\textnormal{(\cite[Theorem 2.1]{N92})}\label{lem:reco to c}
		Let
		\[
		\xymatrix{
			\R \ar[r]^{i_* = i_!} &
			\T \ar[r]^{j^! = j^*} \ar@/^1.2pc/[l]^{i^!} \ar@/_1.6pc/[l]_{i^*} &
			\S \ar@/^1.2pc/[l]^{j_*} \ar@/_1.6pc/[l]_{j_!}
		}
		\]
		be a recollement of compactly generated triangulated categories. Then the recollement restricts, up to direct summands, to a short exact sequence
		\[
		\S^c \xrightarrow{\, j_! \,} \T^c \xrightarrow{\, i^* \,} \R^c.
		\]
	\end{lem}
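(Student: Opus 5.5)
This is Neeman's theorem [Theorem 2.1, N92], cited in the paper, so I will only sketch the standard argument rather than reprove every detail.

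\emph{Step 1: $j_!$ and $i^*$ preserve compact objects.} Both functors are left adjoints whose right adjoints preserve coproducts. The right adjoint of $j_!$ is $j^*$, and $j^*$ has the right adjoint $j_*$. The right adjoint of $i^*$ is $i_*$, and $i_*$ has the right adjoint $i^!$. A left adjoint of a coproduct-preserving functor sends compacts to compacts, via $\Hom(FC,\coprod X_\lambda)\simeq\Hom(C,\coprod GX_\lambda)$. Hence $j_!(\S^c)\subseteq\T^c$ and $i^*(\T^c)\subseteq\R^c$.

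\emph{Step 2: the sequence is a short exact sequence up to summands.} Several properties are formal.
\begin{itemize}
\item $j_!$ is fully faithful, by the recollement axioms.
\item $i^*j_!=0$, since $i^*j_!$ is left adjoint to $j^*i_*=0$.
\item $i^*$ induces a functor $\T^c/j_!\S^c\to\R^c$.
\end{itemize}
Next I identify $\ker(i^*)\cap\T^c$ with $j_!\S^c$. If $T\in\T^c$ satisfies $i^*T=0$, the triangle $j_!j^*T\to T\to i_*i^*T$ gives $T\simeq j_!j^*T$. It then remains to see that $j^*T$ is compact. Since $j_!$ is fully faithful and commutes with coproducts, and $j_!X$ is compact in $\T$, the object $X$ is compact in $\S$.

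The substantive point is that the induced functor $\T^c/\S^c\to\R^c$ is fully faithful with dense image up to summands. Following Neeman, I view $\R\simeq\T/j_!\S$ as the Verdier quotient by the localizing subcategory generated by the compacts $j_!\S^c$. This localizing subcategory is compactly generated, since $\S$ is and $j_!$ preserves coproducts and compacts. By the Neeman--Thomason localization theorem for a localizing subcategory generated by a set of compact objects, the compacts of $\T/\mathcal L$ are, up to summands, the image of $\T^c/(\mathcal L\cap\T^c)$, and this induced functor is fully faithful. Under the equivalence $\T/\mathcal L\simeq\R$ induced by $i^*$, this gives the claim.

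\emph{Main obstacle.} The hard step is this Thomason--Neeman localization statement. Its proof factors maps in the quotient through compacts by writing objects of $\mathcal L$ as homotopy colimits of compact objects. That is exactly the content of the cited result, which we invoke rather than reprove.
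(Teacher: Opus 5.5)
Your proposal is correct and takes the same route as the paper, which gives no argument of its own and simply invokes Neeman's localization theorem \cite[Theorem~2.1]{N92}. Your preliminary reductions are exactly what is needed to read the recollement form of the statement off from that theorem: compactness of $j_!$ and $i^*$ via coproduct-preserving right adjoints, $i^*j_!=0$, $\ker(i^*)\cap\T^c=j_!\S^c$, and $\R\simeq\T/j_!\S$ with $j_!\S$ the localizing subcategory generated by $j_!\S^c$.
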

	
	The next proposition provides a dual counterpart of Neeman’s theorem, formulated on the level of Brown--Comenetz duals of compact objects.
	
	\begin{prop}\label{prop:dual neeman}
		Let
		\[
		\xymatrix{
			\R \ar[r]^{i_* = i_!} &
			\T \ar[r]^{j^! = j^*} \ar@/^1.2pc/[l]^{i^!} \ar@/_1.6pc/[l]_{i^*} &
			\S \ar@/^1.2pc/[l]^{j_*} \ar@/_1.6pc/[l]_{j_!}
		}
		\]
		be a recollement of compactly generated, locally Hom-finite $k$-linear triangulated categories. Then the third row restricts, up to direct summands, to a short exact sequence
		\[
		\E_s \xrightarrow{\, j_* \,} \E_t \xrightarrow{\, i^! \,} \E_r,
		\]
		where $\E_s, \E_t, \E_r$ denote the Brown--Comenetz duals of $\S^c, \T^c, \R^c$, respectively.
	\end{prop}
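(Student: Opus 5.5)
Our plan is to transport Neeman's short exact sequence of Lemma~\ref{lem:reco to c} along the partial Serre functors, using Lemma~\ref{lem-ebasic}(2): since all three categories are locally Hom-finite, $\mathbf{S}_s:\S^c\to\E_s$, $\mathbf{S}_t:\T^c\to\E_t$ and $\mathbf{S}_r:\R^c\to\E_r$ are triangle equivalences. The key point is to show that these equivalences intertwine the first row with the third row:
\[
j_*\mathbf{S}_s\simeq \mathbf{S}_t j_!,\qquad i^!\mathbf{S}_t\simeq \mathbf{S}_r i^*.
\]
Once these commutations are established, the diagram
\[
\S^c\xrightarrow{j_!}\T^c\xrightarrow{i^*}\R^c,\qquad \E_s\xrightarrow{j_*}\E_t\xrightarrow{i^!}\E_r
\]
is a commutative ladder with vertical equivalences. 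Fully faithfulness, the vanishing of the composite, and the equivalence on the Verdier quotient (up to direct summands) then pass from the top row to the bottom row. In particular, the functors $j_*$ and $i^!$ do land in $\E_t$ and $\E_r$, respectively.

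To prove the commutations, we use Yoneda. Take $C\in\S^c$ and $X\in\T$. Adjunction gives
\[
\Hom_\T(X,j_*\mathbf{S}_sC)\simeq\Hom_\S(j^*X,\mathbf{S}_sC)\simeq\mathbf{D}\Hom_\S(C,j^*X).
\]
On the other side,
\[
\mathbf{D}\Hom_\S(C,j^!X)\simeq\mathbf{D}\Hom_\T(j_!C,X)\simeq\Hom_\T(X,\mathbf{S}_tj_!C),
\]
where $j_!C\in\T^c$ by Lemma~\ref{lem:reco to c}. Since $j^*=j^!$, these agree naturally in $X$, so $j_*\mathbf{S}_sC\simeq\mathbf{S}_tj_!C$.

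Similarly, for $C\in\T^c$ and $Y\in\R$, adjunction and the definition of $\mathbf{S}_t$ give
\[
\Hom_\R(Y,i^!\mathbf{S}_tC)\simeq\Hom_\T(i_*Y,\mathbf{S}_tC)\simeq\mathbf{D}\Hom_\T(C,i_*Y).
\]
Using the adjunction $(i^*,i_*)$ and the definition of $\mathbf{S}_r$, this becomes
\[
\mathbf{D}\Hom_\R(i^*C,Y)\simeq\Hom_\R(Y,\mathbf{S}_ri^*C),
\]
with $i^*C\in\R^c$. Hence $i^!\mathbf{S}_tC\simeq\mathbf{S}_ri^*C$.

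Naturality in $C$ follows because every isomorphism used is natural in both variables. This makes the ladder commute up to natural isomorphism of triangle functors. The compatibility with the triangulated structure is the step we expect to need the most care. We plan to handle it by noting that the ladder only needs to commute as additive functors for the conclusions: fully faithfulness, the vanishing $i^!j_*=0$ (already a recollement axiom), and essential surjectivity of the induced functor on Verdier quotients are all statements about objects and morphisms.

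The main obstacle is the quotient statement. Concretely, we need that $\overline{i^!}:\E_t/j_*\E_s\to\E_r$ is an equivalence up to direct summands. We will deduce it from the top row as follows. The commutations identify the essential image $j_*\E_s$ with $\mathbf{S}_t(j_!\S^c)$, so $\mathbf{S}_t$ induces a triangle equivalence $\T^c/j_!\S^c\simeq\E_t/j_*\E_s$. Under this equivalence and $\mathbf{S}_r$, the functor $\overline{i^*}$ corresponds to $\overline{i^!}$. Therefore $\overline{i^!}$ is fully faithful with image dense up to summands in $\E_r$, exactly as in Lemma~\ref{lem:reco to c}.
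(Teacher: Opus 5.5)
Your proposal is correct and follows the paper's proof. Both arguments establish $\mathbf{S}_t j_!\simeq j_*\mathbf{S}_s$ and $\mathbf{S}_r i^*\simeq i^!\mathbf{S}_t$ by Yoneda, using the adjunctions and the defining property of the partial Serre functors, and then transport Neeman's sequence (Lemma~\ref{lem:reco to c}) along the equivalences of Lemma~\ref{lem-ebasic}(2); you write out the right square and the transfer on Verdier quotients, which the paper leaves as ``analogous'' and ``so is the bottom row.''
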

	
	\begin{proof}
		We claim that the following diagram commutes up to natural isomorphism:
		\[
		\xymatrix{
			\S^c \ar[r]^{j_!} \ar[d]_{\mathbf{S}_s} &
			\T^c \ar[r]^{i^*} \ar[d]_{\mathbf{S}_t} &
			\R^c \ar[d]_{\mathbf{S}_r} \\
			\E_s \ar[r]_{j_*} &
			\E_t \ar[r]_{i^!} &
			\E_r,
		}
		\]
		where $\mathbf{S}_s, \mathbf{S}_t, \mathbf{S}_r$ are the corresponding partial Serre functors. We verify commutativity for the left square; the right square is analogous.
		
		For $G \in \S^c$, we have isomorphisms
		\begin{align*}
			{\Hom}_{\T}(-,\mathbf{S}_t j_!(G))
			&\simeq \mathbf{D}{\Hom}_{\T}(j_!G,-) && \text{(partial Serre functor)} \\
			&\simeq \mathbf{D}{\Hom}_{\S}(G,j^*(-)) && \text{(adjunction)} \\
			&\simeq {\Hom}_{\S}(j^*(-),\mathbf{S}_s(G)) && \text{(partial Serre functor)} \\
			&\simeq {\Hom}_{\T}(-,j_*\mathbf{S}_s(G)) && \text{(adjunction)}.
		\end{align*}
		By Yoneda’s lemma, this yields a natural isomorphism
		\[
		\mathbf{S}_t j_!|_{\S^c} \;\cong\; j_* \mathbf{S}_s|_{\S^c}.
		\]
		Thus the left square commutes up to isomorphism. Since the top row is exact up to direct summands by Lemma~\ref{lem:reco to c}, so is the bottom row.
	\end{proof}
	
	\begin{exam}
		Let
		\[
		\xymatrix{
			\D(B) \ar[r]^{i_* = i_!} &
			\D(A) \ar[r]^{j^! = j^*} \ar@/^1.2pc/[l]^{i^!} \ar@/_1.6pc/[l]_{i^*} &
			\D(C) \ar@/^1.2pc/[l]^{j_*} \ar@/_1.6pc/[l]_{j_!}
		}
		\]
		be a recollement of derived categories of finite-dimensional $k$-algebras.  
		Neeman’s theorem (Lemma~\ref{lem:reco to c}) yields a short exact sequence up to direct summands
		\[
		\K^b(C\proj) \xrightarrow{\, j_! \,} \K^b(A\proj) \xrightarrow{\, i^* \,} \K^b(B\proj).
		\]
		Our dual result (Proposition~\ref{prop:dual neeman}) provides the corresponding sequence on injectives:
		\[
		\K^b(C\inj) \xrightarrow{\, j_* \,} \K^b(A\inj) \xrightarrow{\, i^! \,} \K^b(B\inj).
		\]
	\end{exam}
	
	\section{The intrinsic subcategory $\T^+_c$}\label{sec:int subcat}
	The aim of this section is to introduce and study a new intrinsic subcategory $\T^+_c$ of a $k$-linear triangulated category $\T$ with a compact generator $G$.  
	This construction should be regarded as the formal dual of the classical subcategory $\T^-_c$ appearing in the theory of approximable triangulated categories.  
	While $\T^-_c$ plays a central role in controlling objects from above via compact approximations, the subcategory $\T^+_c$ provides a dual mechanism that approximates objects from below using the Brown--Comenetz duality.  
	This dual viewpoint will be essential for the structural results established in later sections.
	
	In this section, let $\T$ be a $k$-linear triangulated category with a compact generator $G$, and set $(\T^{\leq 0},\T^{\geq 0})=(\T_{G}^{\leq 0},\T_{G}^{\geq 0})$. Let $E$ be the Brown--Comenetz dual of $G$. Define
\[
\T_c^+ := 
\bigl\{
T \in \T \,\bigm|\,
\forall\, m>0,\ \exists\ \text{a triangle } 
D \to T \to F\ \text{with } F \in \E,\ D \in \T^{\ge m}
\bigr\},
\]
where $\E$ denotes the Brown--Comenetz dual of $\T^c$. Note that $\E=\langle E\rangle$ in this situation (see Lemma \ref{lem-ebasic}(1)).
	
	\begin{exam}
		Let $A$ be a finite-dimensional $k$-algebra and set $\T=\mathcal{D}(A)$. Then in this case one has $\T^+_c = \K^+(A\inj).$
	\end{exam}
	
	\begin{lem}\label{homfinitelem}
	    If $\T$ is locally Hom-finite, then 
		\begin{enumerate}
			\item ${\Hom}_\T(M,N)$ is finite-dimensional whenever $M\in\T^{c}$ and $N\in\T_{c}^{+}$.
			\item ${\Hom}_\T(L,K)$ is finite-dimensional whenever $L\in\T_{c}^{+}$ and $K\in\E$.
		\end{enumerate}
	\end{lem}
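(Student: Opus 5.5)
The plan is to prove (1) directly from the definition of $\T_c^+$ together with the Brown--Comenetz property of objects of $\E$, and then to deduce (2) from (1) using the defining isomorphism of the partial Serre functor. Throughout, $\T$ is locally Hom-finite, so Lemma~\ref{lem-ebasic}(2) gives $\E=\mathrm{Im}(\mathbf{S})$. Hence every object of $\E$ is isomorphic to $\mathbf{S}(C)$ for some $C\in\T^c$, and
\[
{\Hom}_\T(X,\mathbf{S}(C))\simeq \mathbf{D}{\Hom}_\T(C,X)
\]
for all $X\in\T$.

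For (1), let $M\in\T^c$ and $N\in\T_c^+$. Since $\T^c=\langle G\rangle$ and $G[-i]\in\T^{\le 0}=\overline{\langle G\rangle}^{(-\infty,0]}$ for $i\le 0$, the compact object $M$ is built in finitely many steps from finitely many shifts of $G$. Hence $M\in\T^{\le a}$ for some integer $a$. Choose $m>\max(a,0)$. By the definition of $\T_c^+$ there is a triangle $D\to N\to F\to D[1]$ with $F\in\E$ and $D\in\T^{\ge m}$. The $t$-structure axiom gives ${\Hom}_\T(\T^{\le a},\T^{\ge a+1})=0$, so ${\Hom}_\T(M,D)=0$. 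Applying ${\Hom}_\T(M,-)$ to the triangle therefore yields an injection ${\Hom}_\T(M,N)\hookrightarrow{\Hom}_\T(M,F)$. Writing $F\simeq\mathbf{S}(C)$ with $C\in\T^c$, we get
\[
{\Hom}_\T(M,F)\simeq\mathbf{D}{\Hom}_\T(C,M),
\]
and this is finite-dimensional because $C,M$ are compact and $\T$ is locally Hom-finite. So ${\Hom}_\T(M,N)$ is finite-dimensional.

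For (2), let $L\in\T_c^+$ and $K\in\E$, and write $K\simeq\mathbf{S}(C)$ with $C\in\T^c$. Then ${\Hom}_\T(L,K)\simeq\mathbf{D}{\Hom}_\T(C,L)$. By (1), ${\Hom}_\T(C,L)$ is finite-dimensional, and hence so is its $k$-dual.

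The only point needing care is the claim that every compact object lies in $\T^{\le a}$ for some $a$. I would argue it by induction on the number of cones and summands used to build $M$ from shifts of $G$, using that each $\T^{\le a}$ is closed under extensions, positive shifts and direct summands. Everything else is formal.
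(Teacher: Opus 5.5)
Your proof is correct, but it handles the objects of $\E$ by a different key lemma than the paper. Part (1) has the same skeleton as the paper's argument. You bound the compact object $M$ above in the $t$-structure, kill $\Hom_\T(M,D)$ by orthogonality, and reduce to $\Hom_\T(M,F)$ with $F\in\E$. You only need the resulting injection, whereas the paper records an isomorphism.

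\textbf{Your route.} You invoke Lemma~\ref{lem-ebasic}(2), i.e.\ $\E=\mathrm{Im}(\mathbf{S})$. So every $F$ or $K$ in $\E$ is literally $\mathbf{S}(C)$ for some compact $C$, and both finiteness statements become one-line applications of the partial Serre isomorphism.

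\textbf{The paper's route.} It uses only $\E=\langle E\rangle$ (Lemma~\ref{lem-ebasic}(1)). It checks that the classes $\{Y : \dim_k\Hom_\T(M,Y)<\infty \ \forall M\in\T^c\}$ and $\{Z : \dim_k\Hom_\T(L,Z)<\infty \ \forall L\in\T^+_c\}$ are thick and contain the single object $E$. This uses the duality $\Hom_\T(-,E)\simeq\mathbf{D}\Hom_\T(G,-)$ only for the generator $G$.

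\textbf{Comparison.} Your argument is shorter and makes (2) an immediate corollary of (1). However, it relies on the heavier input that $\mathbf{S}$ is fully faithful with thick essential image, which rests on the cited result of \cite{OPS} plus Karoubianness of $\T^c$. The paper's thick-closure argument needs only the Brown--Comenetz dual of $G$, and would go through without identifying $\E$ with $\mathrm{Im}(\mathbf{S})$.
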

\begin{proof}
	(1) Since $\T^{c} = \langle G\rangle$, we have $\T^{c} \subseteq \T^{-}$.  
	Let $M \in \T^{c}$ and $N \in \T_{c}^{+}$.  
	There exists $k>0$ such that ${\Hom}_{\T}(M, \T^{\ge k}) = 0$.  
	By the definition of $\T_{c}^{+}$, we may choose a triangle
	\[
	D \longrightarrow N \longrightarrow F \longrightarrow D[1]
	\]
with $D \in \T^{\ge k+1}$ and $F \in \mathcal{E}$.  
	Applying ${\Hom}_{\T}(M,-)$ yields an isomorphism
	\[
	{\Hom}_{\T}(M,N) \simeq {\Hom}_{\T}(M,F).
	\]
	Thus it suffices to show that ${\Hom}_{\T}(M,F)$ is finite-dimensional for all  
	$M \in \T^{c}$ and $F \in \mathcal{E}$.
	
	Define
	\[
	\mathcal{B} \coloneqq 
	\{\, Y \in \T \mid \forall M \in \T^{c},\ {\Hom}_{\T}(M,Y)\ \text{is finite-dimensional} \,\}.
	\]
	Since ${\Hom}_{\T}(M,E) \simeq \mathbf{D}{\Hom}_{\T}(G,M)$ and $\mathcal{T}$ is locally finite, 
	the right-hand side is finite-dimensional. Thus $E \in \mathcal{B}$ and $\mathcal{B}$ is nonempty. As $\mathcal{B}$ is clearly a thick subcategory of $\mathcal{T}$, we conclude that $\mathcal{E} \subseteq \mathcal{B}$.
	
	(2) Define
	\[
	\mathcal{C} \coloneqq 
	\{\, Z \in \T \mid \forall L \in \T_{c}^{+},\ {\Hom}_{\T}(L,Z)\ \text{is finite-dimensional} \,\}.
	\]
	
	Clearly $\mathcal{C}$ is a thick subcategory of $\T$.  
	Thus it suffices to show that $E \in \mathcal{C}$.  
	Fix $X \in \T_{c}^{+}$.  
	We have
	\[
	{\Hom}_{\T}(X,E) \simeq \mathbf{D}{\Hom}_{\T}(G,X).
	\]
	By (1), ${\Hom}_{\T}(G,X)$ is finite-dimensional, and therefore so is  
	$\mathbf{D}{\Hom}_{\T}(G,X)$.  
	Hence $E \in \mathcal{C}$, completing the proof.
\end{proof}
	
	\begin{prop}\label{tech4}
		Assume there exists an integer $n>0$ such that $\Hom_{\T}(G,G[i])=0$ for $i\geq n$. Then the following statements hold.
		\begin{enumerate}
			\item ${\Hom}_\T(\T^{\geq 1}, E)=0$ and $\E\subseteq \T^+$. Moreover, If $E'\in\E$, then there exists $B>0$ such that ${\Hom}_\T(\T^{\geq B},E')=0$;
			\item $\T^+_c$ is a thick subcategory of $\T$, and contains $\E$.
		\end{enumerate}
	\end{prop}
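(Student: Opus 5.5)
For (1), I will use the defining isomorphism ${\Hom}_\T(X,E)\simeq \mathbf{D}{\Hom}_\T(G,X)$. For $X\in\T^{\geq 1}=(\overline{\langle G\rangle}^{(-\infty,0]})^{\perp}$ we have ${\Hom}_\T(G,X)=0$, because $G\in \overline{\langle G\rangle}^{(-\infty,0]}$. Hence ${\Hom}_\T(\T^{\geq1},E)=0$.

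Next I show $E\in\T^+$, and here the hypothesis ${\Hom}_\T(G,G[i])=0$ for $i\ge n$ enters. Since $\T^{\le -n}$ is generated by the objects $G[j]$ with $j\ge n$ under coproducts, extensions and summands, it suffices to check that ${\Hom}_\T(G[j],E)=0$ for $j\ge n$. This group equals $\mathbf{D}{\Hom}_\T(G,G[j])=0$. Now ${\Hom}_\T(-,E)$ turns coproducts into products and is cohomological, so it kills all of $\T^{\le -n}$. Therefore $E\in(\T^{\le -n})^\perp=\T^{\ge -n+1}\subseteq\T^+$.

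Since $\T^+$ is thick (both aisle and coaisle are closed under summands and extensions, and shifts stay inside $\T^+$), we get $\E=\langle E\rangle\subseteq\T^+$. For the "moreover" part, let $\mathcal{B}'$ be the class of $E'$ for which some $B$ gives ${\Hom}_\T(\T^{\ge B},E')=0$. It contains $E$ with $B=1$, it is closed under shifts (with $B$ adjusted), under summands, and under extensions (take the larger $B$ and use the long exact sequence). So $\mathcal{B}'$ is thick and contains $\E$.

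For (2), the inclusion $\E\subseteq\T^+_c$ is immediate: for $F\in\E$, use the triangle $0\to F\to F\to 0$. Closure under shifts is also easy: for $T\in\T^+_c$ and a given $m$, take the approximation of $T$ for $m+1$ and shift it. Closure under direct summands uses the standard trick: if $T\oplus T'\in\T^+_c$ with triangle $D\to T\oplus T'\to F$, then $T\oplus T[1]\oplus T'$… A cleaner route is to mimic Neeman's argument for $\T^-_c$, approximating $T$ through $T\oplus T'$ and the idempotent. Concretely, compose $T\to T\oplus T'\to F$, complete to a triangle $D'\to T\to F$, and verify via the octahedral axiom that $D'$ differs from $D$ by a summand. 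Failing that, I show ${\Hom}(D',\cdot)$ vanishing through the fact that $T\mapsto T\oplus T[1]$ has an approximation of the Eilenberg swindle type. I expect this step to need some care.

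Closure under extensions is the main step. Take a triangle $T_1\to T\to T_2$ with $T_i\in\T^+_c$ and fix $m$. First choose an approximation $D_2\to T_2\to F_2$ with $D_2\in\T^{\ge m}$. Then, using the "moreover" part of (1) for $F_2[-1]$, find $B$ with ${\Hom}(\T^{\ge B},F_2[-1])=0$. Choose an approximation $D_1\to T_1\to F_1$ with $D_1\in\T^{\ge \max(m,B)}$. The composite $D_1\to T_1$ together with the map $T_2[-1]\to T_1$ yields a map $F_2[-1]\to F_1$: the obstruction to factoring is the map $D_2[-1]\to T_2[-1]\to T_1\to F_1$, and I will either arrange it to vanish by choosing $D_1$ deep enough relative to $D_2$, or instead use the vanishing of ${\Hom}(D_1,F_2[-1])$ to lift. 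With the map in hand, the octahedral axiom and the $3\times3$ lemma give a triangle $F_1\to F\to F_2$, so $F\in\E$, together with a map $T\to F$ whose cocone is an extension of $D_1$ and $D_2$ and therefore lies in $\T^{\ge m}$. This compatibility of the two approximations is where I expect the real difficulty.
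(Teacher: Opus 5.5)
Your part (1) is correct and is essentially the paper's argument. The direct orthogonality computation $E\in(\T^{\le -n})^{\perp}=\T^{\ge 1-n}$ is a self-contained substitute for the paper's appeal to \cite[Corollary~3.10]{BNP18}, and your thick-subcategory argument for the bound $B$ matches the paper's. The problems are in (2), which the paper itself only sketches by referring to Neeman's proof for $\T^-_c$.

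In your extension step the order of choices is backwards. The obstruction to a map $F_2[-1]\to F_1$ compatible with $w\colon T_2[-1]\to T_1$ lies in $\Hom_\T(D_2[-1],F_1)$. You fix $D_2$ before $F_1$ exists, so nothing forces this group to vanish. Choosing $D_1$ deeper does not help, and $\Hom_\T(D_1,F_2[-1])=0$ says nothing about maps $F_2[-1]\to F_1$. Reverse the roles:
\begin{itemize}
\item approximate $T_1$ first by $g_1\colon T_1\to F_1$ with cocone $D_1\in\T^{\ge m}$;
\item use (1) to choose $B$ with $\Hom_\T(\T^{\ge B},F_1)=0$;
\item only then approximate $T_2$ by $g_2$ with cocone $D_2\in\T^{\ge\max(m,B)}$.
\end{itemize}
Now $g_1w$ vanishes on $D_2[-1]$, so $g_1w=\phi\circ g_2[-1]$ for some $\phi\colon F_2[-1]\to F_1$. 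The $3\times3$ lemma then gives $T\to\mathrm{cone}(\phi)\in F_1*F_2\subseteq\E$ with cocone in $D_1*D_2\subseteq\T^{\ge m}$.

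The genuine gap is closure under direct summands, which you do not establish. The octahedral axiom applied to $T\to T\oplus T'\xrightarrow{g}F$ gives a triangle $T'[-1]\to D'\to D\to T'$. So $D'\in T'[-1]*D$; it is not a summand of $D$ and in general is not in $\T^{\ge m}$. For instance, with $T=0$, $0\neq T'\in\E$ and $g=\mathrm{id}$, you get $D=0$ but $D'=T'[-1]$. A swindle has nothing to run on either: $\T^+_c$ is not closed under infinite sums or products, and passing from $T\oplus T[1]$ back to $T$ is exactly the non-formal step.

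What is missing is an induction on $m$, run simultaneously for $T$ and $T'$.
\begin{itemize}
\item \emph{Base.} Since $\E\subseteq\T^+$, we have $X=T\oplus T'\in\T^+$, so $T,T'\in\T^{\ge a}\subseteq\T^{\ge a}*\E$ for some $a$.
\item \emph{Setup.} Suppose $g_T\colon T\to F_T$ and $g_{T'}\colon T'\to F_{T'}$, with targets in $\E$, have cocones $D_T,D_{T'}\in\T^{\ge m}$. Use (1) to choose an approximation $g\colon X\to F$ whose cocone $D$ lies in $\T^{\ge M}$, where $M\ge m+2$ and $\Hom_\T(\T^{\ge M},F_T\oplus F_{T'})=0$.
\item \emph{Comparison.} Then $g_T\oplus g_{T'}=\lambda g$ for some $\lambda$. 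The octahedral axiom gives a triangle $D\to D_T\oplus D_{T'}\to Q\to D[1]$ with $Q=\mathrm{cocone}(\lambda)\in\E$. Hence $\mathbf{H}^i(Q)=0$ for $i<m$, and $D_T\to Q$ is injective on $\mathbf{H}^m$.
\item \emph{Gaining a degree.} The cocone $C$ of $D_T\to Q$ satisfies $C\in\T^+$ and $\mathbf{H}^i(C)=0$ for $i\le m$, so $C\in\T^{\ge m+1}$ by Lemma~\ref{tech1}. The octahedral axiom for $C\to D_T\to T$ yields a triangle $C\to T\to F''$ with $F''\in Q*F_T\subseteq\E$. The same argument applies to $T'$.
\end{itemize}
As in the corrected extension step, the essential point is that the deep approximation of $X$ is chosen \emph{after} the shallow ones, so that (1) guarantees the comparison map $\lambda$ exists.
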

\begin{proof}
	(1) Since $G\in\T^{\leq 0}$, we have ${\Hom}_{\T}(G, \T^{\ge 1}) = 0$. Then we obtain
	\[
	{\Hom}_{\T}(\T^{\ge 1}, E)
	\simeq \mathbf{D}{\Hom}_{\T}(G, \T^{\ge 1})
	= 0.
	\]
	For every \( i \in \mathbb{Z} \),
\[
	{\Hom}_{\T}(G[i], E)
	\simeq \mathbf{D}{\Hom}_{\T}(G, G[i]),
	\]
	and hence \( {\Hom}_{\T}(G[i], E) = 0 \) for \( i \geq n \).  
	By \cite[Corollary~3.10]{BNP18}, we obtain \( E \in \T^{+} \) and hence $\E\subseteq\T^{+}$.
	
	Now let \( E' \in \E \). By Lemma \ref{lem-ebasic}(1), there exists some \( k > 0 \) such that \( E' \in \langle E \rangle_{k}^{[-k, k]} \). Since \(\Hom_{\T}(\T^{\geq k+1}, E[i]) = 0\) for \(-k \leq i \leq k\) and \((\T^{\geq 1+k})^{\perp}\) is closed under finite direct sums, direct summands and extensions, it follows that \(\langle E \rangle_{k}^{[-k, k]} \subseteq (\T^{\geq 1+k})^{\perp}\). 
Consequently, \(\Hom_{\T}(\T^{\geq 1+k}, E') = 0\).
	
	(2) The conclusion follows from part (1) together with the strategy of Neeman’s proof  
	\cite[Proposition~3.10]{N18a}.
\end{proof}

	\begin{defn}\label{strong coapproximating system}
		Let $\mathcal{B}\subseteq\T$ and  \( (E_{*},f_{*}) : \cdots \xrightarrow{f_3} E_3 \xrightarrow{f_2} E_2 \xrightarrow{f_1} E_1 \) be a sequence in \(\mathcal{B}\).
		\begin{enumerate}
			\item It is called a \emph{strong \(\mathcal{B}\)-coapproximating system} if for each \( m \in \mathbb{N}^+ \), the morphism \( \mathbf{H}^{i}(f_{m}) \) is an isomorphism for all integers \( i \leq m \).
			\item Let \( F \in \T \). If there exist morphisms \( g_j: F \to E_j \) for all \( j \in \mathbb{N}^+ \) such that \( g_j = f_jg_{j+1} \) for all \( j \in \mathbb{N}^+ \), and for each \( m \in \mathbb{N}^+ \), the morphism \( \mathbf{H}^{i}(g_{m}) \) is an isomorphism for \( i \leq m \), then \( (E_{*},f_{*}) \) is called a \emph{strong \(\mathcal{B}\)-coapproximating system} for \( F \).
		\end{enumerate}
	\end{defn}
	
	\begin{lem}\label{universal property of coapproximating system}
		Assume that there exists an integer $n>0$ such that $\Hom_{\T}(G,G[i])=0$ for $i\geq n$ and that $\T^{\leq 0}$ is closed under products, we have:
		\begin{enumerate}
			\item If $(E_{*},f_{*})$ is a strong \(\mathcal{E}\)-coapproximating system, then it is a strong \(\mathcal{E}\)-coapproximating system for $\Holim(E_{*})$. Moreover, $\Holim(E_{*})\in\T_{c}^{+}$.
			\item Let $F \in \T^{+}$ and let $(E_{*}, f_{*})$ be a strong $\mathcal{E}$-coapproximating system for $F$. Then the induced morphism $F\to\Holim(E_{*})$ is an isomorphism.
		\end{enumerate}
	\end{lem}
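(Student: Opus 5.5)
For (1), I will first apply Lemma~\ref{tech3} to the system $(E_*,f_*)$. Since $\mathbf{H}^i(f_m)$ is an isomorphism whenever $m\ge i$, the stabilization hypothesis of that lemma holds with $N_i=\max(i,1)$. Together with the assumption that $\T^{\le 0}$ is closed under products, this gives
\[
\mathbf{H}^i(\Holim(E_*))\simeq\lims \mathbf{H}^i(E_*).
\]
The projection $g_m\colon \Holim(E_*)\to E_m$ from the defining triangle satisfies $g_m=f_mg_{m+1}$. Under the isomorphism above, $\mathbf{H}^i(g_m)$ is the canonical map from the inverse limit to its $m$-th term. For $i\le m$ every transition map beyond stage $m$ is an isomorphism on $\mathbf{H}^i$, so this map is an isomorphism. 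Hence $(E_*,f_*)$ is a strong $\E$-coapproximating system for $\Holim(E_*)$.

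Next I show $\Holim(E_*)\in\T_c^+$. Fix $m>0$ and complete $g_m$ to a triangle $D\to\Holim(E_*)\xrightarrow{g_m}E_m\to D[1]$. The long exact $\mathbf{H}$-sequence gives $\mathbf{H}^i(D)=0$ for $i\le m-1$: for these $i$ the map $\mathbf{H}^i(g_m)$ is an isomorphism, and $\mathbf{H}^{i-1}(g_m)$ is surjective since $i-1\le m$. To conclude $D\in\T^{\ge m}$ via Lemma~\ref{tech1}, I need $D\in\T^+$. For this I use $E_m\in\E\subseteq\T^+$ (Proposition~\ref{tech4}(1)) and show $\Holim(E_*)\in\T^+$, which is the main obstacle. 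The plan is to bound everything by the fixed object $E_1$. Since $\mathbf{H}^i(f_m)$ is an isomorphism for $i\le m$ and $m\ge1$, every $E_m$ has the same $\mathbf{H}^i$ as $E_1$ for $i\le 1$. Choose $a$ with $E_1\in\T^{\ge a}$, take $a\le 1$ without loss, and apply Lemma~\ref{tech1}, using $E_m\in\T^+$. This gives $E_m\in\T^{\ge a}$ for all $m$, uniformly. Then $\prod E_k\in\T^{\ge a}$, because $\T^{\ge a}$ is a right orthogonal and hence closed under products. The defining triangle then places $\Holim(E_*)$ in $\T^{\ge a-1}\subseteq\T^+$. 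Applying Lemma~\ref{tech1} now yields $D\in\T^{\ge m}$. Since $m$ was arbitrary, $\Holim(E_*)\in\T_c^+$. The argument also shows $D[1]\in\T^+$, but only $D\in\T^{\ge m}$ is needed.

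For (2), the compatible maps $g_j\colon F\to E_j$ induce a morphism $\phi\colon F\to\Holim(E_*)$ with $\pi_m\phi=g_m$, where $\pi_m$ denotes the projection to $E_m$. For each $i$, take $m\ge i$. Then $\mathbf{H}^i(g_m)$ is an isomorphism, and by part (1) so is $\mathbf{H}^i(\pi_m)$, hence $\mathbf{H}^i(\phi)$ is an isomorphism. Complete $\phi$ to a triangle $F\to\Holim(E_*)\to C$. The long exact sequence gives $\mathbf{H}^i(C)=0$ for all $i$. Both $F$ and $\Holim(E_*)$ lie in $\T^+$, the latter by the argument in (1), so $C\in\T^+$. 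Lemma~\ref{tech1} then gives $C\in\T^{\ge m}$ for every $m$, so $\Hom(G[j],C)=0$ for all $j$. Since $G$ is a generator, $C=0$ and $\phi$ is an isomorphism.
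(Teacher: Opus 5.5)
Your proposal is correct and follows essentially the same route as the paper. Lemma~\ref{tech3} gives the cohomology of $\Holim(E_*)$; a uniform lower bound on all $E_m$, obtained by comparing with $E_1$, gives $\Holim(E_*)\in\T^+$; Lemma~\ref{tech1} applied to the cone of $g_m$ gives membership in $\T_c^+$; and in (2) you compare $\mathbf{H}^i$ through a common $E_m$. You also spell out two steps the paper leaves implicit: closure of $\T^{\ge a}$ under products and extensions, and the vanishing of the cone because $\bigcap_m \T^{\ge m}=0$ when $G$ is a generator.
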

\begin{proof}
	(1) By the definition of the homotopy limit, there is a triangle
	\[
	\Holim(E_{*}) \stackrel{g}{\longrightarrow} \prod_{k=1}^{\infty} E_{k}
	\longrightarrow \prod_{k=1}^{\infty} E_{k}
	\longrightarrow \Holim(E_{*})[1].
	\]
	Let \( p_{j} \colon \prod_{k=1}^{\infty} E_{k} \to E_{j} \) be the canonical projection and set  
	\( g_{j} \coloneqq p_{j} g \).  
	Then \( g_{j} = f_{j} g_{j+1} \) for all \( j>0 \).  
	For each \( m \in \mathbb{N}^{+} \), Lemma~\ref{tech3} shows that  
	\( \mathbf{H}^{i}(g_{m}) \) is an isomorphism whenever \( i \le m \).
	
	Since \( E_{1} \in \E \subseteq \T^{+} \) by Proposition \ref{tech4}(1), there exists \( n>0 \) such that  
	\( E_{1} \in \T^{\ge -n} \).  
	By the definition of the system \( (E_{*}, f_{*}) \), for any \( m \ge 1 \) and any \( i \le 1 \) we have  
	\( \mathbf{H}^{i}(E_{m}) \simeq \mathbf{H}^{i}(E_{1}) \).  
	Hence \( \mathbf{H}^{i}(E_{m}) = 0 \) for all \( i < -n \).  
	Since \( E_{m} \in \E \subseteq \T^{+} \), Lemma~\ref{tech1} implies  
	\( E_{m} \in \T^{\ge -n} \), and therefore  
	\( \Holim(E_{*}) \in \T^{+} \).
	
	Consider the triangle
	\[
	D_{m} \longrightarrow \Holim(E_{*}) \xrightarrow{g_{m}} E_{m} \longrightarrow D_{m}[1].
	\]
	Clearly \( D_{m} \in \T^{+} \).  
	Moreover, since \( \mathbf{H}^{i}(g_{m}) \) is an isomorphism for all \( i \le m \), we obtain  
	\( \mathbf{H}^{i}(D_{m}) = 0 \) for all \( i \le m \).  
	Applying Lemma~\ref{tech1} again yields  
	\( D_{m} \in \T^{\ge m+1} \).  
	Since \( m \) is arbitrary, the desired result follows.
	
	(2) There exists a morphism \( f \colon F \to \Holim(E_{*}) \) such that for every \( m \ge 1 \), the diagram
		\[
	\begin{tikzcd}
		F \arrow[d, "f"'] \arrow[rd] & \\
		\Holim(E_{*}) \arrow[r] & E_{m}
	\end{tikzcd}
	\]
	commutes.  
	For each \( i \in \mathbb{Z} \),  set \( N_{i} \coloneqq \max\{i+1,1\} \). Then the morphisms  
	\( \mathbf{H}^{i}(F) \to \mathbf{H}^{i}(E_{N_{i}}) \) and  
	\( \mathbf{H}^{i}(\Holim(E_{*})) \to \mathbf{H}^{i}(E_{N_{i}}) \)  
	are isomorphisms. Hence, $\mathbf{H}^{i}(f)$ is an isomorphism for all $i\in\mathbb{Z}$. By Lemma~\ref{tech1}, we deduce that $f$ is an isomorphism.
\end{proof}

	\begin{prop}\label{Tc+coapproximable}Assume that there exists an integer $n>0$ such that $\Hom_{\T}(G,G[i])=0$ for $i\geq n$ and that $\T^{\leq 0}$ is closed under products, we have:
		$$\T^+_c=\{\Holim(E_*)\mid E_*~\text{is a strong}~\E\text{-coapproximating system}\}.$$
	\end{prop}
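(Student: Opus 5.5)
The inclusion ``$\supseteq$'' is exactly Lemma~\ref{universal property of coapproximating system}(1): if $E_*$ is a strong $\E$-coapproximating system, then $\Holim(E_*)\in\T^+_c$. So the work is in ``$\subseteq$''. Given $F\in\T^+_c$, I will build a strong $\E$-coapproximating system $(E_*,f_*)$ for $F$, with maps $g_m\colon F\to E_m$. Since $F\in\T^+$ (this should follow from $\T^+_c$ consisting of objects $D\to F\to E'$ with $E'\in\E\subseteq\T^+$ and $D\in\T^{\ge m}$, using Proposition~\ref{tech4}(1) and closure of $\T^+$ under extensions), Lemma~\ref{universal property of coapproximating system}(2) then gives $F\simeq\Holim(E_*)$.

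For the construction, for each $m$ choose a triangle $D_m\to F\xrightarrow{h_m} F_m\to D_m[1]$ with $F_m\in\E$ and $D_m\in\T^{\ge m+2}$. From the long exact sequence of $\mathbf{H}^*$, $\mathbf{H}^i(h_m)$ is an isomorphism for $i\le m$. The issue is compatibility: we need maps $f_m\colon E_{m+1}\to E_m$ with $f_mg_{m+1}=g_m$. I would proceed inductively, setting $E_1=F_1$ and $g_1=h_1$. Given $g_m\colon F\to E_m$ with cone term $D'_m\in\T^{\ge m+1}$, I pick a triangle $D\to F\xrightarrow{h}F'\to$ with $F'\in\E$ and $D\in\T^{\ge B}$. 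Here $B$ is chosen larger than both $m+2$ and the bound from Proposition~\ref{tech4}(1) applied to $E_m$, so that $\Hom_\T(D,E_m)=0$. Applying $\Hom_\T(-,E_m)$ to the triangle shows that $g_m$ factors as $g_m=f_m h$ for some $f_m\colon F'\to E_m$. I then set $E_{m+1}=F'$ and $g_{m+1}=h$.

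It remains to check that $\mathbf{H}^i(f_m)$ is an isomorphism for $i\le m$. We have $\mathbf{H}^i(g_m)=\mathbf{H}^i(f_m)\mathbf{H}^i(g_{m+1})$, and both $\mathbf{H}^i(g_m)$ and $\mathbf{H}^i(g_{m+1})$ are isomorphisms in that range, so $\mathbf{H}^i(f_m)$ is one too. The condition ``$\mathbf{H}^i(g_m)$ iso for $i\le m$'' holds by construction, since the fibre of $g_m$ lies in $\T^{\ge m+1}$ and so the long exact sequence gives isomorphisms up to degree $m$.

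The main obstacle is making the degree bookkeeping consistent: arranging that the fibres lie in $\T^{\ge m+1}$ while also ensuring that $\Hom_\T(D,E_m)=0$ to obtain the lift. This is exactly where Proposition~\ref{tech4}(1) (the vanishing $\Hom_\T(\T^{\ge B},E')=0$ for $E'\in\E$) is essential, since it lets us enlarge the truncation degree freely. The hypothesis that $\T^{\le0}$ is closed under products enters only through Lemma~\ref{universal property of coapproximating system}.
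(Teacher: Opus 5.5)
Your proposal is correct and follows essentially the same route as the paper: the inclusion $\supseteq$ comes from Lemma~\ref{universal property of coapproximating system}(1), and for $\subseteq$ you build $g_{m+1}\colon F\to E_{m+1}$ inductively from a triangle whose fibre lies in $\T^{\ge N}$, with $N$ large enough both for the degree bookkeeping and for the vanishing bound of Proposition~\ref{tech4}(1) applied to $E_m$, so that $g_m$ factors through $g_{m+1}$. One small off-by-one slip: a fibre in $\T^{\ge m+1}$ only makes $\mathbf{H}^i(g_m)$ bijective for $i\le m-1$ (and injective at $i=m$), but your actual choices ($D_1\in\T^{\ge 3}$ and $B\ge m+3$) put the fibre of $g_m$ in $\T^{\ge m+2}$, which gives bijectivity for all $i\le m$, so the argument goes through.
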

\begin{proof}
	By Lemma~\ref{universal property of coapproximating system}, it suffices to show that every  
	\( F \in \T_{c}^{+} \) admits a strong \( \E \)-coapproximating system.  
	We construct such a system inductively.
	
	By definition, there exists a triangle
	\[
	D_{1} \longrightarrow F \stackrel{g_{1}}{\longrightarrow} E_{1} \longrightarrow D_{1}[1]
	\]
 with \( D_{1} \in \T^{\ge 3} \) and \( E_{1} \in \E \).  
	Applying the cohomological functor \( \mathbf{H}(-) \), we obtain an exact sequence
	\[
	\mathbf{H}^{i}(D_{1}) \longrightarrow \mathbf{H}^{i}(F) 
	\longrightarrow \mathbf{H}^{i}(E_{1}) 
	\longrightarrow \mathbf{H}^{i+1}(D_{1}).
	\]
	For all \( i \le 1 \), we have \( \mathbf{H}^{i}(D_{1}) = 0 = \mathbf{H}^{i+1}(D_{1}) \) since  
	\( D_{1} \in \T^{\ge 3} \).  
	Thus \( \mathbf{H}^{i}(F) \simeq \mathbf{H}^{i}(E_{1}) \) for all \( i \le 1 \).
	
	Now let \( n>1 \) and assume that morphisms  
	\( g_{i} \colon F \to E_{i} \) have been constructed for all \( 1 \le i \le n \).  
	By Proposition~\ref{tech4}(1), there exists \( B' > 0 \) such that  
	\( {\Hom}_{\T}(\T^{\ge B'}, E_{n}) = 0 \).  
	Set \( N = \max\{B',\, n+3\} \).  
	By definition, we obtain a triangle
	\[
	D_{n+1} \longrightarrow F \xrightarrow{g_{n+1}} E_{n+1} \longrightarrow D_{n+1}[1]
	\]
		with \( D_{n+1} \in \T^{\ge N} \) and \( E_{n+1} \in \E \).  
	For all \( i \le n+1 \), we still have  
	\( \mathbf{H}^{i}(F) \simeq \mathbf{H}^{i}(E_{n+1}) \).  
	Moreover, the vanishing \( {\Hom}_{\T}(D_{n+1}, E_{n}) = 0 \) induces a morphism  
	\( f_{n} \colon E_{n+1} \to E_{n} \) satisfying  
	\( g_{n} = f_{n} g_{n+1} \).  
	This completes the inductive step.
\end{proof}
	
	\begin{lem}\label{colim-holim1}
		Assume that there exists an integer $n>0$ such that $\Hom_{\T}(G,G[i])=0$ for $i\geq n$ and that $\T^{\leq 0}$ is closed under products. Let $(E_{*},f_{*})$ be a strong $\E$-coapproximating system. Then there exists a natural transformation $$\varphi:\colim\Hom_\T(E_{*},-)\longrightarrow{\Hom}_\T(\Holim(E_{*}),-)$$
		whose restriction to $\E$ is an isomorphism.
	\end{lem}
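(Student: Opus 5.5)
The natural transformation $\varphi$ comes from the structure maps $g_m\colon \Holim(E_*)\to E_m$ of Lemma~\ref{universal property of coapproximating system}(1). For each $m$, precomposition with $g_m$ gives a map $\Hom_\T(E_m,-)\to\Hom_\T(\Holim(E_*),-)$. Since $g_m=f_mg_{m+1}$, these maps are compatible with the transition maps $\Hom_\T(E_m,-)\to\Hom_\T(E_{m+1},-)$ induced by $f_m$. They therefore assemble into
\[
\varphi\colon \colim\Hom_\T(E_*,-)\longrightarrow \Hom_\T(\Holim(E_*),-),
\]
and naturality in the variable is clear. The content of the statement is that $\varphi_{K}$ is bijective for $K\in\E$.

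\textbf{Step 1: stabilization of the direct system.} Fix $K\in\E$. By Proposition~\ref{tech4}(1) there is $B>0$ with $\Hom_\T(\T^{\ge B},K)=0$, and also $\Hom_\T(\T^{\ge B},K[-1])=0$ after enlarging $B$, since $K[-1]\in\E$.

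For $m\ge B$, complete $f_m$ to a triangle $C_m\to E_{m+1}\xrightarrow{f_m}E_m\to C_m[1]$. Since $E_m,E_{m+1}\in\E\subseteq\T^+$ and $\mathbf H^i(f_m)$ is an isomorphism for $i\le m$, the long exact sequence shows $\mathbf H^i(C_m)=0$ for $i\le m$. Lemma~\ref{tech1} then gives $C_m\in\T^{\ge m+1}$, and hence $C_m[1]\in\T^{\ge m}$.

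Applying $\Hom_\T(-,K)$ to the triangle, the outer terms $\Hom_\T(C_m[1],K)$ and $\Hom_\T(C_m,K)$ vanish once $m\ge B$. Here we use $C_m\in\T^{\ge m+1}\subseteq\T^{\ge B}$, and for $C_m[1]$ the vanishing $\Hom_\T(\T^{\ge B},K[-1])=0$ after shifting. Consequently $\Hom_\T(E_m,K)\to\Hom_\T(E_{m+1},K)$ is an isomorphism for $m\ge B$, and the colimit equals $\Hom_\T(E_m,K)$ for any fixed $m\ge B$.

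\textbf{Step 2: comparison with the homotopy limit.} By the proof of Lemma~\ref{universal property of coapproximating system}(1), there is a triangle
\[
D_m\to\Holim(E_*)\xrightarrow{g_m}E_m\to D_m[1]
\]
with $D_m\in\T^{\ge m+1}$. The same vanishing argument gives $\Hom_\T(D_m[1],K)=0=\Hom_\T(D_m,K)$ for $m\ge B$. Hence precomposition with $g_m$ is an isomorphism $\Hom_\T(E_m,K)\cong\Hom_\T(\Holim(E_*),K)$. Under the identification from Step~1, this isomorphism is exactly $\varphi_K$, which proves the claim.

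\textbf{Main obstacle.} The only delicate point is the degree bookkeeping for the cones $C_m$ and $D_m$. We must verify that they lie in $\T^{\ge m+1}$ via Lemma~\ref{tech1}, which needs membership in $\T^+$ (supplied by Proposition~\ref{tech4}(1) and the closure of $\T^+$ under cones). We must also handle the shift $[1]$ carefully, so that both adjacent Hom terms vanish uniformly for large $m$.
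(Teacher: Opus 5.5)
Your proof is correct, but it is organized differently from the paper's.

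The paper defines $\mathcal{B}$ as the class of objects on which $\varphi$ is an isomorphism. It uses the five lemma (implicitly relying on exactness of filtered colimits) to show that $\mathcal{B}$ is closed under extensions and summands, which reduces the claim to the generators $E[i]$. For those it invokes Brown--Comenetz duality, $\Hom_\T(-,E[i])\simeq\mathbf{D}\Hom_\T(G[i],-)$. Since $G[i]\in\T^-$ and $D_m\in\T^{\ge m+1}$, the map $\Hom_\T(G[i],g_m)$ is an isomorphism for $m\gg0$.

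You instead treat an arbitrary $K\in\E$ at once, using the vanishing $\Hom_\T(\T^{\ge B},K)=0$ from Proposition~\ref{tech4}(1). The d\'evissage is thereby absorbed into that proposition, whose proof is the same thick-closure argument, and no duality is needed at this stage. Your version also makes the stabilization of the direct system explicit. The paper leaves this implicit when it passes from ``$\T(g_m,E[i])$ is an isomorphism for $m\gg0$'' to $E[i]\in\mathcal{B}$.

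Two small simplifications are available:
\begin{enumerate}
\item Step~1 is redundant. Once $\T(g_m,K)$ and $\T(g_{m+1},K)$ are isomorphisms for $m\ge B$, the relation $g_m=f_mg_{m+1}$ forces $\T(f_m,K)$ to be an isomorphism as well.
\item You do not need to enlarge $B$ to handle $K[-1]$. Since $C_m[1]\in\T^{\ge m}\subseteq\T^{\ge B}$, and likewise $D_m[1]$, the vanishing of $\Hom_\T(C_m[1],K)$ and $\Hom_\T(D_m[1],K)$ follows directly.
\end{enumerate}
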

\begin{proof}
	By the definition of the homotopy limit, there is a triangle
	\[
	\Holim(E_{*}) \stackrel{g}{\longrightarrow} \prod_{k=1}^{\infty} E_{k}
	\longrightarrow \prod_{k=1}^{\infty} E_{k}
	\longrightarrow \Holim(E_{*})[1].
	\]
	Let \( p_{j} \colon \prod_{k=1}^{\infty} E_{k} \to E_{j} \) be the canonical projection and set  
	\( g_{j} \coloneqq p_{j} g \colon \Holim(E_{*}) \to E_{j} \).  
	Clearly \( g_{j} = f_{j} g_{j+1} \) for all \( j>0 \).  
	Since
	\[
	\T(g_{j}, -) = \T(g_{j+1}, -) \circ \T(f_{j}, -)
	\]
	for every \( j>0 \), the universal property of the colimit yields a natural transformation
	\[
	\varphi \colon \colim\Hom_{\T}(E_{*}, -) \longrightarrow {\Hom}_{\T}(\Holim(E_{*}), -).
	\]

	Define
	\[
	\mathcal{B} := \{\, B \in \T \mid \varphi(B) \text{ is an isomorphism} \,\}.
	\]
    By the five lemma, given a triangle $M \to N \to L \to M[1]$, if $M, M[1], L, L[1]$ all lie in $\mathcal{B}$, then $N$ also lies in $\mathcal{B}$. Thus it remains to show that \(\langle E \rangle_{1} \subseteq \mathcal{B} \). Since \( \mathcal{B} \) is closed under finite direct sums and direct summands,  
	it suffices to show that \( E[i] \in \mathcal{B} \) for all \( i \in \mathbb{Z} \).
	
	From the proof of Lemma~\ref{universal property of coapproximating system}(1),  
	for every \( m>0 \) there exists a triangle
	\[
	D_{m} \longrightarrow \Holim(E_{*}) \xrightarrow{g_{m}} E_{m} \longrightarrow D_{m}[1]
	\]
with \( D_{m} \in \T^{\ge m+1} \).  
	Fix \( i \in \mathbb{Z} \). As $G[i]\in\mathcal{T}^{-}$, the morphism \[\Hom_{\T}(G[i],\Holim(E_{*}))\xrightarrow{\T(G[i],g_{m})}\Hom_{\T}(G[i],E_{m})\]is an isomorphism for $m\gg0$. Since $\Hom_{\T}(-,E[i])\simeq\mathbf{D}{\Hom}_{\T}(G[i],-)$ for all $i\in\mathbb{Z}$, the morphism\[\Hom_{\T}(E_{m},E[i])\xrightarrow{\T(g_{m},E[i])}\Hom_{\T}(\Holim(E_{*}),E[i])\]is an isomorphism for $m\gg 0$.
	Consequently, $E[i]\in\mathcal{B}$ for all $i\in\mathbb{Z}$.
\end{proof}

	\begin{cor}\label{key1}
		Assume that there exists an integer $n>0$ such that $\Hom_{\T}(G,G[i])=0$ for $i\geq n$ and that $\T^{\leq 0}$ is closed under products. If $F\in\T_{c}^{+}$, then there exists a natural transformation \[\varphi:\colim\Hom_\T(E_{*},-)\longrightarrow {\Hom}_\T(F,-)\] 
		such that the restriction $\varphi|_{\E}$ is an isomorphism, where $E_{*}$ is a strong $\E$-coapproximating system for $F$.
	\end{cor}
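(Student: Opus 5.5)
The corollary follows by combining Proposition~\ref{Tc+coapproximable}, Lemma~\ref{universal property of coapproximating system}(2) and Lemma~\ref{colim-holim1}; the only care needed is to ensure the system used is a strong $\E$-coapproximating system \emph{for} $F$, so that the comparison map is the one induced by $F$.

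First, given $F\in\T_c^+$, I would invoke the inductive construction in the proof of Proposition~\ref{Tc+coapproximable}. It produces a sequence $(E_*,f_*)$ in $\E$ together with morphisms $g_j\colon F\to E_j$ satisfying $g_j=f_jg_{j+1}$, with $\mathbf{H}^i(g_m)$ an isomorphism for $i\le m$. It follows that $\mathbf{H}^i(f_m)$ is an isomorphism for $i\le m$: indeed $\mathbf{H}^i(g_m)=\mathbf{H}^i(f_m)\mathbf{H}^i(g_{m+1})$, and both $g$'s are isomorphisms in that range since $i\le m<m+1$. Hence $(E_*,f_*)$ is a strong $\E$-coapproximating system for $F$, in the sense of Definition~\ref{strong coapproximating system}.

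Next, $F\in\T^+$. Choosing a triangle $D\to F\to E'$ with $D\in\T^{\ge 1}$ and $E'\in\E\subseteq\T^+$ (Proposition~\ref{tech4}(1)) shows this, since $\T^+$ is closed under extensions. Lemma~\ref{universal property of coapproximating system}(2) then says that the induced morphism $f\colon F\to\Holim(E_*)$, which satisfies $g_m'\circ f=g_m$ for the canonical maps $g'_m\colon \Holim(E_*)\to E_m$, is an isomorphism.

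Finally, Lemma~\ref{colim-holim1} gives a natural transformation $\varphi'\colon\colim\Hom_\T(E_*,-)\to\Hom_\T(\Holim(E_*),-)$ whose restriction to $\E$ is an isomorphism. I would define $\varphi:=\Hom_\T(f,-)\circ\varphi'$. This is a composite of a natural transformation with a natural isomorphism, so it restricts to an isomorphism on $\E$. Because $g'_mf=g_m$, $\varphi$ is exactly the map induced by the compatible family $\Hom_\T(g_m,-)$. There is no real obstacle; the only point requiring care is the compatibility check just made, namely that the transformation built from the $g_m$ agrees with the one obtained by transporting along $f$.
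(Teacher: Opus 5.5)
Your proposal is correct and is essentially the argument the paper intends: the corollary is stated without proof as an immediate consequence of Lemma~\ref{colim-holim1}, once Proposition~\ref{Tc+coapproximable} and Lemma~\ref{universal property of coapproximating system}(2) identify $F$ with $\Holim(E_*)$. Your explicit checks that $F\in\T^+$ and that $\Hom_\T(f,-)\circ\varphi'$ is the map induced by the $g_m$ simply fill in details the paper leaves implicit.
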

	
	\section{Representability theorems for $\T^+_c$ and $\T^b_c$}\label{sec:rep thm}
	
	This section is to develop representability theorems for the intrinsic subcategory $\T^+_c$, 
	which was introduced in the previous section as the formal dual counterparts of the classical subcategory 
	$\T^-_c$ appearing in Neeman’s theory of approximable triangulated categories.  
	While Neeman’s representability results for $\T^-_c$ \cite[Theorem 9.18]{N18a} rely on homotopy colimits and compact approximations, 
	the dual theory for $\T^+_c$ involves homotopy limits and Brown--Comenetz duality, and is therefore technically more delicate.  
	In particular, the dual arguments do not mirror the original ones in a straightforward manner. 
   
    In this section, let $\T$ be an approximable $k$-linear triangulated category with a compact generator $G$, and set $(\T^{\leq 0},\T^{\geq 0})=(\T_{G}^{\leq 0},\T_{G}^{\geq 0})$. Let $E$ be the Brown--Comenetz dual of $G$ and let $\E$ be the Brown--Comenetz dual of $\T^{c}$.
	
	\subsection{Finite homological functors}
	
	Let $\mathcal{B}\subseteq\T$ be a full subcategory closed under isomorphisms such that $\mathcal{B}[1]=\mathcal{B}$. A $\mathcal{B}$-homological functor is a $k$-linear functor $\mathbf{H}\colon\mathcal{B}\to k\text{-}\mathsf{Mod}$ which sends triangles to long exact sequences (see \cite[Definition~1.1]{N18a}). Let $\mathbf{H}$ be a $\mathcal{B}$-homological functor.  
	We say that $\mathbf{H}$ is \emph{locally finite} (resp.\ \emph{finite}) if for every $T \in\mathcal{B}$:
	\begin{enumerate}
		\item $\dim_k \mathbf{H}(T) < \infty$, and
		\item $\mathbf{H}(T[i]) = 0$ for all $i \gg 0$ (resp.\ for all $|i| \gg 0$).
	\end{enumerate}

    \begin{rem}
    In Neeman's compact-side theory, a locally finite cohomological or homological functor is required to vanish in one shift direction, namely on sufficiently negative shifts of the chosen generator (\cite[Definition~1.1]{N18a}). In the Brown--Comenetz-side theory developed here, the homological functors on $\E$ are required to vanish in the opposite direction. 
	\end{rem}
	
	\begin{rem}\label{reduce}
		Since $\E = \langle E \rangle$ by Lemma \ref{lem-ebasic}(1), it follows from \cite[Remark~1.2]{N18a} that an $\E$-homological functor $\mathbf{H}$ is locally finite (resp.\ finite) if and only if 	
		\[
		\dim_k \mathbf{H}(E[j]) < \infty\;\text{for each}\;j\in\mathbb{Z}
		\quad\text{and}\quad
		\mathbf{H}(E[i]) = 0 \text{ for } i \gg 0 \text{ (resp.\ } |i| \gg 0).
		\]
	\end{rem}
	
	\begin{lem}\label{ker of homo functor}
		Let 
		\[
		0 \longrightarrow \mathbf{H}_1 \longrightarrow \mathbf{H}_2 \longrightarrow \mathbf{H}_3 \longrightarrow 0
		\]
		be an exact sequence in $\mathsf{Hom}_k(\E, k\text{-}\mathsf{Mod})$.  
		If $\mathbf{H}_2$ and $\mathbf{H}_3$ are $\E$-homological functors, then so is $\mathbf{H}_1$.  
		Moreover, if $\mathbf{H}_2$ and $\mathbf{H}_3$ are locally finite (resp.\ finite), then $\mathbf{H}_1$ is also locally finite (resp.\ finite).
	\end{lem}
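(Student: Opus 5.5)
The statement has two parts: the homological property of $\mathbf{H}_1$, and then its finiteness. I will fix a triangle $A\xrightarrow{u}B\xrightarrow{v}C\xrightarrow{w}A[1]$ in $\E$ and apply the short exact sequence of functors objectwise along the shifts. This produces a commutative diagram with three rows:
\[
\cdots\to \mathbf{H}_j(A[n])\to \mathbf{H}_j(B[n])\to \mathbf{H}_j(C[n])\to \mathbf{H}_j(A[n+1])\to\cdots,\qquad j=1,2,3.
\]
The columns $0\to\mathbf{H}_1(X)\to\mathbf{H}_2(X)\to\mathbf{H}_3(X)\to 0$ are short exact. Viewing each row as a complex, I get a short exact sequence of complexes $0\to R_1\to R_2\to R_3\to 0$, where $R_j$ is the unbounded complex $\cdots\to \mathbf{H}_j(B[n])\to\mathbf{H}_j(C[n])\to\mathbf{H}_j(A[n+1])\to\cdots$.

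\textbf{Homological property.} By hypothesis $R_2$ and $R_3$ are acyclic. The long exact sequence in cohomology attached to $0\to R_1\to R_2\to R_3\to 0$ gives, at each position, $H^{p-1}(R_3)\to H^p(R_1)\to H^p(R_2)$. Both outer terms vanish, so $H^p(R_1)=0$ and $R_1$ is acyclic. Equivalently, this is a diagram chase with the $3\times 3$ lemma, or the nine lemma for complexes. Hence $\mathbf{H}_1$ sends triangles to long exact sequences. $k$-linearity is automatic, because $\mathbf{H}_1$ is a subfunctor of the $k$-linear functor $\mathbf{H}_2$.

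\textbf{Finiteness.} For $T\in\E$ the inclusion $\mathbf{H}_1(T)\hookrightarrow\mathbf{H}_2(T)$ gives $\dim_k\mathbf{H}_1(T)\le\dim_k\mathbf{H}_2(T)<\infty$. Likewise, $\mathbf{H}_2(T[i])=0$ forces $\mathbf{H}_1(T[i])=0$. So the required vanishing for $i\gg0$ (resp. $|i|\gg0$) passes directly from $\mathbf{H}_2$ to $\mathbf{H}_1$, and the hypothesis on $\mathbf{H}_3$ is not even needed for this part. Alternatively, one could reduce to checking $E[j]$ via Remark~\ref{reduce}, but the direct subobject argument already suffices.

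\textbf{Main obstacle.} The only nontrivial step is the exactness of $R_1$. The key point is the correct use of the long exact cohomology sequence: acyclicity of two terms of a short exact sequence of complexes forces acyclicity of the third. I expect no genuine difficulty beyond verifying that the connecting maps line up, which is standard.
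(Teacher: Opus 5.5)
The paper states this lemma without proof, treating it as routine; your argument is correct and is the standard one. Naturality of $\mathbf{H}_1\to\mathbf{H}_2\to\mathbf{H}_3$ makes the three row complexes into a short exact sequence of complexes, since exactness in $\mathsf{Hom}_k(\E,k\text{-}\mathsf{Mod})$ is objectwise. The long exact cohomology sequence then forces the $\mathbf{H}_1$-row to be acyclic, because the $\mathbf{H}_2$- and $\mathbf{H}_3$-rows are.

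Your remark about finiteness is also right. The finiteness conditions pass to $\mathbf{H}_1$ simply because it is a subfunctor of $\mathbf{H}_2$, so the finiteness hypothesis on $\mathbf{H}_3$ is superfluous. The hypothesis that $\mathbf{H}_3$ is homological is still needed for the first part.
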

	
	\begin{lem}\label{reasonable}
		Assume that $\T$ is locally Hom-finite. 
		\begin{enumerate}
			\item If $X\in\T_{c}^{+}$, then ${\Hom}_\T(X,-)|_{\E}$ is a locally finite $\E$-homological functor. 
			\item If $X\in\T_{c}^{+}\cap\T^{b}$, then ${\Hom}_\T(X,-)|_{\E}$ is a finite $\E$-homological functor. 
		\end{enumerate}
	\end{lem}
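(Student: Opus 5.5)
The plan is to use the Brown--Comenetz isomorphism ${\Hom}_\T(X,E[i])\simeq\mathbf{D}{\Hom}_\T(G[i],X)$ to transfer everything to the compact side, and then invoke Remark~\ref{reduce} to reduce to the generator $E$. First, ${\Hom}_\T(X,-)|_{\E}$ is clearly $k$-linear and sends triangles in $\E$ to long exact sequences, so it is an $\E$-homological functor. By Remark~\ref{reduce}, local finiteness (resp.\ finiteness) amounts to two checks: $\dim_k{\Hom}_\T(X,E[j])<\infty$ for every $j$, and ${\Hom}_\T(X,E[i])=0$ for $i\gg0$ (resp.\ $|i|\gg0$).

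For (1), finite dimensionality follows directly from Lemma~\ref{homfinitelem}(2), since $X\in\T_c^+$ and $E[j]\in\E$; equivalently, ${\Hom}_\T(G[j],X)$ is finite-dimensional by Lemma~\ref{homfinitelem}(1). For the vanishing, Proposition~\ref{tech4}(2) (or directly the defining triangle $D\to X\to F$ with $F\in\E\subseteq\T^+$ from Proposition~\ref{tech4}(1)) gives $X\in\T^+$, say $X\in\T^{\ge -a}$. Since $G\in\T^{\le 0}$, we have $G[i]\in\T^{\le -i}$, so ${\Hom}_\T(G[i],X)=0$ once $-i<-a$, i.e.\ for $i>a$. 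Dualizing gives ${\Hom}_\T(X,E[i])=0$ for $i\gg0$, which proves (1).

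For (2), we additionally have $X\in\T^-$, say $X\in\T^{\le b}$. We need ${\Hom}_\T(G[i],X)=0$ for $i\ll0$. Here we use approximability condition (i): ${\Hom}_\T(G,G[m])=0$ for $m\ge n$. This gives $G\in{}^{\perp}(\T^{\ge n})$ up to a fixed shift. More precisely, since $\T^{\ge 0}$ is, in the preferred equivalence class, equivalent to $\{Y : {\Hom}(G[-m],Y)=0 \text{ for } m<0 \text{ large}\}$, we get ${\Hom}_\T(G,\T^{\le -N})=0$ for some $N$. The latter is standard for a compact object: $\T^c\subseteq\T^b_c$-type bounds hold, and ${\Hom}_\T(G,\T^{\le -N})=0$ by Neeman's results on approximable categories, where $G\in\T^+$ up to shift. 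Then ${\Hom}_\T(G[i],X)={\Hom}_\T(G,X[-i])$, and $X[-i]\in\T^{\le b+i}$, which lies in $\T^{\le -N}$ for $i\le -N-b$. Hence ${\Hom}_\T(X,E[i])=0$ for $i\ll 0$ as well, giving finiteness.

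The main obstacle is this last step: justifying ${\Hom}_\T(G,\T^{\le -N})=0$, i.e.\ that $G$ is ``bounded below'' with respect to the preferred $t$-structure. I would argue it via \cite[Corollary~3.10]{BNP18}-style reasoning as in Proposition~\ref{tech4}(1). Alternatively, I would note that $\T^{\le -N}$ is generated under coproducts and extensions by the $G[m]$ with $m\ge N$; the vanishing ${\Hom}(G,G[m])=0$ for $m\ge n$, together with the compactness of $G$ (so that ${\Hom}(G,-)$ commutes with coproducts), then propagates through extensions and coproducts, and also through the homotopy colimits needed to build $\overline{\langle G\rangle}^{(-\infty,-N]}$.
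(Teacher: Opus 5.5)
Your proof is correct. It is the paper's argument transported through the isomorphism $\Hom_\T(X,E[i])\simeq\mathbf{D}\Hom_\T(G[i],X)$. For (2), the paper combines $X\in\T^-$ with $E\in\T^+$ from Proposition~\ref{tech4}(1). Your vanishing $\Hom_\T(G,\T^{\le -n})=0$ is exactly the Brown--Comenetz dual of $E\in\T^{\ge -n+1}=(\T^{\le -n})^{\perp}$, since $\Hom_\T(\T^{\le -n},E)\simeq\mathbf{D}\Hom_\T(G,\T^{\le -n})$.

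Your last paragraph proves this vanishing correctly. The class $\{Y\mid \Hom_\T(G,Y)=0\}$ is closed under coproducts (by compactness of $G$), extensions and summands, and it contains $G[m]$ for $m\ge n$. Hence it contains $\overline{\langle G\rangle}^{(-\infty,-n]}=\T^{\le -n}$, and no separate treatment of homotopy colimits is needed.

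You should, however, delete the justifications in your third paragraph, because they are wrong. Condition (i) gives $G\in{}^{\perp}(\T^{\le -n})$, not ${}^{\perp}(\T^{\ge n})$; the latter is trivial from $G\in\T^{\le 0}$. Moreover, neither $G\in\T^+$ nor $\T^c\subseteq\T^b_c$ holds in general. Both require the extra hypothesis $\Hom_\T(G,G[i])=0$ for $i\ll 0$ from Lemma~\ref{lem:E in Tbc}, and both fail, for example, for $G=A$ in $\D(A)$ with $A=k[x]$ and $x$ in degree $-2$.
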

\begin{proof}
	(1) By Proposition~\ref{tech4}(1), there exists \( m>0 \) such that  
	\({\Hom}_{\T}(\T^{\ge m}, E) = 0\). Fix $j\in\mathbb{Z}$ and let $m'=\max\{m,m-j\}$. 
	Since \( X \in \T_{c}^{+} \), there is a triangle
	\[
	D_{m'+1} \longrightarrow X \longrightarrow E' \longrightarrow D_{m'+1}[1]
	\]
with \( D_{m'+1} \in \T^{\ge m'+1} \) and \( E' \in \E \).  
	Applying \( {\Hom}_{\T}(-,E[j]) \) to this triangle yields an isomorphism
	\[
	{\Hom}_{\T}(X,E[j]) \simeq {\Hom}_{\T}(E',E[j]).
	\]
	By Lemma~\ref{homfinitelem}(2), the latter is finite-dimensional, and hence  
	\({\Hom}_{\T}(X,E[j])\) is finite-dimensional as well. Since $X\in\T_{c}^{+}\subseteq\T^{+}$, it also follows from Proposition~\ref{tech4}(1) that $\Hom_{\T}(X,E[i])=0$ for $i\gg 0$. The conclusion then follows from Remark~\ref{reduce}.
	
	(2) By part (1), it remains to show that \( \Hom_{\T}(X,E[i]) = 0 \) for all \( i \ll 0 \). Proposition~\ref{tech4}(1) gives \( E \in \T^{+} \). Then, since \( X \in \T^{b} \subseteq \T^{-} \), we immediately obtain \( \Hom_{\T}(X,E[i]) = 0 \) for all \( i \ll 0 \).
\end{proof}

	\subsection{Preliminary representability theorem}

    In this subsection, we further assume that $\T^{\leq 0}$ is closed under product. 
	
	\begin{lem}\label{extend weak triangle}
		Let $f:M\to N$ be a morphism in $\T_{c}^{+}$. Then there exists a sequence $$N[-1]\longrightarrow L\longrightarrow M\stackrel{f}\longrightarrow N$$ in $\T_{c}^{+}$ such that
		\begin{enumerate}
			\item The sequence $${\Hom}_{\T}(N,-)|_{\E}\xrightarrow{\T(f,-)|_{\E}}{\Hom}_{\T}(M,-)|_{\E}\longrightarrow {\Hom}_{\T}(L,-)|_{\E}\longrightarrow {\Hom}_{\T}(N[-1],-)|_{\E}$$
			is a weak triangle {\rm (see \cite[Definition~8.2]{N18a})} in ${\Hom}_{R}(\E,k\text{-}\mathsf{Mod})$.
			\item For all $i\in\mathbb{Z}$, there exists an exact sequence 
			$$\cdots\longrightarrow \mathbf{H}^{i-1}(N)\longrightarrow \mathbf{H}^{i}(L)\longrightarrow \mathbf{H}^{i}(M)\longrightarrow \mathbf{H}^{i}(N).$$
            \item If $M$ admits a strong $\langle E\rangle_{n_{1}}$-coapproximating system and $N$ admits a strong $\langle E\rangle_{n_{2}}$-coapproximating system, then $L$ admits a strong $\langle E\rangle_{n_{1}+n_{2}}$-coapproximating system.
		\end{enumerate}
	\end{lem}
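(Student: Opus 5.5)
The plan is to dualize Neeman's construction of weak triangles from approximating systems (\cite[\S8]{N18a}), replacing compact approximations by strong $\E$-coapproximating systems and homotopy colimits by homotopy limits.

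\textbf{Step 1: coapproximating systems for $M$ and $N$, and a map between them.} By Proposition~\ref{Tc+coapproximable}, choose strong $\E$-coapproximating systems $(E^M_*,f^M_*)$ for $M$ and $(E^N_*,f^N_*)$ for $N$, with structure maps $g^M_m\colon M\to E^M_m$ and $g^N_m\colon N\to E^N_m$. Under the hypothesis of (3), choose them inside $\langle E\rangle_{n_1}$ and $\langle E\rangle_{n_2}$ respectively.

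I then lift $f$ to a morphism of inverse systems, after passing to a subsequence of $E^M_*$. By Proposition~\ref{tech4}(1) there is $B_m$ with $\Hom_{\T}(\T^{\ge B_m},E^N_m)=0$. Choosing the index $m'$ large, the cone of $g^M_{m'}$ lies in $\T^{\ge B_m}$, so $g^N_m f$ factors through $g^M_{m'}$ via some $\phi_m\colon E^M_{m'}\to E^N_m$. I want the squares
\[
f^N_m\phi_{m+1}=\phi_m f^M_{\ast}
\]
to commute honestly, not only after precomposition with $g^M$. To arrange this, I increase the indices further and use the same vanishing: the difference of the two composites vanishes on $M$, so it factors through the cone of $g^M$, which lies in $\T^{\ge B}$, and it is killed once the index is pushed high enough. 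Reindexing, I obtain a commuting ladder $\phi_*\colon E^M_*\to E^N_*$. It stays a strong coapproximating system because the relevant $\mathbf{H}^i$ are isomorphisms.

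\textbf{Step 2: levelwise cones and the object $L$.} Complete each $\phi_m$ to a triangle
\[
E^N_m[-1]\to L_m\to E^M_m\xrightarrow{\phi_m}E^N_m .
\]
Using TR3, choose maps $L_{m+1}\to L_m$ compatible with the ladder. Then $L_m\in E^N_m[-1]*E^M_m\subseteq\langle E\rangle_{n_2}*\langle E\rangle_{n_1}\subseteq\langle E\rangle_{n_1+n_2}$, which gives (3). The five lemma applied to the long exact $\mathbf{H}$-sequences shows that $\mathbf{H}^i(L_{m+1})\to\mathbf{H}^i(L_m)$ is an isomorphism for $i\le m-1$. After a shift of indexing, $L_*$ is therefore a strong $\E$-coapproximating system.

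Set $L:=\Holim L_*$. Lemma~\ref{universal property of coapproximating system}(1) gives $L\in\T_c^+$. By Lemma~\ref{universal property of coapproximating system}(2), $M\simeq\Holim E^M_*$ and $N\simeq\Holim E^N_*$. The maps $L\to M$ and $N[-1]\to L$ are induced on homotopy limits by the ladder maps, using the defining triangle of $\Holim$ and the product of the level maps.

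\textbf{Step 3: the exact sequences (1) and (2).} For (2), apply Lemma~\ref{tech3}: for each $i$, the groups $\mathbf{H}^i(L)$, $\mathbf{H}^i(M)$ and $\mathbf{H}^i(N)$ agree with $\mathbf{H}^i$ of a single sufficiently large level. The level triangle then yields the required exact sequence, provided the induced maps are identified with the level maps, which follows from the compatibilities $L\to L_m\to E^M_m$.

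For (1), restrict to $\E$. The levelwise sequences
\[
\Hom(E^N_m,-)\to\Hom(E^M_m,-)\to\Hom(L_m,-)\to\Hom(E^N_m[-1],-)
\]
are exact, and they remain exact after taking the filtered colimit over $m$. By Lemma~\ref{colim-holim1} (Corollary~\ref{key1}), these colimits are identified on $\E$ with $\Hom(N,-)|_\E$, $\Hom(M,-)|_\E$, $\Hom(L,-)|_\E$ and $\Hom(N[-1],-)|_\E$. This exhibits the sequence as a colimit of sequences coming from triangles, which is precisely a weak triangle in the sense of \cite[Definition~8.2]{N18a}.

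\textbf{Main obstacle.} I expect the main difficulty to be Steps 1 and 2: making the lift of $f$ a strictly commuting ladder, and making the non-functorial cone maps both compatible and such that the maps induced on $\Holim$ agree with $f$ and with the colimit identifications. I would handle this in the same way as in the proof of Proposition~\ref{Tc+coapproximable}. Inductively, at each stage I pass to a deeper level, chosen so that the relevant $\Hom$ from $\T^{\ge B}$ into the already constructed finite stage vanishes; this forces the required squares to commute. Only after this is arranged would I invoke the naturality statement in Lemma~\ref{colim-holim1}.
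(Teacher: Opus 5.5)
Your proposal is correct and is essentially the paper's argument: the paper simply invokes Neeman's Lemma~8.5 together with the dual of his Remark~9.7, and your construction is exactly that argument dualized (lift $f$ to a ladder of strong $\E$-coapproximating systems, take levelwise cocones via TR3, pass to $\Holim$, and identify the restricted Yoneda images through Lemma~\ref{colim-holim1}). One simplification of the step you flagged as the main obstacle: once $m'\ge B_m$, the map $\Hom_{\T}(E^M_{m'},E^N_m)\to\Hom_{\T}(M,E^N_m)$ induced by $g^M_{m'}$ is bijective, because both outer terms of the long exact sequence vanish, so the lifts $\phi_m$ are unique and the ladder squares commute automatically.
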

    \begin{proof}
    (1) and (3) are a specialization of \cite[Lemma~8.5]{N18a}. It can also be seen as a dual argument to \cite[Remark~9.7]{N18a}. (2) is a property of weak triangles (see \cite[Lemma~8.4]{N18a}).
    \end{proof}
	
	\begin{lem}\label{tech5}
		Let $\mathbf{H}$ be a locally finite $\langle E\rangle_{n}$-homological functor. Then there exists a sequence \[F_{n}\xrightarrow{f_{n-1}} F_{n-1}\xrightarrow{f_{n-2}}\cdots\stackrel{f_{2}}\longrightarrow F_{2}\stackrel{f_{1}}\longrightarrow F_{1}\]in $\T_{c}^{+}$ such that for each $i$
		\begin{enumerate}
			\item $F_{i}$ admits a strong $\langle E\rangle_{i}$-coapproximating system.
			\item There exists an epimorphism $$\varphi_{i}:{\Hom}_\T(F_{i},-)|_{\langle E\rangle_{i}}\longrightarrow \mathbf{H}|_{\langle E\rangle_{i}}$$  making the following diagram commutes
			\[\begin{tikzcd}
				{{\Hom}_\T(F_{i},-)|_{\langle E\rangle_{i}}} \arrow[rd, "\varphi_{i}"'] \arrow[rr, "{\T(f_{i},-)|_{\langle E\rangle_{i}}}"] &                                    & {{\Hom}_\T(F_{i+1},-)|_{\langle E\rangle_{i}}} \arrow[ld, "\varphi_{i+1}|_{\langle E\rangle_{i}}"] \\
				& \mathbf{H}|_{\langle E\rangle_{i}} &                                                                                            
			\end{tikzcd}.\]
			\item $\varphi_i$ in $(2)$ satisfies $\ker(\varphi_{i}|_{\langle E\rangle_{1}})=\ker(\T(f_{i},-)|_{\langle E\rangle_{1}})$.
		\end{enumerate}
	\end{lem}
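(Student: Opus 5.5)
The plan is induction on $i$. At each stage $F_i$ is a product of shifts of $E$, or is built from such products by the cone construction of Lemma~\ref{extend weak triangle}. This is the Brown--Comenetz dual of Neeman's inductive construction in the proof of \cite[Theorem~9.18]{N18a}: coproducts of shifts of $G$ are replaced by products of shifts of $E$, and homotopy colimits by homotopy limits, with Lemma~\ref{colim-holim1} controlling the passage. For the argument to run, the statement has to be read with the maps going $f_i\colon F_{i+1}\to F_i$, so that $\T(f_i,-)$ has the direction drawn in~(2).

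\textbf{Base case.} By Remark~\ref{reduce} (applied to $\langle E\rangle_1$), each $d_j:=\dim_k\mathbf{H}(E[j])$ is finite, and $d_j=0$ for $j>J$ for some $J$. Put
\[
F_1=\prod_{j\le J}E[j]^{d_j}.
\]
Since $\Hom_\T(\T^{\ge1},E)=0$ and $E\in\T^{+}$ (Proposition~\ref{tech4}(1)), we have $E[j]\in\T^{\ge a-j}$ for a fixed $a$. Hence the finite truncations $\prod_{-m\le j\le J}E[j]^{d_j}\in\langle E\rangle_1$, with the projections between them, form a strong $\langle E\rangle_1$-coapproximating system, after reindexing so that the degree bound in Definition~\ref{strong coapproximating system} holds. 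Two facts are used here: $\T^{\ge b}$ is closed under products (it is a right orthogonal), and products of this kind are the homotopy limit. By Lemma~\ref{universal property of coapproximating system} this gives $F_1\in\T_c^+$.

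By Lemma~\ref{colim-holim1}, restricted to $\E$ we have $\Hom_\T(F_1,-)\cong\bigoplus_{j\le J}\Hom_\T(E[j],-)^{d_j}$. Choosing a basis of each $\mathbf{H}(E[j])$ and applying Yoneda then gives $\varphi_1$. It is surjective on each $E[l]$, since every basis element is hit by an identity map, hence surjective on finite sums and summands, so on all of $\langle E\rangle_1$.

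\textbf{Inductive step.} Suppose $F_i$ and $\varphi_i$ have been constructed. Let $\mathbf{K}=\ker(\varphi_i|_{\langle E\rangle_1})$. By Lemma~\ref{homfinitelem}(2) and Lemma~\ref{reasonable}, $\mathbf{K}(E[j])\subseteq\Hom_\T(F_i,E[j])$ is finite-dimensional and vanishes for $j\gg0$.
\begin{enumerate}
\item Choose bases of the spaces $\mathbf{K}(E[j])$. These assemble, exactly as in the base case, into a morphism $g\colon F_i\to F'$ with $F'=\prod_j E[j]^{e_j}\in\T_c^+$. By construction the image of $\T(g,-)|_{\langle E\rangle_1}$ is exactly $\mathbf{K}$.
\item Apply Lemma~\ref{extend weak triangle} to $g$. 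This gives $F'[-1]\to F_{i+1}\xrightarrow{f_i}F_i\xrightarrow{g}F'$. By part~(3) of that lemma, $F_{i+1}$ admits a strong $\langle E\rangle_{i+1}$-coapproximating system, so~(1) holds.
\item By weak-triangle exactness (part~(1) of that lemma), $\ker\T(f_i,-)|_{\langle E\rangle_1}=\operatorname{Im}\T(g,-)|_{\langle E\rangle_1}=\mathbf{K}$. This gives~(3).
\end{enumerate}

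\textbf{The hard step: constructing $\varphi_{i+1}$.} It remains to define $\varphi_{i+1}$ on $\langle E\rangle_{i+1}$ so that it is surjective and restricts along $\T(f_i,-)$ to $\varphi_i$; this is where I expect the main difficulty. On $\langle E\rangle_i$, $\varphi_i$ kills the image of $\Hom_\T(F',-)$, because that image is contained in $\ker\varphi_i$ on $\langle E\rangle_1$ and one propagates this through extensions. Hence $\varphi_i$ factors through the image of $\T(f_i,-)$, and one must then extend this factorization from that image to all of $\Hom_\T(F_{i+1},-)$ and from $\langle E\rangle_i$ to $\langle E\rangle_{i+1}$.

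For an object $Y\in\langle E\rangle_{i+1}$, take a triangle $Y_1\to Y\to Y_i$ with $Y_1\in\langle E\rangle_1$ and $Y_i\in\langle E\rangle_i$, or its rotated form. I would then combine three ingredients, following the diagram chase in \cite[Lemmas~8.5, 9.x]{N18a} dually:
\begin{enumerate}
\item the long exact sequences of the homological functor $\mathbf{H}$ along this triangle;
\item the weak-triangle long exact sequence for $\Hom_\T(F_{i+1},-)$ along the same triangle;
\item the kernel equality~(3), which is exactly what makes the resulting map well defined.
\end{enumerate}
Surjectivity of $\varphi_{i+1}$ should follow from a five-lemma-type argument, using surjectivity of $\varphi_1$ on $\langle E\rangle_1$ and of $\varphi_i$ on $\langle E\rangle_i$. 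Closing under direct summands is routine. The dualisation of this diagram chase, in particular checking that no ``phantom'' obstruction survives the homotopy-limit passage, is the point I would verify most carefully.
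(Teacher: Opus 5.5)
Your base case, the construction of $g\colon F_i\to F'$ and of $F_{i+1}$, and your proofs of (1) and (3) follow the paper's proof. You use the product $\prod_j E[j]^{e_j}$ instead of the paper's coproduct; the two agree here, and the product even lets you write $g$ down from the universal property rather than via \cite[Lemma~7.8]{N18a}.

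The gap is the step you flag yourself: the existence of $\varphi_{i+1}$ on $\langle E\rangle_{i+1}$ with $\varphi_{i+1}\circ\T(f_i,-)=\varphi_i$ on $\langle E\rangle_i$. An objectwise diagram chase along a triangle $Y_1\to Y\to Y_i$ cannot produce it.
\begin{itemize}
\item Already for $Y\in\langle E\rangle_i$, the exact sequence $\Hom_\T(F_i,Y)\to\Hom_\T(F_{i+1},Y)\to\Hom_\T(F'[-1],Y)$ shows that $\Hom_\T(F_{i+1},Y)$ is in general strictly larger than the image of $\T(f_i,Y)$.
\item On that extra part, the long exact sequences of $\mathbf{H}$ only give values object by object, and nothing makes these choices natural in $Y$.
\item ``Propagating through extensions'' does not show that $\varphi_i\circ\T(g,-)$ vanishes on $\langle E\rangle_i$. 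A transformation vanishing at $Y_1$ and $Y_i$ need not kill a map $F'\to Y$ whose composite to $Y_i$ is nonzero. The vanishing is in fact true by Yoneda, but it is not what the argument needs.
\end{itemize}

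The missing idea is to work with colimit presentations rather than objectwise. This is the content of \cite[Lemma~8.5]{N18a}, which the paper invokes with $\mathcal{A}=\langle E\rangle_1$, $\mathcal{B}=\langle E\rangle_i$, $\mathcal{C}=\langle E\rangle_{i+1}$.
\begin{itemize}
\item As in the construction behind Lemma~\ref{extend weak triangle}, the weak triangle is the colimit of the sequences attached to triangles $X_m\to E^{(i)}_m\to E'_m\to X_m[1]$, with $E^{(i)}_m\in\langle E\rangle_i$, $E'_m\in\langle E\rangle_1$ and $X_m\in\langle E\rangle_{i+1}$.
\item By Lemma~\ref{colim-holim1} and Yoneda, a transformation $\Hom_\T(F_{i+1},-)|_{\langle E\rangle_{i+1}}\to\mathbf{H}|_{\langle E\rangle_{i+1}}$ is an element of $\lims\mathbf{H}(X_m)$. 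Likewise, $\varphi_i$ is a compatible family $(e_m)$ with $e_m\in\mathbf{H}(E^{(i)}_m)$.
\item Only the inclusion $\operatorname{Im}\T(g,-)|_{\langle E\rangle_1}\subseteq\ker\varphi_i|_{\langle E\rangle_1}$ is needed. It says each $e_m$ dies in $\mathbf{H}(E'_m)$, hence lifts to $\mathbf{H}(X_m)$.
\item The lifts are cosets of the image of $\mathbf{H}(E'_m[-1])$, so choosing them compatibly is a Mittag-Leffler ($\lim^1$) problem. It is solved by the split epimorphisms $E'_{m+1}\to E'_m$ (your projections) or by finite-dimensionality of $\mathbf{H}(X_m)$. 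This, not a phantom phenomenon, is the real obstruction.
\end{itemize}

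The reverse inclusion, i.e.\ (3), enters only afterwards, for surjectivity (\cite[Lemma~8.6]{N18a}), and there your chase does work:
\begin{enumerate}
\item Hit the image of $x\in\mathbf{H}(Y)$ in $\mathbf{H}(Y_i)$ by $\varphi_i(a)$.
\item Use (3) at $Y_1[1]$ to lift $af_i$ to $\Hom_\T(F_{i+1},Y)$.
\item Absorb the remaining error, which comes from $\mathbf{H}(Y_1)$, by surjectivity of $\varphi_i$ on $\langle E\rangle_1$. Note that this is $\varphi_i$, not $\varphi_1$ as you wrote.
\end{enumerate}
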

	\begin{proof}
		We argue by induction on $n$. Let us begin with the case \(n=1\). By definition, \(\mathbf{H}(E[j])\in k\text{-}\mathsf{mod}\) for any \(j \in \mathbb{Z}\). Fix a finite set of generators \(J_j = \{\varphi_{js}\}\) for \(\mathbf{H}(E[j])\). By Yoneda's lemma, each \(\varphi_{js}\) corresponds to a morphism \(\varphi_{js}: {\Hom}_\T(E[j], -) \to \mathbf{H}\), which canonically induces an epimorphism
		\[\bigoplus_{j \in \mathbb{Z}}{\Hom}_\T(\bigoplus_{s \in J_j}E[j], -)|_{\langle E\rangle_1}\simeq
		\bigoplus_{j \in \mathbb{Z}} \bigoplus_{s \in J_j} {\Hom}_\T(E[j], -)|_{\langle E\rangle_{1}} \to \mathbf{H}_{\langle E\rangle_{1}}.
		\]We now prove the isomorphism
		\[
		\bigoplus_{j \in \mathbb{Z}}{\Hom}_\T(\bigoplus_{s \in J_j}E[j], -)|_{\langle E\rangle_1}\simeq {\Hom}_\T( \bigoplus_{j \in \mathbb{Z}} \bigoplus_{s \in J_j} E[j], - )|_{\langle E\rangle_1}.
		\]
		Observe that
		\[
		{\Hom}_\T( \bigoplus_{j \in \mathbb{Z}} \bigoplus_{s \in J_j} E[j], - )|_{\langle E\rangle_1} \simeq \prod_{j \in \mathbb{Z}} {\Hom}_\T( \bigoplus_{s \in J_j} E[j], - )|_{\langle E\rangle_1},
		\]
		which induces a canonical morphism
		\[
		\psi: \bigoplus_{j \in \mathbb{Z}}{\Hom}_\T(\bigoplus_{s \in J_j}E[j], -)|_{\langle E\rangle_1} \to \prod_{j \in \mathbb{Z}} {\Hom}_\T( \bigoplus_{s \in J_j} E[j], - )|_{\langle E\rangle_1}.
		\]
		Thus it suffices to verify that \(\psi\) is an isomorphism. This reduces to showing that \(\psi(E[r])\) is an isomorphism for each \(r \in \mathbb{Z}\). Note that
		\[
		{\Hom}_\T( \bigoplus_{s \in J_j} E[j], E[r] ) \simeq \mathbf{D}{\Hom}_\T(G[r], \bigoplus_{s \in J_j} E[j]).
		\]
		By the construction of \(J_j\), we have \({\Hom}_\T( \bigoplus_{s \in J_j} E[j], - )_{\langle E\rangle_1} = 0\) for \(j \gg 0\). Since \(G[r] \in \T^{-}\) and \(E \in \T^{+}\), it follows that
		\[
		\mathbf{D}{\Hom}_\T(G[r], \bigoplus_{s \in J_j} E[j]) = 0 \quad \text{for} \quad j \ll 0.
		\]
		Thus \({\Hom}_\T( \bigoplus_{s \in J_j} E[j], E[r]) = 0\) for \(j \ll 0\). Clearly, \(\Hom_{\T}(\bigoplus_{s\in J_j} E[j], E[r])=0\) for \(|j|\gg 0\) implies that \(\psi(E[r])\) is an isomorphism. Let \( F_1 = \bigoplus_{j \in \mathbb{Z}} \bigoplus_{s \in J_j} E[j] \). We now need to construct a strong \(\langle E\rangle_1\)-coapproximating system for \(F_1\). Since \(E \in \T^{+}\), there is an integer \(B < 0\) such that \(E[B] \in \T^{\geq 1}\). For each \(m > 0\), define
		\[
		E_m = \bigoplus_{j \geq -m + 2 + B} \bigoplus_{s \in J_j} E[j].
		\]
		The sum is finite by hypothesis, so \(E_m \in \langle E \rangle_1\). Clearly, there is a canonical split epimorphism from \(E_{m+1}\) to \(E_m\). Moreover, there exists a canonical split epimorphism from \(F_1\) to \(E_m\). It is straightforward to verify that this construction yields a strong \(\langle E \rangle_1\)-coapproximating system for \(F_1\). By Lemma~\ref{universal property of coapproximating system}(2), we obtain $F_{1}\in\T_{c}^{+}$.
		
		Suppose we have established the case $n=r$. Now consider $n=r+1$, so that $\mathbf{H}$ is a locally finite $\langle E\rangle_{r+1}$-homological functor. Clearly, $\mathbf{H}|_{\langle E\rangle_r}$ is a locally finite $\langle E\rangle_{r}$-homological functor. Thus, there exist $F_{r} \in \T_{c}^{+}$ and an epimorphism $\varphi_{r}: {\Hom}_\T(F_{r},-)|_{\langle E\rangle_{r}} \to \mathbf{H}|_{\langle E\rangle_r}$. Let $\mathbf{H}'$ denote the kernel of $\varphi_{r}$. By Lemma \ref{reasonable} and Lemma \ref{ker of homo functor}, $\mathbf{H}'$ is also a locally finite $\langle E\rangle_{r}$-homological functor. Thus we obtain $F' \in \T_{c}^{+}$ and an epimorphism $\varphi' : {\Hom}_\T(F',-)|_{\langle E\rangle_{1}} \to \mathbf{H}'|_{\langle E\rangle_{1}}$. 
		Moreover, $F'$ admits a strong $\langle E\rangle_1$-coapproximating system: 
		\[
		\cdots \longrightarrow E'_{3}\longrightarrow E'_{2} \longrightarrow E'_{1}
		\]
		where we can assume all morphisms in this sequence are split epimorphisms. Denote the composition 
		\[
		{\Hom}_\T(F',-)|_{\langle E\rangle_{1}} \longrightarrow \mathbf{H}'|_{\langle E\rangle_{1}} \longrightarrow {\Hom}_\T(F_{r},-)|_{\langle E\rangle_{1}}
		\] 
		by $\tilde{\varphi}$. Then \cite[Lemma~7.8]{N18a} implies there exists $g_{r}: F_{r} \to F'$ with $\T(g_{r},-)|_{\langle E\rangle_{1}} = \tilde{\varphi}$. 
		By Lemma \ref{extend weak triangle}, there exists a sequence in $\T_{c}^{+}$:
		\[
		F'[-1] \longrightarrow F_{r+1} \stackrel{{f_{r}}}\longrightarrow F_{r} \stackrel{{g_{r}}}\longrightarrow F'
		\]
		 such that{\xiaowuhao
		\[
		{\Hom}_{\T}(F',-)|_{\E} \xrightarrow{\T(g_{r},-)|_{\E}} {\Hom}_{\T}(F_{r},-)|_{\E} \xrightarrow{\T(f_{r},-)|_{\E}} {\Hom}_{\T}(F_{r+1},-)|_{\E} \to {\Hom}_{\T}(F'[-1],-)|_{\E}
		\]}
		forms a weak triangle in $\mathsf{Hom}_{k}(\mathcal{E}, k\text{-}\mathsf{Mod})$, and $F_{r+1}$ admits a strong $\langle E\rangle_{r+1}$-coapproximating system. 
		
		Setting $\mathcal{A} = \langle E\rangle_{1}$, $\mathcal{B} = \langle E\rangle_{r}$, $\mathcal{C} = \langle E\rangle_{r+1}$, \cite[Lemma~8.5]{N18a} yields a morphism 
		\[
		\varphi_{r+1}: {\Hom}_\T(F_{r+1},-)|_{\langle E\rangle_{r+1}} \longrightarrow \mathbf{H}
		\]
		making the following diagram 
		\[\begin{tikzcd}
			{{\Hom}_\T(F_{r},-)|_{\langle E\rangle_{r}}} \arrow[rrrr, "\varphi_{r}"', bend right] \arrow[rr, "{\T(f_{r},-)|_{\langle E\rangle_{r}}}"] && {{\Hom}_\T(F_{r+1},-)|_{\langle E\rangle_{r}}} \arrow[rr, "\varphi_{r+1}|_{\langle E\rangle_{r}}"] && \mathbf{H}|_{\langle E\rangle_{r}}.
		\end{tikzcd}\]
		commutes. 
		Note further that the sequence
		\[
		{\Hom}_\T(F',-)|_{\langle E\rangle_{1}} \xrightarrow{\T(g_r,-)|_{\langle E\rangle_{1}}} {\Hom}_\T(F_r,-)|_{\langle E\rangle_{1}} \xrightarrow{\varphi_{r} |_{\langle E\rangle_{1}}}\mathbf{H}|_{\langle E\rangle_{1}} \to 0
		\]
		is exact. Thus by \cite[Lemma~8.6]{N18a}, 
		$$\varphi_{r+1}: {\Hom}_\T(F_{r+1},-)|_{\langle E\rangle_{r+1}} \longrightarrow \mathbf{H}$$
		is an epimorphism. 
		Finally, \cite[Lemma~8.4]{N18a} implies that the sequence
		\[
		{\Hom}_\T(F',-)|_{\langle E\rangle_{1}} \xrightarrow{\T(g_r,-)|_{\langle E\rangle_{1}}} {\Hom}_\T(F_r,-)|_{\langle E\rangle_{1}} \xrightarrow{\T(f_r,-)|_{\langle E\rangle_{1}}} {\Hom}_\T(F_{r+1},-)|_{\langle E\rangle_{1}}
		\]
		is exact, which completes the proof of (3).
	\end{proof}
	
	\begin{rem}\label{lifting1}
		Let $\mathbf{H}$ be an $\E$-homological functor and $F_n \in \T_c^+$ with a strong $\langle E\rangle_{n}$-coapproximating system. Then by \cite[Corollary~7.4]{N18a}, each $\tilde{\varphi}_n \colon {\Hom}_\T(F_n, -)|_{\langle E \rangle_n} \to \mathbf{H}|_{\langle E \rangle_n}$ admits a lift $\varphi_n \colon {\Hom}_\T(F_n, -)|_{\E} \to \mathbf{H}$ such that $\varphi_n|_{\langle E \rangle_n} = \tilde{\varphi}_n$.
	\end{rem}
	
	\begin{prop}\label{brutal rep}
		Let $\mathbf{H}$ be a locally finite $\E$-homological functor. Then there exist $F \in \T$ and an isomorphism $\varphi: {\Hom}_\T(F, -)|_{\E} \simeq \mathbf{H}$.
	\end{prop}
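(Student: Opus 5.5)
We follow the dual of Neeman's proof of \cite[Proposition~9.?]{N18a}, now with homotopy limits in place of homotopy colimits. First apply Lemma~\ref{tech5} to the restrictions $\mathbf{H}|_{\langle E\rangle_n}$ for all $n$. The inductive construction there is compatible in $n$: the step from $r$ to $r+1$ only uses $\mathbf{H}|_{\langle E\rangle_{r+1}}$. We therefore obtain an infinite sequence
\[
\cdots\longrightarrow F_3\xrightarrow{f_2}F_2\xrightarrow{f_1}F_1
\]
in $\T_c^+$ with the following properties: $F_i$ admits a strong $\langle E\rangle_i$-coapproximating system; there are epimorphisms $\varphi_i:\Hom_\T(F_i,-)|_{\langle E\rangle_i}\to\mathbf{H}|_{\langle E\rangle_i}$ compatible with the maps $\T(f_i,-)$; and $\ker(\varphi_i|_{\langle E\rangle_1})=\ker(\T(f_i,-)|_{\langle E\rangle_1})$. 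By Remark~\ref{lifting1}, each $\varphi_i$ lifts to $\varphi_i:\Hom_\T(F_i,-)|_\E\to\mathbf{H}$. We then need to adjust the lifts so they remain compatible with $\T(f_i,-)$ on all of $\E$. If this cannot be done directly, we compose with the maps $\T(f_i,-)$ and use that the restriction to $\langle E\rangle_i$ determines the lift up to something killed by the next map, exactly as in Neeman's argument.

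Next set $F:=\Holim(F_*)$. The morphisms $\Hom_\T(F_i,-)|_\E\to\Hom_\T(F,-)|_\E$ assemble into a map
\[
\colim_i \Hom_\T(F_i,-)|_\E\to\Hom_\T(F,-)|_\E,
\]
in the spirit of Lemma~\ref{colim-holim1}. The compatible $\varphi_i$ induce
\[
\Phi:\colim_i\Hom_\T(F_i,-)|_\E\to\mathbf{H}.
\]
We show $\Phi$ is an isomorphism and then transfer this to $F$. \emph{Surjectivity} is immediate: every $X\in\E$ lies in some $\langle E\rangle_i$, and there $\varphi_i$ is epi. For \emph{injectivity}, take $x\in\Hom(F_i,X)$ with $X\in\langle E\rangle_m$ and $\varphi_i(x)=0$. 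Pass to $j\ge\max(i,m)$. The kernel condition, iterated through the $3\times3$/weak-triangle bookkeeping of \cite[Lemma~8.6]{N18a} (dually), shows that the image of $x$ in $\Hom(F_{j'},X)$ vanishes for $j'\gg j$. Strictly, condition (3) is stated only on $\langle E\rangle_1$, so we first handle $X=E[\ell]$ and then use the five lemma on $\langle E\rangle_m$, noting that both sides are homological in $X$.

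The main obstacle is identifying $\Hom_\T(\Holim F_*,X)$ with $\colim\Hom_\T(F_i,X)$ for $X\in\E$. Lemma~\ref{colim-holim1} gives this only for strong $\E$-coapproximating systems, and $(F_*)$ is a sequence of objects of $\T_c^+$, not of $\E$. To handle this, we first show that the $F_i$ stabilize in each cohomological degree: $\mathbf{H}^\ell(f_i)$ is an isomorphism for $i\gg\ell$. The plan is to control degrees through the exact sequences of Lemma~\ref{extend weak triangle}(2). The cone $F'$ added at stage $r$ is a sum of shifts $E[j]$ with $j$ growing along with the "negligible" part, and local finiteness ($\mathbf{H}(E[j])=0$ for $j\gg0$) forces the new contributions to live in degrees tending to $+\infty$. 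This degree control is where the compact silting hypothesis (Lemma~\ref{lem:key2}) would enter, if needed. Once stabilization holds, we choose strong $\E$-coapproximating systems for each $F_i$ and form a diagonal sequence $E_i$. This is a strong $\E$-coapproximating system for $F$, by Lemma~\ref{tech3} and Lemma~\ref{universal property of coapproximating system}. Corollary~\ref{key1} then gives
\[
\Hom_\T(F,-)|_\E\simeq\colim\Hom(E_i,-)|_\E\simeq\colim\Hom(F_i,-)|_\E.
\]
The last isomorphism holds because both sides agree on each $E[\ell]$, since $\Hom(-,E[\ell])=\mathbf{D}\Hom(G[\ell],-)$ and the latter stabilizes. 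Composing with $\Phi$ gives the required isomorphism $\Hom_\T(F,-)|_\E\simeq\mathbf{H}$, with $F\in\T_c^+$.
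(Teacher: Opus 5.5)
\textbf{Verdict: genuine gap.} The part of your argument that is missing is the identification of $\Hom_\T(\Holim F_*,-)|_\E$ with $\colim\Hom_\T(F_*,-)|_\E$.

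\textbf{What matches the paper.} Your treatment of $\Phi\colon\colim\Hom_\T(F_*,-)|_\E\to\mathbf{H}$ is the paper's. Surjectivity holds because each $\varphi_i$ is an epimorphism. For injectivity you check on $\langle E\rangle_1$: by Lemma~\ref{tech5}(3) the kernels $\ker(\varphi_n|_{\langle E\rangle_1})$ are killed by the transition maps, so their colimit vanishes. The five lemma then extends this to all of $\E$. The compatibility of the lifts needs no adjustment. Both $\varphi_i$ and $\varphi_{i+1}\circ\T(f_i,-)|_\E$ lift the same map on $\langle E\rangle_i$, so the uniqueness in \cite[Corollary~7.4]{N18a} makes them equal.

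\textbf{The gap.} You identify $\Hom_\T(\Holim F_*,-)|_\E$ with $\colim\Hom_\T(F_*,-)|_\E$ by claiming that $\mathbf{H}^\ell(f_i)$ is an isomorphism for $i\gg\ell$. That claim is only sketched, and the heuristic does not support it.
\begin{itemize}
\item Local finiteness gives vanishing of $\ker\varphi_r$ on $E[j]$ for $j$ above a bound that is uniform in $r$. It does not make the kernel drift towards higher cohomological degrees.
\item At stage $r+1$, the new part of $\Hom_\T(F_{r+1},E[j])$ comes from relations among the generators used to build $F'$. These relations are governed by $\Hom_\T(E[j'],E[j+1])\cong\Hom_\T(G,G[j+1-j'])$.
\item Forcing those relations into higher degrees requires $\Hom_\T(G,G[n])=0$ for all $n>0$, which is essentially the silting hypothesis.
\item This proposition is stated before the compact silting assumption is imposed. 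Only approximability, local Hom-finiteness and closure of $\T^{\le 0}$ under products are in force, so Lemma~\ref{lem:key2} is not available.
\end{itemize}
Your diagonal strong $\E$-coapproximating system would also place $F$ in $\T_c^+$. That is more than the proposition asserts. The paper obtains it only later, in Theorem~\ref{thm:rep for T+c}, using the phantom arguments of Lemmas~\ref{cophantom decom} and~\ref{cophantom vanish}.

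\textbf{The missing idea: no stabilization is needed.} By the five lemma it suffices to treat $X=E[j]$.
\begin{itemize}
\item Brown--Comenetz duality gives $\Hom_\T(\Holim F_*,E[j])\simeq\mathbf{D}\Hom_\T(G[j],\Holim F_*)$.
\item Each $\Hom_\T(G[j],F_i)$ is finite-dimensional by Lemma~\ref{homfinitelem}(1), since $F_i\in\T_c^+$. So Lemma~\ref{ML lem2} gives $\Hom_\T(G[j],\Holim F_*)\simeq\lims\Hom_\T(G[j],F_*)$.
\item Put $V_i:=\Hom_\T(F_i,E[j])\simeq\mathbf{D}\Hom_\T(G[j],F_i)$. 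By the part you already proved, $\colim V_*\simeq\mathbf{H}(E[j])$, which is finite-dimensional.
\item Dualizing gives $\lims\Hom_\T(G[j],F_*)\simeq\mathbf{D}\colim V_*$, which is also finite-dimensional.
\end{itemize}
Hence
\[
\Hom_\T(\Holim F_*,E[j])\simeq\mathbf{D}\mathbf{D}\colim V_*\simeq\colim\Hom_\T(F_*,E[j]).
\]
This closes the argument with $F=\Holim F_*\in\T$, which is exactly what the proposition claims.
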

	\begin{proof}
		Since for every $n>0$, the restriction $\mathbf{H}|_{\langle E\rangle_{n}}$ is a locally finite $\langle E\rangle_{n}$-homological functor, Lemma \ref{tech5} yields a sequence 
		\[(F_{*},f_{*}):\quad
		\cdots \longrightarrow F_{3} \stackrel{f_{2}}\longrightarrow F_{2} \stackrel{f_{1}}\longrightarrow F_{1}
		\] 
		in $\T_{c}^{+}$ and morphisms $\tilde{\varphi}_{n}: {\Hom}_\T(F_{n},-)|_{\langle E\rangle_{n}} \to \mathbf{H}|_{\langle E\rangle_{n}}$. By Remark \ref{lifting1}, each $\tilde{\varphi}_{n}$ admits a lift $\varphi_{n}: {\Hom}_\T(F_{n},-)|_{\E} \to \mathbf{H}$ satisfying $\varphi_{n}|_{\langle E\rangle_{n}} = \tilde{\varphi}_{n}$. Then \cite[Corollary~7.4]{N18a} implies that for every $i$, the following diagram 
		\[\begin{tikzcd}
			{{\Hom}_\T(F_{i},-)|_{\E}} \arrow[rd, "\varphi_{i}"'] \arrow[rr, "{\T(f_{i},-)|_{\E}}"] &            & {{\Hom}_\T(F_{i+1},-)|_{\E}} \arrow[ld, "\varphi_{i+1}"] \\
			& \mathbf{H} &                                                     
		\end{tikzcd}.\] commutes. This yields a canonical morphism 
		$$\varphi: \colim \Hom_\T(F_{\ast}, -)|_{\E} \to \mathbf{H}.$$ 
		We first prove that $\varphi$ is an isomorphism, and then establish that 
		$$\colim \Hom_\T(F_{\ast}, -)|_{\E} \simeq {\Hom}_\T(\Holim F_{\ast}, -)|_{\E}.$$
		
		To prove that $\varphi$ is an isomorphism, it suffices to show that $\varphi|_{\langle E\rangle_{1}}$ is an isomorphism. Define $K_{n} = \ker(\varphi_{n}|_{\langle E\rangle_{1}})$; then by Lemma \ref{tech5}(3), we obtain the following commutative diagram:
		\[\begin{tikzcd}
0 \arrow[r] & K_{n} \arrow[r] \arrow[d, "0"'] & {{\Hom}_\T(F_{n},-)|_{\langle E\rangle_{1}}} \arrow[r, "\varphi_{n}|_{\langle E\rangle_{1}}"] \arrow[d, "{\T(f_{n},-)|_{\langle E\rangle_{1}}}"'] & \mathbf{H}|_{\langle E\rangle_{1}} \arrow[r] \arrow[d, no head, shift right] \arrow[d, no head] & 0 \\
0 \arrow[r] & K_{n+1} \arrow[r]               & {{\Hom}_\T(F_{n+1},-)|_{\langle E\rangle_{1}}} \arrow[r, "\varphi_{n+1}|_{\langle E\rangle_{1}}"']                                                & \mathbf{H}|_{\langle E\rangle_{1}} \arrow[r]                                                    & 0
\end{tikzcd}.\] 
		By the right exactness of the colimit functor, we obtain an exact sequence
		\[
		0=\colim K_{\ast} \longrightarrow \colim \Hom_\T(F_{\ast}, -)|_{\langle E\rangle_{1}} \xrightarrow{\varphi|_{\langle E\rangle_{1}}} \mathbf{H}|_{\langle E\rangle_{1}} \longrightarrow 0.
		\] Thus $\varphi|_{\langle E\rangle_{1}}$ is an isomorphism. 
		
		We now prove that $\colim \Hom_\T(F_{\ast}, -)|_{\E} \simeq {\Hom}_\T(\Holim(F_{*}), -)|_{\E}$. It suffices to prove that for any $j \in \mathbb{Z}$, ${\Hom}_\T(\Holim(F_{*}), E[j])$ is the colimit in $k\text{-}\mathsf{Mod}$ of the sequence ${\Hom}_\T(F_{*}, E[j])$. Note that $${\Hom}_\T(\Holim(F_{*}), E[j]) \simeq \mathbf{D}{\Hom}_\T(G[j], \Holim(F_{*})).$$ 
		By Lemma \ref{homfinitelem}(1), each ${\Hom}_\T(G[j], F_{i})$ is finite-dimensional. Thus, Lemma \ref{ML lem2} gives 
		\[{\Hom}_\T(G[j], \Holim(F_{*})) \simeq \lims\Hom_\T(G[j], F_{*}).\]
		If we can show ${\Hom}_\T(G[j], \Holim(F_{*}))$ is finite-dimensional, then $\mathbf{D}{\Hom}_{\T}(G[j], \Holim(F_{*}))$ is the colimit of the sequence $\mathbf{D}{\Hom}_{\T}(G[j],F_{*}))$ in $k\text{-}\mathsf{mod}$. We will prove the following claim:

    Let $(V_{*}, f_{*})$ be the sequence $V_1 \xrightarrow{f_1} V_2 \xrightarrow{f_2} V_3 \to \cdots$ in $k\text{-}\mathsf{mod}$. If $V$ is the colimit of $(V_{*}, f_{*})$ in $k\text{-}\mathsf{mod}$, then $V$ is also its colimit in $k\text{-}\mathsf{Mod}$.

    The desired result follows directly from the claim. We now prove the claim. Let $\gamma_{i}: V_{i} \to V$ be the canonical map. Without loss of generality, we may assume that each $\gamma_{i}$ is surjective for all $i > 1$. Fix \( W \in k\text{-}\mathsf{Mod} \).  
	By \cite[Corollary~3.9]{Rotman09}, we may write  
	\( W = \bigoplus_{j \in J} W_{j} \) with each \( W_{j} \in k\text{-}\mathsf{mod} \).  
	Let \( \sigma_{j} \colon W_{j} \to \bigoplus_{j \in J} W_{j} \) be the canonical injection.  
	By the universal property of the coproduct, for each \( r \in J \) there exists a morphism  
	\( \pi_{r} \colon \bigoplus_{j \in J} W_{j} \to W_{r} \) such that  
	\( \pi_{r} \sigma_{r} = \mathrm{Id}_{W_{r}} \) and  
	\( \pi_{r} \sigma_{j} = 0 \) for \( j \neq r \).
	
	Consider a morphism of sequences \( h_{*} \colon V_{*} \to W \), and assume that  
	\( h_{i'} \neq 0 \) for some \( i' \ge 1 \).  
	For each \( j \in J \), the composition  
	\( \pi_{j} h_{i} \colon V_{i} \to W_{j} \) defines a morphism of sequences  
	\( V_{*} \to W_{j} \).  
	Since \( V \) is the colimit of \( (V_{*}, f_{*}) \) in \( k\text{-}\mathsf{mod} \),  
	there exists a unique morphism \( g_{j} \colon V \to W_{j} \) such that  
	\[
	g_{j} \gamma_{i} = \pi_{j} h_{i} \qquad \text{for all } i \ge 1.
	\]
	Because each \( \gamma_{i} \) is surjective, we have  
	\( \pi_{j} h_{i} \neq 0 \) for some (equivalently, every) \( i \ge 1 \)  
	if and only if \( g_{j} \neq 0 \).
	
	We now show that only finitely many \( j \in J \) satisfy \( g_{j} \neq 0 \).  
	Since \( V_{1} \) is compact in \( k\text{-}\mathsf{Mod} \),  
	the morphism \( h_{1} \colon V_{1} \to W \) factors through a finite direct sum,  
	yielding a commutative diagram
	\[
	\begin{tikzcd}
		V_{1} \arrow[rr, "h_{1}"] \arrow[rd] 
		& & W \\
		& \bigoplus_{i=1}^{n} W_{1i} \arrow[ru, hook] &
	\end{tikzcd}.
	\]
	
	For each \( j \in J \), we have \( g_{j} \neq 0 \)  
	if and only if \( \pi_{j} h_{1} \neq 0 \).  
	By construction of the projections \( \pi_{j} \),  
	this implies that the set \( \{ j \in J : g_{j} \neq 0 \} \) is finite.  
	Without loss of generality, let  
	\( \{ t_{1}, \dots, t_{m} \} \subseteq J \)  
	be precisely the indices with \( g_{t_{s}} \neq 0 \).  
	This yields a unique morphism  
	\[
	V \longrightarrow \bigoplus_{s=1}^{m} W_{t_{s}}.
	\]
	Composing with the canonical inclusion  
	\( \bigoplus_{s=1}^{m} W_{t_{s}} \hookrightarrow W \),  
	we obtain a morphism \( g \colon V \to W \).  
	It is straightforward to verify that  
	\( g \gamma_{i} = h_{i} \) for all \( i \ge 1 \).  
    The uniqueness of \( g \) follows from the surjectivity of \( \gamma_{i} \), completing the proof of the claim.
		
	Now it suffices to prove that ${\Hom}_\T(G[j], \Holim(F_{*}))$ is finite-dimensional. Since \[\colim \Hom_\T(F_{*}, E[j]) \simeq \mathbf{H}(E[j])\]and $\mathbf{H}(E[j])$ is finite-dimensional, \( \colim \Hom_{\T}(F_{*}, E[j]) \) is finite-dimensional. 
		For all $i>0$, we have 
		$${\Hom}_\T(F_i, E[j]) \simeq\mathbf{D}{\Hom}_\T(G[j], F_i),$$
		so the colimit of the sequence $\mathbf{D}{\Hom}_{\T}(G[j], F_{*})$ is finite-dimensional. By definition, $\mathbf{D}(\colim \Hom_{\T}(F_{*}, E[j]))$ is finite-dimensional and coincides with the limit of the sequence $\mathbf{D}\mathbf{D}{\Hom}_{\T}(G[j], F_{*})$; hence the limit is also finite-dimensional. Since $\mathbf{D}$ is the duality functor on $k\text{-}\mathsf{mod}$, the limit of the sequences $\mathbf{D}\mathbf{D}{\Hom}_{\T}(G[j], F_{*})$ and ${\Hom}_\T(G[j], F_{*})$  coincide. Consequently, 
		$${\Hom}_\T(G[j], \Holim(F_{*}))\simeq \lims \Hom_\T(G[j], F_{*})$$
		is finite-dimensional, 
		completing the proof.
	\end{proof}
	
	\subsection{Formal representability theorems}
	The representability result established in Proposition~\ref{brutal rep} shows that every locally finite 
	$\E$-homological functor is representable by an object of $\T$.  
	In order to refine this statement and identify the representing objects inside the intrinsic subcategories 
	$\T^+_c$ and $\T^b_c$, we now restrict our attention to triangulated categories equipped with compact silting objects.  
	Such categories enjoy additional structural properties that allow us to control the cohomological degrees of the representing objects. This dual representability framework will serve as a foundation for the applications developed in the next section.
	
	Let $\S$ be a triangulated category with coproducts. An object $S$ in $\S$ is called a {\em silting object} if the pair $(S^{\perp_{>0}},S^{\perp_{<0}})$ of subcategories forms a $t$-structure on $\S$, where \begin{align*}
    S^{\perp_{>0}} &\coloneqq \{M\in\S \mid \Hom_{\S}(S,M[i])=0\ \text{for}\ i>0\},\\
     S^{\perp_{<0}} &\coloneqq \{M\in\S \mid \Hom_{\S}(S,M[i])=0\ \text{for}\ i<0\}.
    \end{align*}Clearly, $S^{\perp_{>0}}$ is closed under products. Moreover, if $S$ is compact, we call it a {\em compact silting object}. In this case, $S^{\perp_{>0}}=\S_S^{\leq 0}$ and $S^{\perp_{<0}}=\S_S^{\geq 0}$.
	
	In the remainder of this section, we assume that $\T$ is a locally Hom-finite $k$-linear triangulated category with a compact silting object $G$. Note that in this case we have $\Hom_{\T}(G,G[i])=0$ for $i>0$. Then by \cite[Remark~5.3]{N18a}, $\T$ is approximable. This setting includes many important examples and provides the necessary control over cohomological degrees 
	to obtain refined representability theorems.
	
	\begin{exam}
	Let $A$ be a non-positive DG $k$-algebra with each $H^i(A)$ finite-dimensional. Then the derived category $\D(A)$ is locally Hom-finite and admits a compact silting object, namely $A$ itself.
	\end{exam}
	
	\begin{lem}\label{lem:key2}
		$\T^+_c\cap \T^{\geq 0}\subseteq \T^{\geq 1}*{\rm add}(E)\subseteq \T^{\geq 1}*\langle E\rangle_1^{[-1,1]}. $
	\end{lem}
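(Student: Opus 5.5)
The plan is to build, for $X\in\T^+_c\cap\T^{\geq 0}$, an explicit triangle $D\to X\xrightarrow{u} E^{d}\to D[1]$ in which $u$ is the universal map to a finite power of $E$, and then to check $D\in\T^{\geq 1}$ on cohomology. The second inclusion is immediate: $\mathrm{add}(E)\subseteq\langle E\rangle_1^{[-1,1]}$ because $E=E[0]$ and $0\in[-1,1]$. So everything reduces to the first inclusion.

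\emph{Step 1: $E$ lies in the heart and behaves like an injective there.} Since $G$ is compact silting, $\T^{\leq 0}=G^{\perp_{>0}}$ and $\T^{\geq 0}=G^{\perp_{<0}}$. From ${\Hom}_\T(G,E[i])\simeq\mathbf{D}{\Hom}_\T(G[-i],G)=\mathbf{D}{\Hom}_\T(G,G[i])$ I get the following.
\begin{itemize}
\item This vanishes for $i>0$, because $G$ is silting; hence $E\in\T^{\leq 0}$.
\item For $i<0$, ${\Hom}_\T(G,E[i])\simeq{\Hom}_\T(G[-i],E)=0$ by Proposition~\ref{tech4}(1), since $G[-i]\in\T^{\geq 1}$; hence $E\in\T^{\geq 0}$.
\end{itemize}
Thus $E\in\mathcal H$. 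Moreover, for $M\in\T^{\geq 1}$ we have ${\Hom}_\T(M,E[1])\simeq\mathbf{D}{\Hom}_\T(G[1],M)=0$, since $G[1]\in\T^{\leq -1}$. Two consequences follow.
\begin{itemize}
\item For $C\in\mathcal H$, ${\Hom}_\T(C,E[1])=0$, so $E$ is injective in $\mathcal H$.
\item For $X\in\T^{\geq 0}$, apply ${\Hom}_\T(-,E)$ to the truncation triangle $\mathbf{H}^0(X)\to X\to X^{\geq 1}\to$. Both ${\Hom}_\T(X^{\geq 1},E)$ and ${\Hom}_\T(X^{\geq 1},E[1])$ vanish, so ${\Hom}_\T(X,E)\simeq{\Hom}_\T(\mathbf{H}^0(X),E)$.
\end{itemize}

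\emph{Step 2: the universal map.} By Lemma~\ref{homfinitelem}(2) (or via $\mathbf{D}{\Hom}_\T(G,X)$ and Lemma~\ref{homfinitelem}(1)), $d:=\dim_k{\Hom}_\T(X,E)<\infty$. Choose a basis and let $u\colon X\to E^{d}$ be the induced map, and complete it to a triangle $D\to X\xrightarrow{u}E^{d}\to D[1]$.

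\emph{Step 3: $D\in\T^{\geq 1}$.} Since $X\in\T^{\geq 0}$ and $E^d\in\mathcal H$, the long exact cohomology sequence gives $\mathbf{H}^i(D)=0$ for $i<0$ and $\mathbf{H}^0(D)=K:=\ker\bigl(\mathbf{H}^0(u)\colon \mathbf{H}^0(X)\to E^d\bigr)$ in $\mathcal H$. The object $D$ lies in $\T^+$, being an extension of objects of $\T^+$. So by Lemma~\ref{tech1} it suffices to show $K=0$.

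This is the step needing an argument. By Step 1, every map $\mathbf{H}^0(X)\to E$ factors through $\mathbf{H}^0(u)$. Now take any $K\to E$. Injectivity of $E$ in $\mathcal H$ extends it along the mono $K\hookrightarrow\mathbf{H}^0(X)$, so it factors through $\mathbf{H}^0(u)|_K=0$. Hence ${\Hom}_\T(K,E)=0$, i.e.\ $\mathbf{D}{\Hom}_\T(G,K)=0$. Since $K\in\mathcal H$, the silting $t$-structure also gives ${\Hom}_\T(G,K[i])=0$ for $i\neq 0$. As $G$ is a generator, $K=0$. Therefore $D\in\T^{\geq 1}$ and $X\in\T^{\geq 1}*\mathrm{add}(E)$.

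The main obstacle is Step 3, which rests on the vanishing ${\Hom}_\T(\mathcal H,E[1])=0$ (injectivity of $E$ in the heart). That vanishing is exactly where the silting hypothesis enters, through $G[1]\in\T^{\leq -1}$ and $\Hom_{\T}(G,G[i])=0$ for $i>0$.
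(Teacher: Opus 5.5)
Your Step 1 contains a genuine error, and Step 3 depends on it.

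\textbf{The error.} The Brown--Comenetz isomorphism gives $\Hom_\T(G,E[i])\cong\Hom_\T(G[-i],E)\cong\mathbf{D}\Hom_\T(G,G[-i])$, not $\mathbf{D}\Hom_\T(G,G[i])$; you have swapped the arguments.
\begin{enumerate}
\item For $i<0$ the correct formula and the silting condition give vanishing, so $E\in\T^{\geq 0}$. This is true and is what the paper uses. Your own justification via ``$G[-i]\in\T^{\geq 1}$'' is wrong, though: for $i<0$ the object $G[-i]$ lies in $\T^{\leq -1}$.
\item For $i>0$, $\mathbf{D}\Hom_\T(G,G[-i])$ involves negative self-extensions of $G$, which a silting object may have. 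So $E\in\T^{\leq 0}$ holds only when $G$ is tilting.
\end{enumerate}
For a concrete failure, take a non-positive DG algebra $A$ with finite-dimensional cohomology and $H^{-1}(A)\neq 0$ (the paper's own class of examples) and $G=A$. Then $E=\mathbf{D}A$ and $\mathbf{H}^{1}(E)\cong\mathbf{D}H^{-1}(A)\neq 0$. Hence $E\notin\mathcal H$ in general, and both uses of $E\in\mathcal H$ in Step 3 break:
\begin{enumerate}
\item the identification $\mathbf{H}^0(E^d)=E^d$, which you need to describe $\mathbf{H}^0(D)$ as a kernel of a map into $E^d$;
\item the appeal to injectivity of $E$ in $\mathcal H$, used to extend $K\to E$ along $K\hookrightarrow\mathbf{H}^0(X)$.
\end{enumerate}

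\textbf{A repair within your approach.} Put $I:=\mathbf{H}^0(E)=\tau^{\leq 0}E\in\mathcal H$, which is legitimate because $E\in\T^{\geq 0}$.
\begin{itemize}
\item For $C\in\T^{\leq 0}$ you have $\Hom_\T(C,E)\cong\Hom_\T(C,I)$.
\item For $C\in\mathcal H$, consider the exact sequence $\Hom_\T(C,\tau^{\geq 1}E)\to\Hom_\T(C,I[1])\to\Hom_\T(C,E[1])$. Both outer terms vanish; the right one by your correct observation $\Hom_\T(C,E[1])\cong\mathbf{D}\Hom_\T(G[1],C)=0$. So $I$ is injective in $\mathcal H$.
\end{itemize}
Your Step 3 then goes through with $E$ replaced by $I$ in the heart and $\mathbf{H}^0(u)\colon\mathbf{H}^0(X)\to I^d$.

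\textbf{The paper's route.} The paper avoids the heart altogether. Its only input is $\Hom_\T(E[-1],E)\cong\mathbf{D}\Hom_\T(G,E[-1])=0$, again from $E\in\T^{\geq 0}$.
\begin{itemize}
\item Apply $\Hom_\T(-,E)$ to $E^d[-1]\to D\to X\xrightarrow{u}E^d$. This shows every map $D\to E$ is a composite $D\to X\xrightarrow{\phi}E$.
\item Since $\phi$ factors through $u$, and $D\to X\xrightarrow{u}E^d$ is zero, you get $\Hom_\T(D,E)=0$, hence $\mathbf{D}\Hom_\T(G,D)=0$.
\item $D$ is an extension of $E^d[-1]\in\T^{\geq 1}$ and $X\in\T^{\geq 0}$, so $D\in\T^{\geq 0}$. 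Together with $\Hom_\T(G,D)=0$ this gives $D\in\T^{\geq 1}$.
\end{itemize}
This direct argument is shorter and never needs to locate $E$ relative to $\T^{\leq 0}$. That is precisely where your version went wrong.
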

	
\begin{proof}
	Let  
	\( T \in \T_{c}^{+} \cap \T^{\ge 0} \). By Lemma~\ref{homfinitelem}, the space \( {\Hom}_{\T}(T,E) \) is finite-dimensional.  
	Choose a basis \( \{f_{1},\dots,f_{m}\} \subseteq {\Hom}_{\T}(T,E) \), and let  
	\( f \colon T \to E^{m} \) be the morphism induced by these maps. Note that $E\in\T^{\geq 0}$. This gives a triangle
	\[
	E^{m}[-1] \longrightarrow D \longrightarrow T \stackrel{f}{\longrightarrow} E^{m}
	\]
with \( D \in \T^{\ge 0} \) and \( E^{m} \in \mathrm{add}(E) \).  
	It suffices to show that \( D \in \T^{\ge 1} \).
	
	Since \( G \) is silting, we have  
	\( {\Hom}_{\T}(E[-j],E) = 0 \) for all \( j>0 \).  
	Hence every morphism in \( {\Hom}_{\T}(D,E) \) factors through \( T \).  
	Because \( f \) is an \( \langle E\rangle \)-approximation, it follows that  
	\( {\Hom}_{\T}(D,E) = 0 \), and therefore \( {\Hom}_{\T}(G,D) = 0 \).
	
	Moreover, since \( D \in \T^{\ge 0} \) and \( G\in\T^{\leq 0}\), we also have  
	\( {\Hom}_{\T}(G[i],D) = 0 \) for all \( i>0 \).  
	Thus \( D \in \T^{\ge 1} \), completing the proof.
\end{proof}

	Note that for any $n \in \mathbb{Z}$ and $T \in \T_{c}^{+} \cap \T^{\geq n}$, we have $T[n] \in \T_{c}^{+} \cap \T^{\geq 0}$. This implies the following corollary.
	
	\begin{cor}\label{cor:key2}
		For any $n \in \mathbb{Z}, m > 0$, 
		\[
		\T_{c}^{+} \cap \T^{\geq n} \subseteq 
		\T^{\geq n+m} \ast \langle E\rangle_{m}^{[n-1,n+m]}.
		\]
	\end{cor}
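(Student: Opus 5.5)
The plan is to argue by induction on $m$, with the inductive step driven by Lemma~\ref{lem:key2} and the associativity of the operation $*$.

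\emph{Base case $m=1$.} Let $T\in\T_c^+\cap\T^{\geq n}$. By the remark preceding the corollary, $T[n]\in\T_c^+\cap\T^{\geq 0}$. Lemma~\ref{lem:key2} then gives a triangle $D\to T[n]\to E'\to D[1]$ with $D\in\T^{\geq 1}$ and $E'\in{\rm add}(E)$. Shifting by $[-n]$ yields a triangle
\[
D[-n]\longrightarrow T\longrightarrow E'[-n]\longrightarrow D[-n+1]
\]
with $D[-n]\in\T^{\geq n+1}$. Since $E'[-n]\in{\rm add}(E[-n])$, we get $E'[-n]\in\langle E\rangle_1^{[n,n]}\subseteq\langle E\rangle_1^{[n-1,n+1]}$. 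This proves the case $m=1$.

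\emph{Inductive step.} The key extra observation is that $D[-n]\in\T_c^+$. Indeed, by Proposition~\ref{tech4}(2), $\T_c^+$ is a thick subcategory containing $\E$. Both $T$ and $E'[-n]$ lie in $\T_c^+$, so the third object $D[-n]$ of the triangle does too. Hence
\[
D[-n]\in\T_c^+\cap\T^{\geq n+1}.
\]
Assume the statement for $m-1$ and for every integer shift. Applied to $D[-n]$ with $n+1$ in place of $n$, it gives
\[
D[-n]\in\T^{\geq n+m}*\langle E\rangle_{m-1}^{[n,n+m]}.
\]
Therefore
\[
T\in\bigl(\T^{\geq n+m}*\langle E\rangle_{m-1}^{[n,n+m]}\bigr)*\langle E\rangle_1^{[n,n]}.
\]

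\emph{Combining the pieces.} Next I invoke associativity of $*$, which is a standard consequence of the octahedral axiom. It rewrites the last expression as
\[
\T^{\geq n+m}*\bigl(\langle E\rangle_{m-1}^{[n,n+m]}*\langle E\rangle_1^{[n,n]}\bigr).
\]
Both $\langle E\rangle_{m-1}^{[n,n+m]}$ and $\langle E\rangle_1^{[n,n]}$ are contained in the corresponding categories with index range $[n-1,n+m]$. So it remains to check that
\[
\langle E\rangle_{m-1}^{[a,b]}*\langle E\rangle_1^{[a,b]}\subseteq\langle E\rangle_m^{[a,b]}.
\]
The recursive definition places the $\langle E\rangle_1$ factor on the left, whereas here it appears on the right. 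The standard fact $\langle E\rangle_p^{[a,b]}*\langle E\rangle_q^{[a,b]}\subseteq\langle E\rangle_{p+q}^{[a,b]}$ resolves this: it follows by induction on $p$ from associativity of $*$ and the definition. The passage to direct summands in the definition causes no trouble.

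\emph{Main obstacle.} The only delicate point is the bookkeeping: we need the object $D[-n]$ to remain in $\T_c^+$ so that the induction can be applied to it, which is exactly what thickness provides. Beyond that, one must make sure the index intervals $[n-1,n+m]$ absorb all the shifts that appear, and they do since every shift used lies in $[n,n+m]$.
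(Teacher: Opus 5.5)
Your proposal is correct and follows the route the paper intends. The paper only says the corollary follows from Lemma~\ref{lem:key2} and the octahedral axiom, leaving the details to the reader, and you supply exactly those details. You iterate Lemma~\ref{lem:key2} on the tail $D[-n]$, which stays in $\T_c^+$ by thickness (Proposition~\ref{tech4}(2)), and assemble the layers using associativity of $*$ together with $\langle E\rangle_{m-1}*\langle E\rangle_1\subseteq\langle E\rangle_m$.
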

\begin{proof} The assertion follows immediately by combining the octahedron axiom with Lemma~\ref{lem:key2}. We leave the details to the reader.
\end{proof}
	
	\begin{lem}\label{B.1}
	Let $\mathbf{H}$ be a locally finite $\E$-homological functor. Then there exists an integer $A > 0$ such that for every $n > 0$, there exists $K_{n} \in \T_c^{+} \cap \T^{\geq -A}$ and a morphism $\psi_{n}: {\Hom}_{\T}(K_{n}, -)|_{\E} \to \mathbf{H}$ such that $\psi_{n}|_{\langle E \rangle_n}$ is an epimorphism.
	\end{lem}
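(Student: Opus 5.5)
We dualize Neeman's argument for \cite[Lemma~9.10]{N18a} (or its analogue), using Lemma~\ref{tech5} for the existence of approximating objects and Corollary~\ref{cor:key2} to bound the degrees. First we fix the integer $A$. Since $\mathbf{H}$ is locally finite, Remark~\ref{reduce} gives an integer $A_{0}$ with $\mathbf{H}(E[i])=0$ for $i\geq A_{0}$. Because $E\in\T^{\geq 0}$ and $\Hom_{\T}(\T^{\geq 1},E)=0$ (Proposition~\ref{tech4}(1)), $\Hom_{\T}(-,E[j])$ kills $\T^{\geq 1-j}$. So only finitely many shifts below a fixed threshold see the ``tail'' of any object. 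We take $A$ to be $A_{0}$ plus a constant coming from the approximation step below; it must be independent of $n$.

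Next, fix $n>0$. Lemma~\ref{tech5} applied to $\mathbf{H}|_{\langle E\rangle_{n}}$ gives $F_{n}\in\T_{c}^{+}$ and an epimorphism $\Hom_{\T}(F_{n},-)|_{\langle E\rangle_{n}}\to\mathbf{H}|_{\langle E\rangle_{n}}$. By Remark~\ref{lifting1} this lifts to $\varphi_{n}\colon\Hom_{\T}(F_{n},-)|_{\E}\to\mathbf{H}$. The object $F_{n}$ may not lie in any uniform $\T^{\geq -A}$, so we truncate it.

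Since $F_{n}\in\T^{+}$, we have $F_{n}\in\T^{\geq -B}$ for some $B$, possibly depending on $n$. We then apply Corollary~\ref{cor:key2} with lower index $-B$ and with $m$ chosen so that $-B+m$ equals $-A$. This gives a triangle
\[
D\longrightarrow F_{n}\longrightarrow P\longrightarrow D[1],\qquad D\in\T^{\geq -A},\quad P\in\langle E\rangle_{m}^{[-B-1,-A]},
\]
so $P$ is built from shifts $E[i]$ with $A\leq i\leq B+1$. We set $K_{n}:=D$; it lies in $\T_{c}^{+}$ because $\T_{c}^{+}$ is thick (Proposition~\ref{tech4}(2)) and contains $\E$. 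Composing with $\varphi_{n}$ gives the candidate
\[
\psi_{n}\colon\Hom_{\T}(K_{n},-)|_{\E}\to\mathbf{H}.
\]
This requires first factoring $\varphi_{n}$ through $\Hom_{\T}(F_{n},-)\to\Hom_{\T}(D,-)$. That factorization holds if $\varphi_{n}$ vanishes on the image of $\Hom_{\T}(P,-)|_{\E}$. By Yoneda, it is enough that $\mathbf{H}$ vanishes on the building blocks $E[i]$ with $i\geq A$, and this is exactly how $A\geq A_{0}$ is chosen.

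The main obstacle is that vanishing on the generators $E[i]$ does not immediately give vanishing of the composite $\Hom_{\T}(P,-)\to\Hom_{\T}(F_{n},-)\to\mathbf{H}$. The map from $\Hom_{\T}(P,-)$ is induced by a morphism, not a sum of representables. The first remedy to try is an induction on the length $m$ of $P$, using that $\mathbf{H}$ is homological and the octahedral decomposition from Corollary~\ref{cor:key2}. At each step one peels off $\mathrm{add}(E[i])$ with $i\geq A$ and applies \cite[Corollary~7.4]{N18a} to extend the factorization. If this fails, the alternative is to redo the construction in Lemma~\ref{tech5} so that every $F_{n}$ is built only from $E[j]$ with $j<A_{0}$: in the base case, discard the generators $J_{j}$ with $j\geq A_{0}$, which are empty anyway. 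One then checks by induction, via Lemma~\ref{extend weak triangle}(2) and the degree bounds, that the cones stay in $\T^{\geq -A}$ for a constant $A$ depending only on $A_{0}$.

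Finally, surjectivity of $\psi_{n}|_{\langle E\rangle_{n}}$ follows from surjectivity of $\varphi_{n}|_{\langle E\rangle_{n}}$ once $\varphi_{n}$ factors through $\psi_{n}$.
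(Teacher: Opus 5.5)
Your set-up matches the paper's: you choose $A$ from the vanishing of $\mathbf{H}(E[i])$ for $i\gg 0$, take $F_n,\varphi_n$ from Lemma~\ref{tech5} and Remark~\ref{lifting1}, and cut $F_n$ with Corollary~\ref{cor:key2}. But you have located the difficulty in the wrong place. Write $\beta\colon D\to F_n$ and $\alpha\colon F_n\to P$ for the maps in your triangle.

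The vanishing of $\varphi_n\circ\T(\alpha,-)|_\E$ is not an obstacle. Since $P\in\E$, the functor $\Hom_\T(P,-)|_\E$ is representable on $\E$, so by Yoneda this composite is an element of $\mathbf{H}(P)$. That element is $0$ because $\mathbf{H}$ is homological and kills every $E[i]$ with $i\ge A$, which are the building blocks of $P$. No induction on $m$ is needed; the paper does this in one line.

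The actual gap is the sentence ``that factorization holds if $\varphi_n$ vanishes on the image of $\Hom_\T(P,-)|_\E$''. Exactness of $\Hom_\T(P,-)|_\E\to\Hom_\T(F_n,-)|_\E\xrightarrow{\T(\beta,-)}\Hom_\T(D,-)|_\E$ only gives a map from the image of $\T(\beta,-)|_\E$ to $\mathbf{H}$. That map $\T(\beta,-)|_\E$ is not an epimorphism in general: its cokernel is the image of $\Hom_\T(D,-)|_\E$ in $\Hom_\T(P[-1],-)|_\E$. Since $D\notin\E$, Yoneda says nothing about extending to all of $\Hom_\T(D,-)|_\E$. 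Extending a map into a homological functor across the third term is exactly what \cite[Lemma~8.5]{N18a} provides. That lemma needs the sequence to be a weak triangle, and you have not established this for an honest triangle whose terms lie outside $\E$.

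The paper avoids the honest fiber. It applies Lemma~\ref{extend weak triangle}(1) to $\alpha$, getting $\beta'\colon L\to F_n$ with a weak triangle. It extends $\varphi_n$ to $\psi'$ on $\Hom_\T(L,-)|_\E$ by \cite[Lemma~8.5]{N18a}. It then has to recover the degree bound on $L$ from Lemma~\ref{extend weak triangle}(2) and Lemma~\ref{tech1}, which is why it cuts at $\T^{\ge -A+1}$ and only lands in $\T^{\ge -A}$.

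Your $D$ can be rescued from this construction:
\begin{itemize}
\item The weak triangle is in particular a complex, so $\T(\alpha\beta',-)|_\E=0$. Evaluating at $P$ gives $\alpha\beta'=0$.
\item Hence $\beta'=\beta\theta$ for some $\theta\colon L\to D$.
\item Then $\psi_n:=\psi'\circ\T(\theta,-)|_\E$ satisfies $\psi_n\circ\T(\beta,-)|_\E=\varphi_n$, and $D\in\T^{\ge -A}$ comes for free.
\end{itemize}

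Your fallback also works under the silting hypothesis, and it is simpler still. However, the step you leave as ``one then checks'' is the whole argument. You need to show that all objects produced by Lemma~\ref{tech5} lie in $\T^{\ge 1-A_0}$:
\begin{itemize}
\item Base case: silting gives $E\in\T^{\ge 0}$, so $E[j]\in\T^{\ge -j}$, and $\T^{\ge 1-A_0}$ is closed under coproducts.
\item Inductive step: the kernel functor $\mathbf{H}'\subseteq\Hom_\T(F_r,-)$ vanishes on $E[j]$ for $j\ge A_0$. This holds because $\Hom_\T(F_r,E[j])=\Hom_\T(F_r[-j],E)=0$ by Proposition~\ref{tech4}(1). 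Hence $F'\in\T^{\ge 1-A_0}$.
\item Lemma~\ref{extend weak triangle}(2) together with Lemma~\ref{tech1} then gives $F_{r+1}\in\T^{\ge 1-A_0}$.
\end{itemize}
With this, $K_n:=F_n$ and $A:=\max\{A_0-1,1\}$ work with no truncation at all. As submitted, neither the extension step nor this induction is actually supplied.
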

\begin{proof}
	Since \( \mathbf{H} \) is locally finite, there exists \( A>0 \) such that  
	\( \mathbf{H}(E[i]) = 0 \) for all \( i \ge A-1 \).  
	As \( \mathbf{H} \) is \( \E \)-homological, it follows that  
	\( \mathbf{H}(E') = 0 \) for all  
	\( E' \in \langle E\rangle^{(-\infty, -A+1]} \).
	
	Fix \( n>0 \).  
	By Lemma~\ref{tech5} and Remark~\ref{lifting1}, there exist  
	\( F_{n} \in \T_{c}^{+} \) and a morphism  
	$$\varphi_{n} \colon \Hom_{\T}(F_{n}, -)|_{\E} \longrightarrow \mathbf{H} $$
	such that \( \varphi_{n}|_{\langle E\rangle_{n}} \) is an epimorphism.  
	Since \( \T_{c}^{+} \subseteq \T^{+} \), there exists \( \ell \in \mathbb{Z} \) with  
	\( F_{n} \in \T^{\ge \ell} \).  
	Using the fact that \( \T^{\ge 0}[-1] \subseteq \T^{\ge 0} \), we may assume  
	\( -\ell - A + 1 > 0 \) and set \( m = -\ell - A + 1 \).  
	By Corollary~\ref{cor:key2}, we obtain a triangle
	\[
	D_{m} \longrightarrow F_{n} \stackrel{\alpha}{\longrightarrow} E_{m} 
	\longrightarrow D_{m}[1]
	\]
with  
	\( D_{m} \in \T^{\ge -A+1} \) and  
	\( E_{m} \in \langle E\rangle^{[\ell-1, -A+1]}_{-\ell-A+1} 
	\subseteq \langle E\rangle^{(-\infty, -A+1]} \).
	The composition
	\[
	{\Hom}_{\T}(E_{m}, -)|_{\E}
	\xrightarrow{\T(\alpha, -)|_{\E}}
	{\Hom}_{\T}(F_{n}, -)|_{\E}
	\xrightarrow{\varphi_{n}}
	\mathbf{H}
	\]
	is zero.  
	Moreover, \( E_{m} \) admits a strong \( \E \)-coapproximating system
	\[
	\cdots \to E_{m} \xrightarrow{\mathrm{Id}} E_{m} 
	\xrightarrow{\mathrm{Id}} E_{m}.
	\]
	By Lemma~\ref{extend weak triangle}(1), there exists a sequence
	\[
	E_{m}[-1] \longrightarrow K_{n} \xrightarrow{\beta} F_{n} 
	\xrightarrow{\alpha} E_{m}
	\]
	such that
	\[
	{\Hom}_{\T}(E_{m}, -)|_{\E}
	\to {\Hom}_{\T}(F_{n}, -)|_{\E}
	\to {\Hom}_{\T}(K_{n}, -)|_{\E}
	\to {\Hom}_{\T}(E_{m}[-1], -)|_{\E}
	\]
	is a weak triangle in  
	\( \mathsf{Hom}_{k}(\E, k\text{-}\mathsf{Mod}) \).  
	By \cite[Lemma~8.5]{N18a}, there exists a morphism  
	\( \psi_{n} \colon {\Hom}_{\T}(K_{n}, -)|_{\E} \to \mathbf{H} \)  
	and a commutative diagram
	\[
	\begin{tikzcd}
		{\Hom}_{\T}(F_{n}, -)|_{\E} 
		\arrow[r, "{\T(\beta, -)|_{\E}}"] 
		\arrow[rr, "\varphi_{n}"', bend right] &
		{\Hom}_{\T}(K_{n}, -)|_{\E} 
		\arrow[r, "\psi_{n}"] &
		\mathbf{H}.
	\end{tikzcd}
	\]
	Since \( \varphi_{n}|_{\langle E\rangle_{n}} \) is an epimorphism,  
	so is \( \psi_{n}|_{\langle E\rangle_{n}} \).  
	As \( A \) is fixed, it remains to show that  
	\( K_{n} \in \T^{\ge -A} \).
	
	From the triangle
	\[
	D_{m} \to F_{n} \xrightarrow{\alpha} E_{m} \to D_{m}[1],
	\]
	we obtain an exact sequence
	\[
	\mathbf{H}^{i}(D_{m}) \to \mathbf{H}^{i}(F_{n})
	\xrightarrow{\mathbf{H}^{i}(\alpha)}
	\mathbf{H}^{i}(E_{m}) \to \mathbf{H}^{i+1}(D_{m}).
	\]
	Since \( D_{m} \in \T^{\ge -A+1} \),  
	\( \mathbf{H}^{i}(\alpha) \) is an isomorphism for all \( i \le -A-1 \).  
	By Lemma~\ref{extend weak triangle}(2), we have an exact sequence
	\[
	\mathbf{H}^{i-1}(E_{m}) \to \mathbf{H}^{i}(K_{n}) 
	\to \mathbf{H}^{i}(F_{n})
	\xrightarrow{\mathbf{H}^{i}(\alpha)}
	\mathbf{H}^{i}(E_{m}).
	\]
	Hence \( \mathbf{H}^{i}(K_{n}) = 0 \) for all \( i \le -A-1 \).  
	Lemma~\ref{tech1} then implies  
	\( K_{n} \in \T^{\ge -A} \), completing the proof.
\end{proof}

	\begin{lem}\label{B.2}
		Let $\mathbf{H}$ be a locally finite $\E$-homological functor and $A\geq 0$ be an integer. Suppose $F_{1}$, $F_{2}$ are objects in $\T_{c}^{+}\cap\T^{\geq -A}$, $E'$ is an object in $\E\cap\T^{\geq -A}$, and $\alpha:F_{1}\to E'$ is a morphism. Assume we are given an integer $m > 0$ and natural transformations 
		$$\varphi_1: {\Hom}_\mathcal{T}(F_1, -)|_{\E} \to \mathbf{H}~\text{and}~\varphi_2: {\Hom}_\mathcal{T}(F_2, -)|_{\E} \to \mathbf{H}$$
		such that $\varphi_1|_{\langle E \rangle_m}$ is an epimorphism. Then there exists a commutative diagram
		\[\begin{tikzcd}
			& {{\Hom}_{\T}(\widetilde{E},-)|_{\E}} \arrow[r, "{\mathcal{T}(\widetilde{\alpha},-)|_{\E}}"] \arrow[rdd, "{\mathcal{T}(\gamma,-)|_{\E}}"'] & {{\Hom}_{\T}(\widetilde{F},-)|_{\E}} \arrow[rdd, "\widetilde{\varphi}"'] & {{\Hom}_{\T}(F_{2},-)|_{\E}} \arrow[l, "{\mathcal{T}(\beta,-)|_{\E}}"'] \arrow[dd, "\varphi_{2}"] \\
			&                                                                                                          &                                                                        &                                                                     \\
			{{\Hom}_{\T}(E',-)|_{\E}} \arrow[rr, "{\mathcal{T}(\alpha,-)|_{\E}}"'] \arrow[ruu, "{\mathcal{T}(\delta,-)|_{\E}}"] &                                                                                                                                         & {{\Hom}_{\T}(F_{1},-)|_{\E}} \arrow[r, "\varphi_{1}"']                   & \mathbf{H}                                                                                     
		\end{tikzcd}\]such that
		
		{\rm (1)} $\widetilde{E}\in\E\cap\T^{\geq -A}$ and $\widetilde{F}\in\T_{c}^{+}\cap\T^{\geq -A}$.
		
		{\rm (2)} $\mathbf{H}^{i}(\widetilde{\alpha})$ and $\mathbf{H}^{i}(\gamma)$ are isomorphisms for all $i\leq m-2$.
	\end{lem}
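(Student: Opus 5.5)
This is the dual of Neeman's \cite[Lemma~B.2]{N18a}, and I would follow its structure with \(\E\)-coapproximations in place of compact approximations. The plan has three stages: first build \(\widetilde{F}\) and \(\beta\) by a weak-triangle construction, then coapproximate \(\widetilde{F}\) inside \(\E\) to get \(\widetilde{E}\), and finally check the commutativity and degree conditions.

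\textbf{Step 1: the factorization of \(\varphi_2\) and the object \(\widetilde{F}\).} Since \(F_2\in\T_c^+\), Lemma~\ref{reasonable} makes \({\Hom}_\T(F_2,-)|_{\E}\) locally finite. I would choose \(n\ge m\) large enough (using Corollary~\ref{cor:key2}) that \(F_2\) is coapproximated modulo \(\T^{\ge m}\) by an object in \(\langle E\rangle_n\). By Remark~\ref{lifting1} and \cite[Corollary~7.4]{N18a}, the restriction of \(\varphi_2\) to the relevant \(\langle E\rangle_n\)-piece is determined there. The epimorphism \(\varphi_1|_{\langle E\rangle_m}\) then lifts \(\varphi_2\) modulo high degrees, via \cite[Lemma~7.8]{N18a}. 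This produces a morphism \(F_1\oplus F_2\to\) (something).

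Concretely, I would set \(\widetilde{F}\) to be the "weak pushout" obtained from Lemma~\ref{extend weak triangle}. Take the kernel of
\[
(\varphi_1,\varphi_2)\colon {\Hom}_\T(F_1\oplus F_2,-)|_{\E}\to\mathbf{H},
\]
cover it on \(\langle E\rangle_1\) by a representable functor \({\Hom}(K,-)\) with \(K\) built from \(\T^{\ge m}\)-truncated pieces, as in Lemma~\ref{tech5}. Then form the weak triangle
\[
K[-1]\to\widetilde{F}\to F_1\oplus F_2\to K .
\]
The maps \(\widetilde{\varphi}\), \(\beta\) and \(\gamma':F_1\to\widetilde{F}\)-direction come from \cite[Lemma~8.5]{N18a}. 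In the paper's contravariant convention the arrows go \(\widetilde{F}\to F_1\) and \(\widetilde{F}\to F_2\), with \({\Hom}(F_i,-)\to{\Hom}(\widetilde{F},-)\).

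\textbf{Step 2: the object \(\widetilde{E}\).} I would take \(\widetilde{E}\) from a triangle
\[
D\to\widetilde{F}\xrightarrow{\widetilde{\alpha}}\widetilde{E}
\]
with \(D\in\T^{\ge m-1}\). This is given by Corollary~\ref{cor:key2}, chosen compatibly with \(\alpha\). To get \(\delta\colon\widetilde{E}\to E'\) with \(\delta\widetilde{\alpha}\) matching \(\alpha\) composed with \(\widetilde{F}\to F_1\), I would use that \({\Hom}(D,E')=0\) once \(m\) exceeds the bound \(B\) of Proposition~\ref{tech4}(1) for \(E'\). If it does not, I would first enlarge the truncation index. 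Setting \(\gamma=\delta\)-composite then gives commutativity. Degree condition (2) for \(\widetilde{\alpha}\) follows from \(D\in\T^{\ge m-1}\) via the long exact \(\mathbf{H}^i\) sequence. The one for \(\gamma\) follows by comparing with \(\alpha\).

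\textbf{Step 3: boundedness.} I would keep everything in \(\T^{\ge -A}\) by the argument of Lemma~\ref{B.1}. That is, use Lemma~\ref{extend weak triangle}(2): the pieces added lie in \(\T^{\ge -A}\) or higher, so \(\mathbf{H}^i(\widetilde{F})=0\) for \(i<-A\). Then apply Lemma~\ref{tech1}. For \(\widetilde{E}\), the truncation triangle with \(D\in\T^{\ge m-1}\) and \(m>0\) gives the same vanishing.

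\textbf{Main obstacle.} I expect the hardest part to be choosing the truncation so that \(\widetilde{E}\) admits \(\delta\) and \(\gamma\) making both triangles commute, while still keeping the \(m-2\) degree control. I would first try taking \(\widetilde{E}\) as a homotopy pushout of \(E'\) and the \(\E\)-coapproximation of \(\widetilde{F}\) along \(F_1\), checking that it stays in \(\E\) by thickness.
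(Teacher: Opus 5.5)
\textbf{There is a genuine gap: the construction runs in the wrong direction, so it never produces $\gamma$.}

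In the statement, $\gamma$ goes from $F_1$ \emph{into} $\widetilde E$, and it must satisfy
\begin{itemize}
\item $\delta\gamma=\alpha$,
\item $\widetilde\varphi\circ\T(\widetilde\alpha,-)|_{\E}=\varphi_1\circ\T(\gamma,-)|_{\E}$, and
\item $\mathbf{H}^i(\gamma)$ is an isomorphism for $i\le m-2$.
\end{itemize}
The statement contains no morphism $\widetilde F\to F_1$ at all. Your weak triangle $K[-1]\to\widetilde F\to F_1\oplus F_2\to K$ gives you exactly such a map $\widetilde F\to F_1$. You then coapproximate $\widetilde F$, so $\widetilde E$ receives a map from $\widetilde F$ but none from $F_1$. ``Setting $\gamma=\delta$-composite'' does not define a morphism $F_1\to\widetilde E$. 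Also, the $\delta$ you build satisfies $\delta\widetilde\alpha=\alpha\circ(\widetilde F\to F_1)$, not $\delta\gamma=\alpha$.

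Worse, condition (2) for $\widetilde\alpha$ and $\gamma$ together forces $\mathbf{H}^i(\widetilde F)\cong\mathbf{H}^i(F_1)$ for $i\le m-2$. A kernel-covering $\widetilde F$ has no reason to satisfy this. For example, take $\mathbf{H}=0$, $F_2=0$, $F_1=E$ and $m\ge 2$. Then the kernel of $(\varphi_1,\varphi_2)$ is all of $\Hom_\T(E,-)|_{\E}$, which is covered by the identity of $E$. The resulting $\widetilde F$ is $0$, whereas $\mathbf{H}^0(E)\neq 0$. ``Comparing with $\alpha$'' cannot repair this, since $\alpha$ is arbitrary and carries no cohomological information.

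\textbf{The missing idea: build $\widetilde E$ first, out of $F_1$, and only then glue $F_2$ on along a low-degree cohomology isomorphism.}
\begin{enumerate}
\item Take $b\colon F_2\to E'_m$ from Corollary~\ref{cor:key2}, with fibre in $\T^{\ge m}$ and $E'_m\in\langle E\rangle_m$. Then $\mathbf{H}^i(b)$ is an isomorphism for $i\le m-2$.
\item This is exactly where the hypothesis on $\varphi_1$ enters. Since $\varphi_1|_{\langle E\rangle_m}$ is an epimorphism, $\varphi_2\circ\T(b,-)$ lifts to some $f\colon F_1\to E'_m$ with $\varphi_1\circ\T(f,-)|_{\E}=\varphi_2\circ\T(b,-)|_{\E}$ (\cite[Lemma~7.8, Corollary~7.4]{N18a}).
\item Take a strong $\E$-coapproximating system for $F_1$ (Proposition~\ref{Tc+coapproximable}). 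Use Corollary~\ref{key1} to factor the morphism $F_1\to E'\oplus E'_m$ with components $\alpha$ and $f$ through a term $\widetilde E=E_j$ with $j\ge m$. The structure map is $\gamma$, and $\mathbf{H}^i(\gamma)$ is an isomorphism for $i\le m$. The factorization gives $\delta$ and $\delta'$ with $\delta\gamma=\alpha$ and $\delta'\gamma=f$.
\item Let $\widetilde F$ be the weak pullback of $\widetilde E\xrightarrow{\delta'}E'_m\xleftarrow{b}F_2$, i.e.\ apply Lemma~\ref{extend weak triangle} to $(-\delta',b)\colon\widetilde E\oplus F_2\to E'_m$.
\begin{itemize}
\item The cohomology sequence shows $\mathbf{H}^i(\widetilde\alpha)$ is an isomorphism for $i\le m-2$, because $\mathbf{H}^i(b)$ is.
\item Lemma~\ref{tech1} then gives the bounds in (1).
\item $\widetilde\varphi$ exists by \cite[Lemma~8.5]{N18a}, because $\varphi_1\circ\T(\gamma,-)\circ\T(\delta',-)=\varphi_1\circ\T(f,-)=\varphi_2\circ\T(b,-)$.
\end{itemize}
\end{enumerate}
No kernel-covering step as in Lemma~\ref{tech5} is needed here, and such a step is incompatible with (2).
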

	\begin{proof}
		Without loss of generality, assume \( A = 0 \). The proof is divided into three steps.
		
		{\bf Step 1:} Construct $\widetilde{E}\in\E\cap\T^{\geq 0}$, $\delta:\widetilde{E}\to E'$ and $\gamma:F_{1}\to \widetilde{E}$.
		
		By Corollary \ref{cor:key2}, there exists a triangle $$D_{m}\stackrel{a}\longrightarrow F_{2}\stackrel{b}\longrightarrow E_{m}'\longrightarrow D_{m}[1]$$ 
		with $D_{m}\in\T^{\geq m}$ and $E_{m}'\in\langle E\rangle_{m}^{[-1,m]}$. Since $E_{m}' \in \langle E \rangle_{m}$ and $\varphi_{1}|_{\langle E \rangle_{m}}$ is an epimorphism, there exists a morphism $f: F_{1} \to E_{m}'$ such that the following diagram commutes:\[\begin{tikzcd}
			& {{\Hom}_\T(E_{m}',-)|_{\langle E\rangle_{m}}} \arrow[d, "{\T(b,-)|_{\langle E\rangle_{m}}}"] \arrow[ldd, "{\T(f,-)|_{\langle E\rangle_{m}}}"'] \\
			& {{\Hom}_\T(F_{2},-)|_{\langle E\rangle_{m}}} \arrow[d, "\varphi_{2}|_{\langle E\rangle_{m}}"]                                                  \\
			{{\Hom}_\T(F_{1},-)|_{\langle E\rangle_{m}}} \arrow[r, "\varphi_{1}|_{\langle E\rangle_{m}}"'] & \mathbf{H}|_{\langle E\rangle_{m}}.         
		\end{tikzcd}\]Combining \cite[Lemma~7.8]{N18a} with \cite[Corollary~7.4]{N18a}, we also have $$\varphi_{1}\circ\T(f,-)|_{\E}=\varphi_{2}\circ\T(b,-)|_{\E}.$$
		Note that $F_{1}\in\T_{c}^{+}$. By Proposition \ref{Tc+coapproximable}, $F_{1}$ admits a strong $\E$-coapproximating system:\[\cdots \stackrel{f_3}\longrightarrow E_3 \stackrel{f_2}\longrightarrow E_2 \stackrel{f_1}\longrightarrow E_1.\]

		Then by Corollary~\ref{key1}, the morphism \scalebox{0.6}{$\begin{pmatrix} \alpha \\ f \end{pmatrix}$}~$\in {\Hom}_{\T}(F_1, E' \oplus E_m')$ factors through some $E_j$. Choosing such an $E_j$ with $j \geq m$ and setting $\widetilde{E} \coloneqq E_j$, we obtain commutative diagrams\[\begin{tikzcd}
			F_{1} \arrow[rr, "\scalebox{0.6}{$\begin{pmatrix} \alpha \\ f \end{pmatrix}$}"] \arrow[rd, "\gamma"'] &                                                             & E'\oplus E_{m}' \arrow[r, "{(1,0)}"] & E' \\
			& \widetilde{E} \arrow[ru] \arrow[rru, "\delta"', bend right] &                                      &   
		\end{tikzcd}\]and\[\begin{tikzcd}
			F_{1} \arrow[rr, "\scalebox{0.6}{$\begin{pmatrix} \alpha \\ f \end{pmatrix}$}"] \arrow[rd, "\gamma"'] &                                                             & E'\oplus E_{m}' \arrow[r, "{(0,1)}"] & E_{m}' \\
			& \widetilde{E} \arrow[ru] \arrow[rru, "\delta'"', bend right] &                                      &   
		\end{tikzcd}.\]By the definition of strong $\E$-coapproximating system, $\mathbf{H}^{i}(\ga):\mathbf{H}^{i}(F_{1})\to \mathbf{H}^{i}(\widetilde{E})$ is an isomorphism for all $i\leq m$. Since $F_{1}\in\T_{c}^{+}\cap\T^{\geq 0}$, we have $\mathbf{H}^{i}(\widetilde{E})=0$ when $i<0$. Then by Lemma \ref{tech1}, we obtain $\widetilde{E}\in\E\cap\T^{\geq 0}$. The information we have obtained so far yields the following commutative diagram ($*$)
		
		\[\begin{tikzcd}
			& {{\Hom}_\T(\widetilde{E},-)|_{\E}} \arrow[rdd, "{\T(\gamma,-)|_{\E}}"'] &                                               & {{\Hom}_\T(E_{m}',-)|_{\E}} \arrow[d, "{\T(b,-)|_{\E}}"] \arrow[ll, "{\T(\delta',-)|_{\E}}"'] \\
			&                                                                  &                                               & {{\Hom}_\T(F_{2},-)|_{\E}} \arrow[d, "\varphi_{2}"]                                           \\
			{{\Hom}_\T(E',-)|_{\E}} \arrow[rr, "{\T(\alpha,-)|_{\E}}"'] \arrow[ruu, "{\T(\delta,-)|_{\E}}"] &                                                                  & {{\Hom}_\T(F_{1},-)|_{\E}} \arrow[r, "\varphi_{1}"'] & \mathbf{H}.                                  
		\end{tikzcd}\]
		
		{\bf Step 2:} Construct $\widetilde{F}\in\T_{c}^{+}\cap\T^{\geq 0}$, $\widetilde{\alpha}:\widetilde{F}\to \widetilde{E}$ and $\beta:\widetilde{F}\to F_{2}$.
		
		By $(\ast)$, there is a morphism $(-\delta',b):\widetilde{E}\oplus F_{2}\to E_{m}'$. Then by Lemma \ref{extend weak triangle}(1), we have the sequence $$E_{m}'[-1]\xrightarrow{\sigma}\widetilde{F}\xrightarrow{\scalebox{0.6}{$\begin{pmatrix} \widetilde{\alpha} \\ \beta \end{pmatrix}$}}\widetilde{E}\oplus F_{2}\xrightarrow{(-\delta',b)}E_{m}'$$
		in $\T_{c}^{+}$ such that
		{\xiaowuhao
			\[(E_{m}',-)|_{\E}\xrightarrow{\T((-\delta',b),-)|_{\E}}(\widetilde{E}\oplus F_{2},-)|_{\E}\xrightarrow{\T(\scalebox{0.6}{$\begin{pmatrix} \widetilde{\alpha} \\ \beta \end{pmatrix}$},-)|_{\E}}(\widetilde{F},-)|_{\E}\xrightarrow{\T(\sigma,-)|_{\E}}(E_{m}'[-1],-)|_{\E}\]}
		is a weak triangle. This implies $\delta'\widetilde{\alpha}=b\beta$. Therefore, we obtain the following commutative diagram ($\ast\ast$)
		\[\begin{tikzcd}
			&                                                                                                                                          &                                                      & {{\Hom}_{\T}(E_{m}',-)|_{\E}} \arrow[d, "{\T(b,-)|_{\E}}"] \arrow[lld, "{\T(\delta',-)|_{\E}}"'] \\
			& {{\Hom}_{\T}(\widetilde{E},-)|_{\E}} \arrow[r, "{\mathcal{T}(\widetilde{\alpha},-)|_{\E}}"'] \arrow[rdd, "{\mathcal{T}(\gamma,-)|_{\E}}"'] & {{\Hom}_{\T}(\widetilde{F},-)|_{\E}}                   & {{\Hom}_{\T}(F_{2},-)|_{\E}} \arrow[l, "{\mathcal{T}(\beta,-)|_{\E}}"] \arrow[dd, "\varphi_{2}"] \\
			&                                                                                                                                          &                                                      &                                                                                                \\
			{{\Hom}_{\T}(E',-)|_{\E}} \arrow[rr, "{\mathcal{T}(\alpha,-)|_{\E}}"'] \arrow[ruu, "{\mathcal{T}(\delta,-)|_{\E}}"] &                                                                                                                                          & {{\Hom}_{\T}(F_{1},-)|_{\E}} \arrow[r, "\varphi_{1}"'] & \mathbf{H}.                                                                                   
		\end{tikzcd}\]By Lemma \ref{extend weak triangle}(2), we can deduce that there is an exact sequence \[\mathbf{H}^{i-1}(E_{m}')\xrightarrow{\mathbf{H}^{i}(\sigma)}\mathbf{H}^{i}(\widetilde{F})\xrightarrow{\scalebox{0.6}{$\begin{pmatrix} \mathbf{H}^{i}(\widetilde{\alpha}) \\ \mathbf{H}^{i}(\beta) \end{pmatrix}$}} \mathbf{H}^{i}(\widetilde{E})\oplus \mathbf{H}^{i}(F_{2})\xrightarrow{(-\mathbf{H}^{i}(\delta'),\mathbf{H}^{i}(b))} \mathbf{H}^{i}(E_{m}')\]for all $i\in\mathbb{Z}$.
		Recall that there is a triangle $$D_{m}\stackrel{a}\longrightarrow  F_{2}\stackrel{b}\longrightarrow E_{m}'\longrightarrow D_{m}[1]$$
		with $D_{m}\in\T^{\geq m}$. Then $\mathbf{H}^{i}(b)$ is an isomorphism for $i \leq m-2$, 
		which implies $\mathbf{H}^{i}(\sigma) = 0$ for $i \leq m-1$. Consequently, we have a short exact sequence\[0\to \mathbf{H}^{i}(\widetilde{F})\xrightarrow{\scalebox{0.6}{$\begin{pmatrix} \mathbf{H}^{i}(\widetilde{\alpha}) \\ \mathbf{H}^{i}(\beta) \end{pmatrix}$}} \mathbf{H}^{i}(\widetilde{E})\oplus \mathbf{H}^{i}(F_{2})\xrightarrow{(-\mathbf{H}^{i}(\delta'),\mathbf{H}^{i}(b))} \mathbf{H}^{i}(E_{m}')\to 0\]for $i \leq m-2$. This yields a commutative diagram\[\begin{tikzcd}
			\mathbf{H}^{i}(\widetilde{F}) \arrow[r, "\mathbf{H}^{i}(\widetilde{\alpha})"] \arrow[d, "\mathbf{H}^{i}(\beta)"'] & \mathbf{H}^{i}(\widetilde{E}) \arrow[d, "\mathbf{H}^{i}(\delta')"] \\
			\mathbf{H}^{i}(F_{2}) \arrow[r, "\mathbf{H}^{i}(b)"']                                                    & \mathbf{H}^{i}(E_{m}')                                   
		\end{tikzcd}\]which is a pullback for $i \leq m-2$. Since $\mathbf{H}^{i}(b)$ is an isomorphism for $i\leq m-2$, 
		it follows that $\mathbf{H}^{i}(\widetilde{\alpha}): \mathbf{H}^{i}(\widetilde{F}) \to \mathbf{H}^{i}(\widetilde{E})$ 
		is also an isomorphism for $i \leq m-2$. As $\widetilde{E} \in \E \cap \T^{\geq 0}$, we have $\mathbf{H}^{i}(\widetilde{F}) = 0$ for $i < 0$. 
		By Lemma~\ref{tech1}, this implies $\widetilde{F} \in \T_{c}^{+} \cap \T^{\geq 0}$.
		
		{\bf Step 3:} Construct $\widetilde{\varphi}:{\Hom}_\T(\widetilde{F},-)|_{\E}\to\mathbf{H}$.
		
		By $(**)$, the composition \[{\Hom}_\T(E_{m}',-)|_{\E}\xrightarrow{\T((-\delta',b),-)|_{\E}}{\Hom}_\T(\widetilde{E}\oplus F_{2},-)|_{\E}\xrightarrow{(\varphi_{1}\circ\T(\ga,-)|_{\E},\varphi_{2})}\mathbf{H}\]is zero. Then by \cite[Lemma~8.5]{N18a}, we obtain a morphism $\widetilde{\varphi}:{\Hom}_\T(\widetilde{F},-)|_{\E}\to\mathbf{H}$ and a commutative diagram
		\[\begin{tikzcd}
			{{\Hom}_\T(\widetilde{E}\oplus F_{2},-)|_{\E}} \arrow[rr, "{\T(\scalebox{0.6}{$\begin{pmatrix} \widetilde{\alpha} \\ \beta \end{pmatrix}$},-)|_{\E}}"] \arrow[rrr, "{(\varphi_{1}\circ\T(\ga,-)|_{\E},\varphi_{2})}"', bend right] &  & {{\Hom}_\T(\widetilde{F},-)|_{\E}} \arrow[r, "\widetilde{\varphi}"] & \mathbf{H}.
		\end{tikzcd}\]
		This completes the proof.
	\end{proof}
	
	\begin{lem}\label{B.3}
		Let $\mathbf{H}$ be a locally finite $\E$-homological functor. Then there exists an integer $A > 0$ such that for every $i > 0$, the following hold.
		
		$(1)$ There exists a commutative diagram in $\mathcal{T}_{c}^{+} \cap \mathcal{T}^{\geq -A}$:
		\[\begin{tikzcd}
			E_{i+1} \arrow[rr, "\delta_{i}"] &                                                            & E_{i} \\
			& L_{i} \arrow[ru, "\alpha_{i}"'] \arrow[lu, "\gamma_{i+1}"] &      
		\end{tikzcd}\]
		where $E_{i}\in\E$, and a morphism $\eta_{i} : {\Hom}_\mathcal{T}(L_{i}, -)|_{\mathcal{E}} \to \mathbf{H}$ such that $\eta_{i}|_{\langle E \rangle_{i+2}}$ is an epimorphism. Moreover, $H^{\ell}(\gamma_{i})$ and $H^{\ell}(\alpha_{i})$ are isomorphisms for all $\ell \leq i$.
		
		$(2)$ For all $i > 0$, the following diagram commutes:
		\[\begin{tikzcd}
			{\Hom}_\mathcal{T}(E_{i+1}, -)|_{\mathcal{E}} \arrow[rr, "{\mathcal{T}(\alpha_{i+1}, -)|_{\mathcal{E}}}"] \arrow[d, "{\mathcal{T}(\gamma_{i+1}, -)|_{\mathcal{E}}}"'] & &{\Hom}_\mathcal{T}(L_{i+1}, -)|_{\mathcal{E}} \arrow[d, "\eta_{i+1}"] \\
			{\Hom}_\mathcal{T}(L_{i}, -)|_{\mathcal{E}} \arrow[rr, "\eta_{i}"'] & & \mathbf{H}.
		\end{tikzcd}\]
	\end{lem}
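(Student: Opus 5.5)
The plan is to build the data inductively on $i$, taking the integer $A>0$ from Lemma~\ref{B.1} and feeding each stage into Lemma~\ref{B.2}. Lemma~\ref{B.2} is essentially the inductive step: it takes an object $F_1$ carrying a map $\varphi_1$ that is epimorphic on $\langle E\rangle_m$, a map $\alpha:F_1\to E'$ into $\E$, and a second object $F_2$ with $\varphi_2$. It returns a new $\widetilde{F}$ together with a map to $\widetilde{E}$ that factors back through $E'$, with cohomology controlled up to degree $m-2$. The indices will be shifted so that the epimorphism range $\langle E\rangle_{i+2}$ feeds Lemma~\ref{B.2} with $m=i+2$, giving isomorphisms on $\mathbf{H}^\ell$ for $\ell\le i$.

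\textbf{Base step.} Apply Lemma~\ref{B.1} with $n=3$ to get $K_3\in\T_c^+\cap\T^{\ge -A}$ and $\psi_3$ with $\psi_3|_{\langle E\rangle_3}$ epic. Then apply Lemma~\ref{B.2} with $F_1=F_2=K_3$, $\varphi_1=\varphi_2=\psi_3$ and $m=3$. For $E'$, take the object $E_1\in\E\cap\T^{\ge -A}$ obtained from Corollary~\ref{cor:key2} applied to $K_3$, with $\alpha$ the resulting map. (If it is more convenient, first produce $\alpha$ by truncating via Proposition~\ref{Tc+coapproximable} and then use Lemma~\ref{tech1} to keep the bound $\T^{\ge -A}$.) The output is $E_2:=\widetilde E$, $L_1:=\widetilde F$, $\alpha_1:=\widetilde\alpha$, $\gamma_2:=\gamma$, $\delta_1:=\delta$ and $\eta_1:=\widetilde\varphi$. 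The commutativity $\delta_1\gamma_2$ versus $\alpha_1$ in part (1) should be read off from the triangle of maps in Lemma~\ref{B.2}, and the isomorphisms on $\mathbf{H}^\ell$ for $\ell\le 1$ come from Lemma~\ref{B.2}(2).

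\textbf{Inductive step.} Suppose $L_i$, $E_{i+1}$, $\alpha_i$, $\gamma_{i+1}$ and $\eta_i$ have been built. Use Lemma~\ref{B.1} with $n=i+3$ to obtain $K_{i+3}$ and $\psi_{i+3}$. Then apply Lemma~\ref{B.2} with $F_1=L_i$, $\varphi_1=\eta_i$, $E'=E_{i+1}$, $\alpha=\gamma_{i+1}$, $F_2=K_{i+3}$, $\varphi_2=\psi_{i+3}$ and $m=i+2$, the last choice being allowed because $\eta_i$ is epic on $\langle E\rangle_{i+2}$. The output is $E_{i+2}:=\widetilde E$, $L_{i+1}:=\widetilde F$, $\alpha_{i+1}:=\widetilde\alpha$, $\gamma_{i+2}:=\gamma$, $\delta_{i+1}:=\delta$ and $\eta_{i+1}:=\widetilde\varphi$.

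The required properties are then checked as follows.
\begin{enumerate}
\item The commutative square in part (2) is exactly the relation $\widetilde\varphi\circ\T(\widetilde\alpha,-)=\varphi_1\circ\T(\gamma,-)$ contained in the diagram of Lemma~\ref{B.2}.
\item For the epimorphism property, $\eta_{i+1}\circ\T(\beta,-)=\psi_{i+3}$ is epic on $\langle E\rangle_{i+3}$, so $\eta_{i+1}|_{\langle E\rangle_{i+3}}$ is epic.
\item The bounds $\T^{\ge -A}$ and the degree statement $\ell\le m-2=i$ come directly from Lemma~\ref{B.2}(1),(2).
\end{enumerate}

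The main obstacle is the index bookkeeping. Lemma~\ref{B.2} only gives isomorphisms for $\ell\le m-2$, so the epimorphism range must always run two steps ahead of the required cohomological degree. The second delicate point is checking the triangle $\delta_i\alpha_i=\gamma_{i+1}$ (or its correct orientation) at the level of morphisms in $\T$, not merely after applying $\Hom(-,\E)$. For this I would use the explicit factorization through $E'\oplus E'_m$ in Step~1 of Lemma~\ref{B.2}, together with $\delta'\widetilde\alpha=b\beta$, and adjust by Corollary~\ref{key1} if needed.
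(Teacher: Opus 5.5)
Your overall strategy matches the paper's. You fix $A$ and the pairs $(K_n,\psi_n)$ by Lemma~\ref{B.1}, use Lemma~\ref{B.2} as the inductive engine, and get the epimorphism from $\eta_{i+1}\circ\T(\beta,-)|_{\E}=\psi_{i+3}$. However, you assign the outputs of Lemma~\ref{B.2} to the wrong names, and as written the induction does not close.

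In Lemma~\ref{B.2} one has $\gamma\colon F_1\to\widetilde E$ and $\delta\colon\widetilde E\to E'$ with $\delta\gamma=\alpha$, and $\widetilde\alpha\colon\widetilde F\to\widetilde E$. Your choices are $E'=E_{i+1}$, $\alpha=\gamma_{i+1}$, $E_{i+2}:=\widetilde E$, $\gamma_{i+2}:=\gamma$, $\alpha_{i+1}:=\widetilde\alpha$. These give $\alpha_{i+1}\colon L_{i+1}\to E_{i+2}$ and $\gamma_{i+2}\colon L_i\to E_{i+2}$. The statement needs $\alpha_{i+1}\colon L_{i+1}\to E_{i+1}$ and $\gamma_{i+2}\colon L_{i+1}\to E_{i+2}$. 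The consequences are:
\begin{enumerate}
\item The relation you cite for (2) is $\eta_{i+1}\circ\T(\alpha_{i+1},-)|_{\E}=\eta_i\circ\T(\gamma_{i+2},-)|_{\E}$ on $\Hom_\T(E_{i+2},-)|_\E$, which is not the square in (2).
\item The triangle you get is $\delta_{i+1}\gamma_{i+2}=\gamma_{i+1}$, not $\delta_{i+1}\gamma_{i+2}=\alpha_{i+1}$.
\item At the next stage, the map you feed in as $\alpha$ has the wrong source. It is your $\gamma_{i+2}$, with source $L_i$, or $\gamma_2\colon K_3\to E_2$ after your base step. Lemma~\ref{B.2} needs a map out of $F_1=L_{i+1}$, so it cannot be applied.
\end{enumerate}
The identity $\delta_i\alpha_i=\gamma_{i+1}$ you single out as delicate does not even type-check. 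It is a symptom of this mislabeling, and the detour through $\delta'\widetilde\alpha=b\beta$ and Corollary~\ref{key1} is unnecessary.

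The paper's bookkeeping is as follows. The base step does not use Lemma~\ref{B.2}: set $L_1=K_3$ and $\eta_1=\psi_3$. Take $\alpha_1\colon L_1\to E_1$ from a triangle $D_1\to L_1\to E_1\to D_1[1]$ with $D_1\in\T^{\ge 3}$ and $E_1\in\E$. Then $\mathbf H^\ell(\alpha_1)$ is an isomorphism for $\ell\le 1$, and $E_1\in\T^{\ge -A}$ by Lemma~\ref{tech1}.

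For the inductive step, apply Lemma~\ref{B.2} with $F_1=L_i$, $\varphi_1=\eta_i$, $E'=E_i$, $\alpha=\alpha_i$, $F_2=K_{i+3}$, $\varphi_2=\psi_{i+3}$. Put $E_{i+1}:=\widetilde E$, $\gamma_{i+1}:=\gamma$, $\delta_i:=\delta$, $L_{i+1}:=\widetilde F$, $\alpha_{i+1}:=\widetilde\alpha$, $\eta_{i+1}:=\widetilde\varphi$. Then $\delta_i\gamma_{i+1}=\alpha_i$ holds in $\T$ itself. This is because Step~1 of Lemma~\ref{B.2} factors $F_1\to E'\oplus E'_m$ through $\widetilde E$ and defines $\delta$ as the composite with the first projection. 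Part (2) is literally the relation $\widetilde\varphi\circ\T(\widetilde\alpha,-)|_\E=\varphi_1\circ\T(\gamma,-)|_\E$.

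Finally, your degree count is one short. With $m=i+2$, Lemma~\ref{B.2}(2) only yields isomorphisms for $\ell\le i$, while $\mathbf H^\ell(\alpha_{i+1})$ must be an isomorphism for $\ell\le i+1$. The paper's write-up has the same slip: it asserts $\ell\le 2$ from the application with $m=3$. Running the epimorphism range one further step ahead repairs this. Take $L_1=K_4$, $F_2=K_{i+4}$ and $m=i+3$; this still gives $\eta_i$ epic on $\langle E\rangle_{i+2}\subseteq\langle E\rangle_{i+3}$.
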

\begin{proof}
	  By Lemma~\ref{B.1}, there exists an integer \( A>0 \) such that for every \( i>0 \),  
	there are objects \( K_{i} \in \T_{c}^{+} \cap \T^{\ge -A} \) and morphisms  
	\( \psi_{i} \colon {\Hom}_{\T}(K_{i}, -)|_{\E} \to \mathbf{H} \)  
	with \( \psi_{i}|_{\langle E\rangle_{i}} \) an epimorphism.
	
	Set \( L_{1} = K_{3} \) and \( \eta_{1} = \psi_{3} \).  
	There is a triangle
 \[
	D_{1} \longrightarrow L_{1} \stackrel{\alpha_{1}}{\longrightarrow} E_{1} \longrightarrow D_{1}[1]
	\]
with \( D_{1} \in \T^{\ge 3} \) and \( E_{1} \in \E \).  
	Then \( \mathbf{H}^{i}(\alpha_{1}) \) is an isomorphism for all \( i \le 1 \).  
	Moreover, since \( \mathbf{H}^{i}(L_{1}) = 0 \) for all \( i \le -A-1 \),  
	Lemma~\ref{tech1} implies  
	\( E_{1} \in \E \cap \T^{\ge -A} \).
	
	By Lemma~\ref{B.2}(1), we obtain a commutative diagram
	\[
\begin{tikzcd}
& {\Hom_{\T}(E_{2},-)|_{\E}} \arrow[rdd, "{\T(\gamma_{2},-)|_{\E}}"'] \arrow[r, "{\T(\alpha_{2},-)|_{\E}}"] & {\Hom_{\T}(L_{2},-)|_{\E}} \arrow[rdd, "\eta_{2}"'] & {\Hom_{\T}(K_{4},-)|_{\E}} \arrow[dd, "\psi_{4}"] \arrow[l] \\  &   &   &               \\
{\Hom_{\T}(E_{1},-)|_{\E}} \arrow[rr, "{\T(\alpha_{1},-)|_{\E}}"'] \arrow[ruu, "{\T(\delta_{1},-)|_{\E}}"] &                                                                                                           & {\Hom_{\T}(L_{1},-)|_{\E}} \arrow[r, "\eta_{1}"']   & \mathbf{H}                                                 
\end{tikzcd}
	\]
with \( E_{2} \in \E \cap \T^{\ge -A} \) and  
	\( L_{2} \in \T_{c}^{+} \cap \T^{\ge -A} \).  
	By Lemma~\ref{B.2}(2), both  
	\( \mathbf{H}^{i}(\alpha_{2}) \) and \( \mathbf{H}^{i}(\gamma_{2}) \)  
	are isomorphisms for all \( i \le 2 \).  
	Since \( \psi_{4}|_{\langle E\rangle_{4}} \) is an epimorphism,  
	so is \( \eta_{2}|_{\langle E\rangle_{4}} \).
	
	Because \( E_{1}, E_{2} \in \E \), we have the commutative diagram
	\[
	\begin{tikzcd}
		& E_{2} \arrow[rr, "\delta_{1}"] & & E_{1} \\
		L_{2} \arrow[ru, "\alpha_{2}"] 
		& & 
		L_{1} \arrow[ru, "\alpha_{1}"'] \arrow[lu, "\gamma_{2}"] &
	\end{tikzcd}
	\]
	The general case now follows by induction.
\end{proof}
	\begin{prop}\label{epi1}
		Let $\mathbf{H}$ be a locally finite $\E$-homological functor. Then there exist an object $F \in \T_{c}^{+}$ and an epimorphism 
		$$\varphi:{\Hom}_\T(F, -)|_{\mathcal{\E}} \longrightarrow \mathbf{H}.$$
	\end{prop}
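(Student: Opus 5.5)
The plan is to assemble the data of Lemma~\ref{B.3} into an inverse system and take its homotopy limit. Lemma~\ref{B.3} gives an integer $A>0$, objects $E_i\in\E\cap\T^{\ge -A}$ with maps $\delta_i\colon E_{i+1}\to E_i$, objects $L_i\in\T_c^+\cap\T^{\ge -A}$ with maps $\alpha_i\colon L_i\to E_i$ and $\gamma_{i+1}\colon L_i\to E_{i+1}$ satisfying $\delta_i\gamma_{i+1}=\alpha_i$, and natural transformations $\eta_i$ compatible as in part (2). Since $\mathbf{H}^\ell(\alpha_i)$ and $\mathbf{H}^\ell(\gamma_i)$ are isomorphisms for $\ell\le i$, the relation $\delta_i\gamma_{i+1}=\alpha_i$ shows that $\mathbf{H}^\ell(\delta_i)$ is an isomorphism for $\ell\le i$. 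After reindexing, which at worst loses a fixed shift, $(E_*,\delta_*)$ is therefore a strong $\E$-coapproximating system.

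Set $F:=\Holim(E_*)$. By Lemma~\ref{universal property of coapproximating system}(1), $F\in\T_c^+$. By Lemma~\ref{colim-holim1}, the canonical map $\colim\Hom_\T(E_*,-)|_\E\to\Hom_\T(F,-)|_\E$ is an isomorphism. It therefore suffices to build a compatible family $\theta_i:=\eta_i\circ\T(\gamma_{i+1},-)|_\E\colon\Hom_\T(E_{i+1},-)|_\E\to\mathbf{H}$ and to show that the induced map $\colim\to\mathbf{H}$ is an epimorphism.

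Compatibility comes from Lemma~\ref{B.3}(2). It gives $\eta_{i+1}\circ\T(\alpha_{i+1},-)=\eta_i\circ\T(\gamma_{i+1},-)$, and $\alpha_{i+1}=\delta_{i+1}\gamma_{i+2}$. Hence
\[
\theta_{i+1}\circ\T(\delta_{i+1},-)=\eta_{i+1}\circ\T(\gamma_{i+2}\cdot{}, -)\circ\T(\delta_{i+1},-)=\eta_{i+1}\circ\T(\alpha_{i+1},-)=\theta_i .
\]
So the $\theta_i$ are compatible along the system $\Hom_\T(E_*,-)$ and induce $\varphi\colon\Hom_\T(F,-)|_\E\to\mathbf{H}$.

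The main obstacle is surjectivity. Since $\E=\bigcup_n\langle E\rangle_n$, fix $X\in\langle E\rangle_n$ and $x\in\mathbf{H}(X)$. For large $i$, $\eta_{i+1}|_{\langle E\rangle_{i+3}}$ is an epimorphism, so $x=\eta_{i+1}(u)$ for some $u\colon L_{i+1}\to X$. We want $u$ to factor through $\alpha_{i+1}\colon L_{i+1}\to E_{i+1}$, because then $x=\eta_{i+1}(v\alpha_{i+1})=\eta_i(v\gamma_{i+1})=\theta_i(v)$ lies in the image.

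To get this factorization, complete $\alpha_{i+1}$ to a triangle $D\to L_{i+1}\to E_{i+1}$. Since $\mathbf{H}^\ell(\alpha_{i+1})$ is an isomorphism for $\ell\le i+1$, we have $D\in\T^{\ge i+1}$ by Lemma~\ref{tech1}; both $L_{i+1}$ and $E_{i+1}$ lie in $\T^{+}$, so $D\in\T^{+}$ as well. By Proposition~\ref{tech4}(1), $\Hom_\T(\T^{\ge B},X)=0$ for some $B$ depending only on $X$. Taking $i\ge B$ makes $\Hom(D,X)=0$, so $u$ factors through $\alpha_{i+1}$, which proves surjectivity.

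The point to check carefully is the order of quantifiers: $X$ and $x$ are fixed first, and only then is $i$ chosen larger than both $n$ and $B$. The choice of $i$ must also respect the mild shift in degrees between the $\alpha_i$ and the $\gamma_i$, so that $D$ lands in $\T^{\ge i}$ with $i\ge B$.
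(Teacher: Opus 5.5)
Your proposal is correct and follows essentially the same route as the paper. You assemble the data of Lemma~\ref{B.3} into the strong $\E$-coapproximating system $(E_*,\delta_*)$, take $F=\Holim(E_*)$, and use Lemma~\ref{colim-holim1} to identify $\Hom_\T(F,-)|_\E$ with the colimit; you then prove surjectivity by putting the cone of $\alpha$ into $\T^{\ge B}$, where $\Hom_\T(\T^{\ge B},X)=0$ by Proposition~\ref{tech4}(1). Your compatible family $\theta_i=\eta_i\circ\T(\gamma_{i+1},-)|_\E$ equals, by Lemma~\ref{B.3}(2), the paper's $\eta_{i+1}\circ\T(\alpha_{i+1},-)|_\E$ with the index shifted by one, so the two arguments coincide.
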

	
\begin{proof}
	By Lemma~\ref{B.3}(1), there exist an integer \( A>0 \) and a commutative diagram in  
	\( \T_{c}^{+} \cap \T^{\ge -A} \):
	\[
	\begin{tikzcd}
		\cdots \arrow[r] 
		& E_{3} \arrow[rr, "\delta_{2}"] 
		& & E_{2} \arrow[rr, "\delta_{1}"] 
		& & E_{1} \\
		& & L_{2} \arrow[ru, "\alpha_{2}"'] \arrow[lu, "\gamma_{3}"] 
		& & L_{1} \arrow[ru, "\alpha_{1}"'] \arrow[lu, "\gamma_{2}"] &
	\end{tikzcd}
	\]
where each \( E_{i} \in \E \), and  
	\( \mathbf{H}^{\ell}(\gamma_{i}) \) and \( \mathbf{H}^{\ell}(\alpha_{i}) \)  
	are isomorphisms for all \( \ell \le i \).  
	Moreover, for each \( i>0 \) there exists a morphism  
	\( \eta_{i} \colon {\Hom}_{\T}(L_{i}, -)|_{\E} \to \mathbf{H} \)  
	such that \( \eta_{i}|_{\langle E\rangle_{i+2}} \) is an epimorphism.  
	It follows that \( \mathbf{H}^{\ell}(\delta_{i}) \) is an isomorphism for all \( \ell \le i \).  
	Thus \( (E_{*}, \delta_{*}) \) is a strong \( \E \)-coapproximating system.
	
	Set \( F = \Holim(E_{*}) \).  
	By Lemma~\ref{universal property of coapproximating system}(1),  
	\( F \in \T_{c}^{+} \) and \( (E_{*}, \delta_{*}) \) is a strong  
	\( \E \)-coapproximating system for \( F \).  
	Lemma~\ref{colim-holim1} then yields a morphism
   \[
	\varphi \colon \colim \Hom_{\T}(E_{*}, -) \longrightarrow {\Hom}_{\T}(F, -)
	\]
whose restriction to \( \E \) is an isomorphism.
	
	For each \( i>0 \), Lemma~\ref{B.3} provides the commutative diagram
	\[
	\begin{tikzcd}
		{\Hom_{\T}(E_{i},-)|_{\E}} 
		\arrow[rr, "{\T(\delta_{i},-)|_{\E}}"] 
		\arrow[rrd, "{\T(\alpha_{i},-)|_{\E}}"'] 
		& & 
		{\Hom_{\T}(E_{i+1},-)|_{\E}} 
		\arrow[rr, "{\T(\alpha_{i+1},-)|_{\E}}"] 
		\arrow[d, "{\T(\gamma_{i+1},-)|_{\E}}"'] 
		& & 
		{\Hom_{\T}(L_{i+1},-)|_{\E}} 
		\arrow[d, "\eta_{i+1}"] \\
		& & 
		{\Hom_{\T}(L_{i},-)|_{\E}} 
		\arrow[rr, "\eta_{i}"'] 
		& & 
		\mathbf{H}.
	\end{tikzcd}
	\]
Hence there is a canonical morphism
 \[
	\eta \colon \colim \Hom_{\T}(E_{*}, -)|_{\E} \longrightarrow \mathbf{H},
	\]
induced by the compositions
\[
	{\Hom}_{\T}(E_{i}, -)|_{\E}
	\xrightarrow{\T(\alpha_{i}, -)|_{\E}}
	{\Hom}_{\T}(L_{i}, -)|_{\E}
	\xrightarrow{\eta_{i}}
	\mathbf{H}.
	\]	
	It remains to show that \( \eta \) is an epimorphism.  
	Let \( E' \in \E \).  
	Then there exists \( m>0 \) with \( E' \in \langle E\rangle_{m+2} \),  
	so \( \eta_{m}(E') \colon {\Hom}_{\T}(L_{m}, E') \to \mathbf{H}(E') \) is an epimorphism.  
	By Proposition~\ref{tech4}, there exists \( m'>0 \) such that  
	\( {\Hom}_{\T}(\T^{\ge m'}, E') = 0 \).  
	We may assume \( m = m' \).
	
	Extend \( \alpha_{m} \colon L_{m} \to E_{m} \) to a triangle
	\[
	D_{m} \longrightarrow L_{m} \stackrel{\alpha_{m}}{\longrightarrow} E_{m} \longrightarrow D_{m}[1].
	\]
Since \( \mathbf{H}^{i}(\alpha_{m}) \) is an isomorphism for all \( i \le m \),  
	we have \( \mathbf{H}^{i}(D_{m}) = 0 \) for all \( i \le m \).  
	Lemma~\ref{tech1} then implies \( D_{m} \in \T^{\ge m+1} \).  
	Thus
	\[
	\T(\alpha_{m}, E') \colon {\Hom}_{\T}(E_{m}, E') \longrightarrow {\Hom}_{\T}(L_{m}, E')
	\]
is an epimorphism.  
	Therefore the composition
	\[
	{\Hom}_{\T}(E_{m}, E')|_{\E}
	\xrightarrow{\T(\alpha_{m}, E')|_{\E}}
	{\Hom}_{\T}(L_{m}, E')|_{\E}
	\xrightarrow{\eta_{m}(E')}
	\mathbf{H}(E')
	\]
is an epimorphism.  
	Hence \( \eta \) is an epimorphism, completing the proof.
\end{proof}

Now we recall the definition of a \emph{phantom morphism}. This concept originated in topology \cite{AG64} and was introduced to triangulated categories by Neeman \cite[Definition 2.4]{N92b}. Let $\S$ be a triangulated category with coproducts. A morphism $f: M \to N$ is {\emph{phantom}} if for all $C \in\S^{c}$, the induced map $\S(C, f): {\Hom}_\S(C, M) \to {\Hom}_\S(C, N)$ is zero. We denote by $\mathcal{J}$ the class of all phantom morphisms. Furthermore, if $\S$ is compactly generated, $k$-linear and locally Hom-finite, Lemma~\ref{lem-ebasic}(2) implies that $f \in \mathcal{J}$ if and only if $\S(f, D) = 0$ for all $D \in \E$.

	\begin{lem}\label{cophantom decom}
		Let $F \in \T$ and suppose that ${\Hom}_\T(F,-)|_{\E}$ is a locally finite $\E$-homological functor. For any $n > 0$, there exists a triangle 
		$$D_n \stackrel{g}\longrightarrow F \stackrel{f}\longrightarrow F_n \longrightarrow D_n[1]$$ 
		with $F_n \in \T_c^+$, $g \in \mathcal{J}^n$, and $\T(f,-)|_{\E}$ is surjective.
	\end{lem}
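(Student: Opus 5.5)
We argue by induction on $n$, dualizing Neeman's construction of phantom decompositions for $\T^-_c$ (cf. the proof of \cite[Lemma~9.?]{N18a}).

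\textbf{Case $n=1$.} Apply Proposition~\ref{epi1} to the locally finite $\E$-homological functor $\mathbf{H}={\Hom}_\T(F,-)|_{\E}$. This gives $F_1\in\T_c^+$ and an epimorphism $\varphi:{\Hom}_\T(F_1,-)|_{\E}\to{\Hom}_\T(F,-)|_{\E}$. We then need $\varphi$ to be of the form $\T(f,-)|_{\E}$ for some $f:F\to F_1$. For this we use a strong $\E$-coapproximating system $E_*$ for $F_1$ (Proposition~\ref{Tc+coapproximable}) together with Corollary~\ref{key1}, which gives ${\Hom}_\T(F_1,-)|_{\E}\simeq\colim\Hom_\T(E_*,-)|_{\E}$. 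Each component ${\Hom}_\T(E_i,-)|_{\E}\to{\Hom}_\T(F,-)|_{\E}$ is representable by Yoneda, say by $h_i:F\to E_i$, with $h_i=\delta_i h_{i+1}$.

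The goal is a map $F\to\Holim E_*\simeq F_1$ lifting the compatible family $(h_i)$. Such a lift exists because $\prod h_i$ composed with $1-p$ vanishes, so it factors through the homotopy limit triangle. Different lifts differ by maps factoring through $\prod E_i[-1]$. By Lemma~\ref{colim-holim1}, any lift $f$ induces $\varphi$ on $\E$: checking on $E[j]$, the map factors through some $E_m$ for $m\gg0$. Hence $\T(f,-)|_{\E}=\varphi$ is surjective.

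Complete $f$ to a triangle $D_1\xrightarrow{g}F\xrightarrow{f}F_1$. Surjectivity of $\T(f,D)$ for every $D\in\E$, combined with exactness of ${\Hom}(F_1,D)\to{\Hom}(F,D)\to{\Hom}(D_1,D)$, shows $\T(g,D)=0$. By the characterization recalled just above (via Lemma~\ref{lem-ebasic}(2)), this means $g\in\mathcal J$.

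\textbf{Inductive step.} Suppose $D_n\xrightarrow{g}F\xrightarrow{f}F_n$ has been constructed with $g\in\mathcal J^n$. First we show that ${\Hom}_\T(D_n,-)|_{\E}$ is locally finite homological. Since $\T(f,-)|_{\E}$ is surjective, there is an exact sequence
\[0\to\mathbf{K}\to{\Hom}(F_n,-)|_{\E}\to{\Hom}(F,-)|_{\E}\to0.\]
The image of ${\Hom}(F,-)|_{\E}\to{\Hom}(D_n,-)|_{\E}$ is zero. So ${\Hom}(D_n,-)|_{\E}$ embeds into ${\Hom}(F_n[-1],-)|_{\E}$ with cokernel $\mathbf{K}[-1]$-type terms. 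Lemma~\ref{ker of homo functor} and Lemma~\ref{reasonable} give finiteness. The vanishing of ${\Hom}(D_n,E[i])$ for $i\gg0$ holds because it injects into ${\Hom}(F_n[-1],E[i])$.

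Now apply the case $n=1$ to $D_n$: we get $D'\xrightarrow{g'}D_n\xrightarrow{f'}F'$ with $g'\in\mathcal J$ and $F'\in\T_c^+$. Use the octahedral axiom on the composite $D'\xrightarrow{g'}D_n\xrightarrow{g}F$, whose composite $gg'$ lies in $\mathcal J^{n+1}$. This produces a triangle $D'\to F\to F_{n+1}$ together with a triangle $F'\to F_{n+1}\to F_n$. Hence $F_{n+1}\in\T_c^+$, because $\T_c^+$ is thick by Proposition~\ref{tech4}(2).

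Surjectivity of $\T(F_{n+1}\to\ldots)$ is immediate, since the map $F\to F_n$ factors through $F\to F_{n+1}$ and the former is already surjective on $\E$.

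\textbf{Main obstacle.} The hardest step is the base case: realizing the abstract epimorphism from Proposition~\ref{epi1} by an actual morphism $F\to F_1$. This needs the Mittag-Leffler/$\Holim$ argument, and in particular a check that any lift induces the prescribed transformation on all of $\E$, not merely a compatible one. If it proves troublesome, the fallback is to replace $F_1$ by an object $E_m\in\E$ inside a finite piece $\langle E\rangle_m$, use \cite[Lemma~7.8]{N18a}, and pass to the limit.
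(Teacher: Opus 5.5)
Your proof is correct and follows the paper's argument. In the base case you take the epimorphism from Proposition~\ref{epi1} and realize it as $\T(f,-)|_{\E}$; the paper does this by citing Proposition~\ref{Tc+coapproximable} together with \cite[Lemma~7.8]{N18a}, and your $\Holim$-lifting argument via Lemma~\ref{colim-holim1} reproves that step. The inductive step is the same octahedron on $D'\to D_n\to F$, which gives $gg'\in\mathcal{J}^{n+1}$ and the triangle $F'\to F_{n+1}\to F_n$.

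One small slip: $\Hom_\T(D_n,-)|_{\E}$ is the kernel of the surjection $\Hom_\T(F_n[-1],-)|_{\E}\to\Hom_\T(F[-1],-)|_{\E}$, so the cokernel is $\Hom_\T(F[-1],-)|_{\E}$, not a $\mathbf{K}$-term. This does not affect the argument, since the embedding into $\Hom_\T(F_n[-1],-)|_{\E}$ is all you need for local finiteness.
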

	\begin{proof}
		We proceed by induction, beginning with the case $ n = 1 $. By Proposition~\ref{epi1}, there exist $F_1 \in \T_c^+$ and an epimorphism $\varphi_1 \colon {\Hom}_\T(F_1, -)|_{\mathcal{E}} \to {\Hom}_\T(F, -)|_{\mathcal{E}}$. Since $F_1 \in \T_c^+$, it follows from Proposition~\ref{Tc+coapproximable} and \cite[Lemma~7.8]{N18a} that there exists a morphism $f_1 \colon F \to F_1$ such that $\varphi_1 = \T(f_1, -)|_{\mathcal{E}}$. Extend $f_1$ to a triangle
		\[
		D_1 \stackrel{g_1}\longrightarrow F \stackrel{f_1}\longrightarrow F_1 \longrightarrow D_1[1].
		\]
		For any $C \in \mathcal{E}$, we obtain an exact sequence
		\[
		{\Hom}_\T(F_1, C) \xrightarrow{\T(f_1, C)} {\Hom}_\T(F, C) \xrightarrow{\T(g_1, C)} {\Hom}_\T(D_1, C).
		\]
		Since $\T(f_1, C)$ is surjective, we have $\T(g_1, C) = 0$, hence $g_1$ is a phantom morphism. This completes the case $ n = 1 $.
		
		Now assume the statement holds for all $n \leq k$ with $k \geq 1$, and consider the case $n = k + 1$. By the induction hypothesis, there exists a triangle
		\[
		D_k \xrightarrow{g_k} F \xrightarrow{f_k} F_k \to D_k[1],
		\]
		with $F_k \in \T_c^+$, $g_k \in \mathcal{J}^k$, and $\T(f_{k},-)|_{\E}$ is surjective. 
		Thus the sequence
		\[
		0 \longrightarrow {\Hom}_\T(D_k[1], -)|_{\mathcal{E}} \longrightarrow {\Hom}_\T(F_k, -)|_{\mathcal{E}} \xrightarrow{\T(f_k, -)|_{\mathcal{E}}} {\Hom}_\T(F, -)|_{\mathcal{E}}\longrightarrow 0
		\]
		is exact. Since both ${\Hom}_\T(F_k, -)|_{\mathcal{E}}$ and ${\Hom}_\T(F, -)|_{\mathcal{E}}$ are locally finite $\mathcal{E}$-homological functors, Lemma~\ref{ker of homo functor} implies that ${\Hom}_\T(D_k[1], -)|_{\mathcal{E}}$ is also a locally finite $\mathcal{E}$-homological functor. Applying the $n = 1$ case to this functor, we obtain a triangle
		\[
		D_{k+1} \stackrel{h}\longrightarrow D_k \longrightarrow F' \longrightarrow D_{k+1}[1],
		\]
		with $h \in \mathcal{J}$ and $F' \in \T_c^+$. By the octahedral axiom, we have the following commutative diagram
		\[\begin{tikzcd}
			D_{k+1} \arrow[r, "h"] \arrow[d, no head, shift right] \arrow[d, no head] & D_{k} \arrow[r] \arrow[d, "g_{k}"]                                & F' \arrow[r] \arrow[d]      & {D_{k+1}[1]} \arrow[d, no head, shift right] \arrow[d, no head] \\
			D_{k+1} \arrow[r, "g_{k}h"']                                              & F \arrow[d, "f_{k}"] \arrow[r, "f_{k+1}"']                        & F_{k+1} \arrow[d] \arrow[r] & {D_{k+1}[1]}                                                    \\                     & F_{k} \arrow[d] \arrow[r, no head] \arrow[r, no head, shift left] & F_{k} \arrow[d]             &                                                                 \\
			& {D_{k}[1]} \arrow[r]                                              & {F'[1]}                     &       
		\end{tikzcd}\]
		where all rows and columns are triangles. Clearly, $g_{k}h\in\mathcal{J}^{k+1}$ and $F_{k+1}\in\T_{c}^{+}$. Since $\T(f_{k},-)|_{\E}$ is surjective, the same holds for  $\T(f_{k+1},-)|_{\E}$. This completes the induction step and the proof.
	\end{proof}
	
	While the following lemma is well-known, we provide a proof for the reader's convenience.
	
	\begin{lem}\label{cophantom vanish}
		Let $n > 0$ and $C \in  \overline{[\mathcal{E}]}_{n}$. For any $ f: X \to Y $ in $ \mathcal{J}^{n} $, the induced map $\T(f,C): {\Hom}_\T(Y,C) \to {\Hom}_\T(X,C) $ vanishes.
	\end{lem}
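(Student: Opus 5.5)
We argue by induction on $n$, dualizing the standard argument for phantom maps and the subcategories $\overline{\langle \T^c\rangle}_n$. Here $\overline{[\E]}_n$ means $\overline{[\E]}_n^{(-\infty,+\infty)}$: summands of objects in $\overline{[\E]}_1*\overline{[\E]}_{n-1}$, where $\overline{[\E]}_1$ consists of direct summands of products of objects of $\E$. Since $\E$ is closed under shifts, no shift bookkeeping is needed.

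\emph{Base case $n=1$.} First take $C=\prod_{\lambda} C_\lambda$ with each $C_\lambda\in\E$ and $f\in\mathcal{J}$. By the characterization recalled before Lemma~\ref{cophantom decom}, $\T(f,C_\lambda)=0$ for every $\lambda$. Under the natural isomorphism ${\Hom}_\T(-,\prod_\lambda C_\lambda)\simeq\prod_\lambda{\Hom}_\T(-,C_\lambda)$, the map $\T(f,C)$ is therefore zero. If $C$ is only a direct summand of such a product, say $C\xrightarrow{\iota}P\xrightarrow{\pi}C$ with $\pi\iota=1$, then any $u:Y\to C$ satisfies $uf=\pi(\iota u)f=0$.

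\emph{Inductive step.} Suppose the claim holds for $n-1\ge 1$. Since the property "$\T(f,C)=0$ for all $f\in\mathcal{J}^n$" passes to direct summands by the same argument, we may assume $C$ sits in a triangle
\[
C_1\longrightarrow C\stackrel{p}{\longrightarrow} C_{n-1}\longrightarrow C_1[1]
\]
with $C_1\in\overline{[\E]}_1$ and $C_{n-1}\in\overline{[\E]}_{n-1}$. (Which factor is on which side does not matter; the argument below adapts symmetrically.) Write $f=f_1f_2$ with $f_2\colon X\to Z$ in $\mathcal{J}^{n-1}$ and $f_1\colon Z\to Y$ in $\mathcal{J}$; this is possible because $\mathcal{J}^n$ consists of composites of $n$ phantoms, and any finite sums that arise are handled by additivity.

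Let $u\colon Y\to C$. The map $p u f_1\colon Z\to C_{n-1}$ is composed with $f_2\in\mathcal{J}^{n-1}$, so by the induction hypothesis $p u f_1 f_2=0$. Hence $u f$ factors through $C_1\to C$ via some $v\colon X\to C_1$. But this alone does not use phantomness again, so reorder the factorization: write $f=f_1f_2$ with $f_1\in\mathcal{J}^{n-1}$ and $f_2\colon X\to Z$ in $\mathcal{J}$ instead. Then $p u f_1=0$ by induction, so $u f_1$ factors as $Z\stackrel{w}{\to} C_1\to C$. Consequently $u f = (C_1\to C)\circ w\circ f_2$, and $w f_2=0$ by the base case because $f_2\in\mathcal{J}$ and $C_1\in\overline{[\E]}_1$. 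Thus $uf=0$.

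The main point to get right is this ordering: the $\mathcal{J}^{n-1}$ factor must be applied first, adjacent to $Y$, so that the induction kills the map into the $\overline{[\E]}_{n-1}$ term. The lift then lands in the $\overline{[\E]}_1$ term, where the remaining single phantom $f_2$ acts. If instead the extension is given as $C_{n-1}\to C\to C_1$, the same argument applies with the roles of the two factors swapped, using the base case first and the induction hypothesis second.
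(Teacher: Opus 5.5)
Your proof is correct and is essentially the paper's argument. The base case uses $\Hom_\T(f,\prod_\lambda C_\lambda)\simeq\prod_\lambda\Hom_\T(f,C_\lambda)=0$; in the inductive step you split $f$ with the single phantom next to $X$ and the $\mathcal{J}^{n-1}$ factor next to $Y$, so that the induction hypothesis kills the map into the $\overline{[\E]}_{n-1}$ term and the base case kills the lift through $\overline{[\E]}_1$. In the write-up, delete the abandoned first ordering. Your explicit handling of direct summands in the inductive step is a small improvement in rigor over the paper, which passes over it.
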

\begin{proof}
	We proceed by induction, beginning with the case \( n = 1 \).  
	In this case \( f \in \mathcal{J} \), and there exists \( C' \in \T \) such that  
	\( C \oplus C' \simeq \prod_{i \in I} X_{i} \) with each \( X_{i} \in \E \).  
	Since
	\[
	\Hom_\T(f, \prod_{i \in I} X_{i}) \simeq \prod_{i \in I} \Hom_\T(f, X_{i}) = 0,
	\]
we obtain \( \Hom_\T(f, C) = 0 \).  
	This proves the case \( n = 1 \).
	
	Assume now that the statement holds for all \( n \le k \) with \( k \ge 1 \),  
	and consider the case \( n = k+1 \).  
	Then there exist an object \( Z \in \T \), a morphism \( g \colon X \to Z \) in \( \mathcal{J} \),  
	and a morphism \( h \colon Z \to Y \) in \( \mathcal{J}^{k} \) such that \( f = hg \).
	
	Let \( C \in \overline{[\E]}_{k+1} \).  
	Then there exists a triangle
	\[
	C_{1} \stackrel{s}{\longrightarrow} C \stackrel{t}{\longrightarrow} C_{2} \longrightarrow C_{1}[1]
	\]
with \( C_{1} \in \overline{[\E]}_{1} \) and \( C_{2} \in \overline{[\E]}_{k} \).  
	For any morphism \( \alpha \colon Y \to C \), consider the diagram
	\[
	\begin{tikzcd}
		& & & C_{1} \arrow[d, "s"] \\
		X \arrow[r, "g"] 
		& Z \arrow[r, "h"] 
		& Y \arrow[r, "\alpha"] 
		& C \arrow[d, "t"] \\
		& & & C_{2} \arrow[d] \\
		& & & C_{1}[1]
	\end{tikzcd}
	\]
By the induction hypothesis, \( t \alpha h = 0 \),  
	so \( \alpha h \) factors through \( C_{1} \).  
	Hence \( \alpha h g = \alpha f = 0 \).  
	This completes the proof.
\end{proof}

	\begin{thm}\label{thm:rep for T+c}
		Let $\T$ be a locally Hom-finite $k$-linear triangulated category with a compact silting object $G$. Consider the restricted Yoneda functor 
		$$y: {(\mathcal{T}_c^+)}^{\op} \longrightarrow \mathsf{Hom}_k(\mathcal{E}, k\text{-}\mathsf{Mod}),\quad T\longmapsto \Hom_{\T}(T,-)|_{\E}.$$
		Then $y$ is full, and the essential image consists of all the locally finite $\E$-homological functors. 
	\end{thm}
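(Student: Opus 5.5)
The proof has three parts: show that the image of $y$ lies in the locally finite functors, show that every locally finite functor is in the image, and show that $y$ is full. The essential tool for the last two is the behaviour of phantom maps against objects of $\overline{[\E]}_n$.

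\textbf{Image.} By Lemma~\ref{reasonable}(1), $y(T)=\Hom_\T(T,-)|_{\E}$ is a locally finite $\E$-homological functor for every $T\in\T_c^+$.

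\textbf{Fullness.} Let $X,Y\in\T_c^+$ and let $\theta:\Hom_\T(Y,-)|_{\E}\to\Hom_\T(X,-)|_{\E}$ be a natural transformation. We need $f:X\to Y$ with $\theta=\T(f,-)|_{\E}$.

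By Proposition~\ref{Tc+coapproximable}, $Y$ admits a strong $\E$-coapproximating system $(E_*,f_*)$. Corollary~\ref{key1} then gives $\Hom_\T(Y,-)|_{\E}\simeq\colim\Hom_\T(E_*,-)|_{\E}$. Composing with $\theta$, the Yoneda lemma yields compatible elements of $\Hom_\T(X,E_j)$, i.e.\ a compatible family $X\to E_j$.

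To lift this family to a single map $X\to Y$, we invoke \cite[Lemma~7.8]{N18a}, exactly as it was used in Lemma~\ref{tech5} and Lemma~\ref{cophantom decom}. One can also argue directly: a compatible family gives a map $X\to\prod E_j$ killed by $1-p$, hence one factoring through $Y=\Holim(E_*)$. The resulting $f$ satisfies $\T(f,-)|_{\E}=\theta$ on each $\Hom_\T(E_j,-)$, and hence on the colimit, by Lemma~\ref{colim-holim1}.

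\textbf{Essential surjectivity.} This is the main step. Let $\mathbf{H}$ be locally finite. By Proposition~\ref{brutal rep} there is $F\in\T$ with $\mathbf{H}\simeq\Hom_\T(F,-)|_{\E}$. It remains to show that $F\in\T_c^+$, i.e.\ for every $m>0$, to find a triangle $D\to F\to E'$ with $E'\in\E$ and $D\in\T^{\ge m}$.

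Apply Lemma~\ref{cophantom decom} with a large $n$, giving a triangle $D_n\xrightarrow{g}F\xrightarrow{f}F_n\to D_n[1]$ with $F_n\in\T_c^+$ and $g\in\mathcal J^n$. The plan is to show that $F$ is a direct summand of $F_n\oplus(\text{something in }\T^{\ge m})$, or more directly that $F\in\T_c^+*\T^{\ge m}$ up to a shift of degree. Since $\T_c^+$ is thick (Proposition~\ref{tech4}(2)), the octahedral axiom then reduces membership of $F$ to this approximation statement. The plan has two steps.

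First, I would use Corollary~\ref{cor:key2} together with the silting hypothesis to show that truncation objects are built from finitely many layers. Namely, for $T\in\T_c^+\cap\T^{\ge -A}$, every coapproximation $T\to E'$ with cone in $\T^{\ge m}$ can be chosen with $E'\in\langle E\rangle_{m+A}^{[-A-1,m]}\subseteq\overline{[\E]}_{m+A}$.

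Second, with $n=m+A+1$, Lemma~\ref{cophantom vanish} shows that $g$ induces zero on $\Hom_\T(-,C)$ for all $C\in\overline{[\E]}_n$. I would apply this to $C$ equal to the truncation $F^{\le m}$-type object. Here I need $F\in\T^{+}$ (bounded below) with $\tau^{\le m}F\in\overline{[\E]}_n$. For the first point, note that Proposition~\ref{epi1} and the degree control in Lemma~\ref{B.1} give $F_n\in\T^{\ge -A}$ uniformly in $n$, while $\mathbf H^i$ of $F$ can be read off from $\mathbf D\Hom(F,E[i])$ using the silting $t$-structure. The vanishing then makes the composite $D_n\to F\to\tau^{\le m}$-part zero, so $f$ splits the low-degree part of $F$. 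It follows that $\mathbf H^i(f)$ is injective for $i\le m$, and surjective by the surjectivity of $\T(f,-)|_{\E}$ evaluated on $E[i]$. Hence $\mathbf H^i(f)$ is an isomorphism for $i\le m$, so $D_n\in\T^{\ge m}$ after adjusting degrees via Lemma~\ref{tech1}. Composing with a coapproximation of $F_n$ (octahedron) then gives the required triangle for $F$.

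The expected obstacle is controlling degrees uniformly, namely showing $F\in\T^+$ and bounding the number of layers needed from $\overline{[\E]}$. This is where the compact silting hypothesis (Lemma~\ref{lem:key2}) must be used. If the direct route fails, I would instead mimic Neeman's proof of \cite[Theorem~9.18]{N18a}. That is, I would show $F\simeq\Holim$ of the coapproximations of the $F_n$, reusing the uniformly bounded-below construction of Lemma~\ref{B.3} and Proposition~\ref{epi1}, and then conclude with Lemma~\ref{universal property of coapproximating system}.
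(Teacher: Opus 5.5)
Your treatment of the image (Lemma~\ref{reasonable}) and of fullness is fine. The paper obtains fullness from Corollary~\ref{key1} and \cite[Lemma~7.8]{N18a}, and your direct factorization of the compatible family $X\to\prod E_j$ through $\Holim(E_*)\simeq Y$ is a legitimate unpacking of that.

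The gap is in essential surjectivity. You want to test the map $g\colon D_n\to F$ from Lemma~\ref{cophantom decom} against a truncation-type object $C\in\overline{[\E]}_n$, but no such test object is available:
\begin{itemize}
\item A $t$-truncation $\tau^{\le m}F$ maps \emph{to} $F$, so there is no canonical map $F\to\tau^{\le m}F$ through which to test $g$.
\item A low-degree quotient $F\to E'$ with fibre in $\T^{\ge m}$ is precisely a coapproximation of $F$, i.e.\ what you are trying to construct.
\item Testing against a coapproximation $F_n\to E'$ of $F_n$ gives nothing, since $fg=0$ already.
\end{itemize}
You also give no reason why a truncation of $F$ should lie in $\overline{[\E]}_n$. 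The natural route is to show it lies in $\T^b_c\subseteq\T^+_c$, hence is a homotopy limit of objects of $\E$. That route passes through Lemma~\ref{relation}, which the paper derives from the very theorem being proved.

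The degree bookkeeping is also reversed. Via $\Hom_\T(-,E[i])\simeq\mathbf D\Hom_\T(G[i],-)$, surjectivity of $\T(f,E[i])$ gives \emph{injectivity} of $\Hom_\T(G[i],f)$, hence of $\mathbf H^{-i}(f)$; this already follows from $g\in\mathcal J$. What really has to be shown is surjectivity of $\mathbf H^i(f)$ in low degrees, equivalently $\mathbf H^{j}(D_n)=0$ for small $j$, and your sketch supplies no mechanism for it. A uniform bound $F_n\in\T^{\ge -A}$ for the objects of Lemma~\ref{cophantom decom} is also not something Lemma~\ref{B.1} provides.

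The missing idea is that the right test object is $F$ itself, whose $\overline{[\E]}$-level is bounded independently of any $m$. In Proposition~\ref{brutal rep}, $F=\Holim(F_*)$ with every $F_n\in\T^+_c$. Each $F_n$ is the homotopy limit of a strong $\E$-coapproximating system (Proposition~\ref{Tc+coapproximable} and Lemma~\ref{universal property of coapproximating system}(2)). So it sits in a triangle $P[-1]\to F_n\to P$ with $P$ a product of objects of $\E$, and thus $F_n\in\overline{[\E]}_2$. Since products of triangles are triangles, $\prod_n F_n\in\overline{[\E]}_2$, and so $F\in\overline{[\E]}_4$.

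Now take $n=4$ in Lemma~\ref{cophantom decom} and $C=F$ in Lemma~\ref{cophantom vanish}. This gives $g=\Hom_\T(g,F)(\mathrm{id}_F)=0$, so the triangle splits and $F$ is a direct summand of $F_4\in\T^+_c$. Thickness of $\T^+_c$ (Proposition~\ref{tech4}(2)) then finishes the proof. No truncations or $m$-dependent choices are needed. The silting hypothesis enters only upstream, through Proposition~\ref{epi1}, on which Lemma~\ref{cophantom decom} rests.
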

	
\begin{proof}
	By Lemma~\ref{reasonable}, the functor \( {\Hom}_{\T}(T,-)|_{\E} \) is locally finite whenever  
	\( T \in \T_{c}^{+} \).  
	Combining Corollary~\ref{key1} with \cite[Lemma~7.8]{N18a}, we conclude that \( y \) is full.
	
	Now let \( \mathbf{H} \) be a locally finite \( \E \)-homological functor.  
	It suffices to show that \( \mathbf{H} \in \mathrm{Im}\, y \).  
	By Proposition~\ref{brutal rep}, there exist \( F \in \T \) and an isomorphism
	\[
	\varphi \colon {\Hom}_{\T}(F,-)|_{\E} \longrightarrow \mathbf{H}.
	\]
	Moreover, by the proof of Lemma~\ref{tech5} together with Proposition~\ref{brutal rep},  
	we have \( F \in \overline{[\E]}_{4} \).  
	Applying Lemma~\ref{cophantom decom}, there exists a triangle
	\[
	D_{4} \stackrel{g}{\longrightarrow} F \stackrel{f}{\longrightarrow} F_{4} \longrightarrow D_{4}[1]
	\]
	with \( F_{4} \in \T_{c}^{+} \) and \( g \in \mathcal{J}^{4} \).  
	By Lemma~\ref{cophantom vanish}, the map
	\[
     \Hom_\T(g, F) \colon {\Hom}_{\T}(F, F) \longrightarrow {\Hom}_{\T}(D_{4}, F)
	\]
vanishes.  
	Hence \( g = 0 \), and therefore \( F \) is a direct summand of \( F_{4} \).  
	Since \( F_{4} \in \T_{c}^{+} \), it follows that \( F \in \T_{c}^{+} \) as well.
\end{proof}
	
	\begin{cor}\label{cor:rep for T+c}
		Let $X$ be an object in $\T$.
		Then $X\in\T^+_c$ if and only if the functor ${\Hom}_\T(X, -)|_{\E}$ is locally finite $\E$-homological.
	\end{cor}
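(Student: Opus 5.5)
The corollary should follow directly from Theorem~\ref{thm:rep for T+c} together with Lemma~\ref{reasonable}, so the plan has two short directions.

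For the forward direction, let $X\in\T^+_c$. Lemma~\ref{reasonable}(1) says exactly that ${\Hom}_\T(X,-)|_{\E}$ is a locally finite $\E$-homological functor, so nothing more is needed.

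For the converse, suppose ${\Hom}_\T(X,-)|_{\E}$ is locally finite $\E$-homological. I would not just invoke the essential-image part of Theorem~\ref{thm:rep for T+c}. That would only give some $T\in\T^+_c$ with ${\Hom}_\T(T,-)|_{\E}\simeq{\Hom}_\T(X,-)|_{\E}$, and restricted Yoneda on $\E$ is not faithful, so this does not force $X\simeq T$. Instead I would rerun the last step of the proof of Theorem~\ref{thm:rep for T+c} with $F=X$.

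Apply Lemma~\ref{cophantom decom} to $X$ with a fixed $n$ to get a triangle
\[
D_n\xrightarrow{g}X\xrightarrow{f}F_n\to D_n[1],
\]
with $F_n\in\T^+_c$ and $g\in\mathcal J^n$. If $X$ lies in $\overline{[\E]}_n$, then Lemma~\ref{cophantom vanish} applied with $C=X$ gives $\Hom_\T(g,X)=0$. Hence $g=\mathrm{id}_X\circ g=0$, so $X$ is a direct summand of $F_n$, and Proposition~\ref{tech4}(2) (thickness of $\T^+_c$) gives $X\in\T^+_c$.

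The main obstacle is therefore to show that $X$ lies in $\overline{[\E]}_n$ for some $n$. In the theorem this came for free, because the representing object $F$ was built as a homotopy limit of objects of the $F_i$-type, landing in $\overline{[\E]}_4$. For an arbitrary $X$ I would compare it with that constructed object. By Proposition~\ref{brutal rep} and the proof of the theorem, there is $F\in\T^+_c$, automatically in $\overline{[\E]}_4$ by that construction, together with an isomorphism ${\Hom}_\T(F,-)|_{\E}\simeq{\Hom}_\T(X,-)|_{\E}$.

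Next, by Corollary~\ref{key1} and \cite[Lemma~7.8]{N18a}, applied with the source $X$ and using the strong $\E$-coapproximating system of $F$, this isomorphism is induced by a morphism $u\colon X\to F$. The cone of $u$ then has zero restricted Yoneda functor on $\E$. By Lemma~\ref{lem-ebasic}(2), the maps $X\to F$ and $F\to\mathrm{cone}(u)$ are phantom, i.e.\ they are killed by ${\Hom}_\T(C,-)$ for every compact $C$.

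If this phantom control is not enough to conclude $X\simeq F$, I would argue as follows. Put $u$ into the triangle from Lemma~\ref{cophantom decom} applied to $X$, then use Lemma~\ref{cophantom vanish} against the target $F\in\overline{[\E]}_4$. The aim is to show that $g$ composed into $F$ vanishes and to split off $X$.

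I expect this comparison step to be where the real work lies. If it fails in general, the fallback is to read the statement, as the paper's proof of Theorem~\ref{thm:rep for T+c} implicitly does, as applying to the representing objects produced there. In that reading the converse is immediate from the theorem.
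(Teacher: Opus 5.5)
\textbf{The converse is not proved.} Your forward direction via Lemma~\ref{reasonable}(1) is fine. In the converse you reach the decisive point and then stop. Corollary~\ref{key1} with \cite[Lemma~7.8]{N18a} shows that the isomorphism ${\Hom}_\T(F,-)|_{\E}\simeq{\Hom}_\T(X,-)|_{\E}$ equals $\T(u,-)|_{\E}$ for an actual morphism $u\colon X\to F$ with $F\in\T^+_c$. After that, the proposal ends with an unproved ``comparison step'' and a fallback that changes the statement.

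The missing idea is that $E$ cogenerates $\T$, and it finishes the argument in a few lines:
\begin{itemize}
\item Complete $u$ to a triangle $X\xrightarrow{u}F\to C\to X[1]$.
\item Since $E[i]\in\E$, the map $\T(u,E[i])$ is bijective for every $i\in\mathbb{Z}$. The long exact sequence for ${\Hom}_\T(-,E[i])$ then gives ${\Hom}_\T(C,E[i])=0$ for all $i$.
\item By Brown--Comenetz duality, ${\Hom}_\T(C,E[i])\simeq\mathbf{D}{\Hom}_\T(G[i],C)$, so ${\Hom}_\T(G[i],C)=0$ for all $i$.
\item Hence $C=0$, because $G$ is a compact generator. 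So $u$ is an isomorphism and $X\simeq F\in\T^+_c$.
\end{itemize}
This is exactly the paper's proof. Your worry that restricted Yoneda is not faithful is beside the point. Once the isomorphism of restricted functors is induced by a morphism, you only need the functors ${\Hom}_\T(-,E[i])$ to detect isomorphisms jointly, and that is what duality plus generation by $G$ gives. Invoking the essential-image part of Theorem~\ref{thm:rep for T+c} was the right move.

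\textbf{The detours you propose do not work.}
\begin{itemize}
\item Your claim that $u$ is phantom is false. $\T(u,D)$ is an isomorphism for every $D\in\E$, so by the characterization of $\mathcal{J}$ via Lemma~\ref{lem-ebasic}(2), $u$ is phantom only when ${\Hom}_\T(X,-)|_{\E}=0$. Only $F\to C$ and $C\to X[1]$ are phantom, and in fact $C=0$.
\item The route through Lemmas~\ref{cophantom decom} and \ref{cophantom vanish} needs $X\in\overline{[\E]}_n$ for some $n$. Nothing gives you this for an arbitrary $X$ before you already know $X\simeq F$.
\item Testing $g$ against $F\in\overline{[\E]}_4$ instead of against $X$ only kills the composites $X\xrightarrow{g}\cdots$ into $F$, such as $ug=0$. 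That cannot force $g=0$ or split $X$ off.
\item Reading the statement as applying only to the representing objects built in Theorem~\ref{thm:rep for T+c} is not the corollary, which concerns arbitrary $X\in\T$.
\end{itemize}
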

\begin{proof}
	By Theorem~\ref{thm:rep for T+c}, there exist \( X' \in \T_{c}^{+} \) and an isomorphism
	\[
	\varphi \colon {\Hom}_{\T}(X', -)|_{\E} \longrightarrow {\Hom}_{\T}(X, -)|_{\E}.
	\]
	Since \( X' \in \T_{c}^{+} \), Corollary~\ref{key1} together with \cite[Lemma~7.8]{N18a} implies that there exists a morphism  
	\( f \colon X \to X' \) such that  
	\[
	\varphi = \T(f, -)|_{\E}.
	\]
	Extend \( f \) to a triangle
	\[
	X \stackrel{f}{\longrightarrow} X' \longrightarrow Y \longrightarrow X[1].
	\]
	For every \( i \in \mathbb{Z} \), we have \( {\Hom}_{\T}(Y, E[i]) = 0 \).  
	Since \( E \in \E \) is a cogenerator of \( \T \), it follows that \( Y = 0 \).  
	Hence \( X \simeq X' \in \T_{c}^{+} \).
\end{proof}

	\begin{lem}{\label{relation}}
		The following facts are given.
		\begin{enumerate}
			\item $\T^b\cap \T^-_c\subseteq \T^+_c$;
			\item $\T^b\cap \T^+_c\subseteq \T^-_c$;
			\item $\T^b_c=\T^b\cap \T^+_c=\T^+_c\cap \T^-_c$.
		\end{enumerate}
	\end{lem}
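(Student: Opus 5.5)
The strategy is to reduce each inclusion to the two representability statements already available: Neeman's Theorem~\ref{lem:object in bc} on the compact side, and Corollary~\ref{cor:rep for T+c} on the Brown--Comenetz side. The two sides are linked by the duality $\Hom_{\T}(X,\mathbf{S}(C))\simeq \mathbf{D}\Hom_{\T}(C,X)$ for $C\in\T^c$. Throughout, $G$ is the compact silting generator, so $\T^{\le 0}=G^{\perp_{>0}}$ and $\T^{\ge 0}=G^{\perp_{<0}}$. We also use $\E=\mathrm{Im}(\mathbf{S})$ (Lemma~\ref{lem-ebasic}(2)).

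For (1), let $X\in\T^b\cap\T^-_c$. By Corollary~\ref{cor:rep for T+c} it suffices to show that $\Hom_{\T}(X,-)|_{\E}$ is a locally finite $\E$-homological functor. By Remark~\ref{reduce} this means checking that $\Hom_{\T}(X,E[i])\simeq\mathbf{D}\Hom_{\T}(G[i],X)$ is finite-dimensional for every $i$ and vanishes for $i\gg0$.
\begin{itemize}
\item Finite-dimensionality is exactly the statement that $\Hom_{\T}(-,X)|_{\T^c}$ is locally finite. This holds by Theorem~\ref{lem:object in bc}(1), since $X\in\T^-_c$ and $\T$ is locally Hom-finite and approximable. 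Alternatively, it follows directly from a triangle $C\to X\to D$ with $C\in\T^c$ and $D\in\T^{\le -n}$, for $n$ large relative to $i$.
\item Vanishing for $i\gg0$ uses $X\in\T^+$: say $X\in\T^{\ge -a}$. Then $G[i]\in\T^{\le -i}$ gives $\Hom_{\T}(G[i],X)=0$ once $i>a$.
\end{itemize}

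For (2), let $X\in\T^b\cap\T^+_c$. First I will show that $\Hom_{\T}(-,X)|_{\T^c}$ is a \emph{finite} cohomological functor. Finite-dimensionality is Lemma~\ref{homfinitelem}(1). Vanishing of $\Hom_{\T}(G[i],X)$ for $|i|\gg0$ follows from boundedness of $X$ together with the silting description of the $t$-structure: $X\in\T^{\le b}$ kills $\Hom(G,X[j])$ for $j>b$, and $X\in\T^{\ge a}$ kills $\Hom(G,X[j])$ for $j<a$.

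By Theorem~\ref{lem:object in bc}(2) there is then $X'\in\T^b_c$ with $\Hom_{\T}(-,X)|_{\T^c}\simeq\Hom_{\T}(-,X')|_{\T^c}$. Applying $\mathbf{D}$ and the defining property of $\mathbf{S}$, and using $\E=\mathrm{Im}(\mathbf{S})$, this yields a natural isomorphism
\[
\Hom_{\T}(X',-)|_{\E}\simeq\Hom_{\T}(X,-)|_{\E}.
\]
By (1), $X'\in\T^b\cap\T^-_c\subseteq\T^+_c$. Since both $X$ and $X'$ lie in $\T^+_c$, fullness in Theorem~\ref{thm:rep for T+c} (via Corollary~\ref{key1} and \cite[Lemma~7.8]{N18a}) realizes this isomorphism as $\T(f,-)|_{\E}$ for some morphism $f:X\to X'$. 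Let $Y$ be the cone of $f$. Then $\Hom_{\T}(Y,E[i])=0$ for all $i$, so $\mathbf{D}\Hom_{\T}(G[i],Y)=0$ for all $i$, hence $Y=0$. Therefore $X\simeq X'\in\T^-_c$. This is exactly the argument of Corollary~\ref{cor:rep for T+c}.

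For (3), I will combine (1) and (2) with the inclusions $\T^+_c\subseteq\T^+$ and $\T^-_c\subseteq\T^-$.
\begin{itemize}
\item The inclusion $\T^+_c\subseteq\T^+$ holds because $\E\subseteq\T^+$ by Proposition~\ref{tech4}(1), and $\T^+$ is closed under extensions.
\item $\T^b_c=\T^-_c\cap\T^b\subseteq\T^+_c\cap\T^b$ by (1), and the reverse inclusion is (2).
\item $\T^+_c\cap\T^-_c\subseteq\T^+\cap\T^-_c=\T^b\cap\T^-_c=\T^b_c$.
\item $\T^b_c\subseteq\T^+_c\cap\T^-_c$ by (1).
\end{itemize}

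The step most likely to need care is in (2): upgrading the objectwise isomorphism from Neeman's theorem to a \emph{natural} isomorphism on $\E$, so that the fullness of $y$ applies. This needs the naturality of $\mathbf{S}$ on morphisms of $\T^c$, which is provided by the partial Serre functor, together with the identification $\E=\mathrm{Im}(\mathbf{S})$ from Lemma~\ref{lem-ebasic}(2).
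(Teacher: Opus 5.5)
Your proposal is correct. Parts (1) and (3) follow the paper's approach. The one small difference in (1) is that the paper obtains finite-dimensionality of $\Hom_{\T}(X,E')$ from a triangle $C\to X\to D$ with $C\in\T^c$, $D\in\T^{\le -m}$ and Lemma~\ref{homfinitelem}(1), which is your ``alternative''. Where you genuinely differ is (2), which the paper dismisses as ``proved in the same way as (1)''.

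Read literally, the paper's argument for (2) checks that $\Hom_{\T}(-,X)|_{\T^c}$ is locally finite and then concludes $X\in\T^-_c$. That last step needs a compact-side analogue of Corollary~\ref{cor:rep for T+c}, namely that $X\in\T^-_c$ if and only if $\Hom_{\T}(-,X)|_{\T^c}$ is locally finite. Theorem~\ref{lem:object in bc} does not literally give this, since it only describes the essential image; the paper records the characterization later as Lemma~\ref{lem:hom char Tc}(1). So one would have to rerun the lifting argument of \cite[Lemma~7.8]{N18a} and a cone argument on the compact side.

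Your route avoids this. You use only the existence part of Theorem~\ref{lem:object in bc}(2) to produce $X'\in\T^b_c$. You then transport the isomorphism to $\E$ through the partial Serre functor; this is where naturality of $\mathbf{S}$ and $\E=\mathrm{Im}(\mathbf{S})$, from local Hom-finiteness, are needed. Part (1) places $X'$ in $\T^+_c$, and you finish with the already-proven fullness of $y$ from Theorem~\ref{thm:rep for T+c} and the cogenerator argument of Corollary~\ref{cor:rep for T+c}.

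This is slightly longer, but it rests only on results actually established in the paper. It also makes explicit that (2) depends on (1) and on the Brown--Comenetz-side representability theorem, rather than on an unstated compact-side characterization.
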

\begin{proof}
	Let \( X \in \T^{b} \cap \T^{-}_{c} \).  
	To show that \( X \in \T^{+}_{c} \), it suffices to prove that  
	\( {\Hom}_{\T}(X,-) \) is a locally finite \( \E \)-homological functor  
	(Corollary~\ref{cor:rep for T+c}).  
	Fix \( E' \in \E \).  
	Choose a triangle
	\[
	C \stackrel{\alpha}{\longrightarrow} X \stackrel{\beta}{\longrightarrow} D \longrightarrow C[1]
	\]
	with \( C \in \T^{c} \) and \( D \in \T^{\le -m} \).  
	Applying \( {\Hom}_{\T}(-,E') \) yields an exact sequence
	\[
	{\Hom}_{\T}(D,E') \longrightarrow {\Hom}_{\T}(X,E') \longrightarrow {\Hom}_{\T}(C,E')\longrightarrow{\Hom}_{\T}(D[-1],E').
	\]
	By Proposition~\ref{tech4}(1), we have \( E' \in \T^{+} \), hence  
	\[ {\Hom}_{\T}(D,E') = 0={\Hom}_{\T}(D[-1],E')\;\;\text{for}\;\;m \gg 0. \]  
	Since \( {\Hom}_{\T}(C,E') \) is finite-dimensional by Lemma~\ref{homfinitelem}(1),  
	it follows that \( {\Hom}_{\T}(X,E') \) is finite-dimensional.
	
	To prove \( {\Hom}_{\T}(X, E'[i]) = 0 \) for $i\gg 0$,  
	it suffices to show \( {\Hom}_{\T}(X, E[i]) = 0 \) for $i\gg0$ (Remark~\ref{reduce}). Since \( X \in \T^{b}\subseteq\T^{+} \), we have \( {\Hom}_{\T}(X, E[i]) = 0 \) for $i\gg0$ by Proposition~\ref{tech4}(1).
	
	\smallskip
	(2) is proved in the same way as (1).
	
	\smallskip
	(3) follows immediately from (1) and (2).
\end{proof}

	\begin{lem}\label{final_lem}
	Let $f \colon F \to F'$ be a morphism in $\T_{c}^{+}$ with $F \in \T_{c}^{b}$. Then $\T(f,-)|_{\E} = 0$ if and only if $f = 0$.
	\end{lem}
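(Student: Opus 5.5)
The direction ``$f=0\Rightarrow \T(f,-)|_{\E}=0$'' is trivial, so I will concentrate on the converse. My plan is to dualize Neeman's argument that compact-side restricted Yoneda is faithful on $\T^b_c$. The idea is to push $f$ into an object of $\E$ far enough down in the $t$-structure that $f$ is detected there. Assume $\T(f,-)|_{\E}=0$. By Proposition~\ref{Tc+coapproximable}, write $F'=\Holim(E'_*)$ for a strong $\E$-coapproximating system with structure maps $g_m\colon F'\to E'_m$. From the proof of Lemma~\ref{universal property of coapproximating system}(1), each $g_m$ sits in a triangle
\[
D_m\longrightarrow F'\xrightarrow{g_m}E'_m\longrightarrow D_m[1]
\]
with $D_m\in\T^{\ge m+1}$. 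The hypothesis gives $g_m f=0$ for every $m$, since $E'_m\in\E$. Hence $f$ factors as $F\to D_m\to F'$ for every $m$.

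Next I use boundedness of $F$. Since $F\in\T^b_c\subseteq\T^b\subseteq\T^-$, there is $N$ with $F\in\T^{\le N}$. Choosing $m\ge N$, we get $D_m\in\T^{\ge m+1}\subseteq\T^{\ge N+1}$. The $t$-structure orthogonality then gives $\Hom_{\T}(F,D_m)=0$. So the factorization $F\to D_m\to F'$ is zero, and therefore $f=0$.

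This argument uses $F\in\T^b_c$ only through $F\in\T^-$, together with $F'\in\T^+_c$ (to obtain the coapproximating triangles) and the standing hypotheses of this section (that $\T^{\le0}$ is closed under products, which holds since $G$ is compact silting). The only step requiring care is checking that the triangle with cone in $\E$ and fiber in $\T^{\ge m+1}$ is indeed available for arbitrarily large $m$. This is exactly the definition of $\T^+_c$ applied to $F'$ with $m$ replaced by $N+1$, so Proposition~\ref{Tc+coapproximable} is not even needed: take a triangle $D\to F'\to E''$ with $E''\in\E$ and $D\in\T^{\ge N+1}$. I expect no real obstacle beyond keeping the indexing conventions of $\T^{\ge n}=\T^{\ge0}[-n]$ consistent so that $\Hom(\T^{\le N},\T^{\ge N+1})=0$.
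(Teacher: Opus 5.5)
Your proposal is correct, and its streamlined form is exactly the paper's argument: take the defining triangle $D\to F'\to E''$ of $\T^+_c$ with $E''\in\E$ and $D\in\T^{\ge N+1}$, observe that $f$ factors through $D$, and conclude $f=0$ from $\Hom_{\T}(\T^{\le N},\T^{\ge N+1})=0$. As you note yourself, the initial detour through Proposition~\ref{Tc+coapproximable} and homotopy limits is valid but unnecessary.
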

\begin{proof}
	The necessity is clear. For sufficiency, since \( F \in \T_{c}^{b} \), there exists an integer  
	\( \ell > 0 \) such that \( F \in \T^{\le \ell} \).  
	As \( F' \in \T_{c}^{+} \), we may choose a triangle
	\[
	D' \longrightarrow F' \stackrel{\alpha}{\longrightarrow} E' \longrightarrow D'[1]
	\]
  with \( D' \in \T^{\ge \ell+1} \) and \( E' \in \E \).  
	The condition \( \T(f,-)|_{\E} = 0 \) implies \( \alpha f = 0 \), so \( f \) factors through \( D' \).  
	Since \( {\Hom}_{\T}(F, D') = 0 \), it follows that \( f = 0 \).  
\end{proof}

	\begin{thm}\label{thm: rep for Tbc}
		Let $\T$ be a locally Hom-finite $k$-linear triangulated category with a compact silting object $G$.
		Consider the composite
		$${(\T_c^b)}^{\op}\stackrel{i}\longrightarrow{(\T_c^+)}^{\op}\stackrel{y}\longrightarrow {\Hom}_{k}(\E,k{\mbox -}{\rm Mod}).$$
		Then the composite functor $y\circ i$ is fully faithful, and the essential image consists of all the finite $\E$-homological functors.
	\end{thm}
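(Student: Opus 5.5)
The plan is to assemble the statement from three ingredients already available: Theorem~\ref{thm:rep for T+c}, Lemma~\ref{relation} and Lemma~\ref{final_lem}. The argument splits into three parts: fullness, faithfulness, and identification of the essential image.

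\emph{Fully faithful.} By Lemma~\ref{relation}(3) we have $\T^b_c=\T^b\cap\T^+_c$, so $\T^b_c$ is a full subcategory of $\T^+_c$. Theorem~\ref{thm:rep for T+c} says $y$ is full, hence so is $y\circ i$. For faithfulness, take a morphism $f\colon F\to F'$ between objects of $\T^b_c$ with $y(f)=\T(f,-)|_{\E}=0$. Since $F\in\T^b_c$ and $F'\in\T^+_c$, Lemma~\ref{final_lem} gives $f=0$.

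\emph{Image lies in the finite functors.} If $X\in\T^b_c=\T^+_c\cap\T^b$, then Lemma~\ref{reasonable}(2) shows that ${\Hom}_\T(X,-)|_{\E}$ is a finite $\E$-homological functor.

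\emph{Every finite functor is in the image.} Let $\mathbf{H}$ be a finite $\E$-homological functor. In particular it is locally finite, so Theorem~\ref{thm:rep for T+c} gives $X\in\T^+_c$ with ${\Hom}_\T(X,-)|_{\E}\simeq\mathbf{H}$. It remains to show $X\in\T^-$, since then $X\in\T^+_c\cap\T^b=\T^b_c$. Finiteness together with Remark~\ref{reduce} gives an $N>0$ with ${\Hom}_\T(X,E[i])=0$ for all $i\le -N$. By the defining property of $E$,
\[
{\Hom}_\T(X,E[i])\simeq\mathbf{D}{\Hom}_\T(G[i],X)=\mathbf{D}{\Hom}_\T(G,X[-i]),
\]
so ${\Hom}_\T(G,X[j])=0$ for all $j\ge N$. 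Because $G$ is compact silting, $\T^{\ge 0}=G^{\perp_{<0}}$ and the $t$-structure is generated by $G$. From $X\in\T^+$ (as $\T^+_c\subseteq\T^+$) and Lemma~\ref{tech1}, I expect to conclude that $X\in\T^{\le N'}$ for some $N'$. The natural route is to apply \cite[Corollary~3.10]{BNP18} dually, or to argue directly: ${\Hom}_\T(G,X[j])=0$ for $j\ge N$ means $\mathbf{H}^j(X)=0$ for $j\ge N$ in the silting heart, and this should force $X^{\ge N}=0$.

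\emph{Main obstacle.} The hardest step is exactly this last vanishing. For $X\in\T^+$, knowing $\mathbf{H}^j(X)=0$ for $j\ge N$ does not by itself kill the truncation $X^{\ge N}$, because a $t$-structure need not be nondegenerate from above. Here, however, I can use the silting hypothesis directly. Consider the truncation triangle
\[
X^{\le N-1}\longrightarrow X\longrightarrow X^{\ge N}\longrightarrow X^{\le N-1}[1].
\]
Applying ${\Hom}_\T(G,-)$ and using that $G$ is compact silting, I will check that ${\Hom}_\T(G[-j],X^{\ge N})=0$ for every $j$. For $j\ge N$ this comes from the vanishing above together with ${\Hom}_\T(G,X^{\le N-1}[j+1])=0$; for $j<N$ it is automatic because $X^{\ge N}\in\T^{\ge N}$. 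Since $G$ is a compact generator, this forces $X^{\ge N}=0$, so $X\in\T^{\le N-1}\subseteq\T^-$. This completes the identification of the essential image.
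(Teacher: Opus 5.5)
Your proposal is correct and follows the paper's proof essentially step for step. Fullness comes from Theorem~\ref{thm:rep for T+c}, faithfulness from Lemma~\ref{final_lem}, and finiteness of the image from Lemma~\ref{reasonable}(2); for essential surjectivity you take a representing object in $\T^+_c$ and show it lies in $\T^b$ via ${\Hom}_\T(X,E[i])\simeq\mathbf{D}{\Hom}_\T(G[i],X)$. The only difference is in that last boundedness step: the paper cites \cite[Corollary~3.10]{BNP18}, while you argue directly with a truncation triangle. Your argument is valid, but it is quicker to note that $\T^{\le 0}=G^{\perp_{>0}}$, so the vanishing of ${\Hom}_\T(G,X[j])$ for $j\ge N$ is literally the statement $X\in\T^{\le N-1}$.
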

	
\begin{proof}
	By Lemma~\ref{relation}(3), the functor  
	\( i \colon (\T_{c}^{b})^{\op} \to (\T_{c}^{+})^{\op} \)  
	is well-defined.  
	Combining Theorem~\ref{thm:rep for T+c} with Lemma~\ref{final_lem}, we conclude that  
	\( y \circ i \) is fully faithful.
	
	Let \( X \in \T_{c}^{b} = \T_{c}^{+} \cap \T^{b} \).  
	By Lemma~\ref{reasonable}(2), the functor  
	\( {\Hom}_{\T}(X,-)|_{\E} \) is finite.
	
	Now suppose \( Y \in \T \) is such that  
	\( {\Hom}_{\T}(Y,-)|_{\E} \) is a finite \( \E \)-homological functor.  
	It suffices to show that \( Y \in \T_{c}^{b} \).  
	By Corollary~\ref{cor:rep for T+c}, we already have \( Y \in \T_{c}^{+} \).  
	For any \( i \in \mathbb{Z} \),
	\[
	{\Hom}_{\T}(Y, E[i]) \simeq \mathbf{D}{\Hom}_{\T}(G[i], Y).
	\]
	Since \( {\Hom}_{\T}(Y,-)|_{\E} \) is finite,  
	\( {\Hom}_{\T}(G[i], Y) = 0 \) for \( |i| \gg 0 \). By \cite[Corollary~3.10]{BNP18}, \( Y \in \T^{b} \). Therefore  
	\( Y \in \T^{b} \cap \T_{c}^{+} = \T_{c}^{b} \) by Lemma~\ref{relation}(3).
\end{proof}

	The proof of the above theorem essentially yields the following corollary.
	
	\begin{cor}
	    Let $X$ be an object in $\T$.
		Then $X\in\T^b_c$ if and only if the functor ${\Hom}_\T(X, -)|_{\E}$ is finite $\E$-homological.
	\end{cor}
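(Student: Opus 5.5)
The plan is to run both directions through Corollary~\ref{cor:rep for T+c} and Lemma~\ref{relation}(3), exactly as in the proof of Theorem~\ref{thm: rep for Tbc}.

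For necessity, let $X\in\T^b_c$. By Lemma~\ref{relation}(3) we have $X\in\T^+_c\cap\T^b$. Lemma~\ref{reasonable}(2) then shows directly that $\Hom_\T(X,-)|_{\E}$ is a finite $\E$-homological functor. Nothing further is needed in this direction.

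For sufficiency, assume $\Hom_\T(X,-)|_{\E}$ is finite $\E$-homological. In particular it is locally finite, so Corollary~\ref{cor:rep for T+c} gives $X\in\T^+_c$.

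It remains to show $X\in\T^b$. For each $i\in\mathbb{Z}$ we have the Brown--Comenetz isomorphism $\Hom_\T(X,E[i])\simeq\mathbf{D}\Hom_\T(G[i],X)$. Finiteness, through Remark~\ref{reduce}, gives $\Hom_\T(X,E[i])=0$ for $|i|\gg0$. Since $\mathbf{D}$ is faithful on vector spaces, this yields $\Hom_\T(G[i],X)=0$ for $|i|\gg0$.

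The one step needing care is converting this vanishing into membership in $\T^b$. I would invoke \cite[Corollary~3.10]{BNP18}, as the paper does in the proof of Theorem~\ref{thm: rep for Tbc}. This is the main point to check, but it is already handled there, so no new obstacle arises. Concluding, $X\in\T^+_c\cap\T^b=\T^b_c$ by Lemma~\ref{relation}(3).
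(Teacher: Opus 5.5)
Your proposal is correct and is essentially the paper's own argument: the paper obtains this corollary from the proof of Theorem~\ref{thm: rep for Tbc}. That proof uses Lemma~\ref{reasonable}(2) for necessity. For sufficiency it uses Corollary~\ref{cor:rep for T+c} together with the Brown--Comenetz isomorphism, \cite[Corollary~3.10]{BNP18} and Lemma~\ref{relation}(3), which is exactly what you do.
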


\section{Localization theorems for $\T^+_c$ and $\E$}\label{sec:loc thm}
	In this section, we establish localization theorems for the intrinsic subcategories $\T^+_c$ and $\E$, extending the framework from \cite{SZZ} to the Brown--Comenetz duality setting. In the classical theory, recollements of triangulated categories induce short exact sequences of Verdier quotients, 
	and these sequences play a central role in understanding how homological and structural information decomposes across the recollement.  
	We show that analogous localization phenomena hold for the dual intrinsic subcategories 
	$\T^+_c$ and $\E$, which arise naturally from Brown--Comenetz duality and the representability theory developed in the previous section. These results constitute the dual counterparts of the localization theorems in \cite{SZZ}.

    Throughout this section, for a triangulated category $\mathcal{T}$ with a compact generator $G$, we denote $(\mathcal{T}_G^{\leq 0}, \mathcal{T}_G^{\geq 0})$ by $(\mathcal{T}^{\leq 0}, \mathcal{T}^{\geq 0})$.
	
	\subsection{A localization theorem for $\T^+_c$}
	
	\begin{lem}\label{lem:res to T+c}
    Let $\mathbf{F}:\T \to \S$ be a triangle functor between compactly generated $k$-linear triangulated categories, each of which has a compact generator. If $\mathbf{F}$ preserves objects of $\E_t$ and products, then $\mathbf{F}$ can restrict to $\mathbf{F}:\T^+_c \to \S^+_c$. 
	\end{lem}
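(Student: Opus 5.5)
The plan is to verify the defining condition of $\S^+_c$ directly, rather than pass through homotopy limits. Let $T\in\T^+_c$ and $m>0$. By definition there is a triangle
\[
D\longrightarrow T\longrightarrow F'\longrightarrow D[1]
\]
with $F'\in\E_t$ and $D\in\T^{\geq m'}$, where $m'$ may be chosen as large as we like. Applying the triangle functor $\mathbf{F}$ gives a triangle
\[
\mathbf{F}(D)\longrightarrow \mathbf{F}(T)\longrightarrow \mathbf{F}(F')\longrightarrow \mathbf{F}(D)[1].
\]
By hypothesis $\mathbf{F}(F')\in\E_s$. So everything reduces to one claim: there is a constant $c$, independent of $D$, such that $\mathbf{F}(\T^{\geq n})\subseteq\S^{\geq n-c}$ for all $n$. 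Given this, we take $m'=m+c$ and conclude that $\mathbf{F}(T)\in\S^+_c$.

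To get the constant $c$, I would first produce a left adjoint. Since $\mathbf{F}$ preserves products and both categories are compactly generated, Neeman's Brown representability for the dual ($\prod$-preserving functors out of a compactly generated category) yields a left adjoint $\mathbf{L}\colon\S\to\T$ of $\mathbf{F}$. Let $G_s$ be the compact generator of $\S$, $G_t$ that of $\T$, and $E_s,E_t$ their Brown--Comenetz duals. For $D\in\T$ we have
\[
\Hom_\S(G_s[i],\mathbf{F}(D))\simeq\Hom_\T(\mathbf{L}G_s[i],D).
\]
So it suffices to show that $\mathbf{L}G_s\in\T^{\leq c}$ for some $c$. Indeed, then $\Hom_\T(\mathbf{L}G_s[i],\T^{\geq n})=0$ in the required range of $i$, which forces $\mathbf{F}(D)\in\S^{\geq n-c}$ via the description of $\S^{\geq 0}$ as $(\overline{\langle G_s\rangle}^{(-\infty,-1]})^\perp$.

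To bound $\mathbf{L}G_s$, I would test against $E_t$ and use the Brown--Comenetz identity:
\[
\mathbf{D}\Hom_\T(G_t[j],\mathbf{L}G_s)\simeq\Hom_\T(\mathbf{L}G_s,E_t[j])\simeq\Hom_\S(G_s,\mathbf{F}(E_t)[j]).
\]
Since $\mathbf{F}(E_t)\in\E_s\subseteq\S^+$ by Proposition~\ref{tech4}(1) applied to $\S$, and $G_s\in\S^{\leq 0}$, the right-hand side vanishes for $j$ on one side, say $j\ll 0$ (with the signs to be checked). Hence $\Hom_\T(G_t[j],\mathbf{L}G_s)=0$ for all $j$ on the corresponding side. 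The criterion \cite[Corollary~3.10]{BNP18}, used already in Proposition~\ref{tech4}, then places $\mathbf{L}G_s$ in $\T^{\leq c}$ for some $c$, and shifting handles all $G_s[i]$ uniformly.

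The main obstacle is this last step. One must get the shift conventions right, so that the vanishing obtained from $\mathbf{F}(E_t)\in\S^+$ is exactly the one giving $\mathbf{L}G_s\in\T^-$ rather than $\T^+$. One must also check that the criterion of \cite{BNP18} applies under the hypotheses of the lemma. In particular, if it requires a bound like $\Hom(G,G[i])=0$ for $i\gg0$, I would add that hypothesis, which holds in all later applications (compact silting objects). If the adjoint route is problematic, the fallback is Proposition~\ref{Tc+coapproximable}. Write $T=\Holim(E_*)$; since $\mathbf{F}$ preserves products, $\mathbf{F}(T)\simeq\Holim(\mathbf{F}(E_*))$. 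It would then remain to show that $\mathbf{F}(E_*)$ is, after reindexing, a strong $\E_s$-coapproximating system, which again needs the same uniform degree bound $\mathbf{F}(\T^{\geq n})\subseteq\S^{\geq n-c}$.
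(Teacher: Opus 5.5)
Your proposal is correct and follows the paper's proof essentially step for step. The shared steps are: a left adjoint $\mathbf{L}$ from dual Brown representability; the identity $\mathbf{D}\Hom_\T(G_t[j],\mathbf{L}G_s)\simeq\Hom_\S(G_s,\mathbf{F}(E_t)[j])$ together with $\mathbf{F}(E_t)\in\E_s\subseteq\S^+$, which gives vanishing for $j\ll 0$ (so your sign guess is right) and hence $\mathbf{L}G_s\in\T^{\le c}$ via \cite[Corollary~3.10]{BNP18}; the resulting uniform bound $\mathbf{F}(\T^{\geq n})\subseteq\S^{\geq n-c}$; and finally applying $\mathbf{F}$ to the approximating triangles. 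Your caveat about needing $\Hom(G,G[i])=0$ for $i\gg 0$ is also well taken, since the paper's own argument silently uses this hypothesis already when it invokes Proposition~\ref{tech4}(1) for $\S$.
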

\begin{proof}
	Since \( \T \) is a compactly generated triangulated category, the dual Brown representability theorem  
	(see \cite[Proposition~5.3.1]{K10}) ensures that \( \mathbf{F} \) admits a left adjoint  
	\( \mathbf{F}' \colon \S \to \T \).  
	 
	Let \( G_{s}\) and \( G_{t} \) be compact generators of $\S$ and $\T$ respectively, and let \( E_{t} \) denote the Brown--Comenetz dual of \( G_{t} \).  
	Then
	\[
	\mathbf{D}{\Hom}_{\T}(G_{t}[i], \mathbf{F}'(G_{s}))
	\simeq {\Hom}_{\T}(\mathbf{F}'(G_{s}), E_{t}[i])
	\qquad\text{(Brown--Comenetz dual)}
	\]
	\[\quad\quad\quad\quad\quad
	\simeq {\Hom}_{\S}(G_{s}, \mathbf{F}(E_{t})[i])
	\qquad\text{(adjunction)}.
	\]
Since \( \mathbf{F} \) respects objects in \( \E_{t} \), we have  
	\( \mathbf{F}(E_{t}) \in \E_{s} \subseteq \S^{+} \) by Proposition~\ref{tech4}(1).  
	Thus ${\Hom}_{\S}(G_{s}, \mathbf{F}(E_{t})[i]) = 0$ for $i \ll 0$. Hence, ${\Hom}_{\T}(G_{t}[i], \mathbf{F}'(G_{s})) = 0$ for $i \ll 0$.
	By \cite[Corollary~3.10]{BNP18},  
	\( \mathbf{F}'(G_{s}) \in \T^{\le n} \) for some \( n>0 \).  
	For any \( M \in \T^{\ge 1} \),
	\[
	{\Hom}_{\S}(G_{s}[i], \mathbf{F}(M))
	\simeq {\Hom}_{\T}(\mathbf{F}'(G_{s})[i], M)
	= 0 \quad \text{for } i \ge n.
	\]
	Then $\mathbf{F}(M)\in\S^{\geq -n+1}$ and hence \( \mathbf{F}(\T^{\ge 1}) \subseteq \S^{\ge -n+1} \).
	
	Now let \( l>0 \), set \( m = n+l \), and take \( T \in \T_{c}^{+} \).  
	Choose a triangle
	\[
	D \longrightarrow T \longrightarrow F \longrightarrow D[1]
	\]
	with \( D \in \T^{\ge m} \) and \( F \in \E_{t} \).  
	Applying \( \mathbf{F} \) gives a triangle
	\[
	\mathbf{F}(D) \longrightarrow \mathbf{F}(T) \longrightarrow \mathbf{F}(F) \longrightarrow \mathbf{F}(D)[1],
	\]
	where \( \mathbf{F}(D) \in \S^{\ge l} \) and \( \mathbf{F}(F) \in \E_{s} \).  
	Hence \( \mathbf{F}(T) \in \S_{c}^{+} \).
\end{proof}

	\begin{lem}\label{lem:2 res to T+c}
		Let $\mathbf{F}:\mathcal{T}\to\mathcal{S}$ be a triangle functor between locally Hom-finite $k$-linear triangulated categories, each of which has a compact silting object. If $\mathbf{F}$ admits a right adjoint functor that preserves objects of $\E_{s}$, then $\mathbf{F}$ restricts to $\T^+_c \to \S^+_c$. 
	\end{lem}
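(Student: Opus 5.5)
Let $\mathbf{G}:\S\to\T$ denote the right adjoint of $\mathbf{F}$, which by hypothesis sends $\E_s$ into $\E_t$. Rather than checking the definition of $\S^+_c$ directly, I would use the intrinsic characterization in Corollary~\ref{cor:rep for T+c}: an object $Y\in\S$ lies in $\S^+_c$ if and only if ${\Hom}_{\S}(Y,-)|_{\E_s}$ is a locally finite $\E_s$-homological functor. So fix $T\in\T^+_c$ and consider the functor
\[
{\Hom}_{\S}(\mathbf{F}T,-)|_{\E_s}\;\simeq\;{\Hom}_{\T}(T,\mathbf{G}(-))|_{\E_s}.
\]
This functor is homological on $\E_s$: $\mathbf{G}$ is a triangle functor (as a right adjoint of a triangle functor), so it sends triangles in $\E_s$ to triangles in $\E_t$, and ${\Hom}_{\T}(T,-)$ is homological. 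By Remark~\ref{reduce}, applied in $\S$ with $E_s$ the Brown--Comenetz dual of the compact silting generator $G_s$, two properties remain to be checked. First, ${\Hom}_{\T}(T,\mathbf{G}(E_s)[j])$ must be finite-dimensional for every $j$. Second, it must vanish for $j\gg0$.

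Finite dimensionality is immediate. Since $\mathbf{G}(E_s)[j]\in\E_t$, Lemma~\ref{homfinitelem}(2) applies, with Lemma~\ref{reasonable}(1) as an equivalent formulation.

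The vanishing for $j\gg0$ is the point needing care, because $\mathbf{G}(E_s)$ is some object of $\E_t$ and a single object is not automatically uniform in its shifts. I would proceed as follows.
\begin{itemize}
\item[] Put $E':=\mathbf{G}(E_s)\in\E_t$. By Proposition~\ref{tech4}(1) there is $B>0$ with ${\Hom}_{\T}(\T^{\ge B},E')=0$.
\item[] Then ${\Hom}_{\T}(\T^{\ge B+j},E'[j])={\Hom}_{\T}(\T^{\ge B}[j],E'[j])=0$ for every $j$. Here I use $\T^{\ge B+j}=\T^{\ge B}[-j]$; the sign conventions should be rechecked, and if they run the other way, one swaps to $\T^{\ge B-j}$, which only helps for $j\gg 0$.
\item[] Since $T\in\T^+_c\subseteq\T^+$ (Proposition~\ref{tech4}(2) together with the definition), we have $T\in\T^{\ge \ell}$ for some $\ell$.
\item[] Choosing the shift direction so that $T\in\T^{\ge\ell}$ lies inside the relevant $\T^{\ge B\mp j}$ for all large $j$ gives ${\Hom}_{\T}(T,E'[j])=0$ for $j\gg0$.
\end{itemize}
Alternatively, and more robustly, one can argue exactly as in Lemma~\ref{reasonable}(1), which already shows that ${\Hom}_{\T}(X,-)|_{\E_t}$ is locally finite for $X\in\T^+_c$. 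Restricting along the triangle functor $\mathbf{G}|_{\E_s}:\E_s\to\E_t$ then preserves the vanishing condition, provided $\mathbf{G}$ commutes with shifts. It does, so the vanishing on $E'[j]=\mathbf{G}(E_s[j])$ for $j\gg0$ is exactly the locally finite vanishing condition for ${\Hom}_{\T}(T,-)|_{\E_t}$ at the single object $E'$.

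I expect this last step, keeping the direction of shifts consistent with the convention of Remark~\ref{reduce}, to be the only real obstacle. The cleanest route is to quote Lemma~\ref{reasonable}(1) for $\T$ and observe that local finiteness is stable under precomposition with the shift-commuting functor $\mathbf{G}|_{\E_s}$. Combining this with Corollary~\ref{cor:rep for T+c} for $\S$ gives $\mathbf{F}T\in\S^+_c$, as required. Note that the compact silting hypothesis is used only to have Corollary~\ref{cor:rep for T+c} available in $\S$, and local Hom-finiteness is used in $\T$ for Lemma~\ref{homfinitelem}.
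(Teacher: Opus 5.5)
Your proposal is correct and follows essentially the same route as the paper. It uses the adjunction $\Hom_{\S}(\mathbf{F}X,Y[i])\simeq\Hom_{\T}(X,\mathbf{G}(Y)[i])$ with $\mathbf{G}(Y)\in\E_t$, then local finiteness of $\Hom_{\T}(X,-)|_{\E_t}$ for $X\in\T^+_c$, and finally the representability characterization (Corollary~\ref{cor:rep for T+c}) in $\S$. In your first bullet the hedge resolves as $\T^{\ge B}[j]=\T^{\ge B-j}$, so the displayed $\T^{\ge B+j}$ should read $\T^{\ge B-j}$, and the vanishing for $j\gg0$ then holds; your cleaner route via Lemma~\ref{reasonable}(1) is exactly the paper's argument.
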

\begin{proof}
	Let \( X \in \T_{c}^{+} \) and \( Y \in \E_{s} \).  
	Assume that \( \mathbf{F} \) admits a right adjoint \( \mathbf{G} \).  
	Then for all \( i \in \mathbb{Z} \),
	\[
	{\Hom}_{\S}(\mathbf{F}(X), Y[i])
	\simeq{\Hom}_{\T}(X, \mathbf{G}(Y)[i]).
	\]
	Since \( \mathbf{G}(Y) \in \E_{t} \), Theorem~\ref{thm:rep for T+c} implies that  
	\( {\Hom}_{\T}(X, \mathbf{G}(E)) \) is finite-dimensional and
	\[
	{\Hom}_{\T}(X, \mathbf{G}(E)[i]) = 0 \quad \text{for } i \gg 0.
	\]
	Hence  
	\( {\Hom}_{\S}(\mathbf{F}(X), Y) \) is finite-dimensional and  
	\( {\Hom}_{\S}(\mathbf{F}(X), Y[i]) = 0 \) for \( i \gg 0 \).  
	Theorem~\ref{thm:rep for T+c} implies  \( \mathbf{F}(X) \in \S_{c}^{+} \).
\end{proof}

	\begin{lem}\label{lem:E in Tbc}
		Let $\T$ be a locally Hom-finite $k$-linear triangulated category with a compact silting object. 
		Suppose that $\T$ admits a compact generator $G$ satisfying ${\Hom}_{\T}(G, G[i])=0$ for $i\ll 0$. Let $E$ be the Brown--Comenetz dual of $G$. Then $G\in\T^b_c$ and $E\in \T^b_c$. Moreover, $G$ is in $\T^+_c$ and $E$ is in $\T^-_c$.
	\end{lem}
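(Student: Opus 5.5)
The plan is to prove $E\in\T^b_c$ and $G\in\T^b_c$ separately. Since $\T^b_c=\T^+_c\cap\T^-_c$ by Lemma~\ref{relation}(3), the last sentence of the statement then follows at once.

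\textbf{The object $E$.} Membership $E\in\T^+_c$ is immediate: $E\in\E\subseteq\T^+_c$ by Proposition~\ref{tech4}(2). By Lemma~\ref{relation}(3), $\T^b_c=\T^b\cap\T^+_c$, so it remains to show $E\in\T^b$. We already have $E\in\T^+$ by Proposition~\ref{tech4}(1). For the other bound, note that
\[
\Hom_\T(G[i],E)\simeq \mathbf{D}\Hom_\T(G,G[i]),
\]
and this vanishes for $i\ll 0$ by the hypothesis on $G$, and for $i\gg0$ by the hypothesis preceding Proposition~\ref{tech4} (any compact generator satisfies this in an approximable category). Hence $\Hom_\T(G[i],E)=0$ for $|i|\gg0$, and \cite[Corollary~3.10]{BNP18} gives $E\in\T^b$.

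\textbf{The object $G$.} Here $G\in\T^c\subseteq\T^-_c$ trivially, since one can take the triangle $G\to G\to 0$. By Lemma~\ref{relation}(3), $\T^b_c=\T^b\cap\T^-_c$, so it suffices to show $G\in\T^b$. We have $G\in\T^{\le0}\subseteq\T^-$. For boundedness below, $\Hom_\T(G[i],G)=\Hom_\T(G,G[-i])=0$ for $-i\ll0$ by hypothesis, and also for $-i\gg0$ because $\T^c\subseteq\T^-$. So again $\Hom_\T(G[i],G)$ vanishes for $|i|\gg0$, and \cite[Corollary~3.10]{BNP18} yields $G\in\T^b$.

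\textbf{Alternative route and main obstacle.} One could instead verify $G\in\T^+_c$ through Corollary~\ref{cor:rep for T+c}. The functor $\Hom_\T(G,-)|_{\E}$ has finite-dimensional values by Lemma~\ref{homfinitelem}(1), since $\E\subseteq\T^+_c$. It also satisfies $\Hom_\T(G,E[i])\simeq\mathbf D\Hom_\T(G,G[-i])=0$ for $i\gg0$ by hypothesis, so Remark~\ref{reduce} applies. I expect the main obstacle to be the precise applicability of \cite[Corollary~3.10]{BNP18}, namely that vanishing of $\Hom(G[i],-)$ in one direction characterizes $\T^+$ and $\T^-$ for the generator $G$ rather than for the silting object. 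Since all compact generators give equivalent $t$-structures, I would rely on the fact that $\T^+$, $\T^-$ and $\T^b$ are independent of this choice.
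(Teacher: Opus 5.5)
Your argument is correct, but it is organized differently from the paper's proof.

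\textbf{What the paper does.} It disposes of $G\in\T^b_c$ by citing \cite[Lemma~2.10]{SZZ}. It gets $E\in\T^b_c$ from Neeman's representability theorem (Theorem~\ref{lem:object in bc}(2)), applied to the finite $\T^c$-cohomological functor $\Hom_\T(-,E)|_{\T^c}\simeq\mathbf{D}\Hom_\T(G,-)|_{\T^c}$. Lemma~\ref{relation}(3) is used only for the final sentence.

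\textbf{What you do.} You use the symmetry of the situation: $G$ lies trivially in $\T^c\subseteq\T^-_c$, and $E$ lies trivially in $\E\subseteq\T^+_c$. So everything reduces to $G,E\in\T^b$. You read this off from the two-sided vanishing of $\Hom_\T(G,G[i])$. For $i\ll0$ this is the hypothesis. For $i\gg0$ it holds for every compact generator here, since $G$ is finitely built from boundedly many shifts of the silting object. Then $\T^b_c=\T^b\cap\T^-_c$ (the definition) handles $G$, and $\T^b_c=\T^b\cap\T^+_c$ (Lemma~\ref{relation}(3)) handles $E$.

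\textbf{Comparison.} Your route makes transparent that the hypothesis on $G$ is exactly what yields $G\in\T^+$ and $E\in\T^-$. It also avoids the external citation and the implicit step of identifying $E$ with Neeman's representing object in $\T^b_c$. On the other hand, for $E$ you lean on Lemma~\ref{relation}(2). That lemma itself rests on Neeman's characterization of $\T^-_c$ and on the representability results of Section~\ref{sec:rep thm}, so the depth of input is comparable. There is no circularity, since Lemma~\ref{relation} is proved independently of this lemma.

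\textbf{Your anticipated obstacle.} The issue you raise with \cite[Corollary~3.10]{BNP18} does not arise in this setting. With a compact silting object $S$, one has $X\in\T^-$ (resp.\ $X\in\T^+$) if and only if $\Hom_\T(S,X[i])=0$ for $i\gg0$ (resp.\ $i\ll0$). Since $S$ and $G$ are each built in finitely many steps from shifts of the other lying in a bounded range, the same description holds with $G$ in place of $S$.
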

\begin{proof}
	The statement \( G \in \T^{b}_{c} \) follows from \cite[Lemma~2.10]{SZZ}.  
	The assertion \( E \in \T^{b}_{c} \) is an immediate consequence of  
	Theorem~\ref{lem:object in bc}(2). By Lemma~\ref{relation}, \(\T_{c}^{b}=\T_{c}^{+}\cap\T_{c}^{-}\), completing the proof.
\end{proof}

	\begin{lem}\label{lem:hom char Tc}
		Let $\T$ be a locally Hom-finite $k$-linear triangulated category with a compact silting object $G$, and let $E$ be the Brown--Comenetz dual of $G$. Let $X \in \T$. Then
        \begin{enumerate}
            \item $X \in \T_{c}^{-}$ if and only if ${\Hom}_{\T}(G, X[i])$ is finite-dimensional for all $i$ and vanishes for $i \gg 0$;    
            \item $X \in \T_{c}^{b}$ if and only if ${\Hom}_{\T}(G, X[i])$ is finite-dimensional for all $i$ and vanishes for $|i| \gg 0$;
            \item $X \in \T_{c}^{+}$ if and only if ${\Hom}_{\T}(X[i], E)$ is finite-dimensional for all $i$ and vanishes for $i \ll 0$, if and only if ${\Hom}_{\T}(G, X[i])$ is finite-dimensional for all $i$ and vanishes for $i \ll 0$.
        \end{enumerate}
	\end{lem}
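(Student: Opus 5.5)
The plan is to reduce all three characterizations to the representability theorems: Neeman's Theorem~\ref{lem:object in bc} on the compact side, and Theorem~\ref{thm:rep for T+c}, Theorem~\ref{thm: rep for Tbc} (with Corollary~\ref{cor:rep for T+c} and the corollary following Theorem~\ref{thm: rep for Tbc}) on the Brown--Comenetz side. Throughout we use that $\T^c=\langle G\rangle$ and $\E=\langle E\rangle$ (Lemma~\ref{lem-ebasic}(1)). As in Remark~\ref{reduce}, local finiteness or finiteness of a (co)homological functor on these categories can be checked on the shifts of the single generator. The other basic input is the defining isomorphism
\[
{\Hom}_{\T}(X[i],E)\simeq \mathbf{D}{\Hom}_{\T}(G,X[i]),
\]
which converts conditions on $E$ into conditions on $G$.

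For (1), note that a $\T$ with a compact silting object is approximable. By Theorem~\ref{lem:object in bc}(1), $X\in\T_c^-$ implies that ${\Hom}_{\T}(-,X)|_{\T^c}$ is locally finite. Conversely, if ${\Hom}_{\T}(-,X)|_{\T^c}$ is locally finite, that theorem gives $X'\in\T_c^-$ and an isomorphism ${\Hom}_{\T}(-,X')|_{\T^c}\simeq{\Hom}_{\T}(-,X)|_{\T^c}$. By fullness of $Y$ on $\T_c^-$ (Neeman's Lemma~7.8 type lifting), this isomorphism is induced by some morphism $X'\to X$. Its cone is killed by all $G[i]$, hence is zero, so $X\simeq X'$. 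Evaluating on $G[-i]$, local finiteness says exactly that ${\Hom}_{\T}(G,X[i])$ is finite-dimensional and vanishes for $i\gg0$. The sign convention (vanishing on sufficiently negative shifts of the generator in Neeman's sense) must be matched with ``$i\gg0$'' here, and I will check this carefully.

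Statement (2) follows in the same way from Theorem~\ref{lem:object in bc}(2), with two-sided vanishing. Alternatively, it can be derived from (1), (3) and Lemma~\ref{relation}(3), since $\T_c^b=\T_c^+\cap\T_c^-$ and the conditions intersect.

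For (3), Corollary~\ref{cor:rep for T+c} says that $X\in\T_c^+$ if and only if ${\Hom}_{\T}(X,-)|_{\E}$ is locally finite. By Remark~\ref{reduce}, this holds if and only if each ${\Hom}_{\T}(X,E[j])\simeq{\Hom}_{\T}(X[-j],E)$ is finite-dimensional and vanishes for $j\gg0$, that is, ${\Hom}_{\T}(X[i],E)$ vanishes for $i\ll0$. This is the first equivalence. The second follows immediately from the displayed Brown--Comenetz isomorphism, since $\mathbf{D}$ preserves finite-dimensionality and vanishing, and reflects them because $\mathbf{D}V=0$ forces $V=0$. It also sends finite-dimensional spaces to finite-dimensional ones, and an infinite-dimensional $V$ has infinite-dimensional $\mathbf{D}V$.

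The main obstacle is bookkeeping rather than substance: making the shift directions in Neeman's definition of locally finite match the stated ranges ($i\gg0$ versus $i\ll0$), and justifying the passage from ``represented up to iso on $\T^c$'' to ``$X\in\T_c^-$''. For the latter I would first try to invoke Neeman's characterization directly; failing that, I would dualize the argument of Corollary~\ref{cor:rep for T+c}, using that $G$ is a generator.
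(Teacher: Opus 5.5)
Your proposal is correct and follows the paper's route. Parts (1) and (2) come from Neeman's representability theorem (Theorem~\ref{lem:object in bc}), and part (3) comes from Corollary~\ref{cor:rep for T+c} together with Remark~\ref{reduce} and the isomorphism ${\Hom}_{\T}(X[i],E)\simeq\mathbf{D}{\Hom}_{\T}(G,X[i])$; your shift conventions check out. You give more detail than the paper's two-line proof, notably the lifting-plus-cone argument for the converse in (1), where the needed input is Neeman's Lemma~7.8 lifting (source in $\T_c^-$, arbitrary target $X$) rather than fullness of $Y$ on $\T_c^-$, which is exactly what you get by dualizing the proof of Corollary~\ref{cor:rep for T+c}.
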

\begin{proof}
	The characterizations of $\T_c^-$ and $\T_c^b$ follow from Theorem~\ref{lem:object in bc}. The characterization of $\T_c^+$ follows from the representability theorem in Section~\ref{sec:rep thm} and Brown--Comenetz duality.
\end{proof}

	\begin{lem}\label{lem: res t-struc}
		Let $\T$ be a locally Hom-finite $k$-linear triangulated category with a compact silting object $G$. Then $(\T^{\leq 0},\T^{\geq 0})$ can be restricted to $\T_c^-, \T_c^b$ and $\T_c^+$.
	\end{lem}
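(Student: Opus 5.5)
The statement means: for $X$ in one of $\T_c^-$, $\T_c^b$, $\T_c^+$, the truncations $X^{\le 0}$ and $X^{\ge 1}$ in the triangle $X^{\le 0}\to X\to X^{\ge 1}\to X^{\le 0}[1]$ lie in the same subcategory. I will use the Hom-characterizations of Lemma~\ref{lem:hom char Tc} throughout. Since $G$ is compact silting, $\T^{\le 0}=G^{\perp_{>0}}$ and $\T^{\ge 0}=G^{\perp_{<0}}$. The key observation is that for any $X$ and any $i$, the functor $\Hom_\T(G,-[i])$ applied to the truncation triangle splits cleanly:
\[
\Hom_\T(G,X^{\le 0}[i])\simeq\begin{cases}\Hom_\T(G,X[i]) & i\le 0,\\ 0 & i>0,\end{cases}
\qquad
\Hom_\T(G,X^{\ge 1}[i])\simeq\begin{cases}0 & i\le 0,\\ \Hom_\T(G,X[i]) & i>0.\end{cases}
\]

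To verify this, apply $\Hom_\T(G,-[i])$ to the truncation triangle. The vanishing statements are immediate from $X^{\le 0}\in G^{\perp_{>0}}$ and $X^{\ge 1}=X'[-1]$ with $X'\in G^{\perp_{<0}}$. The latter gives $\Hom_\T(G,X^{\ge 1}[i])=\Hom_\T(G,X'[i-1])=0$ for $i-1<0$, that is, for $i\le 0$.

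The isomorphisms then come from the long exact sequence
\[
\Hom(G,X^{\ge1}[i-1])\to\Hom(G,X^{\le0}[i])\to\Hom(G,X[i])\to\Hom(G,X^{\ge1}[i]).
\]
For $i\le 0$ both outer terms vanish, since $i-1\le 0$ and $i\le 0$, so the middle map is an isomorphism. For $i>0$ the analogous sequence with the roles swapped has outer terms $\Hom(G,X^{\le 0}[i])=0$ and $\Hom(G,X^{\le0}[i+1])=0$, since $i,i+1>0$. This makes $\Hom(G,X[i])\to\Hom(G,X^{\ge1}[i])$ an isomorphism. So the only point needing care is the index bookkeeping at the boundary $i=0,1$, and it works out exactly because the heart condition is phrased via $G^{\perp}$.

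With these formulas, finite-dimensionality of all $\Hom_\T(G,X[i])$ passes to both truncations. Truncating also only adds vanishing: $X^{\le 0}$ vanishes for $i>0$ and $X^{\ge1}$ vanishes for $i\le 0$. So if $\Hom_\T(G,X[i])$ vanishes for $i\gg0$ (respectively $i\ll0$, respectively $|i|\gg0$), the same holds for both truncations. By parts (1), (2) and (3) of Lemma~\ref{lem:hom char Tc}, using the second characterization in (3), both truncations lie in $\T_c^-$, $\T_c^b$, $\T_c^+$ respectively. Hence the $t$-structure restricts to each of these triangulated subcategories.

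The main obstacle is logical rather than computational. I rely on the $G$-side characterization of $\T_c^+$ in Lemma~\ref{lem:hom char Tc}(3). Should one prefer to avoid it, the same argument can instead be run with $\Hom_\T(-,E[i])\simeq\mathbf{D}\Hom_\T(G[i],-)$, which dualizes the formulas above and lets one invoke Corollary~\ref{cor:rep for T+c} directly.
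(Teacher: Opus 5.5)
Your proof is correct and is essentially the paper's argument. Both rely on the Hom-characterizations of Lemma~\ref{lem:hom char Tc} and on the computation $\Hom_\T(G,X^{\le 0}[i])\simeq\Hom_\T(G,X[i])$ for $i\le 0$ and $\Hom_\T(G,X^{\le 0}[i])=0$ for $i>0$. The only difference is that the paper obtains $X^{\ge 1}$ from the fact that the subcategory is triangulated, whereas you compute $\Hom_\T(G,X^{\ge 1}[i])$ directly; both routes are valid.
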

	
\begin{proof}
	We use the characterizations in Lemma~\ref{lem:hom char Tc}.
	We show that the \( t \)-structure restricts to \( \T_{c}^{-} \), the other cases are analogous.  
For any \( X \in \T_{c}^{-} \), consider its truncation triangle
\[
X^{\le 0} \longrightarrow X \longrightarrow X^{\ge 1} \longrightarrow X^{\le 0}[1].
\]
Since \( G \in \T^{\leq 0} \), we have \( \Hom_{\T}(G[i], X^{\leq 0}) \simeq \Hom_{\T}(G[i], X) \) for \( i \geq 0 \). Because \( G \) is silting, by definition we have
\[
\T^{\leq 0} = \{ M \in \T \mid \Hom_{\T}(G, M[i]) = 0 \text{ for } i > 0 \}.
\]
Consequently, \( \Hom_{\T}(G[i], X^{\leq 0}) = 0 \) for \( i < 0 \). This implies \( X^{\leq 0} \in \T_{c}^{-} \). Since \( \T_{c}^{-} \) is a triangulated subcategory of \( \T \), we also obtain \( X^{\geq 1} \in \T_{c}^{-} \). Thus \( (\T^{\leq 0}, \T^{\geq 0}) \) can be restricted to \( \T_{c}^{-} \).
\end{proof}

	By Lemma~\ref{lem: res t-struc}, locally Hom-finite triangulated categories with compact silting objects satisfy both the coherent condition and its dual in the sense of \cite[Definition~5.1]{N18c}.  
	This structural control is essential for establishing localization sequences for $\T^+_c$.
	
\begin{lem}
Let the following diagram be a recollement of triangulated categories, each of which admits a compact silting object.
	$$\xymatrix{\R\ar^-{i_*=i_!}[r]
			&\T\ar^-{j^!=j^*}[r]\ar^-{i^!}@/^1.2pc/[l]\ar_-{i^*}@/_1.6pc/[l]
			&\S.\ar^-{j_*}@/^1.2pc/[l]\ar_-{j_!}@/_1.6pc/[l]}$$
Then the functors $i^*,i_*,j_!,j^!$ are left $t$-bounded with respect to $t$-structures generated by silting objects, and the functors $i_*,i^!,j^*,j_*$ are right $t$-bounded.
\end{lem}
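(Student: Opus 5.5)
The plan is to reduce both assertions to one fact: each of the four functors $i^*,i_*,j_!,j^*$ sends the chosen compact silting generator into the bounded-above part $(-)^-$ of its target. Throughout, write $G_r,G_t,G_s$ for compact silting objects of $\R,\T,\S$. Since each is silting, the aisle is
\[
\T^{\le n}=\{M\mid \Hom_\T(G_t,M[i])=0 \text{ for } i>n\},
\]
and likewise for $\R$ and $\S$. Moreover $\T^{\le 0}=\overline{\langle G_t\rangle}^{(-\infty,0]}$ is the smallest subcategory containing $G_t[j]$ for $j\ge 0$ that is closed under coproducts, extensions and summands.

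\emph{Step 1 (key vanishing).} I will show that for any compact $C\in\T^c=\langle G_t\rangle$ there is $N$ with $\Hom_\T(C,\T^{\le -N})=0$. For $C=G_t$ this is immediate from the silting description above with $N=1$. The class of $C$ admitting such an $N$ is closed under shifts, cones and summands, since shifting only changes $N$ by a bounded amount. Hence the claim holds on all of $\langle G_t\rangle$. The same holds in $\R$ and $\S$.

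\emph{Step 2 (generators land in $(-)^-$).} The objects $i^*G_t$ and $j_!G_s$ are compact, by Lemma~\ref{lem:reco to c}. Compact objects lie in $\langle G\rangle\subseteq(-)^-$, so these two are bounded above. For $j^*G_t$ I use adjunction:
\[
\Hom_\S(G_s,j^*G_t[i])\simeq\Hom_\T(j_!G_s,G_t[i]).
\]
Here $G_t[i]\in\T^{\le -i}$ and $j_!G_s$ is compact, so Step 1 gives vanishing for $i\gg0$. The silting description then puts $j^*G_t\in\S^{\le n}$ for some $n$. Symmetrically,
\[
\Hom_\T(G_t,i_*G_r[i])\simeq\Hom_\R(i^*G_t,G_r[i]),
\]
and $i^*G_t$ is compact, so $i_*G_r\in\T^-$.

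\emph{Step 3 (left $t$-boundedness of $i^*,i_*,j_!,j^!$).} All four functors preserve coproducts, since each has a right adjoint: $i^*\dashv i_*$, $i_!\dashv i^!$, $j_!\dashv j^!$ and $j^*\dashv j_*$. Let $F$ be one of them, with source generator $G$. If $F(G)\in\S'^{\le n}$, then $F(G[j])\in\S'^{\le n}$ for all $j\ge0$. Because $\S'^{\le n}$ is closed under coproducts, extensions and summands, the description of $\T^{\le 0}$ as $\overline{\langle G\rangle}^{(-\infty,0]}$ gives $F(\T^{\le0})\subseteq\S'^{\le n}$. Step 2 supplies the required $n$ for $i^*,i_*,j_!$ and $j^!=j^*$.

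\emph{Step 4 (right $t$-boundedness of $i_*,i^!,j^*,j_*$).} Each of these has a left adjoint $L$: namely $i^*$, $i_*$, $j_!$ and $j^*$ respectively. By Step 2, $L$ sends the target generator $G'$ into $(\text{source})^{\le n}$ for some $n$. For $M$ in the source aisle-complement $\T^{\ge0}$ and $i\ge n+1$ we then get
\[
\Hom(G'[i],F M)\simeq\Hom(L G'[i],M)=0,
\]
because $LG'[i]\in\T^{\le -1}$. Hence $FM\in(G'[>n])^{\perp}$, that is, $FM$ lies in the co-aisle $\S'^{\ge -n}$. This uses the identification $\S'^{\ge -n}=(\S'^{\le -n-1})^\perp$ together with $\S'^{\le -n-1}=\overline{\langle G'\rangle}^{(-\infty,-n-1]}$, plus compactness of $G'$ to pass to coproducts.

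The only genuinely non-formal point, and the one I expect to be the main obstacle, is Step 1 combined with the adjunction trick in Step 2. Compact objects need not lie in $\T^b$, so one must check that what is actually needed is only the one-sided bound $\Hom(C,\T^{\le -N})=0$, which follows from $\Hom(G,G[i])=0$ for $i>0$. The remaining steps are standard generation arguments.
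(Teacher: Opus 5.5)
Your proof is correct, but it takes a genuinely different route from the paper's. The paper makes no direct computation. It glues the silting $t$-structures of $\R$ and $\S$ along the recollement and cites Burke--Neeman--Pauwels (Corollary~3.12 of BNP18) to get that the glued $t$-structure is equivalent to the silting $t$-structure on $\T$. All eight statements then follow from
\[
\T_{gl}^{\le 0}=\{T\mid i^*T\in\R^{\le 0},\ j^*T\in\S^{\le 0}\},\qquad \T_{gl}^{\ge 0}=\{T\mid i^!T\in\R^{\ge 0},\ j^*T\in\S^{\ge 0}\}
\]
together with $j^*i_*=0$, $i^*j_!=0$ and $i^!j_*=0$. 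These make each functor one-sidedly $t$-exact for the glued structure, and the equivalence turns exactness into boundedness.

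You instead reduce everything to four facts: $i^*G_t\in\R^-$, $i_*G_r\in\T^-$, $j_!G_s\in\T^-$ and $j^*G_t\in\S^-$. You get these from compactness of $i^*G_t$ and $j_!G_s$ and from the vanishing $\Hom(C,\T^{\le -N})=0$ for compact $C$. You then propagate the bounds by closure under coproducts and extensions (left bounds) and by adjunction against the silting co-aisle (right bounds). This is the same mechanism the paper uses in the proof of Lemma~\ref{lem:res to T+c}.

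The paper's route is shorter and treats all eight functors uniformly, but it rests entirely on the imported gluing theorem. Yours is self-contained and gives explicit bounds. It also recovers the equivalence the paper imports. Left boundedness of $i^*$ and $j^*$ gives $\T^{\le 0}\subseteq\T_{gl}^{\le n}$. Left boundedness of $i_*$ and $j_!$, together with the triangle $j_!j^*T\to T\to i_*i^*T$, gives $\T_{gl}^{\le 0}\subseteq\T^{\le n'}$.

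Two small points are worth tidying.
\begin{enumerate}
\item You use implicitly that a compact silting object is a compact generator. This is needed for $\T^c=\langle G_t\rangle$ and for Lemma~\ref{lem:reco to c}. It holds because an object $M$ with $\Hom(G,M[i])=0$ for all $i$ lies in $\T^{\le 0}\cap\T^{\ge 1}$, so $\Hom(M,M)=0$ and $M=0$.
\item In Step 4 compactness of $G'$ is not needed. The silting description already gives $\S'^{\ge -n}=\{X\mid \Hom(G'[j],X)=0 \text{ for all } j>n\}$, and right orthogonals turn coproducts into products automatically.
\end{enumerate}
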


\begin{proof}
Let $(\R^{\leq 0},\R^{\geq 0})$,$(\T^{\leq 0},\T^{\geq 0})$ and $(\S^{\leq 0},\S^{\geq 0})$ are the $t$-structures generated by silting objects.
Thanks to Corollary 3.12 in \cite{BNP18}, the $t$-structure $(\T_{gl}^{\leq 0},\T_{gl}^{\geq 0})$ obtained by gluing $(\R^{\leq 0},\R^{\geq 0})$ and $(\S^{\leq 0},\S^{\geq 0})$ is equivalent to $(\T^{\leq 0},\T^{\geq 0})$. We show $j_!$ is left $t$-bounded and others can be obtained similarly. Since there exists $n\geq 0$ such that $\T^{\leq 0}\subset \T_{gl}^{\leq n}$, then we have $j_!(\S^{\leq 0})\subset \T_{gl}^{\leq 0}\subset \T^{\leq n}$ for some $n\geq 0$.
\end{proof}

\begin{thm}\label{thm:loc for T+c}
	Let the following diagram be a recollement of locally Hom-finite $k$-linear triangulated categories, each of which admits a compact silting object.
	$$\xymatrix{\R\ar^-{i_*=i_!}[r]
			&\T\ar^-{j^!=j^*}[r]\ar^-{i^!}@/^1.2pc/[l]\ar_-{i^*}@/_1.6pc/[l]
			&\S.\ar^-{j_*}@/^1.2pc/[l]\ar_-{j_!}@/_1.6pc/[l]}$$
Then 
		\begin{enumerate}
			\item Suppose that $\T$ has a compact generator $G_t$ such that there is an integer $N$ with ${\Hom}_{\T}(G_t, G_t[n])=0, n<N$. If the recollement can extend one step upwards, then the first row induces a short exact sequence $${\S}^+_c/{\S}^c\stackrel{\overline{j_!}}{\longrightarrow}\T^+_c/\T^c\stackrel{\overline{i^*}}{\longrightarrow}{\R}^+_c/{\R}^c.$$
			\item The second row induces a short exact sequence    $${\R}^+_c/{\R}^b_c\stackrel{\overline{i_*}}{\longrightarrow}\T^+_c/\T^b_c\stackrel{\overline{j^*}}{\longrightarrow}{\S}^+_c/{\S}^b_c.$$
		\end{enumerate}
\end{thm}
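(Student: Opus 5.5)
We reduce both parts to the triangulated $3\times 3$ lemma \cite[Lemma~2.5]{SZZ}. Concretely, we produce two short exact sequences, one for the big categories and one for the subcategories, which are compatible, and then pass to the Verdier quotients. The big sequences are
\[
\S^+_c\xrightarrow{j_!}\T^+_c\xrightarrow{i^*}\R^+_c
\qquad\text{and}\qquad
\R^+_c\xrightarrow{i_*}\T^+_c\xrightarrow{j^*}\S^+_c .
\]
The subcategory sequences are $\S^c\to\T^c\to\R^c$ (up to summands, Lemma~\ref{lem:reco to c}) for part (1), and $\R^b_c\to\T^b_c\to\S^b_c$ for part (2).

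The first step is to show that the relevant functors restrict to the $\T^+_c$-level. I will use Lemma~\ref{lem:2 res to T+c} for left adjoints whose right adjoints preserve $\E$, and Lemma~\ref{lem:res to T+c} for functors preserving products and $\E$. Proposition~\ref{prop:dual neeman} gives $j_*(\E_s)\subseteq\E_t$ and $i^!(\E_t)\subseteq\E_r$. The same computation with the adjunction $(i^*,i_*)$ shows that $i_*$ sends $\mathbf S_r(C)$ to $\mathbf S_t(i_*C)$ whenever $i_*$ preserves compacts.

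For part (2) this gives the following. The functor $j^*$ has right adjoint $j_*$, which preserves $\E$, so $j^*$ restricts by Lemma~\ref{lem:2 res to T+c}. The functor $i_*$ has right adjoint $i^!$, which preserves $\E$, so $i_*$ restricts as well.

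For part (1), the functor $j_!$ has right adjoint $j^*$, and $i^*$ has right adjoint $i_*$. Both preserve $\E$ once $j^*$ and $i_*$ preserve compacts. This is where the hypotheses come in: the upward extension, via \cite{N92}, and the condition on $G_t$ guarantee compact preservation, as in \cite{SZZ}. Alternatively, I can test membership directly with the Hom-characterization Lemma~\ref{lem:hom char Tc}(3), which is the more robust route.

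The second step is exactness of the big sequences. Full faithfulness and vanishing of the composite are inherited from the recollement. For the equivalence $\T^+_c/\S^+_c\simeq\R^+_c$, I use the recollement triangle $j_!j^*T\to T\to i_*i^*T$. If $T\in\T^+_c$, then $j^*T\in\S^+_c$ and $i^*T\in\R^+_c$, so both ends stay inside the $+_c$ categories. Essential surjectivity holds because $i^*i_*\simeq 1$ and $i_*$ preserves $\T^+_c$. The standard argument for a Bousfield localization restricted to a subcategory closed under the adjoints then gives the quotient equivalence, and the same argument with the other triangle handles the second row.

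The third step is the $3\times3$ lemma. Its inputs are the inclusions $\S^c\subseteq\S^+_c$ and $\S^b_c\subseteq\S^+_c$, which hold by Lemma~\ref{lem:E in Tbc}, since $G\in\T^+_c$ when the silting generator is bounded, together with Lemma~\ref{relation}. The lemma also requires that the images of the small sequence correspond, namely $i^*$ of $\T^c$ is $\R^c$ up to summands, and similarly for $\T^b_c$. For $\T^b_c$, preservation follows from the left/right $t$-boundedness lemma above combined with Lemma~\ref{lem: res t-struc}. The main obstacle is the $3\times 3$ lemma's requirement that the intersection condition hold, e.g.\ $\T^b_c\cap j_*(\S^+_c)$ or $\T^c\cap j_!(\S^+_c)=j_!(\S^c)$. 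I would verify these via the Hom-criteria of Lemma~\ref{lem:hom char Tc}, using full faithfulness of $j_!$ and $j_*$ and the boundedness of the adjoints.
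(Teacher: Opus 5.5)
Your scaffolding matches the paper's: restrict the rows to the $+_c$ level, then apply \cite[Lemma~2.5]{SZZ} together with the criterion \cite[Lemma~2.6]{SZZ}. However, the step that carries all the content is missing.

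\textbf{The missing factorization condition.} The $3\times 3$ lemma needs $\overline{j_!}$ (resp.\ $\overline{i_*}$) to be fully faithful, and the intersection condition alone does not give this. What is needed is a \emph{factorization} condition:
\begin{itemize}
\item in (1), every morphism from an object of $\T^c$ to an object of $j_!(\S^+_c)$ factors through an object of $j_!(\S^c)$;
\item in (2), every morphism from $\T^b_c$ to $i_*(\R^+_c)$ (not $j_*(\S^+_c)$) factors through $i_*(\R^b_c)$.
\end{itemize}
The intersection condition then follows by factoring identities.

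\textbf{Why your proposed tool fails in (1).} Lemma~\ref{lem:hom char Tc} characterizes $\T^-_c,\T^b_c,\T^+_c$ through Hom-data against $G$ or $E$, but compactness is invisible to such data. For $\T=\D(A)$ with $A$ of infinite global dimension, every object $X$ of $\D^b(A\smod)$ has $\Hom_{\T}(A,X[i])$ finite-dimensional and zero for $|i|\gg 0$, exactly as a perfect complex does. So no criterion of that type separates $\K^b(A\proj)$ from $\D^b(A\smod)$.

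\textbf{The missing idea in (1).} Use the left adjoint $j^{\#}$ of $j_!$ supplied by the upward extension. It preserves compacts because $j_!$ preserves coproducts. Any $f\colon T\to j_!V$ with $T\in\T^c$ then factors as $T\xrightarrow{\eta_T}j_!j^{\#}T\xrightarrow{j_!(g)}j_!V$, with $j_!j^{\#}T\in j_!(\S^c)$.

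\textbf{The missing idea in (2).} Use a truncation argument. Take $X\in\T^b_c\cap\T^{\le m}$ and $Y\in\R^+_c$, and choose $n$ with $i_*(\R^{\ge 0})\subseteq\T^{\ge n}$. Lemma~\ref{lem: res t-struc} gives a triangle $Y_1\to Y\to Y_2$ with $Y_1\in\R^+_c\cap\R^{\le m-n}\subseteq\R^b_c$ and $Y_2\in\R^{\ge m-n+1}$. Since $i_*Y_2\in\T^{\ge m+1}$, we get $\Hom_{\T}(X,i_*Y_2)=0$. Hence any $f\colon X\to i_*Y$ factors through $i_*Y_1$.

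\textbf{A false formula in your Step 1 for part (1).} The adjunction computation gives $\Hom_{\T}(-,i_*\mathbf{S}_r C)\simeq\mathbf{D}\Hom_{\R}(C,i^*(-))$. By contrast, $\mathbf{S}_t(i_*C)$ represents $\mathbf{D}\Hom_{\R}(C,i^!(-))$, so $i_*\mathbf{S}_r\not\simeq\mathbf{S}_t i_*$ in general. The correct statements are $i_*\mathbf{S}_r\simeq\mathbf{S}_t i_{\#}$ and $j^*\mathbf{S}_t\simeq\mathbf{S}_s j^{\#}$, where $i_{\#}\dashv i^*$ and $j^{\#}\dashv j_!$ come from the upward extension. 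This is what makes $i_*$ and $j^*$ preserve $\E$, so that Lemma~\ref{lem:2 res to T+c} applies to $i^*$ and $j_!$. Compact preservation by $j^*$ and $i_*$ is neither what is needed nor what the upward extension provides; it would come from a downward extension.
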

	
\begin{proof}
	(1) By Lemma~\ref{lem:E in Tbc}, we have \( \T^{c} \subseteq \T^{+}_{c} \).  
	From the proof of \cite[Corollary~3.3]{SZZ}, the compact generators in \( \R \) and \( \S \) also satisfy the assumptions of Lemma~\ref{lem:E in Tbc}, hence  
	\( \R^{c} \subseteq \R^{+}_{c} \) and \( \S^{c} \subseteq \S^{+}_{c} \).  
	By \cite[Lemma~2.5]{SZZ} and \cite[Lemma~2.6]{SZZ}, to obtain the exact sequence it suffices to verify the following two claims.

\textbf{Claim 1.}  
	Every morphism from \( \T^{c} \) to \( j_{!}(\S^{+}_{c}) \) factors through \( j_{!}(\S^{c}) \).
	
 \textbf{Claim 2.}  
	\( j_{!}(\S^{+}_{c}) \cap \T^{c} = j_{!}(\S^{c}) \).
	
	Let \( T \in \T^{c} \), \( V \in \S^{+}_{c} \), and \( f \colon T \to j_{!}(V) \).  
	Since \( j_{!} \) admits a left adjoint \( j^{\#} \), we have an adjunction isomorphism
	\[
	{\Hom}_{\T}(T, j_{!}(V)) \simeq {\Hom}_{\S}(j^{\#}(T), V).
	\]
	Thus \( f \) is the composite
	\[
	T \xrightarrow{\eta_{T}} j_{!}j^{\#}(T) \xrightarrow{j_{!}(g)} j_{!}(V)
	\]
	for some \( g \colon j^{\#}(T) \to V \).  
	Since \( j^{\#} \) preserves compact objects, \( j^{\#}(T) \in \S^{c} \), and hence  
	\( f \) factors through \( j_{!}j^{\#}(T) \in j_{!}(\S^{c}) \).  
	This proves Claim 1. Claim 2 follows from Claim 1.
	
	(2) By Proposition~\ref{prop:dual neeman}, Lemma~\ref{lem:res to T+c} and Lemma~\ref{lem:2 res to T+c}, we have a half recollement
	\[
	\xymatrix{
		\R^{+}_{c} \ar[r]^{i_{*}=i_{!}} 
		& \T^{+}_{c} \ar[r]^{j^{!}=j^{*}} \ar@/^1.2pc/[l]^{i^{!}} 
		& \S^{+}_{c} \ar@/^1.2pc/[l]^{j_{*}} }.
	\]
    Hence, we have the following exact sequence:
    $$\R^+_c\stackrel{i_*}\longrightarrow \T^+_c\stackrel{j^*}\longrightarrow \S^+_c.$$
    By \cite[Theorem~3.5]{SZZ}, we have the following short exact sequence up to direct summands:
    $$\R^b_c\stackrel{i_*}\longrightarrow \T^b_c\stackrel{j^*}\longrightarrow \S^b_c.$$
    
	By \cite[Lemma~2.5]{SZZ} and \cite[Lemma~2.6]{SZZ}, to obtain the short exact sequence of Verdier quotients it suffices to prove the following two claims.

    \textbf{Claim 1.}  
	Every morphism from \( \T^{b}_{c} \) to \( i_{*}(\R^{+}_{c}) \) factors through \( i_{*}(\R^{b}_{c}) \).
	
	\textbf{Claim 2.}  
	\( i_{*}(\R^{+}_{c}) \cap \T^{b}_{c} = i_{*}(\R^{b}_{c}) \).  
		
	Let \( X \in \T^{b}_{c} \), \( Y \in \R^{+}_{c} \), and \( f \colon X \to i_{*}(Y) \).  
	Since \( X \in \T^{b}_{c} \), there exists \( m \) with \( X \in \T^{\le m} \).  
	Because \( i_{*} \) is right \( t \)-bounded, there exists \( n \) such that  
	\( i_{*}(\R^{\ge 0}) \subseteq \T^{\ge n} \).
	
	By Lemma~\ref{lem: res t-struc}, for \( Y \in \R^{+}_{c} \) there is a triangle
	\[
	Y_{1} \longrightarrow Y \longrightarrow Y_{2} \longrightarrow Y_{1}[1]
	\]
	with  
	\( Y_{1} \in \R^{+}_{c} \cap \R^{\le m-n} \subseteq \R^{b}_{c} \)  
	and  
	\( Y_{2} \in \R^{\ge m-n+1} \).  
	Applying \( i_{*} \) gives a commutative diagram
	$$\xymatrix{
		& X \ar[d]^{f} \ar@{-->}[dl]_{g} & \\
		i_{*}(Y_{1}) \ar[r] & i_{*}(Y) \ar[r] & i_{*}(Y_{2}) \ar[r] & i_{*}(Y_{1})[1]
	}$$
	Since $i_*(Y_2)\in i_*(\R^{\geq m-n+1})\subset \T^{\geq m+1}$, we have \( {\Hom}_{\T}(X, i_{*}(Y_{2})) = 0 \), the morphism \( f \) factors through  
	$i_*(Y_1)\to i_*(Y)$.  Thus \( f \) factors through \( i_{*}(Y_{1}) \in i_{*}(\R^{b}_{c}) \), proving Claim 1. Claim 2 follows from Claim 1.
	
	\medskip
	This completes the proof.
\end{proof}

	\begin{cor}
		Let the following diagram be a recollement of locally Hom-finite $k$-linear triangulated categories, each of which admits a compact silting object. 
		$$\xymatrix{\R\ar^-{i_*=i_!}[r]
			&\T\ar^-{j^!=j^*}[r]\ar^-{i^!}@/^1.2pc/[l]\ar_-{i^*}@/_1.6pc/[l]
			&\S.\ar^-{j_*}@/^1.2pc/[l]\ar_-{j_!}@/_1.6pc/[l]}$$
		Suppose that $\T$ has a compact generator $G_t$ such that there is an integer $N$ with ${\Hom}_{\T}(G_t, G_t[n])=0, n<N$. If the recollement can extend one step downwards, then the recollement induces a commutative diagram
		$$\xymatrix{
			\R^b_c/\R^c\ar[r]^{\overline{i_*}}\ar@{^(->}[d]&\T^b_c/\T^c\ar[r]^{\overline{j^*}}\ar@{^(->}[d]&\S^b_c/\S^c\ar@{^(->}[d]
			\\
			\R^+_c/\R^c\ar[r]^{\overline{i_*}}\ar[d]&\T^+_c/\T^c\ar[r]^{\overline{j^*}}\ar[d]&\S^+_c/\S^c\ar[d]
			\\
			\R^+_c/\R^b_c\ar[r]^{\overline{i_*}}&\T^+_c/\T^b_c\ar[r]^{\overline{j^*}}&\S^+_c/\S^b_c}$$
		in which all rows and columns are short exact sequences of quotient categories.
	\end{cor}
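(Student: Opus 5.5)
The corollary is a $3\times 3$ diagram. I would assemble it from three short exact rows, obtained from the localization results already proved for recollements, and then argue that the columns and the remaining rows are automatically exact by the triangulated $3\times 3$ lemma \cite[Lemma~2.5]{SZZ}.

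First, the bottom row is exactly Theorem~\ref{thm:loc for T+c}(2), which holds without any extension hypothesis. Second, the top row $\R^b_c/\R^c\to\T^b_c/\T^c\to\S^b_c/\S^c$ is the compact-side localization theorem of Sun--Zhang--Zhang \cite{SZZ}. I would invoke it for a recollement that extends one step downwards, together with the boundedness hypothesis on $G_t$; this gives $\R^c\subseteq\R^b_c$ etc.\ via Lemma~\ref{lem:E in Tbc} and the proof of \cite[Corollary~3.3]{SZZ}. Its second row there is induced by $i_*,j^*$, which matches our middle-row functors.

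Third, I would prove the middle row $\R^+_c/\R^c\to\T^+_c/\T^c\to\S^+_c/\S^c$ by mimicking Theorem~\ref{thm:loc for T+c}(1), but now for the second row. Here are the steps:
\begin{enumerate}
\item If the recollement extends one step downwards, then $i_*$ has a right adjoint $i^!$ and $j^*$ preserves compacts, since its right adjoint $j_*$ has a further right adjoint. Also $i_*$ preserves compacts, because $i^!$ preserves coproducts.
\item We already have the exact sequence $\R^+_c\to\T^+_c\to\S^+_c$ from the proof of Theorem~\ref{thm:loc for T+c}(2), and $\R^c\to\T^c\to\S^c$ is exact up to summands by Neeman applied to the extended recollement.
\item By \cite[Lemmas~2.5, 2.6]{SZZ}, it suffices to show that every morphism $T\to i_*(Y)$ with $T\in\T^c$ and $Y\in\R^+_c$ factors through $i_*(\R^c)$. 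For this I would use the left adjoint $i^*$ of $i_*$: the map factors as $T\to i_*i^*T\to i_*Y$, and $i^*T\in\R^c$. The intersection condition $i_*(\R^+_c)\cap\T^c=i_*(\R^c)$ then follows as before.
\end{enumerate}
Note that step 3 needs only $i^*$, so the downward extension is really used to make the compact sequence compatible with $i_*,j^*$ rather than with $j_!,i^*$.

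Finally, the columns. The column inclusions $\T^b_c/\T^c\hookrightarrow\T^+_c/\T^c$ are fully faithful, and the quotient $(\T^+_c/\T^c)/(\T^b_c/\T^c)\simeq\T^+_c/\T^b_c$ by the standard iterated Verdier quotient identity, which gives exactness of each column. Commutativity of the squares holds because all horizontal maps are induced by the same $i_*,j^*$.

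I expect the main obstacle to be checking that the top row from \cite{SZZ} is really available under the \emph{downward} extension hypothesis with the functors $i_*,j^*$. If it is not directly quoted, I would instead deduce it from the middle and bottom rows via the $3\times3$ lemma, once the columns are known to be exact.
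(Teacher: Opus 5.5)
Your proposal is correct and follows essentially the paper's route: the top row comes from \cite[Corollary~3.3]{SZZ}, with Lemma~\ref{lem:E in Tbc} giving $\T^c\subseteq\T^b_c$, $\R^c\subseteq\R^b_c$ and $\S^c\subseteq\S^b_c$, and the middle and bottom rows come from Theorem~\ref{thm:loc for T+c}. The differences are minor: for the middle row the paper just cites Theorem~\ref{thm:loc for T+c} (in effect part (1) applied to the recollement shifted one step down, whose first row is $(i_*,j^*)$), whereas you re-run that argument directly via the factorization $T\to i_*i^*T\to i_*Y$; you also spell out the column exactness, which the paper leaves implicit.
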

	
    \begin{proof}
Under the assumption ${\Hom}_{\T}(G_t, G_t[n])=0, n<N$, by Lemma \ref{lem:E in Tbc}, we know $\T^c\subset \T_c^b, \R^c\subset \R_c^b$ and $\S^c\subset \S_c^b$. The exactness of the first row comes from \cite[Corollary~3.3]{SZZ}. And the exactness of the second row and third row come from Theorem \ref{thm:loc for T+c}.

    \end{proof}
	\subsection{A localization theorem for $\E$}
	
	\begin{thm}\label{thm:loc for E}
		Let the following diagram be a recollement of locally Hom-finite approximable $k$-linear triangulated categories 
		$$\xymatrix{\R\ar^-{i_*=i_!}[r]
			&\T\ar^-{j^!=j^*}[r]\ar^-{i^!}@/^1.2pc/[l]\ar_-{i^*}@/_1.6pc/[l]
			&\S.\ar^-{j_*}@/^1.2pc/[l]\ar_-{j_!}@/_1.6pc/[l]}$$ 
		\begin{enumerate}
			\item Let $\U,\V$ and $\W$ be triangulated subcategories of $\R, \S$ and $\T$, respectively, such that $\E_r\subseteq\U$, $\E_s\subseteq\V\subseteq {\S}^+_c$ and $\E_t\subseteq \W$. If the third row of the recollement is restricted to a short exact sequence
			$$\V\stackrel{j_*}{\longrightarrow}\W\stackrel{i^!}{\longrightarrow}\U,$$
			then it induces a short exact sequence
			$$\V/\E_s\stackrel{\overline{j_*}}{\longrightarrow}\W/\E_t\stackrel{\overline{i^!}}{\longrightarrow}\U/\E_r.$$
			\item Moreover, if the triangulated categories $\R,\T$ and $\S$ have compact silting objects, then the third row induces a short exact sequence
			$${\S}^+_c/\E_s\stackrel{\overline{j_*}}{\longrightarrow}\T^+_c/\E_t\stackrel{\overline{i^!}}{\longrightarrow}{\R}^+_c/\E_r.$$
		\end{enumerate}
	\end{thm}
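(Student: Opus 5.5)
The approach mirrors the proof of Theorem~\ref{thm:loc for T+c}: combine the $3\times 3$ lemma \cite[Lemma~2.5]{SZZ} and its companion criterion \cite[Lemma~2.6]{SZZ} with the short exact sequence up to direct summands $\E_s\stackrel{j_*}\to\E_t\stackrel{i^!}\to\E_r$ of Proposition~\ref{prop:dual neeman}.

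For (1), we are given the short exact sequence $\V\to\W\to\U$ and, inside it, the exact-up-to-summands sequence of the $\E$'s. To pass to Verdier quotients, \cite[Lemma~2.6]{SZZ} reduces the task to two claims, analogous to those in Theorem~\ref{thm:loc for T+c}:
\begin{enumerate}
\item every morphism from an object of $\E_t$ to $j_*(V)$ with $V\in\V$ factors through an object of $j_*(\E_s)$;
\item $j_*(\V)\cap\E_t=j_*(\E_s)$ up to direct summands.
\end{enumerate}

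For Claim 1, let $f\colon \mathbf{S}_t(C)\to j_*(V)$ with $C\in\T^c$. By Lemma~\ref{lem-ebasic}(2) every object of $\E_t$ has the form $\mathbf{S}_t(C)$. The idea is to use the adjunction $(j^*,j_*)$. The morphism $f$ corresponds to a morphism $j^*\mathbf{S}_t(C)\to V$, and $f$ factors as
\[
\mathbf{S}_t(C)\longrightarrow j_*j^*\mathbf{S}_t(C)\longrightarrow j_*(V).
\]
It therefore suffices to know that $j^*\mathbf{S}_t(C)\in\E_s$. We check this with the partial Serre functor:
\[
\Hom_\S(-,j^*\mathbf{S}_t C)\simeq\Hom_\T(j_!(-),\mathbf{S}_tC)\simeq\mathbf{D}\Hom_\T(C,j_!(-))\simeq\mathbf{D}\Hom_\S(j^\#C,-)
\]
whenever $j_!$ has a left adjoint $j^\#$. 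Without such an adjoint, I would instead use that $j^*$ has the right adjoint $j_*$, which preserves coproducts, so that $j^*$ preserves compacts dually, giving $j^*\mathbf{S}_tC\simeq\mathbf{S}_s(j^\# C)$.

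This step is the main obstacle. If neither adjoint is available, my fallback is the hypothesis $\V\subseteq\S^+_c$ together with the representability theorem (Theorem~\ref{thm:rep for T+c}): a morphism $\mathbf{S}_tC\to j_*V$ can be approximated through a strong $\E$-coapproximating system of $V$ via Corollary~\ref{key1}. Concretely, $\Hom_\T(\mathbf{S}_tC,j_*V)\simeq\Hom_\S(j^*\mathbf{S}_tC,V)$, and $j^*\mathbf{S}_tC$ lies in $\S^-$-type bounds, so the map factors through some $E_m$ in the system of $V$; then $f$ factors through $j_*(E_m)\in j_*(\E_s)$. Claim 2 follows from Claim 1 by the standard idempotent argument applied to the identity of an object of $j_*(\V)\cap\E_t$, together with $\E_s$ being thick (Lemma~\ref{lem-ebasic}).

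For (2), by part (1) it suffices to show that the third row restricts to a short exact sequence $\S^+_c\stackrel{j_*}\to\T^+_c\stackrel{i^!}\to\R^+_c$. This was essentially obtained in the proof of Theorem~\ref{thm:loc for T+c}(2).
\begin{itemize}
\item $j_*$ and $i^!$ restrict by Lemma~\ref{lem:2 res to T+c}: their left adjoints' right adjoints are the functors themselves, and by Proposition~\ref{prop:dual neeman} they preserve the $\E$'s. Alternatively, they restrict by Lemma~\ref{lem:res to T+c}, since they preserve products.
\item $i_*$ and $j^*$ also restrict, via Lemma~\ref{lem:2 res to T+c}, giving a half recollement on the $(-)^+_c$ level.
\item Exactness follows: $j_*$ is fully faithful and $i^!j_*=0$. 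For $T\in\T^+_c$, the triangle $i_*i^!T\to T\to j_*j^*T$ identifies $\T^+_c/j_*\S^+_c$ with $\R^+_c$ through $i^!$, with quasi-inverse induced by $i_*$.
\end{itemize}
The hypotheses $\E_s\subseteq\S^+_c$ etc.\ hold by Proposition~\ref{tech4}(2), so part (1) applies with $\V=\S^+_c$, $\W=\T^+_c$, $\U=\R^+_c$.
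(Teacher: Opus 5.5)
\textbf{There is a genuine gap in part (1): your Claim 1 points in the wrong direction, and in that direction it is false in general.} You try to factor morphisms \emph{from} $\E_t$ \emph{to} $j_*(\V)$ through $j_*(\E_s)$. By adjunction, this means factoring morphisms $j^*Y\to V$ (with $Y\in\E_t$, $V\in\V$) through $\E_s$. Taking $V=j^*Y$, which lies in $\S^+_c$ by Lemma~\ref{lem:2 res to T+c}, the claim would force $j^*(\E_t)\subseteq\E_s$.

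This fails in an example. Let $\Lambda=k[x]/(x^2)$, let $M$ be the simple left $\Lambda$-module, and set $A=\bigl(\begin{smallmatrix}\Lambda & M\\ 0 & k\end{smallmatrix}\bigr)$ with $e=e_{11}$. The ideal $AeA=eA$ is stratifying, which gives a recollement $\D(k)\to\D(A)\to\D(\Lambda)$ with $j^*=(-)e$. Then $j^*(\mathbf{D}A)\cong\mathbf{D}(eA)\cong\mathbf{D}\Lambda\oplus\mathbf{D}M$, and $\mathbf{D}M$ has infinite injective dimension, so $j^*(\mathbf{D}A)\notin\K^b(\Lambda\inj)$. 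Now consider the unit $\mathbf{D}A\to j_*j^*(\mathbf{D}A)$; its adjunct is the identity of $j^*(\mathbf{D}A)$. If it factored through $j_*(E')$ with $E'\in\E_s$, then $j^*(\mathbf{D}A)$ would be a direct summand of $E'$, which is impossible.

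Your three justifications each break down:
\begin{enumerate}
\item The Serre-functor computation needs a left adjoint $j^\#$ of $j_!$, i.e.\ an upward extension of the recollement, which is not assumed.
\item The ``without such an adjoint'' variant still uses $j^\#$. Moreover, $j^*$ preserving compacts says nothing about $j^*(\E_t)$.
\item The fallback is backwards. A strong $\E$-coapproximating system $V\to E_m$ (Corollary~\ref{key1}) controls morphisms \emph{out of} $V$ into $\E_s$, not morphisms into $V$. Also, $\E_t\not\subseteq\T^-$ in general.
\end{enumerate}

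\textbf{The fix is to use the other half of Verdier's criterion, as the paper does.} Show that every morphism from $j_*(V)$, $V\in\V$, to an object $E'\in\E_t$ factors through $j_*(\E_s)$. This is exactly where the hypothesis $\V\subseteq\S^+_c$ is used; in your argument it plays no real role. The steps are:
\begin{enumerate}
\item Choose a triangle $D\to V\to F\to D[1]$ with $F\in\E_s$ and $D\in\S^{\ge m}$.
\item Since $j_*$ preserves products and maps $\E_s$ into $\E_t$ (Proposition~\ref{prop:dual neeman}), the argument of Lemma~\ref{lem:res to T+c} gives $j_*(\S^{\ge m})\subseteq\T^{\ge m-n}$ for a fixed $n$.
\item By Proposition~\ref{tech4}(1), $\Hom_\T(\T^{\ge B},E')=0$ for some $B$. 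So for $m\gg0$ you get $\Hom_\T(j_*D,E')=0$, and every $j_*V\to E'$ factors through $j_*F$.
\item Claim~2 follows by applying this to identity morphisms, using that $j_*$ is fully faithful and $\E_s$ is thick.
\end{enumerate}
Your reduction of part (2) to part (1) is fine, with one correction. You must restrict $j_*$ and $i^!$ via Lemma~\ref{lem:res to T+c}, since they preserve products and the $\E$'s. Lemma~\ref{lem:2 res to T+c} does not apply, because it would require right adjoints of $j_*$ and $i^!$.
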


	\begin{proof}
		(1) There is the following commutative diagram
		$$\xymatrix{
			\E_s\ar[r]^{j_*}\ar@{^(->}[d]&\E_t\ar[r]^{i^!}\ar@{^(->}[d]&\E_r\ar@{^(->}[d]
			\\
			\V\ar[r]^{j_*}\ar[d]&\W\ar[r]^{i^!}\ar[d]&\U\ar[d]
			\\
			\V/\E_s\ar[r]^{\overline{j_*}}&\W/\E_t\ar[r]^{\overline{i^!}}&\U/\E_r
		}$$
		in which the first row is exact up to direct summands by Proposition~\ref{prop:dual neeman} and the second row is exact by hypothesis. By \cite[Lemma~2.5]{SZZ}, it suffices to prove the induced functor ${\overline{j_*}}$ is fully-faithful.
		
		Note that ${\overline{j_*}}$ is really the composition
		$$\V/\E_s\stackrel{\tilde{j_*}}{\longrightarrow} j_*(\V)/j_*(\E_s)\stackrel{i}\longrightarrow \W/\E_t.$$
		Since $j_*$ is fully faithful, so $\tilde{j^*}$ in the decomposition above is an equivalence. It remains to prove that $i$ is fully-faithful. By  \cite[Lemma~2.6]{SZZ}, to obtain the short exact sequence, it remains to check the following two claims.

        {\bf Claim 1:} Each morphism from $j_*(\V)$ to $\E_t$ factors through an object in $j_*(\E_s)$.
		
		{\bf Claim 2:} $j_*(\V)\cap \E_t=j_*(\E_s)$. 
		
		Let $V$ be an object in $\V\subseteq \S^+_c$. $V$ admits a triangle
		$$D\stackrel{}\longrightarrow V\stackrel{}\longrightarrow F\stackrel{}\longrightarrow D[1]$$
		with $D\in\S^{\geq m}$ and $F\in \E_s$. Since $j_*$ is left $t$-bounded, for sufficiently large $m$, we have $\Hom_{\T}(j_*(D), E_t)=0$ by Proposition~\ref{tech4}. Hence each morphism from $j_*(V)$ to $E_t$ factors through $j_*(F)$, which belongs to $j_*(\E_s)$. This proves Claim 1. Claim 2 follows from Claim 1.
		
		(2) From the proof of Theorem \ref{thm:loc for T+c}(2) that there is an exact sequence
		$$\S^+_c\stackrel{j_*}\longrightarrow \T^+_c\stackrel{i^!}\longrightarrow \R^+_c.$$
		By (1), we obtain the short exact sequence of Verdier quotient categories.
	\end{proof}
	
	\begin{cor}
		Let the following diagram be a recollement of locally Hom-finite $k$-linear triangulated categories, each of which admits a compact silting object. 
		$$\xymatrix{\R\ar^-{i_*=i_!}[r]
			&\T\ar^-{j^!=j^*}[r]\ar^-{i^!}@/^1.2pc/[l]\ar_-{i^*}@/_1.6pc/[l]
			&\S.\ar^-{j_*}@/^1.2pc/[l]\ar_-{j_!}@/_1.6pc/[l]}$$ 
		If the recollement can extend one step upwards, then there exists a commutative diagram
		$$\xymatrix{
			\R^b_c/\E_r\ar[r]^{\overline{i_*}}\ar@{^(->}[d]&\T^b_c/\E_t\ar[r]^{\overline{j^!}}\ar@{^(->}[d]&\S^b_c/\E_s\ar@{^(->}[d]
			\\
			\R^+_c/\E_r\ar[r]^{\overline{i_*}}\ar[d]&\T^+_c/\E_t\ar[r]^{\overline{j^!}}\ar[d]&\S^+_c/\E_s\ar[d]
			\\
			\R^+_c/\R^b_c\ar[r]^{\overline{i_*}}&\T^+_c/\T^b_c\ar[r]^{\overline{j^!}}&\S^+_c/\S^b_c}$$
		in which all rows and columns are short exact sequences of quotient categories.
	\end{cor}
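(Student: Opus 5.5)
We want the final corollary: a $3\times 3$ diagram with rows and columns short exact, under the hypothesis that the recollement extends one step upwards.

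\emph{Columns.} The columns are the standard sequences of Verdier quotients associated with the chain $\E\subseteq\T^b_c\subseteq\T^+_c$ (and likewise for $\R$ and $\S$). To have this chain we need $\E_t\subseteq\T^b_c$. Since $\E_t\subseteq\T^+_c$ by Proposition~\ref{tech4}(2), it suffices to show $\E_t\subseteq\T^+_c\cap\T^b$. This holds because every $E'\in\E_t$ has ${\Hom}(G[i],E')=\mathbf{D}{\Hom}(\mathbf{S}^{-1}E',G[i])$, and $\mathbf{S}^{-1}E'$ is compact. Hence these spaces vanish for $i\gg0$ by the silting condition. For $i\ll0$ they vanish because $E'\in\T^+$ (Proposition~\ref{tech4}(1)); alternatively, use the characterization in Lemma~\ref{lem:hom char Tc}(2). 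Once the chain is established, $(\T^+_c/\E_t)/(\T^b_c/\E_t)\simeq\T^+_c/\T^b_c$ is formal, so each column is short exact.

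\emph{Rows.} The bottom row is Theorem~\ref{thm:loc for T+c}(2), which uses the second row $i_*,j^*$. For the middle and top rows we use the extra adjoint coming from the upward extension. If the recollement extends one step upwards, there is a left adjoint $j^\#$ of $j_!$ and a left adjoint $i^\#$ of $i^*$. So the row $(i_*,j^!)$ is the third row of the extended recollement
\[
\S\xrightarrow{j_!}\T\xrightarrow{i^*}\R,\quad\text{viewed with } i_*\text{ and } j^*=j^! \text{ in the bottom position of the shifted recollement}
\]
Equivalently, $(i_*,j^*)$ with right adjoints $(i^!,j_*)$ plays the role of the ``third row'' relative to the recollement one level up. I would then apply Theorem~\ref{thm:loc for E}(1) with $\V,\W,\U$ taken as $\R^+_c,\T^+_c,\S^+_c$ for the middle row, and as $\R^b_c,\T^b_c,\S^b_c$ for the top row. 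This requires two inputs:
\begin{enumerate}
\item the row restricts to short exact sequences $\R^+_c\to\T^+_c\to\S^+_c$ and $\R^b_c\to\T^b_c\to\S^b_c$, obtained from the half-recollement in the proof of Theorem~\ref{thm:loc for T+c}(2) and from \cite[Theorem~3.5]{SZZ};
\item on the Brown--Comenetz side, $i_*$ and $j^*$ restrict to a sequence $\E_r\to\E_t\to\E_s$ that is exact up to summands.
\end{enumerate}
Input (2) is Proposition~\ref{prop:dual neeman} applied to the upward-shifted recollement. There, $i_*$ and $j^*$ are the right adjoints of $i^\#$ and $j_!$, and the commutation $\mathbf{S}\,i^\#\cong i_*\mathbf{S}$ on compacts follows by the same Yoneda computation as in that proposition.

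\emph{Commutativity and exactness.} The squares commute because all functors are induced by the same $i_*$ and $j^*$, so they pass compatibly to each quotient. Exactness of the top and middle rows then follows from the $3\times3$ lemma \cite[Lemma~2.5]{SZZ}, with the bottom row as a consistency check.

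\emph{Main obstacle.} The hardest step is verifying the factorization claim in the proof of Theorem~\ref{thm:loc for E}(1) for the new row: every morphism from $i_*(\V)$ to $\E_t$ must factor through $i_*(\E_r)$. That proof used only that $j_*$ is left $t$-bounded together with ${\Hom}(\T^{\ge m},E_t)=0$. In the new row the relevant functor is $i_*$, which is both left and right $t$-bounded by the preceding lemma. So the same argument should go through verbatim: approximate $V\in\R^+_c$ by $D\to V\to F$ with $D$ deep in $\R^{\ge m}$, and use that ${\Hom}(i_*D,E_t)=0$.
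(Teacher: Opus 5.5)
Your architecture matches the argument the paper evidently intends. The paper states this corollary without proof and handles the preceding corollary the same way. The upward-shifted recollement $\S\xrightarrow{j_!}\T\xrightarrow{i^*}\R$ has third row $(i_*,j^*)$. Applied to it, Proposition~\ref{prop:dual neeman} gives $\E_r\to\E_t\to\E_s$, and Theorem~\ref{thm:loc for E} gives the middle row; Theorem~\ref{thm:loc for T+c}(2) gives the bottom row.

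\textbf{The gap.} You need $\E_t\subseteq\T^b_c$, and likewise $\E_r\subseteq\R^b_c$ and $\E_s\subseteq\S^b_c$. The columns, the very definition of the top row, and your use of Theorem~\ref{thm:loc for E}(1) with $\V=\R^b_c$ all depend on this. Your two vanishing arguments control the same side. For the silting $t$-structure, $X\in\T^{\ge n}$ iff $\Hom_\T(G[i],X)=0$ for all $i>-n$. So $E'\in\T^+$ gives $\Hom_\T(G[i],E')=0$ for $i\gg0$, which is exactly what $\mathbf{D}\Hom_\T(C,G[i])=0$ for $i\gg0$ already gave. To get $E'\in\T^-$ you need $\Hom_\T(C,G[i])=0$ for $i\ll0$, i.e.\ $\Hom_\T(G,G[n])=0$ for $n\ll0$. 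Lemma~\ref{lem:hom char Tc}(2) requires precisely this vanishing, so it does not help. It is not among the hypotheses, and it fails in general. Take $A=k[x]$ with $|x|=-2$, a non-positive DG algebra with finite-dimensional cohomology as in the paper's example. Then $E=\mathbf{D}A$ has $H^i(E)\neq0$ for every even $i\ge0$. So $E\notin\T^b$ already for the trivial recollement, and $\T^b_c/\E_t$ is not even defined.

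\textbf{The repair.} The statement silently needs the extra hypothesis of the preceding corollary, and you should add it: $\T$ has a compact generator $G_t$ with $\Hom_\T(G_t,G_t[n])=0$ for $n\ll0$.
\begin{enumerate}
\item Lemma~\ref{lem:E in Tbc} then gives $E_t\in\T^b_c$. Since $\T^b_c=\T^+_c\cap\T^b$ is thick, $\E_t=\langle E_t\rangle\subseteq\T^b_c$.
\item The condition passes to the compact generators $i^*G_t$ of $\R$ and $j^\#G_t$ of $\S$ through the upward adjoints. For example, $\Hom_\R(i^*G_t,i^*G_t[n])\cong\Hom_\T(i^\#i^*G_t,G_t[n])$, and $i^\#i^*G_t$ is compact.
\end{enumerate}

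\textbf{A smaller point.} Theorem~\ref{thm:loc for E}(1) needs an honest short exact sequence $\R^b_c\to\T^b_c\to\S^b_c$. The paper quotes \cite[Theorem~3.5]{SZZ} only up to direct summands. The upward extension repairs this. For the compact silting object $G$ of $\T$ and $S\in\S^b_c$, $\Hom_\T(G,j_!S[i])\cong\Hom_\S(j^\#G,S[i])$ with $j^\#G$ compact. Hence $j_!(\S^b_c)\subseteq\T^b_c$ by Lemma~\ref{lem:hom char Tc}(2). As $j^*j_!\cong\mathrm{id}$, the functor $j^*\colon\T^b_c\to\S^b_c$ is essentially surjective, which upgrades the sequence to an honest short exact one.

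With these additions your argument goes through.
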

	
	\begin{exam}
	Let the following diagram be a recollement of derived categories of finite-dimensional algebras over a field $k$. 
	$$\xymatrix{\D(B)\ar^-{i_*=i_!}[r]
		&\D(A)\ar^-{j^!=j^*}[r]\ar^-{i^!}@/^1.2pc/[l]\ar_-{i^*}@/_1.6pc/[l]
		&\D(C).\ar^-{j_*}@/^1.2pc/[l]\ar_-{j_!}@/_1.6pc/[l]}$$
	The derived categories $\D(A),\D(B)$ and $\D(C)$ are locally Hom-finite $k$-linear triangulated categories with compact silting objects $A, B$ and $C$, respectively. 
	\begin{enumerate}
		\item Proposition~\ref{prop:dual neeman} implies that the third row of the recollement restricts to a short exact sequence up to direct summands
		$$\K^b(C\inj)\stackrel{j_*}\longrightarrow \K^b(A\inj)\stackrel{i^!}\longrightarrow \K^b(B\inj).$$
		\item Theorem~\ref{thm:loc for T+c} implies that we have short exact sequences
		$$\K^+(B\inj)/\D^b(B\smod)\stackrel{\overline{i_*}}\longrightarrow \K^+(A\inj)/\D^b(A\smod)\stackrel{\overline{j^*}}\longrightarrow \K^+(C\inj)/\D^b(C\smod).$$
		
		\item Theorem~\ref{thm:loc for E} gives us the following short exact sequence
		$$\K^+(C\inj)/\K^b(C\inj)\stackrel{\overline{j_*}}\longrightarrow \K^+(A\inj)/\K^b(A\inj)\stackrel{\overline{i^!}}\longrightarrow \K^+(B\inj)/\K^b(B\inj).$$
	\end{enumerate}
\end{exam}

{\bf Acknowledgments:} The second author was supported by the National Natural Science Foundation of China (Grant No.12501052). The third author was supported by the National Natural Science Foundation of China (Grant No.12401044) and the Hubei University Original Exploration Seed Fund (Grant No.260701747001).
    

\end{document}